\theoremstyle{plain}
\theoremstyle{definition}{
\newtheorem{defi}{Definition}[section]

\newtheorem*{nota}{Notation}}
\theoremstyle{remark}{
\newtheorem{rem}{Remark}
\newtheorem{ex}{Example}}
\theoremstyle{plain}{
\newtheorem{thm}{Theorem}[section]
\newtheorem{lem}{Lemma}[section]
\newtheorem{cor}{Corollary}[section]
\newtheorem{prop}{Proposition}[section]
\newtheorem{con}{Conjecture}[section]}
\newenvironment{manualtheorem}[1]{%
  \manualtheoreminner
}{\endmanualtheoreminner}
\newenvironment{manualassumption}[1]{%
  \manualassumptioninner
}{\endmanualassumptioninner}
\tikzset{
  curarrow/.style={
  rounded corners=8pt,
  execute at begin to={every node/.style={fill=red}},
    to path={-- ([xshift=50pt]\tikztostart.center)
    |- (#1) {}
    -| ([xshift=-50pt]\tikztotarget.center)
    -- (\tikztotarget)}
    }
}
\newsavebox{\pullbackright}
\sbox\pullbackright{%
\begin{tikzpicture}%
\draw (0,0) -- (1ex,0ex);%
\draw (1ex,0ex) -- (1ex,1ex);%
\end{tikzpicture}}
\newsavebox{\pullbackleft}
\sbox\pullbackleft{%
\begin{tikzpicture}%
\draw (0ex,0ex) -- (0ex,1ex);%
\draw (1ex,0ex) -- (0ex,0ex);%
\end{tikzpicture}}
\newcommand{\m}[1]{{\mathrm{#1}}}
\newcommand{\cat}[1]{\textbf{#1}}
\newcommand{\tot}[3]{\underset{#1}{\m{Tot}^{#2}}\left(#3\right)}
\newcommand{\und}[1]{\hspace{#1}\text{and}\hspace{#1}}
\newcommand{\oder}[1]{\hspace{#1}\text{or}\hspace{#1}}
\DeclareRobustCommand{\chemical}[1]{%
	{\(\m@th
		\edef\resetfontdimens{\noexpand\)%
			\fontdimen16\textfont2=\the\fontdimen16\textfont2
			\fontdimen17\textfont2=\the\fontdimen17\textfont2\relax}%
		\fontdimen16\textfont2=2.7pt \fontdimen17\textfont2=2.7pt
		\mathrm{#1}%
		\resetfontdimens}}
\title{Spin(7)-Orbifold Resolutions}
\author{V. F. Majewski}
\begin{document}

\maketitle
\begin{abstract}
We develop an analytic and geometric framework for resolving compact Spin(7)-orbifolds by smooth torsion-free Spin(7)-manifolds. These orbifolds arise naturally as boundary points in the Gromov--Hausdorff compactification of the moduli space of exceptional
holonomy metrics, and smooth Gromov--Hausdorff resolutions can be viewed as paths from the boundary back into the smooth part of the moduli space.\\

Our construction replaces the singular strata by adiabatic torsion-free asymptotically conically fibred spaces. The local resolution data are encoded by McKay-type correspondences and Chen--Ruan local systems, while the global deformation problem is controlled by the uniform elliptic theory for Dirac-type operators on
orbifold resolutions developed in the author's previous work \cite{majewskiDirac}. In
particular, the obstruction map and the associated isentropicity condition from that theory provide the criterion for whether the local harmonic resolution data glue to global harmonic forms on the smooth
resolution. In this paper, we link the vanishing of the resulting obstruction map to the string cohomology of the orbifold.\\

When this obstruction vanishes, we deform the preglued
Spin(7)-structure to a genuine torsion-free Spin(7)-structure. This extends Joyce's resolution theorem \cite{Joyce1996b,Joyce1999} to the nonflat case and yields new families of compact Spin(7)-manifolds. By dimensional reduction, the same framework recovers and extends the Joyce--Karigiannis theory of $G_2$-orbifold resolutions
\cite{joyce2017new}.
\end{abstract}

\tableofcontents

\section{Introduction}
\label{Introduction}

Special holonomy geometry lies at the intersection of differential geometry, geometric analysis and mathematical physics. By Berger's classification, the possible irreducible non-symmetric holonomy groups of Riemannian manifolds include the classical groups $\m{SU}(n)$ (Calabi--Yau manifolds), $\m{Sp}(n)$ (hyperkähler manifolds) and the exceptional groups
\begin{align*}
G_2\und{1.0cm}\m{Spin}(7).
\end{align*}
The corresponding geometries are Ricci-flat and are described by parallel spinors or equivalently distinguished differential forms satisfying nonlinear first-order equations. In dimension eight, a $\m{Spin}(7)$-structure is given by a positive Cayley four-form
\begin{align*}
    \Phi \in \Gamma(X,\m{Cay}_+(X)) \subset \Omega^4(X),
\end{align*}
which determines a Riemannian metric $g_\Phi$, an orientation and a spin structure. The structure is torsion-free precisely when
\begin{align*}
    \m{d}\Phi = 0,
\end{align*}
or equivalently when $\Phi$ is parallel with respect to the Levi-Civita connection of $g_\Phi$. In that case the holonomy of $g_\Phi$ is contained in $\m{Spin}(7)$, and the metric is Ricci-flat.\\

The local deformation theory of torsion-free $\m{Spin}(7)$-structures is well understood. After quotienting by diffeomorphisms isotopic to the identity, the moduli space of torsion-free $\m{Spin}(7)$-structures on a closed eight-manifold is a smooth finite-dimensional manifold.\cite[Thm. D]{Joyce1996b} Infinitesimal deformations are represented by harmonic four-forms, and the deformation problem becomes elliptic after gauge fixing.\\

The global structure of this moduli space is much less understood. A family of torsion-free $\m{Spin}(7)$-structures may leave every compact subset of the smooth moduli space. Since the associated metrics are Ricci-flat, such degenerations belong to the general compactness theory of Einstein metrics \cite{anderson1990convergence,cheeger1986collapsing,cheeger1990collapsing,cheegernaber2015regularity,jiang2020l2curvatureboundsmanifolds}. Under suitable diameter and non-collapsing assumptions, a sequence of Ricci-flat manifolds admits a Gromov--Hausdorff convergent subsequence. The limit is a compact metric space which is smooth away from a singular set of codimension at least four. On the regular part, the limiting metric is again Ricci-flat, and in the special holonomy setting the limiting special holonomy structure persists away from the singular set.\\

In general, such Ricci-flat limit spaces can have complicated stratified singularities. Orbifolds form the most regular and analytically tractable class among these limits. If the limit is an orbifold, then the singular set is a union of smooth strata, and a neighbourhood of each stratum is modelled on a quotient of a normal bundle by a finite subgroup of the holonomy group. Thus a compact torsion-free $\m{Spin}(7)$-orbifold should be regarded as a natural boundary point of the moduli space of smooth torsion-free $\m{Spin}(7)$-manifolds.\\

This leads to the guiding question of the paper:
\vspace{0.3cm}
\begin{center}
    \textit{\textbf{Which torsion-free Spin(7)-orbifolds can be resolved to smooth torsion-free Spin(7)-manifolds?}}
\end{center}
\vspace{0.3cm}
Equivalently, given a compact $\m{Spin}(7)$-orbifold $(X,\Phi)$, one asks whether there exists a family of smooth $\m{Spin}(7)$-manifolds $(X_t,\Phi_t)$ such that
\begin{align*}
    (X_t,\Phi_t) \xrightarrow[C^\infty GH]{t\to 0} (X,\Phi)
\end{align*}
in the Gromov--Hausdorff topology as $t \to 0$, and such that the convergence is smooth away from the singular strata of $X$. Such a family should be thought of as a path from an orbifold boundary point back into the smooth part of the moduli space.\\

This paper develops a general framework for constructing such paths. The construction is based on the following geometric picture. Let $S \subset X$ be a singular stratum of a compact $\m{Spin}(7)$-orbifold $(X,\Phi)$. The restriction of the metric and of the Cayley form to the normal directions of $S$ defines a conically fibred normal cone
\begin{align*}
    \nu_0 \colon (N_0,\Phi_0) \longrightarrow S.
\end{align*}
The fibres are quotient cones of the form $V/\Gamma$, where $\Gamma$ is a finite subgroup of $\m{Spin}(7)$ preserving the relevant normal representation. A resolution of $(X,\Phi)$ should replace this singular normal cone by a smooth non-compact model
\begin{align*}
    \nu_\zeta \colon  (N_\zeta,\Phi_\zeta) \longrightarrow S,
\end{align*}
which is asymptotic to $(N_0,\Phi_0)$ at infinity. Such spaces are asymptotically conically fibred, or ACF, and they play the role of bubbles along the singular stratum.\\

Once the ACF models have been chosen, one glues large compact subsets of $N_\zeta$ into shrinking tubular neighbourhoods of $S$. This produces a smooth family of manifolds $X_{\zeta;t}$ and a family of preglued $\m{Spin}(7)$-structures
\begin{align*}
    \Phi^{\m{pre}}_{\zeta;t} \in \Gamma(X_{\zeta;t},\m{Cay}_+(X_{\zeta;t})).
\end{align*}
The family $(X_{\zeta;t},\Phi^{\m{pre}}_{\zeta;t})$ converges back to $(X,\Phi)$ in the Gromov--Hausdorff sense. However, the preglued form $\Phi^{\m{pre}}_{\zeta;t}$ is in general not torsion-free. Its torsion is concentrated in the gluing region and must be removed by solving a nonlinear perturbation problem.\\

The analytic core of the paper is the solution of this perturbation problem uniformly in the resolution parameter $t$. The nonlinear equation is
\begin{align*}
\m{d}\Theta(\Phi^{\m{pre}}_{\zeta;t}+\eta)=0,
\end{align*}
where $\Theta$ denotes the projection from a tubular neighbourhood of the bundle of positive Cayley forms. After imposing a gauge condition, the linearisation is governed by the Hodge--de Rham operator
\begin{align*}
D_{\zeta;t}=\m{d}+\m{d}^{*_{\Phi^{\m{pre}}_{\zeta;t}}}.
\end{align*}
Thus the problem reduces to constructing a uniformly bounded right-inverse for $D_{\zeta;t}$ on a family of degenerating manifolds.\\

This is a genuinely singular elliptic problem. As $t \to 0$, the geometry of $X_{\zeta;t}$ decomposes into three different regimes: the orbifold region, the conically fibred normal cone region and the ACF resolution region. Ordinary Hölder or Sobolev spaces do not see these different scales uniformly. We therefore use weighted Hölder spaces adapted to the resolution geometry. These spaces are built so that the Hodge--de Rham operator on $X_{\zeta;t}$ can be compared with model operators on the three limiting pieces
\begin{align*}
D \quad \text{on the orbifold part}, \qquad
\widehat D_0 \quad \text{on the normal cone}, \qquad
\widehat D_\zeta \quad \text{on the ACF resolution}.
\end{align*}
The uniform elliptic theory for such operators is developed in \cite{majewskiDirac}. It gives a linear gluing theorem for Dirac-type operators on orbifold resolutions and provides uniform estimates once a linear obstruction map vanishes.\\

This obstruction has a natural geometric meaning. The harmonic forms on the degenerating family $X_{\zeta;t}$ should, in the adiabatic limit, be described by harmonic forms on the orbifold together with harmonic forms on the local ACF models. However, not every collection of limiting harmonic forms necessarily glues to a genuine harmonic form on $X_{\zeta;t}$. The failure is measured by an obstruction map in the exact linear gluing sequence. We call a resolution isentropic if this obstruction vanishes. In that case, the Hodge theory of the resolution is completely described by its adiabatic limit, and the Hodge--de Rham operator admits a uniformly bounded right-inverse on the complement of its kernel.\\

The term isentropic is meant to reflect the analogy with an adiabatic reversible process in statistical physics; no harmonic information is lost in passing from the smooth resolution to the singular limit. In the present setting, isentropicity is not merely a technical analytic assumption. One of the main points of the paper is that it can be interpreted topologically and geometrically in terms of the string cohomology of the orbifold and of the local resolution models.\\

In codimension four this interpretation becomes especially transparent. The local normal representation is quaternionic, and the isotropy group is a finite subgroup
\begin{align*}
\Gamma \subset \m{Sp}(1).
\end{align*}
The normal cone fibres are Kleinian singularities $\mathbb {H}/\Gamma$. Their smooth ALE hyperkähler resolutions are classified by Kronheimer's construction \cite{kronheimer1989construction,kronheimer} and are governed by the McKay correspondence. The cohomology of the exceptional set is identified with the non-trivial irreducible representations of $\Gamma$, or equivalently with the non-trivial conjugacy classes of $\Gamma$. On the orbifold side, the same data appear as the degree-two Chen--Ruan local system associated with the singular stratum.\\

Thus, for a codimension-four stratum $S$, the resolution parameters form a bundle
\begin{align*}
\mathfrak{P}_\Gamma\coloneqq \wedge^2_+T^*S \otimes \mathfrak{H}^2_\Gamma,
\end{align*}
where $\mathfrak{H}^2_\Gamma$ denotes the local system produced by the McKay correspondence. A section
\begin{align*}
\zeta \in \Gamma(S,\mathfrak{ P}_\Gamma)\cong \Omega^2_+(S,\mathfrak{H}_\Gamma)
\end{align*}
chooses, at each point of the stratum, an ALE hyperkähler resolution of the normal cone fibre. The ACF $\m{Spin}(7)$-resolution of the normal cone is adiabatic torsion-free precisely when $\zeta$ is harmonic as an $\mathfrak{H}^2_\Gamma$-valued self-dual two-form. In this way, the local resolution data are naturally encoded by the Chen--Ruan cohomology group
\begin{align*}
    \m{H}^2(S,\mathfrak{H}^2_\Gamma) \subset \m{H}^4_{\m{CR}}(X).
\end{align*}

This gives a cohomological interpretation of the gluing data. A torsion-free $\m{Spin}(7)$-form on the orbifold determines a class in $\m{H}^4(X)$, while the resolution parameter contributes a Chen--Ruan class \say{supported} on the singular stratum. The resolved torsion-free structure constructed in this paper has cohomology class determined by the pair
\begin{align*}
    [\Phi] \oplus [\zeta] \in H^4_{\m{CR}}(X).
\end{align*}
This is one of the reasons for viewing the Chen--Ruan cohomology of the orbifold as the natural receptacle for the limiting cohomological data of a $\m{Spin}(7)$-orbifold resolution.\\

We now state the two main results of the paper. The first one explains the geometric input needed for the construction: the local resolution data of a codimension-four singular stratum are naturally parametrised by the McKay, or equivalently Chen--Ruan, local system. The second one is the nonlinear existence theorem, which deforms the preglued $\m{Spin}(7)$-structure to a genuine torsion-free one.

\begin{manualtheorem}{\ref{codimensionfourMcKayDuality}/\ref{alladiabaticresolutions}/\ref{isentropicityequalcrepantresolution}}[Orbifold McKay Correspondence and Adiabatic Normal-Cone Resolutions]
Let $(X,\Phi)$ be a compact torsion-free $\m{Spin}(7)$-orbifold and let $S\subset X$ be a singular stratum of codimension four with isotropy group $\Gamma\subset \m{Sp}(1)$. Let
\begin{align*}
\nu_0\colon (N_0,\Phi_0)\longrightarrow (S,g_S)
\end{align*}
denote the corresponding $\m{Spin}(7)$-normal cone. Then the McKay correspondence identifies the degree-two Chen--Ruan local system of the stratum $S$ with the fibrewise second cohomology of the ALE hyperkähler resolutions of $\mathbb{H}/\Gamma$:
\begin{align*}
\mathfrak{H}^2_\Gamma \cong \m{H}^2(N_\zeta/S).
\end{align*}
Consequently, the local resolution parameters form the bundle
\begin{align*}
\mathfrak{P}_\Gamma\coloneqq \wedge^2_+T^*S\otimes \mathfrak{H}^2_\Gamma .
\end{align*}
Let
\begin{align*}
\zeta\in\Gamma(S,\mathfrak{P}^{\m{reg}}_\Gamma)
\subset\Omega^2_+(S,\mathfrak{H}^2_\Gamma)
\end{align*}
be a regular section. Then $\zeta$ determines an ACF $\m{Spin}(7)$-resolution of the normal cone
\begin{align*}
\rho_\zeta\colon (N_\zeta,\Phi_\zeta)\dashrightarrow (N_0,\Phi_0).
\end{align*}
This resolution is of rate $-4$, i.e.
\begin{align*}
(\nabla^{\Phi_0})^k\big((\rho_\zeta)_*\Phi_\zeta-\Phi_0\big)=\mathcal{O}(r^{-4})\und{1.0cm}
(\nabla^{\Phi_0,V})^k\big((\rho_\zeta)_*\Phi_\zeta-\Phi_0\big)=\mathcal{O}(r^{-4-k}).
\end{align*}
Moreover, pulling back the universal dilation morphism yields a diffeomorphism
\begin{align*}
\delta_t\colon (N_{t^2\cdot \zeta},\Phi_{t^2\cdot \zeta})\cong (N_\zeta,\Phi^t_\zeta).
\end{align*}
The ACF $\m{Spin}(7)$-resolution
\begin{align*}
\rho_\zeta\colon (N_\zeta,\Phi_\zeta)\dashrightarrow (N_0,\Phi_0)
\end{align*}
is adiabatic torsion-free if and only if $\zeta$ is harmonic with respect to $g_S$ and the flat McKay local system $\mathfrak{H}^2_\Gamma$. Equivalently, $\zeta$ represents a class
\begin{align*}
[\zeta]\in \m{H}^2(S,\mathfrak{H}^2_\Gamma)\subset \m{H}^4_{\m{CR}}(X,\mathbb{R}).
\end{align*}
In this case the pair $(\Phi,\zeta)$ determines a natural Chen--Ruan cohomology class
\begin{align*}
[\Phi]\oplus[\zeta]\in \m{H}^4_{\m{CR}}(X,\mathbb{R})
\cong
\m{H}^4(X,\mathbb{R})\oplus \m{H}^2(S,\mathfrak{H}^2_\Gamma).
\end{align*}
\end{manualtheorem}

Thus, in codimension four, the resolution parameters are not auxiliary choices but form the stringy extension of the cohomology class of the orbifold $\m{Spin}(7)$-structure. The class $[\Phi]\in \m{H}^4(X,\mathbb{R})$ records the limiting torsion-free $\m{Spin}(7)$-structure on the orbifold, while the class
\begin{align*}
    [\zeta]\in \m{H}^2(S,\mathfrak{H}^2_\Gamma)\subset \m{H}^4_{\m{CR}}(X,\mathbb{R})
\end{align*}
records the choice of harmonic ALE resolution data along the singular stratum. The analytic condition of isentropicity will later identify precisely when this Chen--Ruan cohomological picture agrees with the ordinary cohomology of the smooth resolution. More precisely, for the codimension-four resolutions considered in this paper, isentropicity is equivalent to the equality
\begin{align*}
    \m{H}^\bullet(X_{\zeta;t},\mathbb{R})\cong \m{H}^\bullet_{\m{CR}}(X,\mathbb{R})
\end{align*}
as graded vector spaces. In this case the cohomology class of the resolved torsion-free $\m{Spin}(7)$-structure is uniquely determined by the pair
\begin{align*}
    [\Phi]\oplus[\zeta]\in \m{H}^4_{\m{CR}}(X,\mathbb{R}).
\end{align*}

We can now state the main existence theorem. The construction in Section \ref{Adiabatic Spin(7)-Orbifold Resolutions} produces, from the harmonic local resolution data $\zeta$, a preglued smooth Gromov--Hausdorff resolution
\begin{align*}
    \rho_{\zeta;t}\colon (X_{\zeta;t},\Phi^{\m{pre}}_{\zeta;t})\dashrightarrow (X,\Phi).
\end{align*}
The form $\Phi^{\m{pre}}_{\zeta;t}$ is a genuine $\m{Spin}(7)$-structure, but it is not necessarily torsion-free. The adiabatic torsion-free condition on the local ACF models implies that its torsion tends to zero in the adapted Hölder norms. If, in addition, the resolution is isentropic, the uniform elliptic theory of Section \ref{Hodge Theory on Tame Gromov--Hausdorff-Resolutions of Riemannian Orbifolds} provides a uniformly bounded right-inverse for the Hodge--de Rham operator. The nonlinear deformation problem can then be solved by a fixed-point argument.

\begin{manualtheorem}{\ref{mainthm}}[Existence of Torsion-Free $\m{Spin}(7)$-Orbifold Resolutions]
Let
\begin{align*}
    (X_{\zeta;t},\Phi^{\m{pre}}_{\zeta;t})\dashrightarrow (X,\Phi)
\end{align*}
denote the adiabatic torsion-free $\m{Spin}(7)$-orbifold resolution constructed in Section \ref{Adiabatic Spin(7)-Orbifold Resolutions}. Then the torsion of the preglued $\m{Spin}(7)$-structure satisfies
\begin{align*}
\left|\left|\m{d}\Phi^{\m{pre}}_{\zeta;t}\right|\right|_{\mathfrak{C}^{0,\alpha}_{\beta-1;t}}
\lesssim t^{\vartheta},
\end{align*}
where $\vartheta>0$ satisfies \eqref{vartheta}. If the resolution is isentropic, then for all sufficiently small $t>0$ there exists a torsion-free $\m{Spin}(7)$-structure
\begin{align*}
\Phi_{\zeta;t}\in \Gamma(X_{\zeta;t},\m{Cay}_+(X_{\zeta;t}))
\end{align*}
such that
\begin{align*}
    \rho_{\zeta;t}\colon (X_{\zeta;t},\Phi_{\zeta;t})\dashrightarrow (X,\Phi)
\end{align*}
defines a torsion-free $\m{Spin}(7)$-orbifold resolution. Equivalently,
\begin{align*}
    [\rho_{\zeta;t}\colon (X_{\zeta;t},\Phi_{\zeta;t})\dashrightarrow (X,\Phi)]\in\cat{GHRes}(X,\Phi),
\end{align*}
where $\cat{GHRes}(X,\Phi)$ denotes equivalence classes of smooth Gromov--Hausdorff resolutions.\footnote{Two smooth Gromov--Hausdorff resolutions are identified when they agree for $t\in(0,T)$ for some $0<T$.} Moreover,
\begin{align*}
    \left|\left|\Phi_{\zeta;t}-\Phi^{\m{pre}}_{\zeta;t}\right|\right|_{\mathfrak{D}^{1,\alpha}_{\beta;t}}
\lesssim t^{\vartheta}.
\end{align*}
Here
\begin{align*}
\Phi_{\zeta;t}=\Theta(\Phi^{\m{pre}}_{\zeta;t}+\eta_{\zeta;t}),
\end{align*}
where $\eta_{\zeta;t}$ is the unique solution to the fixpoint problem \eqref{fixpointproblem}.
\end{manualtheorem}

The theorem gives a precise meaning to the statement that the orbifold $(X,\Phi)$ lies on the boundary of the smooth $\m{Spin}(7)$-moduli space. The family $(X_{\zeta;t},\Phi_{\zeta;t})$ consists of smooth torsion-free $\m{Spin}(7)$-structures for $t>0$, converges to $(X,\Phi)$ in the smooth Gromov--Hausdorff sense as $t\to 0$, and remains uniformly controlled by the preglued model. The estimate
\begin{align*}
    \left|\left|\Phi_{\zeta;t}-\Phi^{\m{pre}}_{\zeta;t}\right|\right|_{\mathfrak{D}^{1,\alpha}_{\beta;t}}
\lesssim t^{\vartheta}
\end{align*}
shows that the genuine torsion-free structure is asymptotic to the explicitly constructed adiabatic resolution. In particular, the geometric information carried by the local McKay parameter $\zeta$ survives the nonlinear deformation and determines the exceptional cohomology classes of the resolved $\m{Spin}(7)$-manifold.\\

This theorem extends Joyce's original resolution method \cite{Joyce1996b,Joyce1999} for compact $\m{Spin}(7)$-orbifolds to a broader setting. Joyce's construction resolved flat orbifold singularities by inserting ALE spaces and then applying a nonlinear perturbation theorem for $\m{Spin}(7)$-structures with small torsion. The present paper replaces the flat local picture by a fibred one: the singularities may occur along positive-dimensional strata, and the local models are ACF spaces fibred over the strata. The analytic input is correspondingly stronger, requiring uniform elliptic theory on degenerating orbifold resolutions rather than elliptic estimates on a fixed smooth manifold.\\

The framework also gives a $\m{Spin}(7)$-analogue of the Joyce--Karigiannis resolution theory \cite{joyce2017new} for $G_2$-orbifolds with codimension-four singularities. In the $G_2$-case, the resolution data are governed by ALE hyperkähler geometry over associative strata. In the $\m{Spin}(7)$-case, codimension-four singular strata are four-dimensional and the resolution parameters are self-dual two-forms with values in the McKay local system. The resulting picture is closely related, but the deformation theory is governed by the $\m{Spin}(7)$ Hodge--de Rham operator and the geometry of Cayley four-forms.\\

The paper is organised as follows.\\

In Section \ref{Spin7-Structures on Orbifolds} we recall the basic linear algebra and geometry of $\m{Spin}(7)$-structures on orbifolds. We review Cayley forms, the induced metric and the decomposition of differential forms into irreducible $\m{Spin}(7)$-representations. We also discuss the deformation problem for $\m{Spin}(7)$-structures with small torsion and recall how, after gauge fixing, the Hodge--de Rham operator appears as the relevant linearised operator.\\

Section \ref{The Moduli Space of Torsion-Free Spin(7)-Structures and Smooth Gromov--Hausdorff Resolutions} introduces smooth Gromov--Hausdorff resolutions of Riemannian orbifolds. We explain how orbifold singularities are modelled by conically fibred normal cone bundles and how ACF spaces serve as local resolution models. We define interpolating resolutions and the notion of tameness, which expresses that a given resolution is asymptotic to an explicit gluing model in weighted $C^k$-norms.\\

Section \ref{Hodge Theory on Tame Gromov--Hausdorff-Resolutions of Riemannian Orbifolds} develops the Hodge theory of tame smooth Gromov--Hausdorff resolutions. Using the analytic results of \cite{majewskiDirac}, we compare the Hodge--de Rham operator on the smooth resolution with the model operators on the orbifold, normal cone and ACF pieces. This leads to a linear gluing sequence for kernels and cokernels, a criterion for isentropicity and the construction of uniformly bounded right-inverses.\\

Section \ref{Adiabatic Spin(7)-Orbifold Resolutions} constructs adiabatic $\m{Spin}(7)$-resolutions of normal cone bundles. We analyse the $\m{Spin}(7)$-normal cone of a singular stratum and formulate the adiabatic torsion-free condition. In codimension four, we use Kronheimer's ALE spaces and the McKay correspondence to construct universal local resolution families. We then identify the resulting resolution parameters with Chen--Ruan local systems and prove that harmonic sections produce adiabatic torsion-free ACF $\m{Spin}(7)$-resolutions.\\

Section \ref{Existence of torsion-free Spin(7) Structures on Orbifold Resolutions} proves the nonlinear existence theorem. Starting from an adiabatic torsion-free preglued structure, the uniform right-inverse from Section 4 is used to solve the torsion-free equation by a contraction mapping argument. This produces genuine torsion-free $\m{Spin}(7)$-structures on the smooth resolutions.\\

Finally, Section \ref{Examples of Compact Spin(7)-Manifolds} gives examples. These include generalised Kummer-type constructions and resolutions of orbifolds obtained from products of lower-dimensional special holonomy spaces. These examples illustrate how the abstract resolution theory produces new compact $\m{Spin}(7)$-manifolds and how the resolution parameters are reflected in the topology of the resulting spaces.

\section*{Acknowledgement}
I am particularly grateful to my PhD supervisor Thomas Walpuski who suggested this problem to me. Additionally, I extend my gratitude to Gorapada Bera, Dominik Gutwein,  Jacek Rzemieniecki, Fabian Lehmann, Luc\'ia Mat\'in Merch\'an, Thorsten Hertl, Daniel Platt and Johannes Nordström for their valuable discussions, constructive tips and helpful insights. This work was funded by the Deutsche Forschungsgemeinschaft (DFG, German Research Foundation) under Germany's Excellence Strategy – The Berlin Mathematics Research Center MATH+ (EXC-2046/1, project ID: 390685689).

\section{Spin(7)-Structures on Orbifolds}
\label{Spin7-Structures on Orbifolds}

We will begin with a brief introduction into octonionic linear algebra and geometry of spaces (manifolds, orbifolds) with holonomy $\m{Spin}(7)$. As the construction of manifolds with holonomy $\m{Spin}(7)$ by resolving orbifolds is the main goal of this paper, these definitions will be essential. For those readers with a background in the geometry of special holonomy manifolds it is still recommended to read the next two sections as notations and conventions may differ from other authors. More detailed versions of the following discussion can be found in the work of Joyce \cite[Sec. 10]{joyce2000compact}, Salamon and Walpuski in \cite[Sec. 5-7, Sec. 9]{salamon2010notes}.

\subsection{Octonionic Linear Algebra and Spin(7)}
\label{Octonionic Linear Algebra and Spin(7)}

Let $(W,g_0,\m{vol}_0)$ denote an eight-dimensional oriented Euclidean vector space and let $\m{SO}(W,g_0)\cong\m{SO}(8)$ denote the subgroup of automorphisms preserving both $g_0$ and the orientation $\m{vol}_0$ and let $\m{Spin}(W,g_0)$ denote its universal cover. The spin group $\m{Spin}(W,g_0)$ has three real eight dimensional representations: the vector $W$ representation and two chiral spinor representations 
$\mathbb{S}^+_{g_0}$ and $\mathbb{S}^-_{g_0}$. The space $\mathbb{S}^+_{g_0}$ and $\mathbb{S}^-_{g_0}$ can be endowed with a $\m{Spin}(W,g_0)$-invariant inner products $h_0$ such that the Clifford multiplication 
\begin{align*}
    \m{cl}_{g_0}\colon W\rightarrow\m{End}(\mathbb{S}^+_{g_0},\mathbb{S}^-_{g_0})\oplus \m{End}(\mathbb{S}^-_{g_0},\mathbb{S}^+_{g_0})
\end{align*}
is skew-adjoint. By fixing a unit length spinor $\psi_{0}\in \mathbb{S}^+_{g_0}$ we can identify $ W^*\cong \mathbb{S}^-_{g_0}$ as $\m{Stab}(\psi_{0})\subset \m{Spin}(W,g_0)$-representations. Moreover, the action of $\m{Stab}(\psi_0)\subset \m{Spin}(W,g_0)$ on $\psi^\perp_{g_0}$ factors through $\m{SO}(\psi^\perp_{0},h_0)\cong \m{SO}(7)$ and we can identify it with the universal cover of the latter and hence 
\begin{align*}
    \m{Stab}(\psi_0)\cong \m{Spin}(7)\subset \m{Spin}(8).
\end{align*}

\begin{figure}[h!]
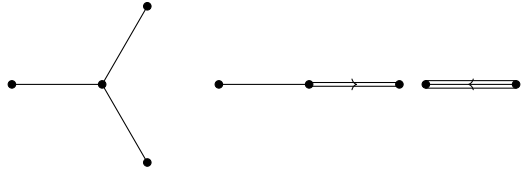

\centering
\dynkin[scale=1, edge length=1.2cm]{D}{4}
\dynkin[scale=1, edge length=1.2cm]{B}{3}
\dynkin[scale=1, edge length=1.2cm]{G}{2}
\caption{The Dynkin diagram of $\mathfrak{spin}(8)$, folded to the one of $\mathfrak{spin}(7)$ by $\mathbb{S}^-\cong W$ or folded to $\mathfrak{g}_2$ by $\mathbb{S}^+\cong \mathbb{S}^-\cong W$.}
\end{figure}

Using the Clifford multiplication by elements in $\wedge^\bullet W$ we can express 
\begin{align}
\label{Cayleyformspinor}
    h_0(\psi_0,\m{cl}_{g_0}(\bullet)\psi)=1+\Phi_0+\m{vol}_{0}
\end{align}
where $\Phi_0\in \wedge^4W^*$ is the so called \textbf{Cayley form} of $\psi_0$. The stabilizer of the right-hand side \eqref{Cayleyformspinor} under the natural action of $\m{GL}(W)$ on the exterior algebra $\wedge^\bullet W^*$ is isomorphic to $\m{Spin}(7)$ and $\Phi_0$ uniquely determines the inner product $g_0$, the orientation $\m{vol}_0$ and the spinor $\psi_0$ up to a sign.\\

The \textbf{space of (positive) Cayley forms} on $W$ can be naturally identified with the $\m{GL}_+(W)$-orbit 
\begin{align*}
    Cay_+\coloneqq \m{GL}_+(W).\Phi_0\cong \m{GL}_+(8)/\m{Spin}(7)\subset \wedge^4 W^*
\end{align*}
inside the space of four forms and naturally projects onto the space of inner products 
\begin{align*}
    g_\bullet \colon Cay_+\twoheadrightarrow \m{Met}_+\coloneqq \m{GL}_+(W).g_0\cong \m{GL}_+(8)/\m{SO}(8).
\end{align*}
Given $\Phi\in Cay_+$ the exterior algebra decomposes into irreducible $\m{Stab}(\Phi)$-representations such that 
\begin{align*}
    \wedge^2W^*\cong& \wedge^2_{\Phi,21}\oplus \wedge^2_{\Phi,7}\cong \mathfrak{stab}(\Phi)\oplus \psi^{\perp}\\
    \wedge^3W^*\cong& \wedge^3_{\Phi,8}\oplus \wedge^3_{\Phi,48}\cong W^*\oplus \ker(\omega\mapsto \Phi\wedge \omega)\\
    \wedge^4 W^*\cong &\wedge^4_{\Phi,1}\oplus \wedge^{4}_{\Phi,7}\oplus \wedge^4_{\Phi,27}\oplus \wedge^4_{\Phi,35}\cong \mathbb{R}\Phi\oplus \psi^\perp\oplus N_\Phi Cay_+\oplus \wedge^4_-
\end{align*}
and 
\begin{align*}
    T_\Phi Cay_+\cong \wedge^4_{\Phi,1}\oplus \wedge^4_{\Phi,7}\oplus  \wedge^4_{\Phi,35}.
\end{align*}

In contrast to the orbit of positive three-forms on $\mathbb{R}^7$, i.e. the space of associative calibrations, the space of positive Cayley calibrations is not an open $\mathrm{GL}_+(8,\mathbb{R})$-orbit in the space of four-forms, but rather a submanifold of codimension 27. Consequently, generic small perturbations of a Cayley calibration in the space of four-forms do not remain within the space $Cay_+$. To compensate for this lack of openness, we construct a $\mathrm{GL}_+(W)$-equivariant tubular neighbourhood, which allows us to project a generically perturbed Cayley calibration back onto the space of positive Cayley forms.\\

Let $j\colon Cay_+\hookrightarrow \wedge^4W^*$ be the inclusion and 
\begin{equation*}
    \begin{tikzcd}
            TCay_+\arrow[r,hook]&j^*T\wedge^4W^*\arrow[r,two heads]&NCay_+
    \end{tikzcd}
\end{equation*}
be the induced exact sequence. Since both $TCay_+$ and $j^*T\wedge^4W^*$ can be equipped with a metric, by 
\begin{align*}
    G(v,w)=g_\Phi(v,w)\hspace{1.0cm}\text{for all }v,w\in\wedge^4W^*
\end{align*}
we obtain an orthogonal splitting $j^*T\wedge^4W^*\cong TCay_+\oplus NCay_+$. Let us define the set $Tub_\rho(Cay_+)\subset j^*T\wedge^4W^*=\{v\in NCay_+ \colon\left|\left|v\right|\right|<\rho\}$. Using parallel-transport by translation we can define 
\begin{align*}
    \m{exp}: Tub_\rho(Cay_+)\hookrightarrow\wedge^4W^*.
\end{align*}
For small enough $\rho$ this map is injective. We denote the projection map by 
\begin{align*}
    \Theta:Tub_\rho(Cay_+)\rightarrow Cay_+.
\end{align*}

\begin{lem}\cite[Prop. 10.5.9]{joyce2000compact}
\label{C0Festimateslem}
Both $Tub_\rho(Cay_+)$ and $Cay_+$ are $\m{GL}_+(W)$-spaces and the map $\Theta$ is $\m{GL}_+(W)$-equivariant. Given a $\Phi\in Cay_+$ and $\Phi+\eta\in\m{Tub}_\rho(Cay_+)$, we can write 
\begin{align}
\label{C0Festimates}
    \Theta(\Phi+\eta)=&\Phi+\pi_{\Phi,\top}(\eta)+Q_{\Phi}(\eta).
\end{align}
Here linear part is given by 
\begin{align*}
    \pi_{\Phi,\top}(\eta)=\pi_{\Phi,1}(\eta)+\pi_{\Phi,7}(\eta)+\pi_{\Phi,35}(\eta)
\end{align*}
with respect to $\Phi$ and $Q_\Phi(\eta)$ satisfies 
\begin{align*}
    |Q_\Phi(\eta)-Q_\Phi(\eta')|_{g_\Phi}\lesssim&|\eta-\eta'|_{g_\Phi}\left(|\eta|_{g_\Phi}+|\eta'|_{g_\Phi}\right).
\end{align*}
\end{lem}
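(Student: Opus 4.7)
\textbf{Plan for the proof of Lemma \ref{C0Festimateslem}.} The statement bundles three claims (equivariance of $\Theta$, a pointwise Taylor expansion, and a quadratic remainder estimate) which are handled essentially independently.

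\emph{Equivariance.} The group $\m{GL}_+(W)$ acts on $\wedge^4 W^*$ by pullback. By definition $Cay_+$ is an orbit, hence invariant, so the action restricts to $j\colon Cay_+ \hookrightarrow \wedge^4W^*$. The map $\gamma\colon Cay \to Met(8)$ recalled at the end of Section \ref{Octonionic Linear Algebra and Spin(7)} is $\m{GL}_+(W)$-equivariant by construction (it sends $A^*\Phi$ to the pullback inner product $A^* g_\Phi$), and therefore the fibrewise metric $G$ on $j^*T\wedge^4W^*$ is $\m{GL}_+(W)$-equivariant. Consequently the orthogonal splitting $j^*T\wedge^4W^* \cong TCay_+ \oplus NCay_+$ and hence the tube $\m{Tub}_\rho(Cay_+)$ are $\m{GL}_+(W)$-invariant. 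Since parallel transport used to define $\m{exp}$ is simply translation in the vector space $\wedge^4 W^*$, and the $\m{GL}_+(W)$-action on $\wedge^4W^*$ is linear, $\m{exp}$ intertwines the action on $\m{Tub}_\rho(Cay_+)$ with the action on $\wedge^4 W^*$. Inverting on a sufficiently small tube and projecting to the base yields equivariance of $\Theta$.

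\emph{Linearisation.} The map $\Theta\circ\m{exp}^{-1}\colon \m{exp}(\m{Tub}_\rho(Cay_+))\to Cay_+$ is by construction a smooth retraction onto $Cay_+$. Hence its differential at $\Phi\in Cay_+$ is the orthogonal projection (with respect to $G=g_\Phi$) onto $T_\Phi Cay_+$. Using the $\m{Spin}(7)$-decomposition
\begin{align*}
    \wedge^4 W^* \cong \wedge^4_{1,\Phi}\oplus\wedge^4_{7,\Phi}\oplus\wedge^4_{28,\Phi}\oplus\wedge^4_{35,\Phi}
\end{align*}
and the identification $T_\Phi Cay_+\cong \wedge^4_{1,\Phi}\oplus\wedge^4_{7,\Phi}\oplus\wedge^4_{35,\Phi}$ recalled in Section \ref{Octonionic Linear Algebra and Spin(7)}, this projection is precisely $\pi_{\tau,\Phi}=\pi_{1,\Phi}+\pi_{7,\Phi}+\pi_{35,\Phi}$. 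Taylor expanding $\Theta$ around $\Phi$ therefore produces the advertised decomposition with $Q_\Phi(\eta)\coloneqq \Theta(\Phi+\eta)-\Phi-\pi_{\tau,\Phi}(\eta)$ vanishing to second order at $\eta=0$.

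\emph{Quadratic estimate.} Writing $\eta_s\coloneqq s\eta+(1-s)\eta'$ and applying the fundamental theorem of calculus gives
\begin{align*}
    Q_\Phi(\eta)-Q_\Phi(\eta')=\int_0^1\bigl(\mathrm{d}\Theta|_{\Phi+\eta_s}-\mathrm{d}\Theta|_{\Phi}\bigr)(\eta-\eta')\,\mathrm{d}s.
\end{align*}
Since $\Theta$ is smooth on the compact tube (after restricting to a neighbourhood of $\Phi$) and $\mathrm{d}\Theta$ is Lipschitz there, the integrand is bounded by a constant times $|\eta_s|_{g_\Phi}\,|\eta-\eta'|_{g_\Phi}\le(|\eta|_{g_\Phi}+|\eta'|_{g_\Phi})|\eta-\eta'|_{g_\Phi}$, which is the required estimate.

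\emph{Main obstacle.} The only delicate point is to make the implied constant in the quadratic estimate independent of the base point $\Phi$. This follows from the equivariance established in the first step: the action of $\m{GL}_+(W)$ (composed with scaling) is transitive on $Cay_+$, and $\Theta$ together with the metric $g_\Phi$ are equivariant, so the constant at any $\Phi$ is controlled by the one at $\Phi_0$. This reduces the estimate to a single pointwise computation on $\mathbb{O}$, where it follows from smoothness of the nearest-point projection onto the $\m{Spin}(7)$-orbit.
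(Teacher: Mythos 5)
The paper offers no proof of this lemma; it is cited verbatim from Joyce's book, so there is nothing of the author's own to compare against. Your argument is correct and is essentially the argument one expects (and which Joyce gives): $\m{GL}_+(W)$-equivariance of the tubular-neighbourhood data, identification of $\mathrm{d}\Theta|_\Phi$ with the $g_\Phi$-orthogonal projection onto $T_\Phi Cay_+\cong\wedge^4_{1,\Phi}\oplus\wedge^4_{7,\Phi}\oplus\wedge^4_{35,\Phi}$, and the fundamental theorem of calculus together with local Lipschitz control of $\mathrm{d}\Theta$, with uniformity of the implied constant obtained by transporting everything to a single base point via the transitive $\m{GL}_+(W)$-action. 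Two small points to tighten. First, the inference ``a smooth retraction onto $Cay_+$, hence its differential at $\Phi$ is the orthogonal projection'' is not valid for an arbitrary retraction: what makes it true here is that the fibres of $\Theta\circ\m{exp}^{-1}$ near $\Phi$ are precisely the affine normal slices $\Phi'+N_{\Phi'}Cay_+$, so that $\ker\bigl(\mathrm{d}(\Theta\circ\m{exp}^{-1})|_\Phi\bigr)=N_\Phi Cay_+$, and only because $N_\Phi Cay_+$ is the $g_\Phi$-orthogonal complement of $T_\Phi Cay_+$ does the projection become orthogonal. Second, in your closing sentence the submanifold onto which one projects is the $\m{GL}_+(W)$-orbit $Cay_+\cong\m{GL}_+(8;\mathbb{R})/\m{Spin}(7)$, not a $\m{Spin}(7)$-orbit (which would be a single point); and $\m{GL}_+(W)$ already acts transitively on $Cay_+$, with no separate scaling action needed since $\mathbb{R}_{>0}\subset\m{GL}_+(W)$.
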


\subsection{Finite Symmetries of Cayley Forms}
\label{Finite Symmetries of Cayley Forms}

Throughout this paper we will encounter various finite subgroups preserving a fixed Cayley structure, all of which fall into the following framework.\\

Let $\Gamma \subset \mathrm{Spin}(7)$ be a finite subgroup acting naturally on $W$, and let $H \coloneqq W^\Gamma$ denote the fixed subspace under this action and $V\coloneqq H^\perp$ it orthogonal complement. Then, by construction,
\begin{align*}
    \Gamma \subset \m{ker} \left( \mathrm{Spin}(7)\cap \left(\m{SO}(V)\times \m{SO}(H) \right) \twoheadrightarrow \mathrm{SO}(H) \right).
\end{align*}
This condition is very restrictive, and hence such subgroups are necessarily of one of the following types:

\begin{itemize}
\item[(1)] $\m{dim} (H) = 8$, $\Gamma = {1}$ and $W/\Gamma \cong W$.
\item[(2)] $\m{dim} (H) = 4$, $\Gamma$ is conjugate to a finite subgroup of $\mathrm{Sp}(1)$, and $W/\Gamma \cong \mathbb{R}^4 \times (\mathbb{H}/\Gamma)$.
\item[(3)] $\m{dim} (H) = 2$, $\Gamma$ is conjugate to a finite subgroup of $\mathrm{SU}(3)$, and $W/\Gamma \cong \mathbb{R}^2 \times (\mathbb{C}^3/\Gamma)$.
\item[(4)] $\m{dim} (H) = 1$, $\Gamma$ is conjugate to a finite subgroup of $G_2$, and $W/\Gamma \cong \mathbb{R} \times (\mathrm{Im},\mathbb{O}/\Gamma)$.
\item[(5)] $\m{dim} (H) = 0$ and $\Gamma$ is a finite subgroup of one of the nested groups $\m{Sp}(2)\subset\m{SU}(4)\subset\m{Spin}(7)$
\end{itemize}

The $\Gamma$-invariance of $\Phi$ forces a type decomposition with respect to the splitting $W\cong H\oplus V$.\\ 

\textbf{Case (2): }Both $H$ and $V$ are four dimensional real vector spaces. A $\Gamma$-invariant Cayley form $\Phi$ decomposes into 
\begin{align*}
    \Phi=\Phi^{4,0}+\Phi^{2,2}+\Phi^{0,4}\in\wedge^4 H^*\oplus\wedge^2 H^*\otimes\wedge^2V^*\oplus\wedge^4V^*
\end{align*}
The tensor $\Phi^{2,2}\in\wedge^2H^*\otimes\wedge^2 V^*$ is bi-self-dual with respect to volume forms $\Phi^{4,0}\in\wedge^4 H^*$ and $\Phi^{0,4}\in\wedge^4 V^*$, and an isomorphism $\phi \colon\wedge^4 H^*\rightarrow\wedge^4 H$. The map 
\begin{align*}
    \Phi^{2,2} \colon\wedge^2H\rightarrow\wedge^2 V^*
\end{align*}
defines an isomorphism of maximal positive subspaces. Three out of four tensors determine the fourth and we will write
\begin{align*}
    \Phi=\m{vol}_H-\m{tr}_+(\underline{\omega}_H\wedge\underline{\omega}_V)+\m{vol}_V.
\end{align*}
Here, $\m{tr}_+(\underline{\omega}_H\wedge\underline{\omega}_V)$ denotes the trace of the identification $\Phi\colon \wedge^2_+ H\rightarrow \wedge^2_+ V^*$. We view $\underline{\omega}_H\in \wedge^2_+ H^*\otimes\m{Im}(\mathbb{H})$ and $\underline{\omega}_V\in \wedge^2_+ V^*\otimes\m{Im}(\mathbb{H})$ as $\m{Sp}(1)$-structures on $H$ and $V$ respectively.\\

\textbf{Case (3): } The space $H$ is two dimensional and $V$ is a six dimensional real vector spaces. In this case a $\Gamma$-invariant Cayley form $\Phi$ decomposes into 
\begin{align*}
    \Phi=\Phi^{2,2}+\Phi^{1,3}+\Phi^{0,4}\in\wedge^2 H^*\otimes\wedge^2V^*\oplus\wedge^1 H^*\otimes\wedge^3V^*\oplus\wedge^4V^*
\end{align*}
Such a $\Gamma$-invariant Cayley form $\Phi$ is determined by an $z\in\wedge^{1,0}_{\mathbb{C}}H^* $ and a $\m{SU}(3)$-structure $(\omega,\theta)\in \wedge^2 V^*\oplus \wedge^{3,0}_{\mathbb{C}}V^*$ on $V$ and we write 
\begin{align*}
    \Phi=&\tfrac{1}{2}(z\wedge\theta-\overline{z}\wedge\overline{\theta})+z\wedge\overline{z}\wedge\omega+\tfrac{1}{2}\omega\wedge\omega\\
    =&e^0\wedge\m{Re}(\theta)-e^1\wedge\m{Im}(\theta)+e^0\wedge e^1\wedge\omega+\tfrac{1}{2}\omega\wedge\omega.
\end{align*}

\textbf{Case (4): } Let $H$ be a one dimensional and $V$ a seven dimensional real vector space. A $\Gamma$-invariant Cayley form $\Phi$ decomposes into 
\begin{align*}
    \Phi=\Phi^{1,3}+\Phi^{0,4}\in\wedge^1 H^*\otimes\wedge^3V^*\oplus\wedge^4 V^*
\end{align*}
Such a $\Gamma$-invariant Cayley form $\Phi$ is determined by a linear map
\begin{align*}
    \Phi^{1,3}:H\rightarrow \wedge^3 V^*
\end{align*}
such that $\Phi^{1,3}(e)=\varphi$ defines $G_2$ structures for $e\neq0$. Further we will write 
\begin{align*}
    \Phi=e\wedge\varphi+*_{\varphi}\varphi.
\end{align*}

\subsection{Spin(7)-Structures}
\label{Spin(7)-Structures}

The following section collects the necessary background on $\m{Spin}(7)$-structure on manifolds and more generally orbifolds.\\  

Let $X$ be an eight dimensional, real, connected, oriented and spinnable orbifold and $Fr_+(X)$ its $\m{GL}_+(8)$-frame bundle. The \textbf{bundle of Spin(7)-structures} or positive Cayley forms is given by 
\begin{align*}
    Cay_+(X)\coloneqq Fr_+(X)/\m{Spin}(7)\subset \wedge^4 T^* X
\end{align*}
A section $\Phi\in \Gamma(X,Cay_+(X))\subset \Omega^4(X)$ naturally defines a $\m{Spin}(7)\hookrightarrow \m{GL}_+(8)$-reduction 
\begin{align*}
    \m{Fr}(X,\Phi)\coloneqq \Phi^*Fr_+(X)\hookrightarrow Fr_+(X).
\end{align*}
The projection map 
\begin{align*}
    g_\bullet\colon Cay_+(X)\twoheadrightarrow \m{Met}(X)
\end{align*}
assigns to a Cayley form $\Phi\in\Gamma(X,Cay_+(X))$ a corresponding Riemannian metric $g_\Phi$ and hence $\m{Spin}(7)\hookrightarrow \m{SO}(8)$-reduction 
\begin{align*}
    \m{Fr}(X,\Phi)\hookrightarrow Fr_+(X,g_\Phi).
\end{align*}
The \textbf{torsion} of a $\m{Spin}(7)$-structure $\Phi$ is given by 
\begin{align*}
    T_\Phi\coloneqq \nabla^{g_\Phi}\Phi
\end{align*} 
and naturally forms an obstruction to $\m{Hol}_{g_\Phi}\subset \m{Spin}(7)$. We will call a $\m{Spin}(7)$-structure $\Phi$ \textbf{torsion-free} if $T_\Phi=0$.

\begin{rem}
    By the holonomy principle, the torsion-freeness $\Phi$ corresponds to the torsion-freeness of the reduction $\m{Fr}(X,\Phi)\hookrightarrow Fr_+(X,g_+)$, i.e. the Levi-Civita connection restricts to $\m{Fr}(X,\Phi)$.
\end{rem}

The following result by \cite{Fernandez1986} proves that the torsion-freeness of a $\m{Spin}(7)$-structure is equivalent to the Cayley form being closed. 

\begin{thm}\cite[Thm. 5.3]{Fernandez1986}\cite[Thm. 2]{bryant1987}
\label{torsionfreeequalclosedness}
    A $\m{Spin}(7)$-structure is torsion-free (1-flat), i.e. $T_\Phi=0$ if and only if the Cayley form is closed, i.e. $\m{d}\Phi=0$.
\end{thm}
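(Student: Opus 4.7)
The plan is to prove the two implications separately. The forward direction is immediate: if the $\m{Spin}(7)$-structure is $1$-flat, the Levi-Civita connection $\nabla^{g_\Phi}$ of the induced metric has holonomy contained in $\m{Spin}(7)$ and hence preserves the Cayley form $\Phi$. Thus $\nabla^{g_\Phi}\Phi=0$, and since the exterior derivative is obtained by antisymmetrising the covariant derivative, $\m{d}\Phi=0$ follows at once.

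The converse is the substantive implication, and I would handle it by a representation-theoretic argument in the spirit of Fernandez and Bryant. The intrinsic torsion of the $\m{Spin}(7)$-structure is a section
\begin{align*}
T(\Phi)\in\Gamma\bigl(X,\,T^*X\otimes\bigl(\mathfrak{so}(8)/\mathfrak{spin}(7)\bigr)\bigr),
\end{align*}
and via the identifications $\mathfrak{so}(8)\cong\wedge^2 T^*X$ and $\wedge^2 T^*X=\wedge^2_{21,\Phi}\oplus\wedge^2_{7,\Phi}$ with $\wedge^2_{21,\Phi}\cong\mathfrak{spin}(7)$ recorded in Section \ref{Octonionic Linear Algebra and Spin(7)}, it lives in the $56$-dimensional $\m{Spin}(7)$-representation $T^*X\otimes\wedge^2_{7,\Phi}$. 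A weight computation, or equivalently a direct construction using the triple cross product, yields the isomorphism of $\m{Spin}(7)$-representations
\begin{align*}
T^*X \otimes \wedge^2_{7,\Phi} \cong \wedge^3_{8,\Phi}\oplus\wedge^3_{48,\Phi},
\end{align*}
so the intrinsic torsion decomposes into exactly two irreducible components. On the other hand, $\m{d}\Phi\in\Omega^5(X)$ is Hodge-dual to a $3$-form, and the decomposition $\wedge^3 T^*X=\wedge^3_{8,\Phi}\oplus\wedge^3_{48,\Phi}$ shows that the codomain of the torsion-to-$\m{d}\Phi$ map sits in the same total representation.

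The main step---and the central obstacle---is to show that the $\m{Spin}(7)$-equivariant linear map
\begin{align*}
\mathcal{L}\colon T^*X\otimes\wedge^2_{7,\Phi}\longrightarrow \wedge^5 T^*X,\qquad T(\Phi)\longmapsto \m{d}\Phi,
\end{align*}
obtained by expanding $\m{d}\Phi$ via the pointwise identity $\nabla^{g_\Phi}_X\Phi = T(\Phi)(X)\cdot\Phi$ (where $T(\Phi)(X)\in\mathfrak{spin}(7)^\perp\subset\mathfrak{so}(8)$ acts on $\wedge^4 T^*X$ in the natural representation) and antisymmetrising, is nonzero on each of the two irreducible summands $\wedge^3_{8,\Phi}$ and $\wedge^3_{48,\Phi}$. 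Once this is established, Schur's lemma forces $\mathcal{L}$ to restrict to an isomorphism on each irreducible summand onto the corresponding isotypical component in $*_\Phi\wedge^3 T^*X$, and in particular to be injective, whence $\m{d}\Phi=0$ implies $T(\Phi)=0$. Nonvanishing on each summand is verified by an explicit pointwise calculation in a standard octonionic frame of $\mathbb{O}\cong\mathbb{R}^8$: picking a nonzero pure tensor in each irreducible component and reading off its image under $\mathcal{L}$ from the structure constants of the triple cross product. This routine but essential linear-algebra step completes the proof.
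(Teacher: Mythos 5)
Your proposal is correct and is essentially the argument of Fernández (and the analogous Bryant computation) cited for this theorem; the paper itself does not reprove it, merely citing those sources. Both sides of the implication are handled as in the classical reference: the forward direction via $\nabla^{g_\Phi}\Phi=0$, and the converse by decomposing the intrinsic torsion space $T^*X\otimes\wedge^2_{7,\Phi}\cong\wedge^3_{8,\Phi}\oplus\wedge^3_{48,\Phi}$ and the target $*_\Phi\wedge^3 T^*X$ identically and applying Schur's lemma, with the one step you correctly flag as remaining---the explicit frame computation showing $\mathcal{L}$ is nonzero on each of the two irreducible summands---being exactly the pointwise verification Fernández carries out.
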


Recall that $\Phi\in \Gamma(X,Cay_+(X))$ implies that $\Phi\in \Omega^4_{\Phi,1}(X)$ is a self-dual four form with respect to the metric $g_\Phi$. In the spirit of this paper it is of more geometric meaning to think of a torsion-free $\m{Spin}(7)$-structure as a harmonic four form
\begin{align}
\label{DPhiPhi}
    D_\Phi\Phi=0
\end{align}
with respect to its Hodge--de Rham operator $D_\Phi=\m{d}+\m{d}^{*_\Phi}$. 

\begin{nota}
    In this paper we will refer to an orbifold $(X,\Phi)$ with a torsion-free $\m{Spin}(7)$ as a $\m{Spin}(7)$-orbifold.
\end{nota}




Torsion-free $\m{Spin}(7)$-structures are one of the exceptional geometries appearing in Berger's classification of irreducible, non-symmetric Riemannian
holonomy groups \cite{berger}. In particular, as for Calabi--Yau, hyperkähler and $G_2$ metrics, it is Ricci-flat. This Ricci-flatness can be seen directly from the spinorial description of $\m{Spin}(7)$-structures.

\begin{prop}\cite[Prop. 10.5.6]{joyce2000compact}
\label{parallelspinor}
Let $\pm\psi\in \Gamma(X,S^+_{g_\Phi}X)$ denote the unit spinor (up to a sign) corresponding to a Cayley form $\Phi$. The $\m{Spin}(7)$-structure $\Phi$ is torsion-free if and only if $\pm\psi$ is parallel. The existence of a parallel non-vanishing spinor implies that $(X,g_\Phi)$ is Ricci-flat. 
\end{prop}

In general, the holonomy of a torsion-free $\m{Spin}(7)$-structure is only contained in $\m{Spin}(7)$, it may be a proper subgroup. On closed manifolds, the possible holonomy reductions are strongly constrained by the
$\hat{A}$-genus.

\begin{prop}\cite[Thm. 10.6.1]{joyce2000compact}
Let $(X,\Phi)$ be a $\m{Spin}(7)$-manifold. The $\hat{A}$-genus of $X$ is given by 
\begin{align*}
    24\hat{A}(X)=-1+b_1(X)-b_2(X)+b_3(X)+b_4^+(X)-2b_4^-(X).
\end{align*}
If $\hat{A}(X)=1$ the holonomy of $(X,\Phi)$ is equal to $\m{Spin}(7)$, if $\hat{A}(X)=2$ the holonomy of $(X,\Phi)$ is equal to $\m{SU}(4)$, if $\hat{A}(X)=3$ the holonomy of $(X,\Phi)$ is equal to $\m{Sp}(2)$ and if $\hat{A}(X)=4$ the holonomy of $(X,\Phi)$ is equal to $\m{Spin}(4)$.
\end{prop}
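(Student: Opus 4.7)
The plan is to prove both parts by combining the Atiyah–Singer index theorem with the refined holonomy structure of a $\m{Spin}(7)$-manifold. For the topological identity, Atiyah–Singer gives $\hat{A}(X) = \m{ind}(\slashed{D}^+_\Phi) = \tfrac{1}{5760}\int_X(7p_1^2 - 4p_2)$, while the Hirzebruch signature theorem and Chern–Gauss–Bonnet supply $\sigma(X) = \tfrac{1}{45}\int_X(7p_2 - p_1^2)$ and $\chi(X) = \int_X e(TX)$. The essential additional input is the characteristic class identity $8\,e(TX) = 4\,p_2(TX) - p_1(TX)^2$, valid on any closed $\m{Spin}(7)$-manifold because the structure group of $TX$ reduces to $\m{Spin}(7)\subset\m{Spin}(8)$ and triality identifies the vector representation $W$ of $\m{Spin}(8)$ with the $8$-dimensional spin representation of $\m{Spin}(7)$, producing a rational relation in $H^8(B\m{Spin}(7);\mathbb{Q})$ among $p_1^2$, $p_2$ and $e$. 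Combining this identity with the three characteristic-number formulas reduces to $48\,\hat{A}(X) = 3\sigma(X) - \chi(X)$. Finally, using Poincaré duality to write $\chi = 2 - 2b_1 + 2b_2 - 2b_3 + b_4$ and $\sigma = b_4^+ - b_4^-$, a short algebraic manipulation yields $24\,\hat{A}(X) = -1 + b_1 - b_2 + b_3 + b_4^+ - 2 b_4^-$.

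For the holonomy correspondence, Proposition \ref{parallelspinor} shows $g_\Phi$ is Ricci-flat with a parallel non-vanishing positive spinor. The Lichnerowicz–Weitzenböck formula $\slashed{D}^2 = \nabla^*\nabla + \tfrac{R}{4}$ then forces every harmonic spinor to be parallel, so $\m{ker}(\slashed{D}^\pm_\Phi) \cong (\mathbb{S}^\pm)^{\m{Hol}(g_\Phi)}$ and $\hat{A}(X) = \m{dim}(\mathbb{S}^+)^{\m{Hol}} - \m{dim}(\mathbb{S}^-)^{\m{Hol}}$. It remains to count $\m{Hol}(g_\Phi)$-invariants for each admissible subgroup. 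For $\m{Hol} = \m{Spin}(7)$, $\mathbb{S}^+ \cong \mathbb{R} \oplus \mathbb{R}^7$ contributes the one-dimensional fixed line $\mathbb{R}\cdot s_\Phi$ while $\mathbb{S}^- \cong W$ carries no invariants because $\m{Spin}(7)$ acts transitively on $S^7$, giving $\hat{A}=1$. For $\m{Hol} = \m{SU}(4)$, using $\mathbb{S}^+ \cong \Lambda^{0,\m{even}}\mathbb{C}^4$ and $\mathbb{S}^- \cong \Lambda^{0,\m{odd}}\mathbb{C}^4$, the invariants in $\mathbb{S}^+$ are spanned by the scalar and the anti-holomorphic volume form, giving $\hat{A}=2$. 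For $\m{Hol} = \m{Sp}(2)$ the holomorphic symplectic form produces an extra invariant in $\Lambda^{0,2}\mathbb{C}^4$, yielding $\hat{A}=3$. Finally for $\m{Hol} = \m{Spin}(4) = \m{Sp}(1)\cdot\m{Sp}(1)$, a local splitting $X = X_1 \times X_2$ into hyperkähler four-manifolds and the tensor decomposition $\mathbb{S}^+(X) \cong \mathbb{S}^+(X_1)\otimes\mathbb{S}^+(X_2) \oplus \mathbb{S}^-(X_1)\otimes\mathbb{S}^-(X_2)$ place all four parallel invariants in the first summand, yielding $\hat{A}=4$.

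The principal obstacle is justifying the characteristic class identity $8\,e(TX) = 4\,p_2(TX) - p_1(TX)^2$ on a $\m{Spin}(7)$-manifold. I would approach this either by a direct Chern–Weil computation, exploiting the parallel Cayley form to simplify the curvature polynomials appearing in $e$, $p_1^2$ and $p_2$, or by a Borel-spectral-sequence computation of $H^*(B\m{Spin}(7);\mathbb{Q})$ combined with the triality identification of the $\m{Spin}(8)$-vector and $\m{Spin}(7)$-spin representations, which constrains the image of $e$, $p_1^2$, $p_2$ along the pullback $H^8(B\m{Spin}(8);\mathbb{Q}) \to H^8(B\m{Spin}(7);\mathbb{Q})$. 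Once this identity is in hand, the remainder of the proof is index-theoretic bookkeeping combined with Lichnerowicz vanishing and a representation-theoretic count of invariant spinors.
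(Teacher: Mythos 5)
The paper states this proposition purely as a citation to Joyce's book and contains no proof of its own, so there is no in-paper argument to compare against; taking your reconstruction on its merits, the strategy is the correct one. The identity $48\hat{A}(X) = 3\sigma(X) - \chi(X)$, which you correctly deduce from the characteristic-class relation, does unpack via Poincar\'e duality to the claimed Betti-number formula, the Lichnerowicz argument identifies $\hat{A}(X)$ with the count of parallel positive minus parallel negative spinors, and your invariant-spinor counts for $\m{Spin}(7)$, $\m{SU}(4)$, $\m{Sp}(2)$ and $\m{Sp}(1)\times\m{Sp}(1)$ are all correct (a cheap cross-check for the last case is multiplicativity: $\hat{A}(K3\times K3) = \hat{A}(K3)^2 = 4$).

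For the key input $8\,e(TX) = 4\,p_2(TX) - p_1(TX)^2$ there is a shorter derivation than either the Borel-spectral-sequence or Chern--Weil computations you propose, and it uses only ingredients the paper already supplies: by Proposition \ref{parallelspinor} a torsion-free $\m{Spin}(7)$-structure furnishes a nowhere-vanishing parallel positive spinor $s_\Phi \in \Gamma(X, S^+X)$, so the top Euler class $e(S^+X)$ of the rank-$8$ real bundle $S^+X$ vanishes. On any closed spin $8$-manifold the classes $e(S^\pm X)$ are universal polynomials in $p_1(TX)$, $p_2(TX)$ and $e(TX)$ -- this is exactly the concrete content of the $\m{Spin}(8)$-triality you invoke -- and setting $e(S^+X)=0$ produces the relation directly, bypassing any computation in $H^*(B\m{Spin}(7);\mathbb{Q})$. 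Lastly, a minor notational slip: $\m{Sp}(1)\cdot\m{Sp}(1)$ conventionally denotes the quotient $(\m{Sp}(1)\times\m{Sp}(1))/\mathbb{Z}_2 \cong \m{SO}(4)$, whereas $\m{Spin}(4) = \m{Sp}(1)\times\m{Sp}(1)$ is the direct product; it is the latter (the holonomy of a local product of two $\m{Sp}(1)$-holonomy factors) that occurs for $\hat{A}=4$, and your subsequent de Rham splitting argument makes clear this is what you intend.
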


\begin{cor}
Every compact, connected Riemannian eight manifold with holonomy equal to $\m{Spin}(7)$ is simply connected, i.e. $\pi_1(X)=0$.
\end{cor}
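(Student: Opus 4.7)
The plan is to combine the constraint $\hat A(X)=1$ coming from holonomy $\m{Spin}(7)$ (via the preceding proposition) with the multiplicativity of the $\hat A$-genus under finite covers. First, I would pass to the universal cover $\widetilde X\to X$. By Proposition~\ref{parallelspinor}, $(X,g_\Phi)$ is Ricci-flat, and since the standard representation of $\m{Spin}(7)$ on $\mathbb{R}^8$ is irreducible, the restricted holonomy acts irreducibly on each tangent space. The Cheeger–Gromoll splitting theorem then forces the universal cover to have no Euclidean de~Rham factor, so $\widetilde X$ is compact and $\pi_1(X)$ is finite of some order $n$.

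Next, $\widetilde X$ inherits a parallel Cayley form, and its holonomy equals the identity component of the holonomy of $X$; since $\m{Spin}(7)$ is connected, the holonomy of $\widetilde X$ is again exactly $\m{Spin}(7)$. Hence by the preceding proposition applied to $\widetilde X$, one has $\hat A(\widetilde X)=1$. On the other hand, the $\hat A$-genus is multiplicative under finite covers, so
\begin{align*}
    \hat A(\widetilde X)=n\cdot \hat A(X)=n.
\end{align*}
Combining the two yields $n=1$, i.e. $\pi_1(X)=0$.

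I do not expect any serious obstacle here: the argument is entirely formal once one knows (i) that compact Ricci-flat manifolds with irreducible holonomy have finite fundamental group, and (ii) that $\hat A$ is multiplicative on finite covers, both of which are standard. The only point worth flagging is the subtlety that a priori only $\mathrm{Hol}(X)\subseteq\m{Spin}(7)$ is assumed equal to $\m{Spin}(7)$, and one must verify that the universal cover retains full holonomy $\m{Spin}(7)$ rather than dropping to a proper subgroup; this is handled by the connectedness of $\m{Spin}(7)$, which identifies the full holonomy with the restricted holonomy and hence with the holonomy of $\widetilde X$. Once this is noted, the corollary follows by comparing the two expressions for $\hat A(\widetilde X)$.
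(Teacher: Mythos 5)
Your proof is correct and is essentially the standard argument (Joyce, \emph{Compact Manifolds with Special Holonomy}, Prop.~10.6.6), which the paper is implicitly invoking by stating the corollary immediately after Theorem~10.6.1 without supplying a proof. The key steps are all in order: the Cheeger--Gromoll splitting combined with the fact that $\m{Spin}(7)\subset\m{SO}(8)$ fixes no nonzero vector (so the de~Rham decomposition of $\widetilde X$ has no Euclidean factor) gives $|\pi_1(X)|=n<\infty$; the connectedness of $\m{Spin}(7)$ forces $\m{Hol}(\widetilde X)=\m{Hol}^0(X)=\m{Spin}(7)$, so $\hat A(\widetilde X)=1$; and multiplicativity of the $\hat A$-genus under degree-$n$ covers (it is a characteristic number) gives $1=\hat A(\widetilde X)=n\cdot\hat A(X)=n$. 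One could streamline the last step by observing $\hat A(X)\geq 1$ directly from the existence of a parallel spinor (Proposition~\ref{parallelspinor}), which immediately forces $n=1$, but the argument as you wrote it is already complete.
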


\subsection{The Spin(7)-Deformation Problem}
\label{The Spin(7)-Deformation Problem}

In the following we will discuss how a $\m{Spin}(7)$-structure $\Phi$ with a \say{small} torsion can be deformed to a torsion-free $\m{Spin}(7)$-structure.

The map 
\begin{align*}
    TX\rightarrow \m{End}(\wedge^\bullet T^*X),v\mapsto\left(\omega\mapsto v^{\flat_{g_\Phi}}\wedge\omega-\iota_{v}\omega\right) 
\end{align*}
satisfies the Clifford property and hence, induces a faithfully representation of
\begin{align*}
    \m{cl}_{g_{\Phi}} \colon\m{Cl}(T^*X,g_{\Phi})\rightarrow\mathfrak{so}(\wedge^\bullet T^*X,g_{\Phi})\subset\m{End}(T^*X)
\end{align*}
which is skew-Hermitian with respect to $g_{\Phi}$. There exists an involutive isometry
\begin{align*}
    \epsilon_{g_{\Phi}}=i^{(\bullet(\bullet-1))}*_{g_{\Phi}} \colon\wedge^\bullet T^*X\rightarrow T^*X
\end{align*}
such that $\{\m{cl}_{g_{\Phi}},\epsilon_{g_{\Phi}}\}=0$, and 
\begin{align*}
    \wedge^\bullet T^*X=\wedge^\bullet_{+_{g_{\Phi}}}T^*X\oplus \wedge^\bullet_{-_{g_{\Phi}}}T^*X.
\end{align*}
Hence, the tuple 
\begin{align*}
    (\wedge^\bullet T^*X,\m{cl}_{g_{\Phi}},g_{\Phi},\nabla^{g_{\Phi}},\epsilon_{g_{\Phi}})
\end{align*}
forms a graded Hermitian Dirac bundle. Its Dirac operator 
\begin{align*}
    D_{\Phi}\coloneqq\m{d}+\m{d}^{*_{\Phi}}=\left(\begin{array}{cc}
         &D^-_{\Phi}  \\
         D_{\Phi}^+& 
    \end{array}\right)
\end{align*}
and $D^+_{\Phi}$ is the signature operator of $(X,\Phi)$. Notice that the skew-adjoint \say{opposite} signature operator, an operator that will be important in the deformation of $\m{Spin}(7)$-structures decomposes with respect to $\epsilon_{g_{\Phi}}$ into
\begin{align*}
    D_{\Phi}^{op}\coloneqq\m{d}-\m{d}^{*_{\Phi}}=\left(\begin{array}{cc}
         D^{op;-}_{\Phi}&\\
         & D^{op;+}_{\Phi}
    \end{array}\right)
\end{align*}
and hence,
\begin{align*}
    D_{\Phi}\pi_{\pm,\Phi}=\pi_{\pm,\Phi}D_{\Phi}\und{1.0cm}D_{\Phi}^{op}\pi_{\pm;\Phi}=\pi_{\mp;\Phi}D_{\Phi}^{op}.
\end{align*}
This operator squares to $-\Delta_{g_{\Phi}}$ and anti-commutes with the operator $D_{\Phi}$. Thus, on a closed manifold we deduce that 
\begin{align*}
    \m{ker}_{L^2}(D_{\Phi})=\m{ker}_{L^2}(D_{\Phi}^{op})\und{1.0cm}\m{im}_{L^2}(D_{\Phi})=\m{im}_{L^2}(D_{\Phi}^{op}).
\end{align*}

The idea of Joyce was to use singular perturbation theory to obtain a torsion-free $\m{Spin}(7)$-structure. We will briefly summarize these ideas and state the Joyce-existence result for torsion-free $\m{Spin}(7)$-structures.\\

The $\m{GL}_+(8)$-equivariant tubular neighbourhood 
\begin{align*}
    \Theta:Tub_\rho(Cay_+)\rightarrow Cay_+
\end{align*}
induces an open subset 
\begin{align*}
    Fr_+(X)\times_{\m{GL}_+(8)}Tub_\rho(Cay_+)\subset \wedge^4 T^*X
\end{align*}
of four forms that are projectable onto the bundle of Cayley forms. Let $\Phi$ be a $\m{Spin}(7)$-structure on an eight manifold $X$ with small torsion. The condition of a $\m{Spin}(7)$-structure in a small neighbourhood of ${\Phi}$ to be torsion-free is given by 
\begin{align*}
    0=\m{d}\Theta({\Phi}+\eta)=&\m{d}\Phi+\m{d}\pi_{\Phi,\top}(\eta)+\m{d}Q_{\Phi}(\eta)
\end{align*}

Instead of constructing a $\m{Diff}_0(X)$-orbit full of solutions, it is more convenient to construct a distinguished representative in the $\m{Diff}_0(X)$-orbit. Thus, we need to impose the gauge fixing condition of the form 
\begin{align*}
    \pi_{\Phi,8}\m{d} \pi_{\Phi,\top}(\eta)=0
\end{align*}
which defines a gauge slice for the $\m{Diff}_0(X)$-action on $\Gamma(X,Cay_+(X))$ \cite[Thm. D]{Joyce1996b}. Joyce \cite[p.32]{Joyce1996b} further, proved that this condition is equivalent to 
\begin{align*}
    \eta\in \mathcal{H}^4_{\Phi,1}\oplus \mathcal{H}^4_{\Phi,7}\oplus \Omega^4_{\Phi,-}(X).
\end{align*}
Consequently, if $\pi_1(X)$ and $\hat{A}(X)=1$ we can write 
\begin{align*}
    0=\m{d}{\Phi}+\m{d}\pi_{\Phi,\top}(\eta)+\m{d}Q_{\Phi}(\eta)\und{1.0cm}
    0=\pi_{\Phi,1\oplus 7}\eta.
\end{align*}
Now, $\pi_{\Phi,1\oplus 7}\eta=0$ implies $\eta=-*_{\Phi}\eta $ and hence, by taking the $*_{g_{\Phi}}$ of the first equation we obtain
\begin{align}
\label{torsion-freespin7strcmodgauge1}
    -\m{d}\eta=\m{d}\Phi+\m{d}Q_{\Phi}(\eta)\und{1.0cm}
    -\m{d}^{*_{\Phi}}\eta=*_{\Phi}\m{d}\Phi+*_{\Phi}\m{d}Q_{\Phi}(\eta).
\end{align}
Using these operators, we are able to express \eqref{torsion-freespin7strcmodgauge1} as
\begin{empheq}[box=\colorbox{blue!10}]{align}
\label{Diracfixedpoint}
    -D_{\Phi}\eta=&D_{\Phi}\pi_{\Phi,-}\left\{Q_{\Phi}(\eta)\right\}+D^{op}_{\Phi}\left(\Phi+\pi_{\Phi,+}\left\{Q_{\Phi}(\eta)\right\}\right).
\end{empheq}

Let $R_{\Phi}$ denote a right-inverse of $D_{\Phi}$ and let $R_{\Phi}\gamma=\eta$. Then $\eta$ is a solution to the above if $\gamma$ is a solution to the fix-point problem
\begin{empheq}[box=\colorbox{blue!10}]{align}
\label{fixpointproblem}
    \gamma=&-D_{\Phi}\pi_{\Phi,-}\left\{Q_{\Phi}(R_{\Phi}\gamma)\right\}-D^{op}_{\Phi}\left(\Phi+\pi_{\Phi,+}\left\{Q_{\Phi}(R_{\Phi}\gamma)\right\}\right).
\end{empheq}
Alternatively, we can restrict ourselves to $\eta\in\m{ker}(D_{\Phi})^\perp=\m{im}(R_{\Phi})$ and solve for 
\begin{align*}
    \eta=&-(D_{\Phi}|_{\m{ker}(D_{\Phi})^\perp})^{-1}\left(D_{\Phi}\pi_{\Phi,-}\left\{Q_{\Phi}(\eta)\right\}+D^{op}_{\Phi}\left(\Phi+\pi_{\Phi,+}\left\{Q_{\Phi}(\eta)\right\}\right)\right)
\end{align*}
as $D_{\Phi}\pi_{\Phi,-}\left\{Q_{\Phi}(\eta)\right\}+D^{op}_{\Phi}\left(\Phi+\pi_{\Phi,+}\left\{Q_{\Phi}(\eta)\right\}\right)\in\m{ker}(D_{\Phi})^\perp$.

\begin{rem}
    Let $X$ be simply connected and $\hat{A}(X)=1$. The moduli space is of dimension $\m{dim}(\mathfrak{Spin}(7)[X])=\hat{A}(X)+b^1(X)+b^4_-(X)=1+b^4_-(X)$. Given a solution $\Phi'=\Theta(\Phi+\eta)$ of \eqref{Diracfixedpoint}, a local neighbourhood of $[\Phi']$ is modelled by a small neighbourhood of the zero in $\mathbb{R}\Phi'\oplus \m{ker}(D_{\Phi})$.
\end{rem}

In order to show that \eqref{Diracfixedpoint} has a solution, we need to show that the right-hand side of \eqref{fixpointproblem} defines a contractive mapping. The following quadratic estimate on $Q_\Phi$ was used by Joyce to prove the existence of a solution.

\begin{lem}\cite[Lem. 5.1.1]{Joyce1996b}
\label{C1Festimates}
Let $X$ be an oriented eight manifold, $\Phi$ a $\m{Spin}(7)$-structure and $g_{\Phi}$ the corresponding Riemannian metric. Let $\eta\in\Omega^4(X)$ and $|\eta|_{g_{\Phi}}<\rho$ such that $\Phi+\eta\in \Gamma(X,Tub_\rho(Cay_+(X)))$ and 
\begin{align}
\label{taylortheta}
    \Theta(\Phi+\eta)=\Phi+\pi_{\Phi,\top}(\eta)+Q_{\Phi}(\eta)
\end{align}
where $Q_{\Phi}\colon B_{\rho,g_{\Phi}}(\wedge^4T^* X)\rightarrow \wedge^4T^* X$ is a smooth map satisfying 
\begin{align*}
    Q_{\Phi}(0)=0.
\end{align*}
For all $\eta,\eta'\in\Omega^4(X)$ with  $|\eta|_{g_{\Phi}},|\eta'|_{g_{\Phi}}<\rho$ 
\begin{align}
\label{thetaestimate}
\begin{split}
    |Q_{\Phi}(\eta_1)-Q_{\Phi}(\eta_2)|_{g_{\Phi}}\lesssim&\hspace{0.5cm}|\eta_1-\eta_2|_{g_{\Phi}}(|\eta_1|_{g_{\Phi}}+|\eta_2|_{g_{\Phi}})
\end{split}
\end{align}
and
\begin{align}
\label{nablathetaestimate}
    |\nabla^{g_\Phi} Q_{\Phi}(\eta_1)-\nabla^{g_\Phi} Q_{\Phi}(\eta_2)|_{g_{\Phi}}\lesssim c_0(\Phi,\eta_1,\eta_2)\cdot|\eta_1-\eta_2|_{g_{\Phi}}+c_1(\Phi,\eta_1,\eta_2)\cdot|\nabla^{g_\Phi} (\eta_1-\eta_2)|_{g_{\Phi}}
\end{align}
where
\begin{align*}
    c_0(\Phi,\eta_1,\eta_2)=&|\m{d}\Phi|_{g_{\Phi}}(|\eta_1|_{g_{\Phi}}+|\eta_2|_{g_{\Phi}})+|\nabla \eta_1|_{g_{\Phi}}+|\nabla^{g_\Phi} \eta_2|_{g_{\Phi}}+|\nabla^{g_\Phi} \eta_1|_{g_{\Phi}}|\eta_2|_{g_{\Phi}}+|\eta_1|_{g_{\Phi}}|\nabla^{g_\Phi}\eta_2|_{g_{\Phi}}\\
    c_1(\Phi,\eta_1,\eta_2)=&|\eta_1|_{g_{\Phi}}+|\eta_2|_{g_{\Phi}}+|\eta_2|_{g_{\Phi}}|\eta_1-\eta_2|_{g_{\Phi}}.
\end{align*}
\end{lem}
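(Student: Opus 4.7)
The plan is that both \eqref{thetaestimate} and \eqref{nablathetaestimate} are purely pointwise algebraic statements at each $x \in X$, so it suffices to prove them fibrewise in $\wedge^4 T^*_xX$ with constants uniform in $x$. First I would use the $\m{GL}_+(W)$-equivariance of $\Theta$ established in Lemma \ref{C0Festimateslem}: writing $\Phi^{pre}_x = A_x^* \Phi_{0}$ for a local section $A_x$ of positively oriented frames, pullback by $A_x$ reduces every fibrewise estimate to the single model map $\tilde Q(\Phi, \eta)\coloneqq Q_{\Phi}(\eta)$ at $\Phi = \Phi_{0}$ on the compact set $\{|\eta|_{g_{0}} \leq \rho\}$. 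On this set $\tilde Q$ admits uniform $C^k$-bounds for all $k$. Since $\Theta$ is $C^2$ with derivative $\pi_{\tau,\Phi_{0}}$ at $\Phi_{0}$, one has $\tilde Q(\Phi_{0},0)=0$ and $\partial_\eta\tilde Q(\Phi_{0},0)=0$, and the integral remainder formula immediately gives \eqref{thetaestimate} with a universal constant depending only on $\rho$ and $\sup_{B_\rho}|\partial_\eta^2\tilde Q|$.

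For \eqref{nablathetaestimate} I view $\tilde Q$ as a smooth function of both arguments and apply the chain rule
\begin{align*}
    \nabla^{pre} Q_{\Phi^{pre}}(\eta)
    \;=\; (\partial_\Phi\tilde Q)(\Phi^{pre},\eta)\,\nabla^{pre}\Phi^{pre}
    \;+\;(\partial_\eta\tilde Q)(\Phi^{pre},\eta)\,\nabla^{pre}\eta.
\end{align*}
Subtracting the analogous expression for $\eta_2$ and telescoping yields
\begin{align*}
    \nabla^{pre}(Q_{\Phi^{pre}}(\eta_1)-Q_{\Phi^{pre}}(\eta_2))
    &= \bigl[(\partial_\Phi\tilde Q)_{\eta_1}-(\partial_\Phi\tilde Q)_{\eta_2}\bigr]\,\nabla^{pre}\Phi^{pre}\\
    &\quad + (\partial_\eta\tilde Q)_{\eta_1}\,\nabla^{pre}(\eta_1-\eta_2)\\
    &\quad + \bigl[(\partial_\eta\tilde Q)_{\eta_1}-(\partial_\eta\tilde Q)_{\eta_2}\bigr]\,\nabla^{pre}\eta_2.
\end{align*}
The key observations are then: \emph{(i)} $\partial_\eta\tilde Q$ vanishes to first order in $\eta$ at $\eta=0$, giving $|\partial_\eta\tilde Q(\Phi,\eta)|\lesssim|\eta|$ and the Lipschitz bound $|\partial_\eta\tilde Q(\Phi,\eta_1)-\partial_\eta\tilde Q(\Phi,\eta_2)|\lesssim|\eta_1-\eta_2|$; \emph{(ii)} because $\tilde Q(\Phi,0)\equiv 0$ identically in $\Phi$, the partial $\partial_\Phi\tilde Q$ vanishes to second order in $\eta$, yielding $|\partial_\Phi\tilde Q(\Phi,\eta_1)-\partial_\Phi\tilde Q(\Phi,\eta_2)|\lesssim|\eta_1-\eta_2|(|\eta_1|+|\eta_2|)$; \emph{(iii)} $|\nabla^{pre}\Phi^{pre}|_{g_{\Phi^{pre}}}\lesssim|\m{d}\Phi^{pre}|_{g_{\Phi^{pre}}}$, which is the quantitative form of Theorem \ref{torsionfreeequalclosedness} asserting that the intrinsic torsion of a $\m{Spin}(7)$-structure is entirely encoded in $\m{d}\Phi^{pre}$.

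Combining these three bounds and sorting the resulting contributions by whether they multiply $|\eta_1-\eta_2|$ or $|\nabla^{pre}(\eta_1-\eta_2)|$ produces exactly the terms appearing in $c_0$ and $c_1$: the first bracket yields $|\m{d}\Phi^{pre}|(|\eta_1|+|\eta_2|)\cdot|\eta_1-\eta_2|$, the second contributes the leading Lipschitz term $(|\eta_1|+|\eta_2|)\cdot|\nabla^{pre}(\eta_1-\eta_2)|$, and the third contributes $(|\nabla^{pre}\eta_1|+|\nabla^{pre}\eta_2|)\cdot|\eta_1-\eta_2|$. The remaining cubic refinements $|\nabla^{pre}\eta_i||\eta_j|$ in $c_0$ and $|\eta_2||\eta_1-\eta_2|$ in $c_1$ arise by retaining one further order in the Taylor expansion of $\tilde Q$ about $\eta=0$ and applying the same telescoping identity to the cubic remainder.

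The main obstacle I expect is purely bookkeeping: there is no new analytic content beyond the torsion identity $|\nabla^{pre}\Phi^{pre}|\lesssim|\m{d}\Phi^{pre}|$ from Theorem \ref{torsionfreeequalclosedness}. The actual work lies in carefully arranging the difference expansion so that each summand falls into the asymmetric pattern prescribed by $c_0$ and $c_1$, and in verifying that the equivariance reduction to the model $\Phi_{0}$ produces constants independent of $x\in X$ and of the particular $\Phi^{pre}$ within its $\m{GL}_+$-orbit.
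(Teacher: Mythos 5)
The paper does not actually prove this lemma: it is cited verbatim from Joyce \cite[Lem.~5.1.1]{Joyce1996b}, so there is no in-paper proof to compare against. I will therefore assess your argument on its own merits.

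Your overall strategy — equivariance reduction to the model form, chain rule for the covariant derivative of $\tilde Q(\Phi^{pre},\eta)$, telescoping, and the torsion identity $|\nabla^{pre}\Phi^{pre}|\lesssim|\m{d}\Phi^{pre}|$ — is the right one and would in fact reproduce Joyce's estimate. However, observation \emph{(ii)} contains a genuine gap. You claim that $\partial_\Phi\tilde Q$ vanishes to second order in $\eta$ ``because $\tilde Q(\Phi,0)\equiv 0$ identically in $\Phi$.'' That identity only gives $\partial_\Phi\tilde Q(\Phi,0)=0$, i.e.\ \emph{first}-order vanishing. What you need for the quadratic Lipschitz bound $|\partial_\Phi\tilde Q(\Phi,\eta_1)-\partial_\Phi\tilde Q(\Phi,\eta_2)|\lesssim|\eta_1-\eta_2|(|\eta_1|+|\eta_2|)$ is $\partial_\eta\partial_\Phi\tilde Q(\Phi,0)=0$ as well, and this does not follow from $\tilde Q(\Phi,0)\equiv 0$. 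To obtain it, one must differentiate the identity $D\Theta|_{Cay_+}=\pi_{\tau,\cdot}$ along $Cay_+$: computing $\partial_\eta\partial_\Phi\tilde Q(\Phi,0)\cdot(\delta\Phi,\delta\eta)= D^2\Theta(\Phi)(\delta\Phi,\delta\eta)-(\partial_\Phi\pi_{\tau,\Phi})(\delta\eta)\cdot\delta\Phi$, the cancellation holds precisely for $\delta\Phi\in T_\Phi Cay_+$. Without this calculation the quadratic factor in the leading $c_0$-contribution, $|\m{d}\Phi^{pre}|(|\eta_1|+|\eta_2|)\cdot|\eta_1-\eta_2|$, is not justified — a naïve Lipschitz bound would instead produce the non-appearing term $|\m{d}\Phi^{pre}|\cdot|\eta_1-\eta_2|$.

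Closely linked is a second omission: the cancellation above holds only for tangential $\delta\Phi$, so when you apply $\partial_\Phi\tilde Q$ to $\nabla^{pre}\Phi^{pre}$ you must know that $\nabla^{pre}\Phi^{pre}$ takes values in $T_{\Phi^{pre}}Cay_+$. This is indeed true — $\nabla^{pre}$ is metric for $g_{\Phi^{pre}}$, so $\nabla^{pre}\Phi^{pre}$ is valued in $\mathfrak{spin}(7)^\perp\cdot\Phi^{pre}\cong\wedge^4_{7,\Phi^{pre}}T^*X\subset T_{\Phi^{pre}}Cay_+$ — and this tangentiality is also the quantitative content behind your point \emph{(iii)}. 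You should make both facts explicit: they are the actual bridge between the algebra of $\Theta$ and the appearance of $|\m{d}\Phi^{pre}|$ in $c_0$. The remainder of your argument (equivariance, the $C^0$ estimate, the telescoping into $c_0$- and $c_1$-type terms, the hand-wavy treatment of the cubic refinements) is fine modulo these repairs.
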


In his seminal work \cite{Joyce1996b}, Joyce used this estimate to prove that under certain conditions \eqref{Diracfixedpoint} has a smooth solution, i.e. that in some cases $\m{Spin}(7)$-structures with small torsion can be deformed into torsion-free ones.

\begin{thm}\cite[Thm. A]{Joyce1996b}
Let $X$ be a compact eight-dimensional manifold and $\Phi\in\mathcal{C}ay(X)$. Assume that there exists a smooth four form $\eta$ such that 
\begin{align*}
    \m{d}(\Phi+\eta)=0
\end{align*}
and that the following four conditions hold
\begin{itemize}
    \item $\left|\left|\eta\right|\right|_{L^2}\lesssim t^{9/2}$
    \item $\left|\left|\eta\right|\right|_{L^{10}}\lesssim t$
    \item If ${\chi}\in W^{1,10}\Omega^4_-(X)$ then
    \begin{align*}
        \left|\left|\nabla{\chi}\right|\right|_{L^{10}}\lesssim\left|\left|\m{d}{\chi}\right|\right|_{L^{10}}+t^{-21/5}\cdot\left|\left|{\chi}\right|\right|_{L^2}
    \end{align*}
    \item If ${\chi}\in W^{1,10}\Omega^4_-(X)$ then ${\chi}\in\Omega^{0;4}_-(X)$ and
    \begin{align*}
        \left|\left|{\chi}\right|\right|_{C^0}\lesssim t^{1/5}\cdot\left|\left|\nabla{\chi}\right|\right|_{L^{10}}+t^{-4}\cdot\left|\left|{\chi}\right|\right|_{L^2}
    \end{align*}
\end{itemize}
Then there exists a smooth torsion-free $\m{Spin}(7)$-structure $\Phi$ satisfying
\begin{align*}
\left|\left|\Phi-\Phi\right|\right|_{C^0}\lesssim t^{1/2}. 
\end{align*}
\end{thm}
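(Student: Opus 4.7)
The plan is to deform $\Phi^{pre}$ by adding a small anti-self-dual four-form $\chi$ and to apply Banach's contraction principle to the resulting nonlinear equation, following Joyce's original scheme as formulated in \eqref{Diracfixedpoint}. In the gauge slice $\pi_{1\oplus 7,\Phi^{pre}}\chi=0$ one has $\chi=-*_{\Phi^{pre}}\chi$ and $\pi_{\tau,\Phi^{pre}}\chi=\chi$, so the torsion-free condition $\m{d}\Theta(\Phi^{pre}+\chi)=0$ reduces, using $\m{d}\Phi^{pre}=-\m{d}\eta$, to
\[
    \m{d}\chi = \m{d}\eta - \m{d}Q_{\Phi^{pre}}(\chi).
\]
The task becomes finding an anti-self-dual fixed point of the map $T$ that sends $\chi_n$ to the unique anti-self-dual $\chi_{n+1}$ with $\m{d}\chi_{n+1}=\m{d}(\eta-Q_{\Phi^{pre}}(\chi_n))$ and $\chi_{n+1}\perp\mathcal{H}^4_-(X,g_{\Phi^{pre}})$; existence of such $\chi_{n+1}$ follows from Hodge theory on the closed manifold $X$ since the right-hand side is exact.

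First, I would verify that $T$ is a contraction in a $t$-weighted Banach space modelled on $W^{1,10}\Omega^4_-(X)$. Applying the scale-invariant elliptic estimate of condition (3) to $\chi_{n+1}-\chi_n$ controls $\|\nabla(\chi_{n+1}-\chi_n)\|_{L^{10}}$ in terms of $\|\m{d}(Q_{\Phi^{pre}}(\chi_n)-Q_{\Phi^{pre}}(\chi_{n-1}))\|_{L^{10}}$ and a $t^{-21/5}$-weighted $L^2$ remainder. The quadratic estimates \eqref{thetaestimate}--\eqref{nablathetaestimate} of Lemma \ref{C1Festimates}, together with the weighted Sobolev inequality of condition (4), bound this by a multiple of $(\|\chi_n\|_{C^0}+\|\chi_{n-1}\|_{C^0})\,\|\nabla(\chi_n-\chi_{n-1})\|_{L^{10}}$ plus lower-order $L^2$ terms. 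In the weighted norm $\|\chi\|_{*,t}\coloneqq \|\chi\|_{L^2}+t^{21/5}\|\nabla\chi\|_{L^{10}}$, conditions (1)--(2) give $\|T(0)\|_{*,t}\lesssim t^{9/2}$, while the Lipschitz constant of $T$ tends to zero with $t$, so $T$ becomes strictly contractive on a ball of radius $\sim t^{9/2}$ for $t$ sufficiently small.

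Second, Banach's fixed-point theorem produces $\chi_\infty\in W^{1,10}\Omega^4_-(X)$ with $\|\chi_\infty\|_{*,t}\lesssim t^{9/2}$, and condition (4) converts this into $\|\chi_\infty\|_{C^0}\lesssim t^{1/2}$. Bootstrapping smoothness is standard: once $\|\chi_\infty\|_{C^0}<\rho$ the composition $\Theta(\Phi^{pre}+\chi_\infty)$ lies in the smooth part of the tubular neighbourhood, so $\chi_\infty$ satisfies an elliptic equation with smooth coefficients; iterating interior Schauder and higher-order $L^p$ elliptic estimates then upgrades $\chi_\infty\in C^\infty$. Finally, $\Phi\coloneqq\Theta(\Phi^{pre}+\chi_\infty)$ is a smooth torsion-free $\m{Spin}(7)$-structure, and \eqref{C0Festimates} combined with $\|\chi_\infty\|_{C^0}\lesssim t^{1/2}$ yields $\|\Phi-\Phi^{pre}\|_{C^0}\lesssim t^{1/2}$.

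The main obstacle is the delicate book-keeping of $t$-dependent constants. The elliptic and Sobolev inequalities of conditions (3)--(4) blow up like $t^{-21/5}$ and $t^{-4}$, and these negative powers must be absorbed by the positive powers $t^{9/2}$ and $t$ on $\|\eta\|_{L^2}$ and $\|\eta\|_{L^{10}}$ supplied by conditions (1)--(2). The specific exponents $9/2,\,1,\,21/5,\,4,\,1/5,\,1/2$ appearing in the statement are tuned so that both the inhomogeneous displacement $T(0)$ and the Lipschitz constant of $T$ tend to zero as $t\to 0$; verifying mutual consistency of these exponents — in particular that the $t^{-21/5}\|\chi\|_{L^2}$ remainder in (3) is reabsorbed by the weighted norm rather than destabilising the iteration — is the only genuinely subtle step of the proof.
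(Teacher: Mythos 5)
The paper does not give a proof of this statement: Theorem A is cited from Joyce's original work as background, and the surrounding text only recalls the fixed-point equation \eqref{Diracfixedpoint} and the quadratic estimates of Lemma~\ref{C1Festimates} that enter into it. Your proposal is therefore being measured against Joyce's proof, not one in the present paper.

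Your overall strategy — deform inside the gauge slice, set up the Hodge-theoretic iteration $\chi_{n+1} = T(\chi_n)$, and close the contraction in a $t$-weighted $W^{1,10}\oplus L^2$ space, using condition (3) as the scale-invariant elliptic estimate, condition (4) as the Sobolev embedding, and Lemma~\ref{C1Festimates} for the quadratic remainder — is exactly Joyce's scheme. The final bootstrap is also standard and correctly stated.

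There is, however, a concrete gap in the initial step. You claim that conditions (1)--(2) give $\|T(0)\|_{*,t} \lesssim t^{9/2}$ in the weighted norm $\|\chi\|_{*,t} = \|\chi\|_{L^2} + t^{21/5}\|\nabla\chi\|_{L^{10}}$. The $L^2$ part is fine (it follows from the boundedness of the inverse of $D$ on $(\mathcal{H}^4_-)^\perp$, a point you invoke only implicitly as ``existence follows from Hodge theory'' but which deserves a sentence of its own). The problem is the $W^{1,10}$ part: to conclude $t^{21/5}\|\nabla T(0)\|_{L^{10}} \lesssim t^{9/2}$, i.e.\ $\|\nabla T(0)\|_{L^{10}} \lesssim t^{3/10}$, you apply condition (3), which returns
\begin{align*}
\left\|\nabla T(0)\right\|_{L^{10}} \lesssim \left\|\m{d}T(0)\right\|_{L^{10}} + t^{-21/5}\left\|T(0)\right\|_{L^2} = \left\|\m{d}\eta\right\|_{L^{10}} + \mathcal{O}(t^{3/10}),
\end{align*}
and neither condition (1) nor (2) controls $\|\m{d}\eta\|_{L^{10}}$: $\eta$ is not assumed anti-self-dual, so (3)--(4) do not apply to it, and an $L^{10}$ bound on $\eta$ cannot bound its exterior derivative. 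In Joyce's actual theorem this is handled by an explicit \emph{fifth} hypothesis of the form $\|\m{d}\phi\|_{L^p} \leq \lambda$ (e.g.\ $\|\m{d}\phi\|_{L^{14}}\leq\lambda$ in the version in \cite[Thm.~13.4.3]{joyce2000compact}), and the paper's transcription of Theorem~A has silently dropped it. Without such a bound your contraction argument cannot even produce a bounded first iterate, so the ``exponent bookkeeping'' you flag as the only subtle point is not in fact where the difficulty lies: the genuinely subtle point is the missing hypothesis on $\m{d}\eta$ and the care Joyce takes to keep the $\|\nabla\eta_j\|_{L^{10}}$ estimate propagating under iteration even though it starts from a quantity that is only bounded, not small.
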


In the above Theorem, $W^{1,10}\Omega^4_-(X)$ denotes the Sobolev space of anti-self dual four forms that are weakly differentiable and the Sobolev norm is taken with respect to $L^{10}$, while $\Omega^{0;4}_-(X)$ denotes the space of $C^0$ anti-self-dual forms.

\section{The Moduli Space of Torsion-Free Spin(7)-Structures and Smooth Gromov--Hausdorff Resolutions}
\label{The Moduli Space of Torsion-Free Spin(7)-Structures and Smooth Gromov--Hausdorff Resolutions}

\subsection{Compactifications of Moduli Spaces of Spin(7)-Manifolds and Orbifold-Degenerations}
\label{Compactifications of Moduli Spaces of Spin(7)-Manifolds and Orbifold-Degenerations}

At the heart of exceptional holonomy geometry lies the study of moduli spaces of torsion-free $\m{Spin}(7)$-structures on smooth manifolds. These moduli spaces encode the local deformation theory of special holonomy structures, but they also raise global compactness questions. While the local theory describes small deformations of a fixed smooth $\m{Spin}(7)$-manifold, the global theory asks what happens to families of torsion-free $\m{Spin}(7)$-structures which leave every compact subset of the smooth moduli space. Since torsion-free $\m{Spin}(7)$-metrics are Ricci-flat, such degenerations naturally fall within the compactness theory of Einstein metrics.

\begin{defi}
    The \textbf{moduli space of Spin(7)-manifolds} is defined to be 
    \begin{align*}
        \mathfrak{Spin}(7)\coloneqq&\left\{(X,\Phi)\mid \Phi\in\Gamma(X,Cay_+(X)),D_\Phi\Phi=0\right\}/\cat{Diff}_0\\
        =&\left\{(X,\Phi)\mid \Phi\in\Gamma(X,Cay_+(X)),\m{d}\Phi=0\right\}/\cat{Diff}_0.
    \end{align*}
Here $\cat{Diff}_0$ denotes the group of diffeomorphisms isotopic to the identity. 
\end{defi}

A fundamental feature of this moduli space is that the deformation theory of torsion-free $\m{Spin}(7)$-structures is governed by an elliptic and unobstructed deformation complex. Infinitesimal deformations are described by harmonic four-forms, and the linearisation of the torsion-free condition is elliptic after gauge fixing. Moreover, this deformation complex can be naturally extended to an elliptic complex \cite[Thm. 7.2]{goto2002deformations}, which folds into a twisted Dirac operator. This ellipticity implies that the moduli space is finite-dimensional.

\begin{thm}\cite[Thm. D]{Joyce1996b}
    Let $X$ be a closed spinnable eight-manifold. The connected component $\mathfrak{Spin}(7)[X]$ is a smooth manifold of dimension 
    \begin{align*}
        \m{dim}(\mathfrak{Spin}(7)[X])=\hat{A}(X)+b_1(X)+b^4_-(X).
    \end{align*}
    It is endowed with a natural Riemannian metric, called the Weil--Peterson-metric, given by the $L^2$-metric on
    \begin{align*}
        T_{[\Phi]}\mathfrak{Spin}(7)\cong
        \mathcal{H}^4_{1;\Phi}\oplus\mathcal{H}^4_{7;\Phi}\oplus \mathcal{H}^4_{35;\Phi}.
    \end{align*}
\end{thm}

The theorem describes the smooth local structure of $\mathfrak{Spin}(7)$, but it does not address its compactification. A degenerating family of torsion-free $\m{Spin}(7)$-structures should be understood as a compactness problem in $\mathfrak{Spin}(7)$. Assume, for instance, that we are given a family $(X_i,\Phi_i)$ of torsion-free $\m{Spin}(7)$-manifolds, leaving every compact subset of $\mathfrak{Spin}(7)$. Moreover, assume that the associated Ricci-flat metrics satisfy uniform diameter and non-collapsing bounds
\begin{align*}
    \m{diam}(X_i,g_{\Phi_i})\leq D
    \und{1.0cm}
    \m{Vol}(X_i,g_{\Phi_i})\geq v>0.
\end{align*}
Such conditions are linked to finite Weil-Peterson-distance noncompactness issues see \cite{Langlais_2025}. By Gromov-compactness, after passing to a subsequence, the spaces $(X_i,d_{g_{\Phi_i}})$ converge in the Gromov--Hausdorff topology to a compact metric space
\begin{align*}
    (X_i,d_{g_{\Phi_i}})\xrightarrow[GH]{}(X_\infty,d_\infty).
\end{align*}
The regularity theory for non-collapsed Einstein limits, developed by Anderson \cite{anderson1990convergence}, Cheeger--Colding \cite{cheeger1986collapsing}, Cheeger--Naber \cite{cheeger1990collapsing}, and Jiang--Naber\cite{jiang2020l2curvatureboundsmanifolds}, describes the structure of such limits. The regular set $X^{reg}_\infty\subset X_\infty$ is open and carries a smooth torsion-free $\m{Spin}(7)$-structure $\Phi_\infty$, and the convergence is smooth on compact subsets of $X^{reg}$. The singular set
\begin{align*}
    X^{sing}_\infty\coloneqq X_\infty\backslash X^{reg}_\infty
\end{align*}
has codimension at least four. More precisely, the codimension-four stratum is rectifiable and has locally finite Hausdorff measure. At generic points of this top stratum the tangent cone is unique and has the form
\begin{align*}
    \mathbb{R}^{n-4}\times C(\mathbb{S}^3/\Gamma),
\end{align*}
where $\Gamma\subset \m{Sp}(1)$ is a finite subgroup acting freely on $\mathbb{S}^3$. In general, an $\m{Spin}(7)$-limit-space need not be an orbifold; the singular set may have a more complicated stratified structure. Orbifolds should therefore be regarded as the most regular and analytically most tractable singular spaces arising in this compactness theory. In the orbifold case the singular set is a union of smooth strata, and a neighbourhood of a stratum $S$ of codimension $m$ is modelled on a normal cone bundles. The convergence is smooth away from the singular strata, while the failure of compactness is localised in shrinking tubular neighbourhoods of these strata.\\

This localisation is the geometric reason for studying orbifold resolutions. If curvature concentrates near a singular stratum, then after rescaling in the normal directions one expects complete non-compact Ricci-flat bubbles asymptotic to the corresponding quotient cone. In the special holonomy setting these bubbles should carry compatible special holonomy structures. Thus, for a $\m{Spin}(7)$-orbifold limit, the relevant local models are ALE, QALE, or more generally asymptotically conically fibred spaces resolving the $\m{Spin}(7)$ normal cone along the singular stratum.\\

The compactness picture suggests a reverse construction. Instead of starting with a smooth family and passing to its orbifold limit, one may start with a compact $\m{Spin}(7)$-orbifold $(X,\Phi)$ and try to reconstruct a smooth family by inserting suitable non-compact local models into tubular neighbourhoods of the singular strata. The exceptional geometry is then collapsed in the adiabatic limit, and the resulting family converges back to $(X,\Phi)$ in the Gromov--Hausdorff sense. Smooth Gromov--Hausdorff resolutions should be understood in precisely this way: they are controlled paths from an orbifold boundary point back into the smooth part of the moduli space.\\

Joyce's foundational constructions \cite{Joyce1996b,Joyce1999} provide the basic evidence for this perspective. Starting from compact $\m{Spin}(7)$-orbifolds with suitable singularities, Joyce constructed nearby smooth $\m{Spin}(7)$-manifolds by gluing in ALE local models. Thus orbifold points are not merely possible singular limits of smooth special holonomy metrics; in favourable cases they can also be resolved back into the smooth moduli space.\\

This raises the broader question of how the moduli space of $\m{Spin}(7)$-orbifolds, denoted by $\mathfrak{Spin}(7)^{\m{orb}}$, interacts with the smooth moduli space $\mathfrak{Spin}(7)$. There is a tautological inclusion
\begin{align*}
    \mathfrak{Spin}(7)\subset \mathfrak{Spin}(7)^{\m{orb}},
\end{align*}
but the size of the difference between these two spaces is not well understood. In particular, one would like to know which orbifold points in $\mathfrak{Spin}(7)^{\m{orb}}$ arise as Gromov--Hausdorff limits of smooth $\m{Spin}(7)$-manifolds, and which of them can be resolved back to smooth points of $\mathfrak{Spin}(7)$. The present paper addresses the second question by constructing smooth Gromov--Hausdorff resolutions under explicit geometric and analytic hypotheses.\\

From the metric viewpoint, a smooth $\m{Spin}(7)$ resolution of an orbifold singularity produces a path in the Gromov--Hausdorff topology leading from an orbifold boundary point back into the interior of $\mathfrak{Spin}(7)$. In Section \ref{Smooth Gromov--Hausdorff-Resolutions} we introduce smooth Gromov--Hausdorff resolutions, which formalise this idea. The notion of tameness introduced there should be understood as an analytic expression of the compactness heuristic above. After choosing the correct local bubbles, the family has no hidden degeneration away from the prescribed singular strata and is asymptotic to an explicit interpolating model.\\

Thus, one of the central problems in understanding the global structure of $\mathfrak{Spin}(7)$ is to determine to what extent the boundary of the moduli space is described by orbifolds, and when such orbifolds can be resolved. This perspective places $\m{Spin}(7)$-geometry within the broader paradigm of special holonomy moduli problems, where singular degenerations and their resolutions play a decisive role in shaping the global geometry of the moduli space.

\subsection{Smooth Gromov--Hausdorff-Resolutions}
\label{Smooth Gromov--Hausdorff-Resolutions}

In the preceding subsection we explained why orbifold limits form a natural and tractable class of compactness phenomena for degenerating families of torsion-free $\m{Spin}(7)$-structures. We now formalise the reverse operation. Starting with a Riemannian orbifold $(X,g)$, a smooth Gromov--Hausdorff resolution is a family of spaces which resolves the singular locus of $X$ while
converging back to $(X,g)$ as the resolution parameter tends to zero. Thus the exceptional geometry is visible at positive scale, but collapses in the limit.\\

The definition is designed to isolate the geometric features suggested by Einstein compactness. First, away from the singular set the family should not degenerate; after identifying the complements of the exceptional sets with $X^{\mathrm{reg}}$, the metrics converge smoothly. Second, all degeneration should be confined to a neighbourhood of the singular set; the exceptional
region has vanishing volume and collapses to $X^{\mathrm{sing}}$ in the Gromov--Hausdorff limit. This captures precisely the situation in which the limiting orbifold contains all macroscopic geometry, while the missing microscopic geometry is encoded by the local bubbles inserted along the singular strata.
\begin{defi}
Let $(X,g)$ be a Riemannian orbifold. A \textbf{smooth Gromov--Hausdorff resolution} of $(X,g)$ is given by a smooth family of (singular) Riemannian orbifolds $(X_t,g_t)$ and morphisms
\begin{align*}
    \rho_t\colon X_t\dashrightarrow X
\end{align*}
such that $\rho_t$ restricts to an orbifold diffeomorphism $\rho_t^{-1}(X^{reg})\cong X^{reg}$, the exceptional set $\Upsilon_t=\rho_t^{-1}(X^{sing})$ is of codimension $0$ and 
\begin{align*}
    (X_t,g_t)\xrightarrow[GH]{t\to 0}(X,g)\und{1.0cm} (X_t\backslash \Upsilon_t,g_t)\xrightarrow[C^\infty]{t\to 0}(X^{reg},g).
\end{align*}
In particular, $\m{vol}_{g_t}(\Upsilon_t)\xrightarrow[]{t\to 0} 0$. 
\end{defi}
A complete smooth Gromov--Hausdorff-resolution of $(X,g)$ is given by a smooth family of Riemannian manifolds $(X_t,g_t)$ resolving $(X,g)$ at $X^{sing}$. Smooth Gromov--Hausdorff resolutions of Riemannian orbifolds form a category $\widetilde{\cat{GHRes}}(X,g)$. A smooth Gromov--Hausdorff resolution $(X_t,g_t,\rho_t)$ is a $(0,\epsilon)$-family of Riemannian orbifold resolutions. A morphism $(\rho_t,X_t,g_t)\rightsquigarrow(\rho'_t,X'_t,g'_t)$ of Riemannian orbifold resolutions is given by a $(0,\min\{\epsilon,\epsilon'\})$-family of commuting diagrams
\begin{equation*}
    \begin{tikzcd}
	{(X_t,g_t)}\arrow[rd,"{\rho_t}"', dashed] \arrow[rr,"{\phi_t}", dashed]&& {(X'_t,g'_t)}\arrow[dl,"{\rho'_t}", dashed] \\
	 & {(X,g)}
\end{tikzcd}
\end{equation*}
Two resolutions $(\rho_t,X_t,g_t)$ and $(\rho'_t,X_t,g'_t)$ are called equivalent, if the morphisms $\phi_t$ are isometries of Riemannian orbifolds. The category of smooth Gromov--Hausdorff-resolutions of Riemannian is the quotient category 
\begin{align*}
    \cat{GHRes}(X,g)=\widetilde{\cat{GHRes}}(X,g)/\sim.
\end{align*}
With this definition, there is no need to repeatedly shrink the parameter range $(0, T)$ at each step. Some arguments may only apply to a smaller subfamily $(0, T')$, but this is irrelevant for our purposes. When discussing resolutions of Riemannian orbifolds, we are typically not concerned with the maximal interval of existence, but rather with the existence of a resolution at all. The above “germ-like” definition of the resolution category allows us to avoid unnecessary technicalities. Starting from a resolution $(X_t, g_t) \in \widetilde{\cat{GHRes}}(X, g)$, certain analytic steps may require restricting the family to $(0, \epsilon') \subset (0, \epsilon)$; however, in the category $\cat{GHRes}(X, g)$, these are identified as the same object, and no distinction is needed.

\subsubsection{Riemannian Orbifolds as Conically Fibred Singular Spaces}
\label{Riemannian Orbifolds as Conically Fibred Singular Spaces}

In this section, we recall the notion of conically fibred (CF) spaces and conically fibred singular (CFS) spaces as a systematic way to describe the local geometry of Riemannian orbifolds and more general stratified spaces. This viewpoint refines the classical description of quasi-conical neighbourhoods near singular strata by emphasising the fibration structure of each stratum, which plays a central role in resolution constructions.\\

Let in the following $\vartheta\colon (Y,g_{Y})\rightarrow (S,g_S)$ be a Riemannian fibration\footnote{Here we allow the Riemannian structure on the fibres to be singular.} of dimension $m-1+s=n-1$ (here $S$ is an $s$-dimensional Riemannian manifold) with compact fibres. The Riemannian structure on $(Y,g_Y)$ induces an Ehresmann connection on the fibration, i.e. $TY\cong (VY)^\perp \oplus VY\cong H_Y\oplus VY$.

\begin{defi}
A \textbf{conical fibration (CF)} over a link-fibration $(\vartheta\colon (Y,g_{Y})\rightarrow (S,g_S))$ is given by the (singular) warped product
\begin{align*}
    \left(CF(\vartheta)\colon (CF_\vartheta(Y),g_{\m{CF}_\vartheta(Y)})\rightarrow (S,g_S)\right)\coloneqq\left((\vartheta\circ\m{pr}_2\colon \mathbb{R}_{\geq 0}\times Y,\m{d}r^2+g^{2,0}_{Y}+r^2g^{0,2}_{Y})\rightarrow (S,g_S)\right)
\end{align*}
Usually, we will write $g_{CF_\vartheta(Y)}=\vartheta^*g_{S}+g_{\m{CF}_\vartheta(Y);V}$ where $g_{\m{CF}_\vartheta(Y);V}$ denotes the fibrewise conical structure. Given a conical fibration, we will denote by 
\begin{align*}
    r\coloneqq \m{pr}_1\colon CF_\vartheta(Y)\rightarrow \mathbb{R}_{\geq 0}
\end{align*}
the radius function.
\end{defi}

\begin{rem}
    Notice that the geometry of the conical fibration is the one of a singular warped product 
    \begin{align*}
        (\mathbb{R}_{\geq 0}\times Y,\m{d}r^2+g_{Y;r})
    \end{align*}
    of the real half-line and a collapsing fibration 
    \begin{align*}
        \left(Y,g_{Y;r}\right)\rightarrow (S,g_S),
    \end{align*}
    where $g_{Y;r}\coloneqq g^{2,0}_{Y}+r^2g^{0,2}_{Y}$.
\end{rem}

\begin{nota}
    Given $I\subset \mathbb{R}_{\geq 0}$, we will write  
    \begin{align*}
        CF^I_\vartheta(Y)\coloneqq r^{-1}(I)
    \end{align*}
\end{nota}

The group $(\mathbb{R}_{>0},\cdot)$ acts on a conical fibration via the dilatation 
    \begin{align*}
        \delta_t\colon CF_\vartheta(Y)\rightarrow CF_\vartheta(Y),\,(r,y)\mapsto (t\cdot r,y).
    \end{align*}

\begin{nota}
    Given a tensor $\eta$ on $CF_\vartheta(Y)$, we will denote by 
    \begin{align*}
        \eta^t\coloneqq \delta^*_t\eta 
    \end{align*}
    the $\mathbb{R}_{>0}$-family induced by the action.
\end{nota} 

In particular, the CF-metric satisfies 
\begin{align*}
    g^t_{CF_\vartheta(Y)}=g_{CF_\vartheta(Y)H}+t^2\cdot g_{CF_\vartheta(Y);V}.
\end{align*}

\begin{defi}
A singular Riemannian manifold $(X,g)$ is called \textbf{conically fibred singular (CFS)} of rate $\sigma$ with respect to a conical fibration over a link $\vartheta\colon (Y,g_Y)\rightarrow (S,g_S)$ if there exists a compact subset $K_R\subset X$ and a diffeomorphism $\tau\colon CF_\vartheta^{[0,R)}(Y)\cong  K_R $ such that 
\begin{align*}
    (\nabla^{g_{CF_\vartheta(Y)}})^k\left(\tau^*g-g_{CF_\vartheta(Y)}\right)=&\mathcal{O}(r^{\sigma})\\
    (\nabla^{g_{CF_\vartheta(Y);V}})^k\left(\tau^*g-g_{CF_\vartheta(Y)}\right)=&\mathcal{O}(r^{\sigma-k}).
\end{align*}
Usually, we will write 
\begin{align*}
    \tau^*g=g_{CF_\vartheta(Y)}+g_{\tau,\m{hot}}\und{1.0cm}|g_{\tau,\m{hot}}|_{g_{CF_\vartheta(Y)}}=\mathcal{O}(r^\sigma).
\end{align*}
\end{defi}

Let $i\colon (S,g_S)\hookrightarrow (X,g)$ be a connected singular stratum of codimension $m$ whose isotropy group is $\Gamma\subset\m{O}(m)$. We define the normal bundle of $S$ via
\begin{equation*}
    \begin{tikzcd}
        TS\arrow[r,hook]&i^*TX\arrow[r, two heads]&NS.
    \end{tikzcd}
\end{equation*}
We will think of $NS$ as a vector bundle $\nu\colon NS\rightarrow S$ with a fibrewise $\Gamma$-action. We define the normal cone bundle to be the conically fibred space 
\begin{align*}
    \nu_0\colon N_0\coloneqq NS/\Gamma\rightarrow S.
\end{align*}
Let further $\m{Fr}(X/S,g)$ be the natural $\m{N}_{\m{O}(n)}(\Gamma)$-reduction of the  $\m{O}(n)$ frame bundle $Fr_{\m{O}(n)}(X)$ of $(X,g)$ restricted to $S$. Define the associated (orbifold) bundle 
\begin{align*}
    Y\coloneqq \m{Fr}(X/S,g)\times_{\m{N}_{\m{O}(n)}(\Gamma)}\mathbb{S}^{m-1}/\Gamma\xrightarrow[]{\vartheta} S.
\end{align*}
Whenever $(S,g_S)$ is of type A, the bundle $Y$ is smooth. The Riemannian orbifold structure $g$ induces a splitting 
\begin{align*}
    TN_0\cong H_0\oplus VN_0\cong\nu^*_0TS\oplus\nu^*_0NS
\end{align*}
and a Riemannian orbifold structure on the normal cone bundle $N_0$ such that
\begin{align*}
    g_0=\nu^*i^*g=\nu^*_0g_S+g_{0;V}\in  \Gamma\left(N_0,\m{Sym}^{2}H_0^*\oplus\m{Sym}^2V^*N_0\right).
\end{align*}
Equivalently, we can think of a Riemannian orbifold structure on $(N_0,g_0)$ as a real Hermitian Lie algebroid structure on $(\prescript{ie}{}{}T[N_0\colon S],g_0)$.\\

Let $\epsilon>0$ and let  
\begin{align*}
    \m{Tub}_{5\epsilon}(S)\coloneqq (Y\times (0,5\epsilon))\subset N_0
\end{align*}
and
\begin{align*}
    \m{exp}_{g}\colon \m{Tub}_{5\epsilon}(S)\rightarrow U_{5\epsilon}\subset X
\end{align*}
be a tubular neighbourhood of width $5\epsilon$, induced by the Riemannian structure.\footnote{Later we will assume that $\epsilon\sim t^\lambda$ for $0\leq\lambda<1$.} Let $r\colon N_0\rightarrow[0,\infty)$ denote the radius function on the normal cone bundle. Then there exists an asymptotic expansion
\begin{align*}
    \m{exp}_{g}^*g=g_0+g_{hot}
\end{align*}
and the higher order terms satisfy
\begin{align*}
    |g_{hot}|_{g_0}=\mathcal{O}(r^2).
\end{align*}
The first order term vanishes, as $S$ is locally the fixed point sets of isometries. This structure is referred to as in the literature (see \cite[Chap. 2]{gray2003tubes}) as a Fermi atlas with respect to $S$.

\begin{lem}
    Let $S\subset X$ be a singular stratum of a Riemannian orbifold $(X,g)$. Its normal cone bundle $\nu_0\colon (N_0,g_0)\rightarrow (S,g_S)$ is a Riemannian CF-space. A tubular neighbourhood of the singular stratum endows $(X,g)$ with the structure of a Riemannian CFS-space of rate $2$.
\end{lem}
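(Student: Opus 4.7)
My plan is to split the statement into the two assertions and verify each by unpacking the relevant local structure. For the CF claim, I would exhibit $(N_0,g_0)$ directly as a warped product over the link. The bundle $\nu\colon NS\to S$ is an orthogonal rank-$m$ vector bundle carrying a fibrewise $\Gamma$-action, and polar decomposition gives a canonical diffeomorphism $N_0\setminus S\cong \mathbb{R}_{>0}\times Y$, where $Y=Fr(X/S,g)\times_{\m{N}_{\m{O}(n)}(\Gamma)}\mathbb{S}^{m-1}/\Gamma$ is the unit sphere bundle modulo $\Gamma$. Under this identification the Riemannian structure $g_0=\nu_0^*g_S+g_{0;V}$ becomes
\begin{align*}
\m{d}r^2+\vartheta^*g_S+r^2\,g_{Y/S},
\end{align*}
where $g_{Y/S}$ is the fibrewise round metric on $\mathbb{S}^{m-1}/\Gamma$. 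This is exactly the warped-product form required by the CF-definition, with Ehresmann connection on $Y$ induced from the orthogonal connection on $NS$ restricted to unit vectors. Continuity across the zero section then upgrades this to a CF-structure on all of $N_0$.

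For the CFS claim, I would set $\rho=\m{exp}_g^{-1}$ on the Fermi tubular neighbourhood $U_{5\epsilon}$ and choose
\begin{align*}
g_{\m{CFS};0}:=g_0,\qquad g_{\m{CFS};hot}:=\m{exp}_g^*g-g_0.
\end{align*}
With this choice $\rho_*g_{\m{CFS};0}=g_{\m{CF}}$ holds tautologically, so the covariant derivative conditions on $\rho_*g_{\m{CFS};0}-g_{\m{CF}}$ are automatic, both horizontally and vertically. The only remaining content of rate $\sigma=1$ is the pointwise estimate $|g_{\m{CFS};hot}|_{g_0}=\mathcal{O}(r)$. This follows from the classical Fermi expansion of a Riemannian metric along a submanifold \cite[Chap.~2]{gray2003tubes}: the leading correction to the model metric $g_0$ encodes the second fundamental form and extrinsic curvatures of $S$ and vanishes linearly in the normal distance $r$.

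The only subtlety will be ensuring that the Fermi expansion is compatible with the orbifold quotient $N_0=NS/\Gamma$. I would handle this by working in $\Gamma$-equivariant Fermi charts around a point of $S$: since $g$ is $\Gamma$-invariant, the full Fermi expansion is automatically $\Gamma$-invariant and descends to a well-defined expansion on $N_0$. Once this equivariance is recorded, the $\mathcal{O}(r)$ bound on $g_{\m{CFS};hot}$ is an immediate Taylor computation in the radial and angular variables, and both assertions of the lemma follow.
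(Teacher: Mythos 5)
Your proof is correct and takes essentially the same route the paper (implicitly) takes: the lemma is stated without proof immediately after the paper records the Fermi expansion $\m{exp}_g^*g = g_0 + g_{hot}$ with $|g_{hot}|_{g_0} = \mathcal{O}(r)$, and your argument simply unpacks this. Your polar decomposition of $(N_0,g_0)$ handles the CF claim, and your choice $g_{\m{CFS};0}=g_0$, $g_{\m{CFS};hot}=g_{hot}$ makes the covariant-derivative conditions in the CFS definition vacuous, reducing the rate-$1$ assertion to the Fermi bound the paper already cites from \cite[Chap.~2]{gray2003tubes}.
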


\subsubsection{Local Models of Resolutions}
\label{Local Models of Resolutions}

In the following we will axiomatises the local model of resolutions of CFS spaces at the working example of Riemannian orbifolds. We will introduce the concept of asymptotically conical fibrations (ACF).

\begin{defi}
A Riemannian fibration $\nu\colon (N,g)\rightarrow (S,g_S)$ is an \textbf{asymptotic conical fibration (ACF)} over a fibration $\vartheta\colon (Y,g_{Y})\rightarrow (S,g_S)$ of rate $\gamma$, if there exists compact subset $K_R$ and a fibre bundle diffeomorphism $\rho\colon N\backslash K_R\rightarrow CF^{[R,\infty)}_\vartheta(Y)$ such that
\begin{align*}
    (\nabla^{g_{CF_\vartheta(Y)}})^k\left(\rho_*g-g_{CF_\vartheta(Y)}\right)=&\mathcal{O}(r^{\gamma})\\
    (\nabla^{g_{CF_\vartheta(Y);V}})^k\left(\rho_*g-g_{CF_\vartheta(Y);V}\right)=&\mathcal{O}(r^{\gamma-k}).
\end{align*}
\end{defi}

Generalising the notion of these noncompact geometries leads to the notion of QACF spaces. These geometries include all local models of orbifold resolutions.

\begin{defi}
A Riemannian fibre bundle $\nu\colon (N,g)\rightarrow S$ is \textbf{quasi asymptotic to a conical fibration (QACF)} of rate $\gamma$ over a stratified fibration $\vartheta\colon (Y,g_{Y})\rightarrow (S,g_S)$, if there exists subset $S_R$ and a fibre bundle diffeomorphism $\rho\colon N\backslash S_R\rightarrow CF^{[0,R)}_\vartheta(Y^{reg})$ such that 
\begin{align*}
    (\nabla^{g_{CF_\vartheta(Y)}})^k\left(\rho_{*}g-g_{CF_\vartheta(Y)}\right)=\mathcal{O}(r^{\gamma}).
\end{align*}
\end{defi}

A special class of spaces that are (Q)ACF spaces are the so called (Quasi) Asymptotically Locally Euclidean ((Q)ALE) spaces. They are formed by the subclass of the former that fibre over the point $S=\{pt.\}$ and were introduced by Joyce in \cite[Chap. 8 and 9]{joyce2000compact}. Moreover, given a $G$-structure on these noncompact spaces, we will specify their asymptotic behaviour compared to a flat model structure.\\

Let in the following $(V,\Phi_V)$ be a $m$-dimensional Euclidean space and $\Gamma\subset G\subset \m{SO}(V,g_V)$ a finite subgroup, with a flat, translational invariant $G$-structure $\Phi_V\in \Gamma(V,\m{Str}_G(V))^V$.

\begin{defi}
Assume that $\Gamma$ acts freely on $V\backslash \{0\}$. An orbifold $(M,\Phi)$ with a torsion-free $G$-structure $\Phi\in\Gamma(M,\m{Str}_G(M))$ is called an \textbf{asymptotically locally Euclidean (ALE)} $G$-structure of rate $\gamma$, if there exists a compact subset $B_R\subset M$ and a diffeomorphisms 
\begin{align*}
    \rho\colon M\backslash B_R\rightarrow (V\backslash B_R(0))/\Gamma
\end{align*}
and the $G$-structure satisfies
\begin{align*}
    (\nabla^{g_{V}})^k\left(\rho_*\Phi-\Phi_{V}\right)=\mathcal{O}(r^{\gamma-k}),
\end{align*}
where $r\colon V/\Gamma\rightarrow [0,\infty)$ denotes the radius function.
\end{defi}

\begin{rem}
By a classical result of Bando, Kasue and Nakajima \cite{bando1989construction} Euclidean volume growth and $L^2$-integrable curvature implies that a non-compact, Ricci-flat manifold $(M,g)$ is ALE of rate $\gamma=1-m$. If further $m=4$ or $g$ Kähler, then $\gamma=-m$.
\end{rem}

Let $\Gamma\subset G\subset \m{SO}(V,g_V)$ be a finite subgroup acting on $V$ in a possible non-free way. Previously we assumed that the stratum of points with nontrivial stabiliser was only the origin; now we allow nontrivial linear subspaces to be stabilised by nontrivial subgroups of $\Gamma$.\\

We define 
\begin{align*}
    Fix(\Gamma_i)=&\{v\in V|\m{Stab}(v)=\Gamma_i\subset\Gamma\}\\
    C(T)=&\{g\in\Gamma|g.t=t,\forall t\in T\}\\
    N(T)=&\{g\in\Gamma|g.T=T\}
\end{align*}
 and denote by $\mathcal{L}=\{T_i=Fix(\Gamma_i)|\Gamma_i\subset\Gamma\}$ the set of all fixpoint vector spaces and let $I$ be an indexing set for $\mathcal{L}$. Set $T_0=Fix(1)=V$ and $T_\infty=Fix(\Gamma)=0$ and define a partial ordering $i\succeq j$ if $T_i\subset T_j$. If $V=T_i\oplus T_i$, then $T_i=Fix(C(T_i))$ and $C(T_i)$ act on $V/C(T_i)\cong T_i\times S_i/C(V_i)$. In particular, $W/N(T_i)=(T_i\times S_i/C(T_i))/(N(T_i)/C(T_i))$.\\

Let $T_i,T_j\in\mathcal{L}$. Then 
\begin{align*}
    C(T_i)C(T_j)=C(T_i\cap T_j)
\end{align*}
and consequently, the set $\mathcal{L}$ is closed under intersections. Moreover, the group $\Gamma$ acts on the indexing set $I$, by $T_{g.i}=g.T_i$.\\

We can identify the singular set of $V/\Gamma$ with the set 
\begin{align*}
    Sing(V/\Gamma)=\bigcup_{i\in I\backslash\{0\}}T_i/\Gamma.
\end{align*}
For a generic point $[v]\in Sing(V/\Gamma)$ the singularity of $V/\Gamma$ at $[v]$ is modelled on $T_i\times S_i/C(T_i)$.\\

Let $Sing(V/\Gamma)$ be the singular set of $V/\Gamma$. The singular set is given by the union 
\begin{align*}
    Sing(V/\Gamma)=\bigcup_{i\in I}T_i/\Gamma.
\end{align*}
Define the distance functions for $0\neq i\in I$
\begin{align*}
    d_i\colon V/\Gamma\rightarrow[0,\infty),\hspace{1.0cm}v\mapsto \m{dist}_{g_V}(v,T_i).
\end{align*}
Notice that $d_\infty=r$ is the standard radius function on $V/\Gamma$ and that $d_j\leq d_i$ for $i\succeq i$.

\begin{defi}
A noncompact, connected $m$-dimensional, orbifold $(M,\Phi)$ with a torsion-free $G$-structure $\Phi\in\Gamma(M,\m{Str}_G(M))$ is called \textbf{quasi asymptotically locally Euclidean (QALE) $G$-structure} of rate $\gamma$, if there exists a asymptotically cylindrical suborbifold\footnote{Here asymptotic cylindrical is meant in the topological sense.} $S_R$ and a diffeomorphism 
\begin{align*}
    \rho\colon M\backslash S_R\rightarrow (V/\Gamma)\backslash \m{Tub}_R(Sing(V/\Gamma))
\end{align*}
such that 
\begin{align*}
  (\nabla^{g_{V}})^k(\rho_*\Phi-\Phi_{V})=\sum_{0\neq i\in I}\mathcal{O}(d_i^{\gamma-k})
\end{align*}
for all $k\geq0$.
\end{defi}

\begin{defi}
Let $(M,\Phi)$ be a QALE $G$-structure. A set of QALE distance functions $\{\delta_i\}_{0\neq i\in I}$ on $M$ are functions $\delta_i\colon M\rightarrow [1,\infty)$ satisfying 
\begin{align*}
        (\nabla^{g_{V}})^k(\rho_*\delta_i-d_i)=\mathcal{O}(d_i^{1+\gamma-k})
\end{align*}
for all $k\geq 0$.
\end{defi}

\subsubsection{Interpolating and $k$-Tame Smooth Gromov--Hausdorff Resolutions}
\label{Interpolating and k-Tame smooth Gromov--Hausdorff Resolutions}

In this section we develop a systematic framework for analysing smooth Gromov--Hausdorff resolutions of Riemannian orbifolds. The guiding principle is that such resolutions should be modelled, at small scales near the singular locus, by asymptotically conical fibrations (ACF) glued into the normal cone of the singular stratum. This motivates the notion of interpolating resolutions and serve as reference models for more general resolutions. To quantify how closely an arbitrary smooth Gromov--Hausdorff resolution approximates a interpolating one, we introduce the notion of $k$-tameness, measured by $C^k_{loc}$-convergence of the associated families of metrics under suitable identifications. This formalism provides a convenient language for comparing analytic data across families, and will be the main tool for studying geometric operators, such as Dirac operators, in later sections.\\

The group $(\mathbb{R}_{>0},\cdot)$ acts on the normal cone bundle $\nu_0\colon N_0\rightarrow S$ via the dilatation of the fibres 
    \begin{align*}
        \delta_t\colon N_0\rightarrow N_0,\,[n_s]\mapsto [t\cdot n_s].
    \end{align*}

In particular, the CF-normal cone structure satisfies 
\begin{align*}
    g^t_0=g_{0;H}+t^2\cdot g_{0;V}.
\end{align*}

Let us assume that there exists a family of ACF spaces of rate $\gamma$, forming a smooth Gromov--Hausdorff resolution 
\begin{align*}
    (N_{t^2\cdot\zeta},g_{t^2\cdot \zeta})\in\cat{GHRes}(N_0,g_0),
\end{align*}
such that
\begin{equation*}
\begin{tikzcd}
        (N_\zeta,g^t_\zeta)\arrow[d,dashed,"\rho_\zeta",swap]\arrow[r,"\delta_t"]&(N_{t^2\zeta},g_{t^2\zeta})\arrow[d,dashed,"\rho_{t^2\cdot\zeta}"]\\
        (N_0,g^t_0)\arrow[r,"\delta_t"]&(N_0,g_0)
\end{tikzcd}
\end{equation*}
and the Riemannian structure decomposes into 
\begin{align*}
    \delta_t^*g_{t^2\cdot \zeta}=g_{\zeta;H}+t^2\cdot g_{\zeta;V}.
\end{align*}

Using the splitting $H_\zeta\oplus VN_\zeta$ we define the connection $ \nabla^{\oplus_\zeta}$ and its torsion tensor $T_{\oplus_\zeta}$. This connection is $t$-independent and we identify
\begin{align*}
    \nabla^{g^t_\zeta}=\nabla^{\oplus_\zeta}+\frac{1}{2}\overline{\Lambda^t_\zeta}(T_{\oplus_\zeta}).
\end{align*}
Following \cite[(10.3)]{berline1992heat} the limit 
\begin{align*}
    \nabla^{g^0_\zeta}=\nabla^{\oplus_\zeta}+\frac{1}{2}\overline{\Lambda}^0(T_{\oplus_\zeta})
\end{align*}
is well-defined.

\begin{rem}
    By abuse of notation we will denote the lift of the dilatation action by $\delta_t$.
\end{rem}

\begin{nota}
    Let $r\colon N_\zeta\rightarrow[0,\infty)$ be the radius function on $N_\zeta$ defined by $r=r\circ \rho_\zeta$. We will denote by
    \begin{align*}
        B_{R}(N_\zeta)\coloneqq r^{-1}(0,R)\und{1.0cm}A_{(R_1,R_2)}(S)\coloneqq\m{Tub}_{R_2}(S)\backslash \m{Tub}_{R_1}(S).
    \end{align*}
\end{nota}
\begin{defi}
We define the \textbf{gluing morphism} $\Gamma_{t^2\cdot\zeta}\coloneqq \m{exp}_g\circ \rho_{t^2\cdot\zeta}$ and denote the family of spaces
\begin{align*}
    X_{\zeta;t}\coloneqq(B_{5\epsilon}(N_{t^2\cdot\zeta})\sqcup X\backslash S)/_{\sim_ {\Gamma_{t^2\cdot\zeta}}}
\end{align*}
Furthermore, these spaces are equipped with a resolution morphism 
\begin{align*}
    \rho_{\zeta;t}\colon X_{\zeta;t}\dashrightarrow X.
\end{align*}
\end{defi}

\begin{rem}
The dilation map
\begin{align*}
    \delta_t \colon (N_\zeta, g^t_\zeta) \rightarrow (N_{t^2 \cdot \zeta}, g_{t^2 \cdot \zeta})
\end{align*}
identifies the resolution data of the normal cone bundle $(N_0, g_0)$ with the fixed fibration $\nu_\zeta \colon N_\zeta \to S$, equipped with a family of metrics $g^t_\zeta$. This allows us to define a family of spaces by gluing
\begin{align*}
    \rho^{t}_\zeta \colon X^t_{\zeta} \coloneqq \left( B_{5t^{-1}\epsilon}(N_\zeta) \sqcup X \setminus S \right) / \sim_{\Gamma^t_\zeta} \dashrightarrow X,
\end{align*}
where the gluing map is $\Gamma^t_\zeta \coloneqq \delta_t^* \Gamma_{t^2 \cdot \zeta}$. The spaces $X^t_{\zeta}$ and $X_{\zeta;t}$ are canonically diffeomorphic via the rescaling
\begin{align*}
    \delta_t \colon X^t_{\zeta} \cong X_{\zeta;t} \colon \delta_{t^{-1}}.
\end{align*}

We emphasize that $X^t_{\zeta}$ and $X_{\zeta;t}$ describe the same space, but the notation reflects different perspectives. We use the superscript $X^t_{\zeta}$ to indicate the viewpoint relative to $N_\zeta$, and the subscript $X_{\zeta;t}$ when viewing the space relative to $X$. This distinction is particularly useful since the gluing map $\Gamma^t_\zeta$ incorporates the rescaling map $\delta_t$, and it is often convenient to switch between the two pictures depending on the geometric or analytic context.

\end{rem}

Let ${\chi}\colon [0,1]\rightarrow[0,1]$ be a smooth positive, function such that $\m{supp}({\chi})\subset(0,1]$ and $\m{supp}(1-{\chi})\subset[0,1)$ fixed throughout this work. Define the smooth functions
\begin{align*}
    {\chi_i}(x)\coloneqq&\left\{\begin{array}{cl}
     0&x\in B_{(i-1)\epsilon}(N_{t^2\cdot\zeta})\\
     {\chi}((i-1)\epsilon^{-1}r-i+1)&x\in B_{i\epsilon}(N_{t^2\cdot\zeta})\backslash B_{(i-1)\epsilon}(N_{t^2\cdot\zeta}) \\
     1& \text{else} 
    \end{array}\right.
\end{align*}
and set $\delta^*_t\chi_i=\chi^t_i$.

\begin{nota}
Given two sections $\Psi$ and $\Phi$ of mutually vector bundles on $N_\zeta$ and $X$ that \say{glue} together by a lift $\hat{\Gamma}_{t^2\cdot\zeta}$ of $ \Gamma_{t^2\cdot\zeta}$, we define a new section on $X_t$ by
\begin{align*}
    \Psi\cup^t\Phi\coloneqq&(1-\chi^t_3)\Psi+\chi^t_3(\hat{\Gamma}^t_\zeta)^*( \Gamma^t_\zeta)_*\Phi\\
    =&(1-\chi_3)(\hat{\Gamma}^{t^{-1}}_\zeta)^*( \Gamma^{t^{-1}}_\zeta)_*\Psi+\chi_3\Phi\eqqcolon (\hat{\Gamma}^{t^{-1}}_\zeta)^*( \Gamma^{t^{-1}}_\zeta)_*\Psi\cup_t\Phi
\end{align*}
Instead of $(\hat{\Gamma}^t_\zeta)^*( \Gamma^t_\zeta)_*\Phi$ we will abuse the notation and write $\delta_t^*\Phi$.
\end{nota}

\begin{defi}
    We interpolate between the Riemannian orbifold structure on $X$ and the ACF structure on $B_{5t^{-1}\epsilon}(N_\zeta)$ using the cut-off function ${\chi^t_3}$ by
\begin{align*}
    g^{pre;t}_\zeta\coloneqq g^t_{\zeta}\cup^t g =\delta_t^*(g_{t^2\cdot\zeta}\cup_t g)\eqqcolon \delta_t^*g^{pre}_{\zeta;t}.
\end{align*}
\end{defi}

\begin{defi}
We set $\Omega_\zeta=(1-\chi_5)T_{\oplus_\zeta}$
and define the adapted metric connection 
    \begin{align}
        \label{hatnablaoplustzeta}
    \widehat{\nabla}^{g^t_\zeta}\coloneqq&\nabla^{\otimes_\zeta}+\frac{1}{2}\overline{\Lambda}^t_\zeta(\Omega_\zeta).
    \end{align}
\end{defi}


Using this adapted connection can be seen as a leading order term in the expansion of the Levi-Civita connection $\nabla^{g_{\zeta;t}^{pre}}$. The following lemma makes this statement more precise.

\begin{lem}
    The Levi-Civita connection with respect to $g^{pre}_{\zeta;t}$ satisfies 
    \begin{align*}
        g^{pre}_{\zeta;t}(\nabla^{g^{pre}_{\zeta;t}}_XY,Z)=&g_{t^2\cdot\zeta}(\widehat{\nabla}^{g_{t^2\cdot\zeta}}_XY,Z)\cup_tg(\nabla^{g}_XY,Z)\\&+\frac{1}{2}\m{d}\chi_3\wedge((\rho_{t^2\cdot\zeta})_*g_{t^2\cdot\zeta}-g)(X,Y,Z)
    \end{align*}
\end{lem}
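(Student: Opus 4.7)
The plan is to apply the Koszul formula to $g^{pre}_{\zeta;t}$ and to expand it according to the decomposition $g^{pre}_{\zeta;t}=g_{t^2\cdot\zeta}\cup_t g_0+g_{hot}$, carefully treating the cut-off function $\chi^t_3$ under the product rule.

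First I would work in a local chart covering the transition annulus where $\chi^t_3\in(0,1)$. On this annulus, after the natural identification via the gluing map, the preglued metric reads
\[
g^{pre}_{\zeta;t}=(1-\chi^t_3)\,g_{t^2\cdot\zeta}+\chi^t_3\, g_0+g_{hot}.
\]
Applying the Koszul identity
\begin{align*}
2g^{pre}_{\zeta;t}(\nabla^{g^{pre}_{\zeta;t}}_XY,Z)
&=Xg^{pre}_{\zeta;t}(Y,Z)+Yg^{pre}_{\zeta;t}(X,Z)-Zg^{pre}_{\zeta;t}(X,Y)\\
&\quad+g^{pre}_{\zeta;t}([X,Y],Z)-g^{pre}_{\zeta;t}([X,Z],Y)-g^{pre}_{\zeta;t}([Y,Z],X),
\end{align*}
and distributing every vector field via the Leibniz rule yields three families of terms: those in which the derivative falls on one of the metric tensors $g_{t^2\cdot\zeta}$ or $g_0$, those in which it hits the cut-off $\chi^t_3$, and those stemming from $g_{hot}$.

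Second, the terms in which the derivative falls on $g_{t^2\cdot\zeta}$ or $g_0$ reassemble, by linearity of the Koszul formula, into the $\chi^t_3$-interpolation of the corresponding Levi-Civita connections. Using the identities $\nabla^{g^t_\zeta}=\nabla^{\oplus_\zeta}+\frac{1}{2}\overline{\Lambda}^t_\zeta(T_{\oplus_\zeta})$ and $\widehat{\nabla}^{g^t_\zeta}=\nabla^{\otimes_\zeta}+\frac{1}{2}\overline{\Lambda}^t_\zeta(\Omega_\zeta)$ from the preceding subsection, together with the ACF decay $\nabla^k\Omega_{t^2\cdot\zeta}=\mathcal{O}(r^{\gamma})$, I would replace the naive Levi-Civita connections $\nabla^{g_{t^2\cdot\zeta}}$ and $\nabla^{g_0}$ by the adapted ones $\widehat{\nabla}^{g_{t^2\cdot\zeta}}$ and $\nabla^{\oplus}$, the difference being controllable by an $\mathcal{O}(r)$ quantity absorbed in the remainder. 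The contributions in which the derivative hits $\chi^t_3$ combine, after cyclic (anti)symmetrization inherited from the signs in Koszul's formula, into exactly $\frac{1}{2}\m{d}\chi_3\wedge((\rho_{t^2\cdot\zeta})_*g_{t^2\cdot\zeta}-g_0)(X,Y,Z)$; the overall factor of $\frac{1}{2}$ arises by dividing the Koszul identity by $2$. Finally, all contributions involving $g_{hot}$ yield $\mathcal{O}(r)$ errors by the Fermi-atlas estimate $|g_{hot}|_{g_0}=\mathcal{O}(r)$ together with standard bounds on $\nabla g_{hot}$, and outside the transition annulus, where $\chi^t_3\equiv 0$ or $1$, the claim reduces to the standard Levi-Civita expansion of a single metric perturbed by $g_{hot}$.

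The main technical obstacle will be ensuring that the $\mathcal{O}(r)$ remainder is genuinely uniform in $t$. Replacing $\nabla^{g_{t^2\cdot\zeta}}$ by $\widehat{\nabla}^{g_{t^2\cdot\zeta}}$ requires combining the ACF decay of $\Omega_{t^2\cdot\zeta}$ with the scaling behaviour of $\overline{\Lambda}^t_\zeta$ under $\delta_t$, and one must keep careful track of the $t$-dependence of the horizontal/vertical decomposition, since $\overline{\Lambda}^t_\zeta$ itself collapses in the limit $t\to 0$. Verifying the exact sign and skew-symmetrization convention in the wedge notation for the $\m{d}\chi_3$ term is more notational than conceptual, but is the other place where meticulous bookkeeping is required.
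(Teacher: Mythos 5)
Your approach is the same Koszul-based expansion the paper uses, and the structure of the argument (split into terms hitting the interpolated metrics, terms hitting $\chi_3$, and terms from $g_{hot}$) is correct. However, there is a genuine gap in the step where you replace the Levi-Civita connections $\nabla^{g_{t^2\cdot\zeta}}$ and $\nabla^{g_0}$ by the adapted connections $\widehat{\nabla}^{g_{t^2\cdot\zeta}}$ and $\nabla^{\oplus}$.

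You cite the ACF decay $\nabla^k\Omega_{t^2\cdot\zeta}=\mathcal{O}(r^{\gamma})$ as the reason this replacement costs only $\mathcal{O}(r)$. But this is not the relevant estimate: writing out the two displayed connection identities and subtracting, the difference is
\[
\nabla^{g^t_\zeta}-\widehat{\nabla}^{g^t_\zeta}=\tfrac{1}{2}\overline{\Lambda}^t_\zeta\bigl(T_{\oplus_\zeta}-\Omega_\zeta\bigr)=\tfrac{1}{2}\overline{\Lambda}^t_\zeta\bigl(\rho_\zeta^*T_{\oplus_0}\bigr),
\]
and similarly $\nabla^{g_0}-\nabla^{\oplus}=\tfrac{1}{2}\overline{\Lambda}^t_0(T_{\oplus_0})$. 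Both corrections are governed entirely by the \emph{base} torsion $T_{\oplus_0}$, not by the difference torsion $\Omega_\zeta$; the ACF decay of $\Omega_{t^2\cdot\zeta}$ controls how $T_{\oplus_\zeta}$ asymptotes to $T_{\oplus_0}$ at large radius, which is orthogonal to what you need here. The $\mathcal{O}(r)$ estimate for these correction terms comes instead from the Fermi-atlas expansion $\nabla^g=\nabla^{\oplus}+\mathcal{O}(r)$ stated at the start of Section~\ref{Spin(7)-Normal Cones}: the vertical-valued curvature/torsion tensor $T_{\oplus_0}$, measured against the warped cone metric whose fibre directions shrink like $r$, has norm $\mathcal{O}(r)$ near the zero section. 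That is the single fact the paper invokes, and it covers both replacements simultaneously as well as the $g_{hot}$ contributions; substituting the ACF-decay estimate in its place leaves the $\mathcal{O}(r)$ conclusion unsupported in the region where $\chi_3$ is near $1$, where $\Omega_\zeta$ provides no smallness at all.
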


\begin{proof}
    Let $X,Y,Z\in\mathfrak{X}(X_{\zeta;t})$. Then using Koszul's formula
    \begin{align*}
        2g^{pre}_{\zeta;t}(\nabla^{g_{\zeta;t}}_XY,Z)=&Xg^{pre}_{\zeta;t}(Y,Z)+Yg^{pre}_{\zeta;t}(X,Z)-Zg^{pre}_{\zeta;t}(X,Y)\\
        &+g^{pre}_{\zeta;t}([X,Y],Z)-g^{pre}_{\zeta;t}([Y,Z],X)+g^{pre}_{\zeta;t}([Z,X],Y).
    \end{align*}
\end{proof}

\begin{cor}
    The family $\rho_t\colon(X_{\zeta;t},g^{pre}_{\zeta;t})\dashrightarrow (X,g)$ defines a smooth Gromov--Hausdorff resolution. We will refer to such Gromov--Hausdorff resolutions as being \textbf{interpolating}. 
\end{cor}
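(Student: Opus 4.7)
The plan is to verify each clause of the definition of a smooth Gromov-Hausdorff resolution for the triple $(X_{\zeta;t},g^{pre}_{\zeta;t},\rho_{\zeta;t})$. Concretely, I need to check (i) that $g^{pre}_{\zeta;t}$ is a well-defined smooth Riemannian metric on $X_{\zeta;t}$; (ii) that $\rho_{\zeta;t}$ restricts to an orbifold diffeomorphism off the exceptional set, which has positive codimension and vanishing volume; (iii) that $(\rho_{\zeta;t})_* g^{pre}_{\zeta;t}\to g$ in $C^\infty_{loc}(X^{reg})$; and (iv) that $(X_{\zeta;t},g^{pre}_{\zeta;t})$ converges to $(X,g)$ in the (pointed) Gromov-Hausdorff sense.

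For (i), I would work in the annular gluing region $\{2\epsilon<r<3\epsilon\}\subset N_{t^2\cdot\zeta}$, where $\chi^t_3$ interpolates. On this annulus, the ACF condition provides the expansion $(\rho_{t^2\cdot\zeta})_* g_{t^2\cdot\zeta}-g_0=\mathcal{O}(r^\gamma)$, while the Fermi-chart expansion gives $\exp_g^* g-g_0=g_{hot}$ with $|g_{hot}|_{g_0}=\mathcal{O}(r)$. Hence both summands in $g^{pre}_{\zeta;t}=g_{t^2\cdot\zeta}\cup_t g_0+g_{hot}$ lie in an arbitrarily small $C^0$-neighbourhood of $g_0$ for $\epsilon$ small, so positive-definiteness and smoothness of the convex combination follow immediately.

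For (ii), the exceptional set is $\Upsilon_{\zeta;t}=\rho_{\zeta;t}^{-1}(S)$, which coincides (under the gluing $\Gamma_{t^2\cdot\zeta}$) with the exceptional set of $\rho_{t^2\cdot\zeta}\colon N_{t^2\cdot\zeta}\dashrightarrow N_0$; by hypothesis this has positive codimension. On its complement, $\rho_{t^2\cdot\zeta}$ is a diffeomorphism onto $N_0\setminus S$, which glues to the identity on $X\setminus \mathrm{Tub}_{5\epsilon}(S)$ via $\exp_g$, producing the required orbifold diffeomorphism onto $X^{reg}$. For the volume estimate, pull back via $\delta_t$: the exceptional set sits inside $N_\zeta$ with fixed $g_\zeta$-volume, and since $\delta_t^* g_{t^2\cdot\zeta}=g_{\zeta;H}+t^2\, g_{\zeta;V}$, the volume form of $g^t_\zeta$ carries a factor $t^m$ (with $m=\mathrm{codim}(S)$) relative to $g_\zeta$; thus $\mathrm{vol}_{g^{pre}_{\zeta;t}}(\Upsilon_{\zeta;t})=O(t^m)\to 0$.

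For (iii), fix a compact $K\subset X^{reg}$. There is $\delta>0$ with $K\cap \mathrm{Tub}_\delta(S)=\emptyset$. For $t$ small enough (in particular under $\epsilon\sim t^\lambda$ with $\lambda<1$), $5\epsilon<\delta$, so $K$ lies entirely in the region where $\chi^t_3\equiv 1$ and $g^{pre}_{\zeta;t}\equiv g$; convergence in $C^\infty(K)$ is then trivial. The main obstacle is (iv), the pointed Gromov-Hausdorff convergence of the total spaces, since here the ACF region must be shown to collapse onto $S$ under $\rho_{\zeta;t}$. The strategy is to exhibit $\rho_{\zeta;t}$ itself as an $\varepsilon_t$-Hausdorff approximation, with $\varepsilon_t\to 0$. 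Outside $\mathrm{Tub}_{5\epsilon}(S)$ the map is a diffeomorphism with metric error bounded by $\|g^{pre}_{\zeta;t}-\rho_{\zeta;t}^*g\|_{C^0}=o(1)$, using the $\mathcal{O}(r^\gamma)+\mathcal{O}(r)$ estimates above on $\{r\leq 5\epsilon\}$. Inside the glued ACF core, any fibre of $\nu_\zeta$ over $s\in S$ has $g^{pre}_{\zeta;t}$-diameter controlled by the $g_{t^2\cdot\zeta}$-diameter of $B_{5\epsilon}(N_{t^2\cdot\zeta})|_s$, which in the rescaled picture $(N_\zeta,g^t_\zeta)$ is bounded by the $g^t_\zeta$-diameter of $B_{5\epsilon/t}(N_\zeta)|_s$; the vertical collapse $g^t_\zeta=g_{\zeta;H}+t^2 g_{\zeta;V}$ combined with the asymptotic cylindricity of $N_\zeta$ outside a compact set (from the ACF hypothesis) bounds this by $C t^{1-\lambda}\to 0$. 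Combining the outer and inner estimates yields the desired $\varepsilon_t$-approximation. The delicate point, and the one I anticipate requiring the most care, is uniformity of this bound across $S$ and the compatibility between the horizontal (base-direction) and vertical (fibre) contributions to distance in the collapsing geometry of $g^t_\zeta$.
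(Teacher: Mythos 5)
The paper gives no proof of this corollary; it is asserted as an immediate consequence of the construction and the preceding computation of the Levi-Civita connection of $g^{pre}_{\zeta;t}$. Your strategy of verifying each clause of the definition is the right one, and items (i), (ii), (iii) are handled correctly. The difficulty is in (iv).

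Your argument for Gromov--Hausdorff convergence in the core region is misconceived. You claim that the $g^{pre}_{\zeta;t}$-diameter of a fibre of $B_{5\epsilon}(N_{t^2\cdot\zeta})|_s$ is controlled by $C t^{1-\lambda}$ and therefore collapses. Neither the exponent nor the qualitative statement is right. Rescaling, the $g^t_{\zeta;V}=t^2 g_{\zeta;V}$-diameter of $B_{5t^{-1}\epsilon}(N_\zeta)|_s$ is $t\cdot O(t^{-1}\epsilon)=O(\epsilon)=O(t^\lambda)$, not $O(t^{1-\lambda})$. More importantly, this quantity does \emph{not} need to go to zero: the fibres of $B_{5\epsilon}(N_0)\subset X$ in the orbifold itself also have diameter $O(\epsilon)$, so there is no collapse of the tubular neighbourhood to account for, only a comparison. (In particular, the corollary must hold for $\lambda=0$, i.e.\ $\epsilon$ constant, since the constraint $\epsilon\sim t^\lambda$ is only imposed later in the analysis of function spaces; your bound gives nothing in that regime.)

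The correct mechanism is a two-region estimate that you do not cleanly separate. On $\{r\geq\delta\}\cap\mathrm{Tub}_{5\epsilon}(S)$ for fixed $\delta>0$, the ACF asymptotics combined with the $C^\infty_{loc}$-convergence $(\rho_{t^2\cdot\zeta})_*g_{t^2\cdot\zeta}\to g_0$ away from $0_S$ give
\begin{align*}
    \bigl\lvert g^{pre}_{\zeta;t}-\rho_{\zeta;t}^*g\bigr\rvert_{g_0}
    =\bigl\lvert(1-\chi_3)\bigl((\rho_{t^2\cdot\zeta})_*g_{t^2\cdot\zeta}-g_0\bigr)\bigr\rvert_{g_0}\longrightarrow 0
    \quad\text{as }t\to0,
\end{align*}
uniformly on that region, so $\rho_{\zeta;t}$ is a near-isometry there. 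What \emph{does} collapse is the exceptional set: $\Upsilon_{\zeta;t}=\rho_{t^2\cdot\zeta}^{-1}(0_S)$ has fixed $g_\zeta$-vertical extent in the rescaled picture, so its $g^t_\zeta$-vertical extent is $O(t)$, and $\Upsilon_{\zeta;t}$ lies within $g^{pre}_{\zeta;t}$-distance $O(t)$ of $S$. Combining the near-isometry off $\{r<\delta\}$, the $O(\delta)$-size of $\{r<\delta\}$, and the $O(t)$-collapse of $\Upsilon_{\zeta;t}$, and letting $t\to0$ then $\delta\to0$, yields the required $\varepsilon_t$-Hausdorff approximation. Your proposal conflates the diameter of the whole glued core (which does not vanish) with the distance from $\Upsilon_{\zeta;t}$ to $S$ (which does), and the combination step ``outer + inner estimates'' is not actually carried out.
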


In the following paper, we will analyse smooth Gromov--Hausdorff resolutions that are constructed through the described construction. In many cases, smooth Gromov--Hausdorff resolutions of interest are \say{close} to such resolutions constructed from such limiting data. This naturally leads to the following definition

\begin{defi}
\label{ktame}
    A smooth Gromov--Hausdorff resolution 
    \begin{align*}
        [\rho_t\colon(X_t,g_t)\dashrightarrow (X,g)]\in\cat{GHRes}(X,g)
    \end{align*}
    is called \textbf{$k$-tame}, if there exists a interpolating smooth Gromov--Hausdorff resolution 
    \begin{align*}
        [\rho_{\zeta;t}\colon(X_{\zeta;t},g^{pre}_{\zeta;t})\dashrightarrow (X,g)]\in\cat{GHRes}(X,g)
    \end{align*}
    and a family of diffeomorphisms $f_t\colon X_t\cong X_{\zeta;t}$ such that $f_t^*g_t\xrightarrow[C^{k}_{-1;t}]{t\to 0} g_{\zeta;t}^{pre}$. Here $C^{k}_{-1;t}$ is a weighted $C^k$-norm defined in \cite[Def. 9.4]{majewskiDirac}.
\end{defi}

In order to study the analytic properties of families of geometric operators, such as Dirac operators, on $k$-tame smooth Gromov--Hausdorff resolution $[\rho_t\colon(X_t,g_{t})\dashrightarrow (X,g)]\in \cat{GHRes}(X,g)$ it suffices to study them on the interpolating smooth Gromov--Hausdorff resolutions.\\

To avoid cumbersome booking work and improve the readability, we will restrict ourselves throughout this paper to orbifolds with singular strata of depth one and type A (see Appendix \ref{Orbifolds}). However, the results presented in this paper extend $k$-tame orbifold resolutions of general strata.

\begin{manualassumption}{1}
\label{ass1}    
We will assume the following: 
\begin{itemize}
    \item The Riemannian orbifold $(X,g)$ contains one singular stratum of depth one and type A.
    \item 
The orbifold resolution 
\begin{align*}
[\rho_t\colon(X_t,g_{t})\dashrightarrow (X,g)]=[\rho_{\zeta;t}\colon(X_{\zeta;t},g^{pre}_{\zeta;t})\dashrightarrow (X,g)].
\end{align*}
is interpolating.
\end{itemize}
\end{manualassumption}

\section{Hodge Theory on Tame Gromov--Hausdorff-Resolutions of Riemannian Orbifolds}
\label{Hodge Theory on Tame Gromov--Hausdorff-Resolutions of Riemannian Orbifolds}

As recalled in \eqref{Diracfixedpoint}, the condition of deforming a nearly torsion-free $\m{Spin}(7)$-structure into a torsion-free one as a nonlinear fixed-point problem involving a right-inverse of the Hodge–de Rham operator. Solving this deformation problem requires uniform analytic control of the Hodge theory across families of resolutions.\\

Here we develop such a framework in the general setting of tame smooth Gromov–Hausdorff-resolutions of Riemannian orbifolds. Building on the analytic results of \cite{majewskiDirac}, we establish Hodge theory for these resolutions, showing how harmonic forms and cohomology behave under the resolution process. The key point is to construct and compare harmonic forms on the resolution with those on the singular model, in a way that is compatible with gluing techniques.\\

These results form the analytic backbone of the paper. In Section \ref{Adiabatic Spin(7)-Orbifold Resolutions} they are applied to determine the resolution data required for $\m{Spin}(7)$-orbifold singularities, and in Section \ref{Existence of torsion-free Spin(7) Structures on Orbifold Resolutions} they are used to solve the deformation problem and prove the existence of torsion-free $\m{Spin}(7)$-orbifold resolutions.\\

In the following section, we will apply the author's results in \cite{majewskiDirac} on the uniform elliptic theory of Dirac operators on orbifold resolutions to the Hodge--de Rham operator on Riemannian orbifold resolutions. In Section \ref{The Cohomology of Resolved Orbifolds} we computed the cohomology of the resolved orbifold using topological methods. By Hodge-theory the cohomology of the resolved orbifold consists of harmonic forms. The following section relates these harmonic forms to kernels of certain Dirac operators on the three gluing regions, i.e. the CFS- the CF- and the ACF-part of the orbifold resolution.

\subsection{The Hodge--de Rham Operator of an Orbifold Resolution}
\label{The Hodge--de Rham Operator of an Orbifold Resolution}

We begin discussing the set-up of \cite{majewskiDirac} in the present case, by stating general results about the signature operator.\\

The map 
\begin{align*}
    T^*X_{\zeta;t}\rightarrow \m{End}(\wedge^\bullet T^*X_{\zeta;t}),\xi\mapsto\left(\omega\mapsto \xi\wedge\omega-\iota_{\xi^{\sharp_{g^{pre}_{\zeta;t}}}}\omega\right) 
\end{align*}
satisfies the Clifford property and hence, induces a faithfully representation of
\begin{align*}
    \m{cl}_{g^{pre}_{\zeta;t}} \colon\m{Cl}(T^*X_{\zeta;t},g^{pre}_{\zeta;t})\rightarrow\mathfrak{so}(\wedge^\bullet T^*X_{\zeta;t},g^{pre}_{\zeta;t})\subset\m{End}(T^*X_{\zeta;t}).
\end{align*}
which is skew-Hermitian with respect to $g^{pre}_{\zeta;t}$. There exists an involution-isometry
\begin{align*}
    \epsilon_{g^{pre}_{\zeta;t}}=i^{(\bullet(\bullet-1))}*_{g^{pre}_{\zeta;t}} \colon\wedge^\bullet T^*X_{\zeta;t}\rightarrow T^*X_{\zeta;t}
\end{align*}
such that $\{\m{cl}_{g^{pre}_{\zeta;t}},\epsilon_{g^{pre}_{\zeta;t}}\}=0$, and 
\begin{align*}
    \wedge^\bullet T^*X_{\zeta;t}=\wedge^\bullet_{+_{g^{pre}_{\zeta;t}}}T^*X_{\zeta;t}\oplus \wedge^\bullet_{-_{g^{pre}_{\zeta;t}}}T^*X_{\zeta;t}.
\end{align*}
Hence, the tuple 
\begin{align*}
    (\wedge^\bullet T^*X_{\zeta;t},\m{cl}_{g^{pre}_{\zeta;t}},g^{pre}_{\zeta;t},\nabla^{g^{pre}_{\zeta;t}},\epsilon_{g^{pre}_{\zeta;t}})
\end{align*}
forms a graded Hermitian Dirac bundle. Its Dirac operator 
\begin{align*}
    D_{\zeta;t}^{pre}\coloneqq \m{d}+\m{d}^{*^{pre}_{\zeta;t}}=\left(\begin{array}{cc}
         &D_{\zeta;t}^{pre;-}  \\
         D_{\zeta;t}^{pre;+}& 
    \end{array}\right)
\end{align*}
and $D_{\zeta;t}^{pre;+}$ is the signature operator of $(X_{\zeta;t},\Phi^{pre}_{\zeta;t})$.\\

Notice that the skew-adjoint \say{opposite} signature operator decomposes with respect to $\epsilon_{g^{pre}_{\zeta;t}}$ into
\begin{align*}
    D_{\zeta;t}^{op;pre}\coloneqq \m{d}-\m{d}^{*_{\zeta;t}^{pre}}=\left(\begin{array}{cc}
         D^{op;pre;-}_{\zeta;t}&\\
         & D^{op;pre;+}_{\zeta;t}
    \end{array}\right).
\end{align*}
and hence,
\begin{align*}
    D_{\zeta;t}^{pre}\pi_{\pm;t}=\pi_{\pm;t}D_{\zeta;t}^{pre}\und{1.0cm}D_{\zeta;t}^{op;pre}\pi_{\pm;t}=\pi_{\mp;t}D_{\zeta;t}^{op;pre}.
\end{align*}
This operator squares to $-\Delta_{g^{pre}_{\zeta;t}}$ and anti-commutes with the operator $D_{\zeta;t}^{pre}$. Thus, we deduce that 
\begin{align*}
    \m{ker}_{L^2}(D_{\zeta;t}^{pre})=\m{ker}_{L^2}(D_{\zeta;t}^{op;pre})\und{1.0cm}\m{im}_{L^2}(D_{\zeta;t}^{pre})=\m{im}_{L^2}(D_{\zeta;t}^{op;pre}).
\end{align*}

In the following we will use the resolution data to construct operators on $X$, $N_0$ and $N_\zeta$, whose analytic behaviour will determine the one of $D^{pre}_{\zeta;t}$.\\

Using the gluing morphisms $\Gamma^t_\zeta$ and $\hat{\Gamma}^t_\zeta$ we define 
\begin{equation*}
    \begin{tikzcd}
        &(N_\zeta,g^t_\zeta)\sqcup (X,g)\arrow[dl,"\bigsqcup_{ \Gamma^t_\zeta}",swap,dotted]\arrow[dr,"\bigsqcup_{\overline{\Gamma}^t_\zeta}",dotted]&&\\
        (X_{\zeta;t},g^{pre}_{\zeta;t})&&(\overline{X}_t,\overline{g}_t)\arrow[r,equal]&(N_0,g_0)        
    \end{tikzcd}
\end{equation*}
the spaces $X_{\zeta;t}$ and $\overline{X}_t$ as well as the Dirac bundles
\begin{align*}
    \pi_t\colon &(\wedge^\bullet X_{\zeta;t},\m{cl}_{g^{pre}_{\zeta;t}},g^{pre}_{\zeta;t},\nabla^{g^{pre}_{\zeta;t}})\rightarrow (X_{\zeta;t},g^{pre}_{\zeta;t})\\\overline{\pi}_t\colon &(\wedge^\bullet\overline{X}_t,\m{cl}_{\overline{g}_t},\overline{g}_t,\nabla^{\overline{g}_t})\rightarrow (\overline{X}_t,\overline{g}_t).
\end{align*}

\begin{defi}
We will define the operators
\begin{align*}
    D& \colon\Omega^{\bullet}(X)\rightarrow \Omega^{\bullet}(X)\und{1.0cm}
    &\widehat{D}_0 \colon\Omega^{\bullet}(N_0)\rightarrow \Omega^{\bullet}(N_0)\\
        \widehat{D}_{0;V}& \colon\Omega^{\bullet}(N_0)\rightarrow \Omega^{\bullet}(N_0)\und{0.7cm}
        &\widehat{D}^t_\zeta \colon\Omega^{\bullet}(N_\zeta)\rightarrow \Omega^{\bullet}(N_\zeta).
\end{align*}

We will refer to $D=\m{d}+\m{d}^*$ as the orbifold signature operator, $\widehat{D}_0=\m{d}^{1,0}+\m{d}^{0,1}+\left(\m{d}^{1,0}+\m{d}^{0,1}\right)^*$ as the normal operator\footnote{This work is related to the normal operator $N(D)$ used in \cite{mazzeo1991elliptic}}, $D_{0,V}=\m{d}^{0,1}+\left(\m{d}^{0,1}\right)^*$ as the vertical or indicial operator and $\widehat{D}^t_\zeta=\m{d}^{1,0}+\m{d}^{0,1}+t^{-1}\cdot \left(\m{d}^{1,0}+\m{d}^{0,1}\right)^*+t\cdot\m{cl}(\Omega_\zeta)$ as the resolution of the normal operator.
\end{defi}

Understanding the geometric behaviour of $D_{\zeta;t}^{pre}$ will essentially reduces to understand the behaviour of the orbifold Hodge--de Rham, the resolution of the normal operator and their interaction.\\

In order to prove the exactness of gluing we will need to construct a functional analytic realisation of the normal operator that is an isomorphism on a comeagre set of the parameters, the so called rates characterising the realisation. The meagre set on which the normal operator fails to be an isomorphism are the so called indical roots or critical rates. These rates are determined by homogeneous solutions of the $\mathbb{R}_{>0}$-invariant indicial operator $I(D)=D_{0,V}$.\\

The analytic relation of $\widehat{D}_0$ as an isomorphism of Banach spaces will determine Fredholm maps, realising the two operators $D$ and $\widehat{D}^t_\zeta$ on function spaces \say{asymptotic} to the ones realising the normal operator. The sum of the indices of these two realisations coincides with the index of the operator $D^{t}$ and is in particular invariant under wall-crossings of the rate. This holds even though the indices of the two realisation depend on the connected component of the space of rates.\\

By using these fundamental insights we will further be able to construct, in certain cases, \say{adapted} norms and we construct a uniformly bounded right-inverse.

\subsection{The Cohomology of Resolved Orbifolds}
\label{The Cohomology of Resolved Orbifolds}

We first record the topological effect of replacing the singular orbifold neighbourhood by its smooth resolution. This calculation is independent of the analytic choices made later, but it determines the
finite-dimensional spaces which appear in the Hodge theory of the resolved spaces. In particular, the dimension of the harmonic forms on $X_{\zeta;t}$ is fixed by the topology of the resolution, whereas their
analytic representatives depend on the collapsing geometry.\\

Let $S\subset X$ denote the singular stratum and let $\nu_\zeta\colon N_\zeta\rightarrow S$ be the local resolution model. Topologically, the resolved space $X_{\zeta;t}$ is obtained by removing a tubular neighbourhood of $S$ in $X$ and gluing in a truncated neighbourhood of the exceptional locus
$\Upsilon_\zeta\subset N_\zeta$. Thus the new cohomology classes on $X_{\zeta;t}$ are precisely those carried by the exceptional geometry, modulo the classes already present on the stratum $S$. The following
statement makes this precise.
\begin{prop}
\label{cohomofXt}
There exists a short exact sequence of singular cohomology groups
\begin{equation*}
    \begin{tikzcd}
        \m{H}^\bullet(X,\mathbb{R})\arrow[r,hook]&\m{H}^\bullet(X_{\zeta;t},\mathbb{R})\arrow[r,two heads]&\m{H}^\bullet(\Upsilon_\zeta,\mathbb{R})/\m{H}^\bullet(S,\mathbb{R})
    \end{tikzcd}
\end{equation*}
and hence, 
\begin{align*}
    b_k(X_{\zeta;t})=b_k(X)+b_k(\Upsilon_\zeta)-b_k(S).
\end{align*}
The cohomology $\m{H}^\bullet(X)$ is the cohomology of the coarse moduli space of the orbifold $X$.
\end{prop}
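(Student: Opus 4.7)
The plan is to prove the statement by a Mayer-Vietoris (MV) argument together with the snake lemma. Cover $X_{\zeta;t}$ by $U_1\coloneqq X_{\zeta;t}\setminus\Upsilon_{\zeta;t}$, which is diffeomorphic to $X\setminus S$ via $\rho_{\zeta;t}$, and by an open thickening $V_1$ of the glued ACF piece that deformation retracts onto $N_\zeta$. Cover the coarse space of $X$ by $U_0\coloneqq X\setminus S$ and $W\coloneqq\m{Tub}_\epsilon(S)\simeq S$. In both covers the pairwise intersection is homotopy equivalent to the link fibration $\vartheta\colon Y\to S$, and $\rho_{\zeta;t}$ induces a morphism of open covers, hence a morphism of MV long exact sequences.

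Concretely, one obtains the commutative ladder
\begin{equation*}
\begin{tikzcd}[column sep=scriptsize]
\cdots \arrow[r] & \m{H}^{k-1}(Y) \arrow[r,"\delta"] \arrow[d,equal] & \m{H}^k(X) \arrow[r,"\phi"] \arrow[d,"\rho^*"] & \m{H}^k(X\setminus S)\oplus \m{H}^k(S) \arrow[r,"\psi"] \arrow[d,"{(\m{id},\nu_\zeta^*)}"] & \m{H}^k(Y) \arrow[d,equal] \arrow[r] & \cdots \\
\cdots \arrow[r] & \m{H}^{k-1}(Y) \arrow[r,"\delta'"] & \m{H}^k(X_{\zeta;t}) \arrow[r,"\phi'"] & \m{H}^k(X\setminus S)\oplus \m{H}^k(N_\zeta) \arrow[r,"\psi'"] & \m{H}^k(Y) \arrow[r] & \cdots
\end{tikzcd}
\end{equation*}
The zero section of $\nu_\zeta\colon N_\zeta\to S$ makes $\nu_\zeta^*$ a split injection with complement $Q^k\coloneqq \m{H}^k(N_\zeta)/\m{H}^k(S)$, so the right vertical map is a split injection whose cokernel is $0\oplus Q^k$.

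The crucial structural input is that the asymptotic restriction $\m{H}^k(N_\zeta)\to \m{H}^k(Y)$ factors as $\iota_S^*$ followed by $\vartheta^*\colon \m{H}^k(S)\to \m{H}^k(Y)$. Under Assumption \ref{ass1} the fibre of $\nu_\zeta$ is a crepant (ALE) resolution of $\mathbb{R}^m/\Gamma$ with $\Gamma\subset\m{SU}(m/2)$; its positive-degree cohomology is Poincaré-dual to exceptional cycles supported near the core, hence restricts to zero on the asymptotic link $\mathbb{S}^{m-1}/\Gamma$. The Leray spectral sequence of $\vartheta$ then forces the fibrewise restriction to be nontrivial only on the $q=0$ summand. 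Consequently $\ker\delta=\ker\delta'$, so $\rho^*$ restricts to an isomorphism $\m{im}(\delta)\cong\m{im}(\delta')$, and the right vertical map restricted to $\m{im}(\phi)\to\m{im}(\phi')$ still has cokernel $0\oplus Q^k$: any $(a,n)\in\m{im}(\phi')$ satisfies $a|_Y=\vartheta^*\iota_S^*n$, whence $(a,\iota_S^*n)\in\m{im}(\phi)$ maps to $(a,\nu_\zeta^*\iota_S^*n)$ and the residue $(0,n-\nu_\zeta^*\iota_S^*n)$ lies in $0\oplus Q^k$.

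Breaking each MV sequence into the short exact sequence $0\to\m{im}(\delta)\to \m{H}^k(-)\to\m{im}(\phi)\to 0$ and applying the snake lemma to the induced ladder gives $\ker\rho^*=0$ and $\m{coker}\rho^*\cong Q^k$, which is precisely the claimed short exact sequence; additivity of dimensions then yields the Betti number identity. The main technical obstacle is the factorisation of the asymptotic restriction through $\m{H}^\bullet(S)$, which depends on the crepant ALE geometry of the fibre granted by Assumption \ref{ass1}; for strata of different isotropy type (e.g.\ $\m{Sp}$, $\m{G}_2$) or of higher depth one would need to re-examine the vanishing of the link restriction map, possibly via a more refined Leray or Koszul-type argument, and to be careful with nontrivial local systems on $S$.
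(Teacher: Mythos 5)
Your Mayer--Vietoris ladder and the snake-lemma reduction are a valid reformulation of the paper's argument, which instead uses the long exact sequences of the pairs $(X,\m{Tub}_{5\epsilon}(S))$ and $(X_{\zeta;t},B_{5t^{-1}\epsilon}(N_\zeta))$ glued by excision. The two are essentially equivalent: in the paper's diagram the connecting maps $\m{H}^{k-1}(S)\to\m{H}^{k}(X,\m{Tub})$ and $\m{H}^{k-1}(N_\zeta)\to\m{H}^{k}(X,\m{Tub})$ both factor through restriction to the link $Y$, so one ends up needing precisely the structural input you isolate, namely that $\iota_Y^*\colon\m{H}^{k}(N_\zeta)\to\m{H}^{k}(Y)$ factors through $\vartheta^*\colon\m{H}^{k}(S)\to\m{H}^{k}(Y)$. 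You are right to flag this as the crucial point; the paper's proof leaves it entirely implicit, and in that sense your write-up is a genuine improvement in explicitness.

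However, the way you derive the factorisation has a real gap. From the vanishing of the fibrewise restriction $\m{H}^{q}(M_\zeta)\to\m{H}^{q}(\mathbb{S}^{m-1}/\Gamma)$ for $q>0$ you conclude, via the Leray spectral sequence, that the induced map on associated graded pieces $\m{gr}^{p}\m{H}^{k}(N_\zeta)\to\m{gr}^{p}\m{H}^{k}(Y)$ vanishes for $p<k$. That is correct. But a filtration-preserving map whose associated-graded map vanishes off the deepest step need not have image contained in the deepest step of the target filtration: such a map $f$ only satisfies $f(F^{p})\subset F^{p+1}$ for each $p<k$, which gives $f(F^{0})\subset F^{1}$ and no more, because you cannot re-apply the graded vanishing to elements not already in $F^{p}$. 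Concretely, after choosing splittings $\m{H}^{k}=\bigoplus_{p}G^{p}$, $\m{H}^{k}(Y)=\bigoplus_{p}G'^{p}$, the map is strictly lower-triangular with zero diagonal except in the corner, and the off-diagonal entries $f_{p+1,p}$ with $p<k-1$ are not controlled by the $E_2$-page vanishing alone. So ``$\iota_Y^*=\vartheta^*\circ\iota_S^*$'' does not follow from the argument you give. This is not a cosmetic issue: your proof of $\ker\delta=\ker\delta'$ and the cokernel identification both rest on the pointwise identity $\iota_Y^*n=\vartheta^*\iota_S^*n$, not merely on $\m{im}\,\iota_Y^*\subset\m{im}\,\vartheta^*$. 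In addition, you use the existence of a zero section $\iota_S\colon S\hookrightarrow N_\zeta$ to split $\nu_\zeta^*$; unlike $N_0$, the resolved bundle $N_\zeta$ has no canonical section (the fibre $M_\zeta$ has no distinguished point), so you either need a separate argument that $\nu_\zeta^*$ is split injective or you must rephrase $Q^{k}$ as a cokernel and track the diagram chase without the splitting. Both issues are also present, silently, in the paper's own proof, so this is a place where it is worth writing out the repair: e.g.\ reduce to the case where the Leray spectral sequence of $\nu_\zeta$ and $\vartheta$ both degenerate, where the filtered-map argument can be completed degree by degree, or use the long exact sequence $\m{H}^{k}_{c}(N_\zeta)\to\m{H}^{k}(N_\zeta)\to\m{H}^{k}(Y)$ together with Joyce's identification $\m{H}^{q}(M_\zeta)\cong\m{H}^{q}_{c}(M_\zeta)$ fibrewise to control the off-diagonal terms directly.
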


\begin{proof}
We compute the cohomology of $X_{\zeta;t}$ using relative cohomology\\

\adjustbox{scale=0.55,center}{
    \begin{tikzcd}
        ...\arrow[r]&\m{H}^{k-1}(\m{Tub}_{5\epsilon}(S))\arrow[r]\arrow[dd,hook]&\m{H}^{k}(X,\m{Tub}_{5\epsilon}(S))\arrow[r]\arrow[dd,equal]&\m{H}^{k}(X)\arrow[r]\arrow[dd,hook]&\m{H}^{k}(\m{Tub}_{5t^{-1}\epsilon}(S))\arrow[dd,hook]\arrow[r]&...\\
        &&&&&\\
        ...\arrow[r]&\m{H}^{k-1}(Tub_{5t^{-1}\epsilon}(\Upsilon_{t^2\cdot\zeta}))\arrow[r]\arrow[dd,two heads]&\m{H}^{k}(X_{\zeta;t},Tub_{5t^{-1}\epsilon}(\Upsilon_{t^2\cdot\zeta}))\arrow[r]&\m{H}^{k}(X_{\zeta;t})\arrow[r]\arrow[dd,two heads]&\m{H}^{k}(Tub_{5t^{-1}\epsilon}(\Upsilon_{t^2\cdot\zeta}))\arrow[r]\arrow[dd,two heads]&...\\
        &&&&&\\
        ...\arrow[r,two heads]&\m{H}^{k-1}(Tub_{5t^{-1}\epsilon}(\Upsilon_{t^2\cdot\zeta}))/\m{H}^{k-1}(\m{Tub}_{5\epsilon}(S))&&\m{H}^{k}(Tub_{5t^{-1}\epsilon}(\Upsilon_{t^2\cdot\zeta}))/\m{H}^{k}(\m{Tub}_{5\epsilon}(S))\arrow[r,hook]&\m{H}^{k}(Tub_{5t^{-1}\epsilon}(\Upsilon_{t^2\cdot\zeta}))/\m{H}^{k}(\m{Tub}_{5\epsilon}(S))\arrow[r,two heads]&...\\
    \end{tikzcd}}
Here we used that $Tub_{5t^{-1}\epsilon}(\Upsilon_{t^2\cdot\zeta})$ retracts onto $\Upsilon_\zeta$ and $\m{Tub}_{5\epsilon}(S)$ onto $S$.
\end{proof}

This description will later be compared with Hodge theory. On the analytic side, the classes coming from $X$ are represented by harmonic forms on the orbifold part, while the additional classes are represented
by vertically harmonic forms on the ACF resolution model. The role of the gluing theorem is then to decide when these two collections of local harmonic forms glue to genuine harmonic forms on $X_{\zeta;t}$ without creating an obstruction.\\

The remaining question is analytic rather than topological. Although the Betti numbers of $X_{\zeta;t}$ are independent of $t$, the harmonic representatives of these classes may concentrate in different regions as $t\to0$. Classes inherited from $X$ are expected to converge to orbifold harmonic forms, while exceptional classes concentrate along the resolution region and are modelled by $L^2$-harmonic forms on the ACF fibres. To make this comparison uniform in $t$, we now introduce
weighted Hölder spaces adapted to the three geometric regimes of the resolution; the CFS orbifold region, the CF normal cone, and the ACF resolution model.

\subsection{Weighted Function Spaces on Orbifold Resolutions}
\label{Weighted Function Spaces on Orbifold Resolutions}

We now introduce the function spaces in which the Hodge--de Rham operator will be studied uniformly along the resolution family. The main point is that the geometry of $X_{\zeta;t}$ is not uniformly bounded in ordinary Hölder norms as $t\to0$. Near the singular stratum the metric develops a conically fibred structure, while in the resolution region one sees a rescaled ACF model. Thus a single unweighted Hölder norm cannot simultaneously describe the orbifold region, the normal cone, and the resolving space.\\

The appropriate norms have to keep track of the different geometric scales. On the conically fibred model $N_0$, radial derivatives and derivatives tangent to the fibres scale differently from derivatives along the stratum $S$. This anisotropy is reflected in the definition of the CF-weighted Hölder norms below. The weight records the expected growth or decay in the conical direction, while the splitting of the connection into horizontal and vertical parts records how many derivatives fall in each direction.\\

These weighted spaces serve two purposes. First, they give Fredholm realisations of the model operators away from the critical rates determined by the indicial operator. Second, they are compatible with the gluing maps used to compare forms on $X_{\zeta;t}$ with forms on the orbifold part and on the ACF resolution model. This compatibility is what makes it possible to formulate uniform elliptic estimates for the Hodge--de Rham operator on the degenerating family.\\

We begin with the conically fibred model. Let $N_0$ be the normal cone over the singular stratum $S$, equipped with its splitting into horizontal and vertical directions. The corresponding connection will be denoted by
\begin{align*}
    \nabla^{\oplus_0}=\nabla^{\oplus_0;(1,0)}+\nabla^{\oplus_0;(0,1)} .
\end{align*}
Here $\nabla^{\oplus_0;(1,0)}$ denotes differentiation along the stratum direction and $\nabla^{\oplus_0;(0,1)}$ denotes differentiation in the conical fibre direction. The weighted norms below assign
different powers of the conical radius depending on how many derivatives are taken in the vertical direction.

\begin{nota}
Let $\m{word}^+_{i,j}(A,B)$ denote the sum of all words of length $j$ in $A$ and $B$ counting $i$-times the letter $A$. In particular, $\m{word}^+_{i,j}(A,B)=\m{word}^+_{j-i,j}(B,A)$. 
\end{nota}

\begin{defi}
 We define the space $ \Omega^{\bullet;k}_{\m{CF};\beta}(N_0)$ to be the space of all $C^k$-forms $\Psi$ such that 
\begin{align*}
    \left|\left|w_{\m{CF};\beta}\left\{(\nabla^{\oplus})^j\right\}\Psi\right|\right|_{C^0}<\infty
\end{align*}
where 
\begin{align*}
    w_{\m{CF};\beta}\left\{(\nabla^{\oplus_0})^j\right\}\coloneqq \bigoplus_{i=0}^jw_{\m{CF};(\beta-j+i)}\m{word}^+_{i,j}(\nabla^{\oplus_0;(1,0)},\nabla^{\oplus_0;(0,1)}) 
\end{align*}
and equip the spaces $ \Omega^{\bullet;k}_{\m{CF};\beta}(N_0)$ with the metrics
\begin{align*}
    \left|\left|\Psi\right|\right|_{C^{k}_{\m{CF};\beta}}\coloneqq&\sum_{j=0}^k\left|\left|w_{\m{CF};\beta}\left\{(\nabla^{\oplus_0})^j\right\}\Psi\right|\right|_{C^0_{g_0}}
\end{align*}
The space $ \Omega^{\bullet}_{\m{CF};\beta}(N_0)$ is defined to be the intersection of all $ \Omega^{\bullet;k}_{\m{CF};\beta}(N_0)$ for all $k$. Furthermore, we define the $w_{\m{CF};\beta}$-weighted Hölder spaces $  \Omega^{\bullet;k,\alpha}_{\m{CF};\beta}(N_0)$ as the completions with respect to the norms 
\begin{align*}
    \left|\left|\Psi\right|\right|_{C^{k,\alpha}_{\m{CF};\beta}}\coloneqq&\sum_{j=0}^k\left|\left|w_{\m{CF};\beta}\left\{(\nabla^{\oplus_0})^j\right\}\Psi\right|\right|_{C^0_{g_0}}+[(\nabla^{\oplus_0})^j\Psi]_{C^{0,\alpha}_{\beta-CF}}\\
[(\nabla^{\oplus_0})^j\Psi]_{C^{0,\alpha}_{\beta-CF}}\coloneqq&\underset{B_{g_0}}{\m{sup}}\left\{\bigoplus_{i=0}^jw_{\m{CF};(\beta-j+i-\alpha)}(x,y)\frac{\left|\nabla^{\oplus_0}_{i,j}\Psi(x)-\Pi^{x,y}\nabla^{\oplus_0}_{i,j} \Psi(y)\right|_{g}}{d_{g_0}(x,y)^\alpha}\right\}
\end{align*}
where $\nabla^{\oplus_0}_{i,j}\coloneqq\m{word}^+_{i,j}(\nabla^{\oplus_0;(1,0)},\nabla^{\oplus_0;(0,1)}) $, $\Pi^{x,y}$ denotes the parallel transport along the geodesic connecting $x$ and $y$, and 
\begin{align*}
    B_{g_0}(U)\coloneqq\{x,y\in U|\,x\neq y,\,\m{dist}_{g_0}(x,y)\leq w_{\m{CF};(\beta-j+i-\alpha)}(x,y)\}\subset U\times U
\end{align*}
\end{defi}

Using these function space the operator $\widehat{D}_0$ can be realised as a unbounded operator 
\begin{align*}
    \widehat{D}_0\colon &\m{dom}_{  \Omega^{\bullet;k+1,\alpha}_{\m{CF};\beta}}(\widehat{D}_0)\subset   \Omega^{\bullet;k+1,\alpha}_{\m{CF};\beta}(N_0)\rightarrow  \Omega^{\bullet;k,\alpha}_{\m{CF};\beta-1}(N_0).
\end{align*}
We thus define the following Banach space.

\begin{defi}
We will denote the domain of $\widehat{D}_0$ by
\begin{align*}
    \Omega^{\bullet;k+1,\alpha}_{\m{CF};\beta}(\widehat{D}_0;\m{APS})=\left\{\left.\Psi\in  \Omega^{\bullet;k+1,\alpha}_{\m{CF};\beta}(N_0;\m{APS})\right|\widehat{D}_0\Psi\in   \Omega^{\bullet;k,\alpha}_{\m{CF};\beta-1}(N_0)\right\}
\end{align*}
and equip it with the graph norm 
\begin{align*}
    \left|\left|\Psi\right|\right|_{  C^{k+1,\alpha}_{\m{CF};\beta}(\widehat{D}_0)}=\left|\left|\Psi\right|\right|_{C^{k+1,\alpha}_{\m{CF};\beta}(\nabla^{\otimes_0})}+\left|\left|\widehat{D}_0\Psi\right|\right|_{C^{k,\alpha}_{\m{CF};\beta-1}(\nabla^{\otimes_0})}.
\end{align*}
\end{defi}

\begin{rem}
    The space $\Omega^{\bullet;k+1,\alpha}_{\m{CF};\beta}(N_0;\m{APS})$ denotes $C^{k,\alpha}_{\m{CF};\beta}$-regular differential forms subject to APS-type boundary conditions. These need to be imposed to realise $\widehat{D}_0$ as a left-semi Fredholm operator. Its precise definition can be found in \cite{majewskiDirac}.
\end{rem}

\begin{defi}
    We define the set of critical rates of $\widehat{D}_0$, $D$ and $\widehat{D}^t_{\zeta}$ to be
\begin{align*}
        \lambda\in\mathcal{C}(\widehat{D}_0)\coloneqq\left\{\lambda+\frac{m-1}{2}-\bullet\in\sigma\left(D_{\mathbb{S}^{m-1}/\Gamma} \colon H^1\Omega^{\bullet}(\mathbb{S}^{m-1}/\Gamma)\rightarrow L^2\Omega^{\bullet}(\mathbb{S}^{m-1}/\Gamma)\right)\right\}.
    \end{align*}
\end{defi}

In the following we will extend the notion of CF-weighted Hölder spaces to CFS- and ACF-weighted Hölder spaces. For the precise definition we refer to \cite{majewskiDirac}.\\

Given the exponential tubular neighbourhood $\m{exp}_{g_\Phi} \colon\m{Tub}_{5\epsilon}(S)\hookrightarrow X$, we define the CFS-weighted Hölder spaces 
\begin{align*}
    \Omega^{\bullet;k+1,\alpha}_{\m{CFS};\beta;\epsilon}(X)\und{1.0cm}\Omega^{\bullet;k+1,\alpha}_{\m{CFS};\beta;\epsilon}(X;\m{APS})
\end{align*}
by interpolating between the CF-weighted norms on the tubular neighbourhood and \say{unweighted} Hölder norms on the compact part of the $\m{Spin}(7)$-orbifold.\\

The spaces $(N_\zeta,\Phi^t_\zeta,g^t_\zeta)$ are ACF, i.e. asymptotic to $(N_0,\Phi^t_0,g^t_0)$. By interpolating between the CF-weighted Hölder norms on the non-compact parts of $N_\zeta$ and the \say{unweighted} Hölder norms on the compact region, we define the ACF-weighted Hölder norms
\begin{align*}
    \Omega^{\bullet;k+1,\alpha}_{\m{ACF};\beta;t}(N_\zeta)\und{1.0cm}\Omega^{\bullet;k+1,\alpha}_{\m{ACF};\beta;t}(\widehat{D}^t_\zeta;\m{APS}).
\end{align*}
By the construction, the spaces $\Omega^{\bullet;k+1,\alpha}_{\m{ACF};\beta;t}(\widehat{D}^t_\zeta;\m{APS})$ contain forms, that fibrewise restrict to $C^{k+1,\alpha}_{\m{AC};\beta}$-regular forms. Here AC-denotes asymptotically conically weighted Hölder spaces as defined in \cite{lockhart1985elliptic}. The vertical Dirac operator of $\widehat{D}^t_\zeta$, i.e. the vertical Hodge--de Rham operator, can be realised as semi-Fredholm map
\begin{align*}
    \widehat{D}_{\zeta;V} \colon\Omega^{\bullet;k+1,\alpha}_{\m{AC};\beta}(\nu_\zeta^{-1}(s))\rightarrow \Omega^{\bullet;k,\alpha}_{\m{AC};\beta-1}(\nu_\zeta^{-1}(s)).
\end{align*}
outside a meagre set of critical rates $\beta\in\mathcal{C}(\widehat{D}_0)$. The function space $\Omega^{\bullet;k+1,\alpha}_{\m{ACF};\beta}(N_\zeta)$ are well-behaved with respect to push-forwards along $\nu_\zeta$, and hence, we identify $\Omega^{\bullet;k+1,\alpha}_{\m{ACF};\beta}(N_\zeta)$ with the Hölder space of section of an infinite dimensional Banach bundle of vertically AC-Hölder continuos differential forms. Using this perspective, we are able to realise the vertical Hodge--de Rham operator as a smooth homomorphism of Banach bundles over $S$. Using the AC-version of the Hodge-decomposition of differential forms \cite{lockhart1987fredholm}, we are able to define the spaces 
\begin{align*}
    &\Omega^{\bullet;k+1,\alpha}_{t}(S,\mathcal{C}o\mathcal{I}^{\bullet;k+1,\alpha}_{\m{AC};\beta}(N_\zeta/S)),\\
    &\Omega^{\bullet;k+1,\alpha}_{t}(S,\mathcal{H}^{\bullet}_{\m{AC};\beta}(N_\zeta/S)),\\
    &\Omega^{\bullet;k,\alpha}_{t}(S,\mathcal{I}^{\bullet;k,\alpha}_{\m{AC};\beta-1}(N_\zeta/S))\\
    \und{0.2cm}&\Omega^{\bullet;k,\alpha}_{t}(S,\mathcal{C}o\mathcal{H}^{\bullet}_{\m{AC};\beta-1}(N_\zeta/S))
\end{align*}
of vertical differential forms that are in the coimage, kernel, image and cokernel of the vertical Hodge--de Rham. By AC elliptic regularity vertical harmonicity or coharmonicity implies vertical smoothness.
\begin{rem}
In the notation of \cite{majewskiDirac} we have isomorphisms
\begin{align*}
    \mathcal{K}_{\m{AC};\beta}(\pi_\zeta/\nu_\zeta)\cong\wedge^\bullet T^*S\otimes \mathcal{H}^\bullet_{\m{AC};\beta}(N_\zeta/S);
\end{align*}
analogously for the other bundles.
\end{rem}

Let in the following $\pi_{\mathcal{C}o\mathcal{I};\beta}$, $\varpi_{\mathcal{H};\beta}$, $\pi_{\mathcal{I};\beta-1}$ and $\varpi_{\mathcal{C}o\mathcal{H};\beta}$ denote projections onto the subbundles and let $\iota$ and $\pi$ be two real numbers that will be determined in Lemma \ref{valueofphi} and Theorem \ref{lineargluingHodgedeRham}.

\begin{defi}
Let $-\iota<\kappa<\pi$. We define the adiabatic norms adapted to the adiabatic family of Dirac operators $\widehat{D}^t_{\zeta}$ to be 
\begin{align}    
\label{adiabaticnormszeta}
    \left|\left|\Psi\right|\right|_{\mathfrak{D}^{k+1,\alpha}_{\m{ACF};\beta;t}}\coloneqq&\left|\left|\pi_{\mathcal{C}o\mathcal{I};\beta}\Psi\right|\right|_{C^{k+1,\alpha}_{\m{ACF};\beta;t}}+t^{-\kappa}\cdot \left|\left|\varpi_{\mathcal{K};\beta}\Psi\right|\right|_{C^{k+1,\alpha}_{t}}\\
    \left|\left|\Psi\right|\right|_{\mathfrak{C}^{k,\alpha}_{\m{ACF};\beta-1;t}}\coloneqq&\left|\left|\pi_{\mathcal{I};\beta-1}\Psi\right|\right|_{C^{k,\alpha}_{\m{ACF};\beta-1;t}}+t^{-\kappa}\cdot \left|\left|\varpi_{\mathcal{C}o\mathcal{K};\beta-1}\Psi\right|\right|_{C^{k,\alpha}_{t}}.
\end{align}
\end{defi}

\begin{defi}
We define the spaces
\begin{align*}
      \mathfrak{C}\Omega^{\bullet;k;\alpha}_{\m{ACF};\beta-1;t}(N_\zeta)\coloneqq&\left(\Omega^{\bullet;k;\alpha}_{\m{ACF};\beta-1;t}(N_\zeta),\left|\left|.\right|\right|_{\mathfrak{C}^{k,\alpha}_{\m{ACF};\beta;t}}\right)\\
      \mathfrak{D}\Omega^{\bullet;k+1,\alpha}_{\m{ACF};\beta;t}(\widehat{D}^t_{\zeta};\m{APS})\coloneqq&\left(\m{dom}_{  \mathfrak{D}^{k+1,\alpha}_{\m{ACF};\beta;t}}(\widehat{D}^t_{\zeta};\m{APS}),\left|\left|.\right|\right|_{  \mathfrak{D}^{k+1,\alpha}_{\m{ACF};\beta;t}(\widehat{D}^t_{\zeta})}\right)
\end{align*}
where
\begin{align*}
    \left|\left|\Psi\right|\right|_{  \mathfrak{D}^{k+1,\alpha}_{\m{ACF};\beta;t}(\widehat{D}^t_{\zeta})}=\left|\left|\Psi\right|\right|_{\mathfrak{D}^{k+1,\alpha}_{\m{ACF};\beta;t}}+\left|\left|\widehat{D}^t_{\zeta}\Psi\right|\right|_{\mathfrak{C}^{k,\alpha}_{\m{ACF};\beta-1;t}}.
\end{align*}
denotes the graph norm.
\end{defi}

Finally, we are able to construct the adapted function spaces on $X_\zeta$ which will be used in the construction of the torsion-free $\m{Spin}(7)$-structure in section \ref{Existence of torsion-free Spin(7) Structures on Orbifold Resolutions}.

\begin{nota}
    In the following we will need to vary the width of the tubular neighbourhood of the singular stratum. Hence, we will set 
\begin{align*}
    \epsilon\sim t^\lambda
\end{align*}
for a $1>\lambda\geq 0$.
\end{nota}

\begin{defi}
    We define the function spaces 
\begin{align*}
    \mathfrak{D}\Omega^{\bullet;k+1,\alpha}_{\beta;t}(X_{\zeta;t})\und{1.0cm}\mathfrak{C}\Omega^{\bullet;k,\alpha}_{\beta-1;t}(X_{\zeta;t})
\end{align*}
by completing $\Omega^{\bullet}(X_{\zeta;t})$  with respect to the norms
    \begin{align*}
        \left|\left|\Psi\right|\right|_{\mathfrak{D}^{k+1,\alpha}_{\beta;t}}=&\left|\left|\delta^*_t\left((1-\chi_4)\cdot\Psi\right)\right|\right|_{\mathfrak{D}^{k+1,\alpha}_{\m{ACF};\beta;t}}+\left|\left|\chi_2\cdot\Psi\right|\right|_{C^{k+1,\alpha}_{\m{CFS};\beta;\epsilon}}\\
\left|\left|\Psi\right|\right|_{\mathfrak{C}^{k,\alpha}_{\beta-1;t}}=&\left|\left|\delta^*_t\left((1-\chi_4)\cdot\Psi\right)\right|\right|_{\mathfrak{C}^{k,\alpha}_{\m{ACF};\beta-1;t}}+\left|\left|\chi_2\cdot\Psi\right|\right|_{C^{k,\alpha}_{\m{CFS};\beta-1;\epsilon}}.
    \end{align*}
\end{defi}

\subsection{Vertically Harmonic Forms}
\label{Vertically Harmonic Forms}

The following Lemma proves that the vertical harmonic bundles exist, the fibres are given by smooth elements that decay at least by a power of $1-m$, i.e. 
\begin{align*}
    \mathcal{H}_{\m{AC};\beta}(N_\zeta/S)=\mathcal{H}_{L^2}(N_\zeta/S)
\end{align*}
for $\beta\in(1-m,0)$.

\begin{lem}
\label{adiabaticmircokernels}
The kernel of
\begin{align*}
    \widehat{D}_{\zeta;V}(s)\colon   \Omega^{\bullet;k+1,\alpha}_{\m{AC};\beta}(\nu_\zeta^{-1}(s))\rightarrow  \Omega^{\bullet;k,\alpha}_{\m{AC};\beta-1}(\nu_\zeta^{-1}(s))
\end{align*}
independent of $0<\alpha<1$, $k$ and $\beta\in[1-m,0)$. In particular, when $\beta<-m/2$ then $\mathcal{H}_{\m{AC};\beta}\cong\mathcal{H}_{L^2}$ by \cite{lockhart1985elliptic}.
\end{lem}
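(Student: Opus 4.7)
The claim is a pointwise statement about the fibre AC operator $\widehat{D}_{\zeta;V}(s)$ acting on a resolution of $\mathbb{R}^m/\Gamma$, so the argument is an application of the classical Lockhart--McOwen theory recalled in \cite{lockhart1985elliptic} together with an explicit examination of the indicial set. My plan is to separate the assertion into three independent statements: independence of the Hölder regularity $(k,\alpha)$, independence of the weight $\beta$ across the interval $[1-m,0)$, and coincidence with the $L^2$-kernel for $\beta<-m/2$.

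First I would dispose of the dependence on $(k,\alpha)$. Since $\widehat{D}_{\zeta;V}(s)$ is a first order elliptic operator with smooth coefficients on the AC manifold $\nu_\zeta^{-1}(s)$, any element $\Psi\in\ker\widehat{D}_{\zeta;V}(s)\subset  \Omega^{\bullet;k+1,\alpha}_{\m{AC};\beta}(\nu_\zeta^{-1}(s))$ is automatically smooth by interior elliptic regularity, and a weighted bootstrap (using that the AC structure identifies the operator with a perturbation of a scale-invariant model at infinity) promotes the decay of $\Psi$ to hold for all derivatives. Hence $\ker\widehat{D}_{\zeta;V}(s)$ depends only on the weight $\beta$.

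Next I would prove independence in $\beta\in[1-m,0)$. By \cite{lockhart1985elliptic}, for an AC elliptic operator of order one, the kernel realised on a weighted space at rate $\beta$ is constant on each connected component of $\mathbb{R}\setminus \mathcal{C}(\widehat{D}_0)$; moreover, jumps in the kernel can only happen when $\beta$ crosses a critical rate. It therefore suffices to verify that
\begin{align*}
    (1-m,0)\cap \mathcal{C}(\widehat{D}_0)=\emptyset.
\end{align*}
Using the definition of $\mathcal{C}(\widehat{D}_0)$ via the spectrum of the link operator $D_{\mathbb{S}^{m-1}/\Gamma}$, this reduces to a classical computation for the Hodge--de Rham operator on the round sphere, restricted to $\Gamma$-invariant eigenforms (since $\Gamma$ acts by isometries, passing to the quotient only throws out eigenvalues, never introduces new ones). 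One standard way to carry this out is to diagonalise on the cone $C(\mathbb{S}^{m-1}/\Gamma)$ using the decomposition $\Omega^\bullet\cong \Omega^\bullet(\mathbb{S}^{m-1}/\Gamma)\oplus dr\wedge \Omega^{\bullet-1}(\mathbb{S}^{m-1}/\Gamma)$; the resulting indicial operator is an algebraic $2\times 2$ block, whose eigenvalues produce precisely the integers outside $(1-m,0)$, with the endpoints $1-m$ and $0$ corresponding to the constant functions and their Hodge duals. This leaves the open interval $(1-m,0)$ free of critical rates, so the kernel is constant in $\beta$ on $[1-m,0)$.

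Finally, the identification with the $L^2$-kernel for $\beta<-m/2$ is a volume comparison: a form with $|\Psi|_{g_0}\lesssim r^\beta$ satisfies $\int |\Psi|^2\,\mathrm{dvol}\lesssim \int r^{2\beta}\cdot r^{m-1}\,dr<\infty$ precisely when $\beta<-m/2$, and conversely an $L^2$ harmonic form has weighted pointwise decay at any rate $\beta$ between $-m/2$ and the next critical rate by the standard elliptic estimates of \cite{lockhart1985elliptic}. Combining this with Step~2 gives $\mathcal{H}_{\m{AC};\beta}\cong \mathcal{H}_{L^2}$ for $\beta<-m/2$. The main obstacle in the argument is the algebraic verification in Step~2 that $(1-m,0)$ avoids the indicial set; once the indicial roots are expressed in terms of link eigenvalues this becomes routine, but it is the only step requiring real content beyond citing \cite{lockhart1985elliptic}.
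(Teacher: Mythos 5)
Your argument is genuinely different from the paper's, which does \emph{not} run a spectral/Lockhart--McOwen wall-crossing argument: instead it invokes elliptic regularity for the $(k,\alpha)$-independence, and then proves a direct decay result (Proposition \ref{diracspinorsthatdecaydecayfast}) via the Kato inequality, the Weitzenb\"ock formula, and a maximum-principle comparison, showing that \emph{any} $o(1)$ solution of $\widehat{D}_{\zeta;V}\Psi=0$ already decays like $\mathcal{O}(r^{1-m-k})$ with all derivatives. The $\beta$-independence is then immediate, since every kernel element at any rate $\beta\in[1-m,0)$ automatically lies in the smallest of these spaces.

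There are two substantive gaps in your route. First, the claim $(1-m,0)\cap\mathcal{C}(\widehat{D}_0)=\emptyset$ is asserted rather than proven, and it is not clear it is true: for the full Hodge--de Rham operator on all form degrees (as opposed to the scalar Laplacian or a Dirac operator on a spinor bundle), the indicial set of the cone $C(\mathbb{S}^{m-1}/\Gamma)$ receives contributions from \emph{all} eigenforms of the link operator, not only from link-harmonic forms, and some of these can land in the open interval $(1-m,0)$ for intermediate degrees. The absence of link cohomology in degrees $0<k<m-1$ does not by itself clear the interval. More importantly, even if $(1-m,0)$ were free of indicial roots, the Lockhart--McOwen theorem you cite only gives constancy of the kernel on each connected component of $\mathbb{R}\setminus\mathcal{C}(\widehat{D}_0)$, i.e.\ on the \emph{open} interval; the lemma you are proving includes the endpoint $\beta=1-m$, which \emph{is} a critical rate (from link-harmonic $0$- and $(m-1)$-forms), and the kernel of a weighted elliptic operator is in general only upper semicontinuous across a critical rate—it can jump up. So your argument as written does not reach $\beta=1-m$.

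The paper's PDE argument is precisely designed to sidestep both issues: it makes no claim about the indicial set at all, and the maximum-principle comparison establishes the decay rate $r^{1-m}$ directly, so the endpoint is handled for free. Note also that the paper then sharpens this further (Lemma \ref{imporveddecay}): for $m=4$, $\Gamma\subset\mathrm{SU}(2)$, the improved Kato constant of Seaman pushes the decay all the way to $r^{-m}$, giving $\mu_\mathcal{H}=-4$, a statement that a purely wall-crossing argument could not produce without identifying the kernel at each intervening critical rate. If you want to salvage a spectral approach, you would need to (i) actually compute the indicial set for the Hodge--de Rham complex on $C(\mathbb{S}^{m-1}/\Gamma)$ and (ii) show that at any critical rate in $[1-m,0)$ the actual (non-model) kernel does not jump—the second point is exactly what the Weitzenb\"ock argument supplies.
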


By the elliptic regularity of uniform elliptic asymptotically conical operator \cite{marshal2002deformations}, the first part of the statement holds. To prove the independence in $\beta$ we need the following result.

\begin{prop}
\label{diracspinorsthatdecaydecayfast}
Let $\Psi\in \Omega^{\bullet}(\nu^{-1}_\zeta(s))$. If $\widehat{D}_{\zeta;V}\Psi=0$ and $\Psi=o(1)$ then $\nabla^k\Psi=\mathcal{O}(r^{1-m-k})$.
\end{prop}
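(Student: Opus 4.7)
The plan is to establish the decay through a Lockhart--McOwen style asymptotic expansion for harmonic sections of asymptotically conical Dirac-type operators, combined with an explicit analysis of the indicial roots of the normal operator $\widehat{D}_0$ on the model cone.

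First I would upgrade regularity by a scaling argument. Since $\widehat{D}_{\zeta;V}$ is uniformly AC-elliptic on $\nu_\zeta^{-1}(s)$ and $\widehat{D}_{\zeta;V}\Psi = 0$, rescaled interior Schauder estimates applied on the annuli $A_{(R,2R)}$ promote the hypothesis $\Psi = o(1)$ to the quantitative statement $\Psi \in \Omega^{\bullet;k,\alpha}_{\m{AC};\beta}(\nu_\zeta^{-1}(s))$ for every $k,\alpha$ and some $\beta < 0$, together with the schematic derivative bounds $\nabla^{j}\Psi = \mathcal{O}(r^{\beta-j})$. Next I would invoke the AC asymptotic expansion theorem for harmonic sections of uniformly elliptic AC operators (as used in \cite{marshal2002deformations}): on the asymptotic end of $\nu_\zeta^{-1}(s)$, after pulling back via the asymptotic diffeomorphism, one obtains
\begin{align*}
    \Psi = \sum_{\lambda \in \mathcal{C}(\widehat{D}_0) \cap (\beta', 0)} r^{\lambda} \phi_\lambda + \widetilde{\Psi},
\end{align*}
where each $\phi_\lambda$ is a homogeneous harmonic section of the model operator $\widehat{D}_0$ of weight $\lambda$ on the normal cone and $\widetilde{\Psi}$ decays strictly faster than every rate appearing in the sum. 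Since $\Psi = o(1)$, the exponent $\beta'$ may be pushed arbitrarily close to $0$ from below.

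The main obstacle is the spectral gap
\begin{align*}
    \mathcal{C}(\widehat{D}_0) \cap (1-m, 0) = \emptyset,
\end{align*}
which forces every surviving term in the expansion to have $\lambda \leq 1-m$. Using the characterisation of $\mathcal{C}(\widehat{D}_0)$ in terms of the spectrum of $D_{\mathbb{S}^{m-1}/\Gamma}$ and the fact that $\Gamma$ acts freely on $\mathbb{S}^{m-1}$, this reduces to an explicit enumeration of the $\Gamma$-invariant portion of the Hodge--de Rham spectrum on the round sphere, which is available in closed form via spherical harmonics. The enumeration shows that the largest critical rate strictly below zero is exactly $1-m$, realised by the model Dirac Green's function on flat $\mathbb{R}^m/\Gamma$ built from the constant harmonic $0$-form on the link. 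Granting this gap, the expansion produces $\Psi = \mathcal{O}(r^{1-m})$, and the full estimate $\nabla^{k}\Psi = \mathcal{O}(r^{1-m-k})$ then follows by re-applying the rescaled Schauder bounds from the first step to the improved decay of $\Psi$ on $A_{(R,2R)}$.
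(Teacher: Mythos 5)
Your route is genuinely different from the paper's and is, in outline, a legitimate alternative. The paper proves the decay purely by PDE means: it invokes the refined Kato inequality $|\m{d}|\Psi||\le\sqrt{\tfrac{m-1}{m}}|\nabla^{g_{\zeta;V}}\Psi|$ for Dirac-harmonic sections, combines it with the Weitzenböck formula and the curvature decay $|\mathcal{R}_{g_{V;\zeta}}|=\mathcal{O}(r^{-m})$ to obtain a differential inequality $\Delta f\lesssim r^{-m}f$ for $f=|\Psi|^{\frac{m-2}{m-1}}$, and then closes with a comparison against a superharmonic barrier $g=\mathcal{O}(r^{2-m})$. The exponent $\tfrac{m-2}{m-1}$ converts the barrier rate $2-m$ precisely into $|\Psi|=\mathcal{O}(r^{1-m})$. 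You replace this with the Lockhart--McOwen polyhomogeneous expansion and a spectral-gap claim about $\mathcal{C}(\widehat{D}_0)$. The two methods are morally encoding the same fact --- the refined Kato constant $\sqrt{\tfrac{m-1}{m}}$ is exactly what rules out the naive Green's-function rate $2-m$ for Dirac-harmonic sections --- but they package it very differently.

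Where your write-up is thinner than the paper's proof, and where real work would be needed to make it rigorous:

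\emph{(i) The spectral gap.} Your whole argument pivots on the assertion that $\mathcal{C}(\widehat{D}_0)\cap(1-m,0)=\emptyset$, which you dispose of as an ``explicit enumeration.'' This claim is true, but it is the entire content of the proposition and deserves more than a gesture. The point is that homogeneous sections in $\ker(\m{d}+\m{d}^*)$ on the flat cone have componentwise-harmonic coefficients, so the available homogeneities are $\{j:j\ge0\}\cup\{2-m-j:j\ge0\}$; the dangerous rate in $(1-m,0)$ is exactly $\lambda=2-m$, and it must be excluded degree by degree --- a constant-coefficient $k$-form $\Psi_0$ times $r^{2-m}$ has $\m{d}(r^{2-m}\Psi_0)=(2-m)r^{1-m}\m{d}r\wedge\Psi_0\ne0$ for $0\le k<m$, and dually for $\m{d}^*$. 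None of this appears in your proposal, and the phrase ``$\Gamma$-invariant portion of the Hodge--de Rham spectrum on the round sphere'' obscures that what you actually need is this exclusion of $\lambda=2-m$ together with tracking the degree-dependent shift $\lambda+\tfrac{m-1}{2}-\bullet$ in the paper's definition of $\mathcal{C}(\widehat{D}_0)$.

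\emph{(ii) The initial regularity upgrade.} You assert that rescaled Schauder promotes $\Psi=o(1)$ to $\Psi\in\Omega^{\bullet;k,\alpha}_{\m{AC};\beta}$ for some $\beta<0$. Rescaled Schauder only gives $|\nabla^j\Psi|=o(r^{-j})$; it does \emph{not} produce polynomial decay, and $o(1)$ alone does not place $\Psi$ in any strictly negatively-weighted space. To get from $o(1)$ to polynomial decay one must already invoke the discreteness of indicial roots (a decaying solution of the model conical ODE can only carry strictly negative indicial rates, which are separated from zero). This is precisely the kind of step the paper's maximum-principle argument avoids: the barrier comparison works directly from boundedness, with no intermediate weighted-space step.

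So: the route is sound, and if carried out would yield the stronger statement of a full polyhomogeneous expansion (which the paper's barrier argument does not give). But the proposal as written leaves the two most delicate steps entirely implicit, while the paper's argument is shorter and closes cleanly with classical tools.
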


\begin{proof}
We will prove this statement similar to the proof of Proposition 5.10 in \cite{walpuski2012G2}. Notice that $\widehat{D}_{\zeta;V}$ is a Dirac operator and hence, the Kato inequality
\begin{align}
\label{Katoinequalty}
    |\m{d}|\Psi||\leq\sqrt{\frac{m-1}{m}}|\nabla^{g_{\zeta;V}}\Psi|
\end{align}
holds on the complement of the vanishing locus of $\Psi$. By using the Weitzen-böck formula 
\begin{align*}
    (\widehat{D}_{\zeta;V})^2=(\nabla^{g_{\zeta;V}})^*\nabla^{g_{\zeta;V}}+\mathcal{R}_{g_{V;\zeta}},
\end{align*}
$|F^{tw.}_{\nabla^{g_{\zeta;V}}}|=\mathcal{O}(r^{-m})$, and the identity
\begin{align*}
    \Delta^{g_{V,\zeta}}|\Psi|^2+2|\nabla^{g_{\zeta;V}}\Psi|^2=2\left<(\nabla^{g_{\zeta;V}})^*\nabla^{g_{\zeta;V}}\Psi,\Psi\right>
\end{align*}
we obtain the following estimate on the complement of the vanishing locus of $\Psi$
\begin{align*}
    \frac{2(m-1)}{m-2} \Delta^{g_{\zeta;V}}|\Psi|^{\frac{m-2}{m-1}}\leq&|\Psi|^{-\frac{m+1}{m}}\left(\Delta^{g_{\zeta;V}}|\Psi|^{2}+\frac{2m}{m-1}|\m{d}|\Psi||^2\right)\\
    \leq&|\Psi|^{-\frac{m}{m-1}}\left(\Delta^{g_{\zeta;V}}|\Psi|^{2}+2|\nabla^{g_{\zeta;V}}\Psi|^2\right)\\
    =&2|\Psi|^{-\frac{m}{m-1}}\left<(\nabla^{g_{\zeta;V}})^*\nabla^{g_{\zeta;V}}\Psi,\Psi\right>\\
    \leq&2|\Psi|^{-\frac{m}{m-1}}\left(\left<(\widehat{D}_{\zeta;V})^*\widehat{D}_{\zeta;V}\Psi,\Psi\right>+\left<\mathcal{R}_{g_{V;\zeta}}\Psi,\Psi\right>\right)\\
    \leq&\mathcal{O}(r^{-m})|\Psi|^{\frac{m-2}{m-1}}
\end{align*}
Let $f=|\Psi|^{\frac{m-2}{m-1}}$ on $(V_\Psi)^c=\{m\in \nu_\zeta^{-1}(s)|\Psi(m)\neq0\}$. By the above
\begin{align*}
    \Delta^{g_{V,\zeta}}f\lesssim f\cdot w_{\m{AC};m}.
\end{align*}
Since $f$ is bounded by \cite[Theorem 8.3.6(a)]{joyce2000compact}\footnote{Here we used a Sobolev embedding of $W^{2,p}\hookrightarrow C^{0,\alpha}$ for big enough $p>1$.}, there exists a $g=\mathcal{O}(r^{2-m})$ such that 
\begin{align*}
    \Delta g=\left\{\begin{array}{ll}
         (\Delta^{g_{V,\zeta}} f)^+& \text{on } (V_\Psi)^c\\
         0& V_\Psi
    \end{array}\right.
\end{align*}
where $(.)^+$ denotes the positive part. As $g$ is superharmonic and decays at infinity, the maximum principle implies its nonnegativity. Further, the function $f-g$ is subharmonic on $(V_\Psi)^c$, decays at infinity and is nonnegative on the boundary of $(V_\Psi)^c$. Again, using the maximum principle $f\leq g=\mathcal{O}(r^{2-m})$ and we deduce that
\begin{align*}
    |\Psi|=\mathcal{O}(r^{1-m}).
\end{align*}
\end{proof}

\begin{defi}
    Define the bundle of conically $\beta$-\textbf{logarithmic solutions} of $\widehat{D}_{0;V}$ by
\begin{align*}
        \mathcal{L}_{\m{C};\beta}(\widehat{D}_{0;V})\coloneqq\left\{\Phi=\left.\sum^n_{i=0}\m{log}^i(r)\Phi_i\in \Gamma_{\m{C};\beta}(\nu_0^{-1}(s),\widehat{E}_0)\right|\widehat{D}_{0;V}\Phi=0,\Phi_i\text{ is }\beta-\text{homogen.}\right\}
    \end{align*}
\end{defi}

\begin{rem}
From now on we assume that there exists a critical rate $\mu_{\mathcal{H}}\in(-\infty,1-m]$ such that the kernels 
\begin{align*}
    \widehat{D}_{\zeta;V}(s)\colon   \Omega^{\bullet;k+1,\alpha}_{\m{AC};\beta}(\nu_\zeta^{-1}(s))\rightarrow  \Omega^{\bullet;k,\alpha}_{\m{AC};\beta-1}(\nu_\zeta^{-1}(s))
\end{align*}
are isomorphic for all $\beta\in[\mu_{\mathcal{H}},0)$. This implies that for $\beta_2>\beta_1>\mu_{\mathcal{H}}$ there exists an exact sequence  
\begin{equation*}
    \begin{tikzcd}
	\mathcal{C}o\mathcal{H}_{\m{AC};\beta_1}(N_\zeta/S) &  \mathcal{C}o\mathcal{H}_{\m{AC};\beta_2}(N_\zeta/S) & {\bigoplus_{\beta_1<\beta<\beta_2}\mathcal{L}_{\m{C},\beta}(\widehat{D}_{0;V})}  
	\arrow[from=1-1, hook, to=1-2]
	\arrow[from=1-2,two heads, to=1-3]
\end{tikzcd}
\end{equation*}
Furthermore, there exists an isomorphism of vector bundles
\begin{align*}
    \mathcal{L}_{\m{C};\beta}(\widehat{D}_{0;V})\cong\mathcal{E}_{\beta+\frac{m-1}{2}-\delta(\widehat{E}_0)},
\end{align*}
where $\mathcal{E}_\lambda$ is the bundle $\wedge^\bullet T^*S$ twisted by the vertical $\lambda$-eigenspace bundle of the Hodge--de Rham operator on the normal cone.
\end{rem}

\begin{lem}
\label{imporveddecay}
Let $m=4$, $\Gamma\subset\m{SU}(2)$ and $(N_\zeta,g_{\zeta;V})$ be fibrewise ALE.
Then $\mathcal{H}_{\m{AC};\beta}(N_\zeta/S)\cong \mathcal{H}^2_{-}(N_\zeta/S)$ and $\mu_{\mathcal{H}}=-4$. 
\end{lem}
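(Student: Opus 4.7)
The statement is fibrewise, so the plan is to fix a generic fibre $F_s = \nu_\zeta^{-1}(s)$, which by hypothesis is a smooth hyperkähler ALE 4-manifold asymptotic to $\mathbb{R}^4/\Gamma$ with $\Gamma\subset \mathrm{SU}(2)$. The proof has two independent ingredients: (a) identification of the fibrewise $L^2$-harmonic forms with $\mathcal{H}^2_-(F_s)$, and (b) the sharp $r^{-4}$ decay upgrade of Proposition~\ref{diracspinorsthatdecaydecayfast}.

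For (a) I would first dispose of the non-middle degrees. Harmonic $0$-forms are constants, hence zero in $L^2$, and by $*$-duality the same holds in degree~$4$. In degrees~$1$ and~$3$, the Bochner--Weitzenböck formula on the Ricci-flat manifold $F_s$ reduces to $\Delta=\nabla^*\nabla$, so integration by parts (justified as in Proposition~\ref{diracspinorsthatdecaydecayfast} using $\omega\in L^2$ and completeness) yields $\nabla\omega=0$; but a parallel $1$-form on $F_s$ must be asymptotic to a $\Gamma$-invariant parallel $1$-form on $\mathbb{R}^4/\Gamma$, and no such nonzero form exists because $\Gamma\subset\mathrm{SU}(2)$ acts with only the origin fixed. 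For degree~$2$, since $\mathbb{S}^3/\Gamma$ is a rational homology sphere the long exact sequence gives $H^2_c(F_s;\mathbb{R})\cong H^2(F_s;\mathbb{R})$, and the $L^2$-Hodge theorem of Hausel--Hunsicker--Mazzeo identifies this with $\mathcal{H}^2_{L^2}(F_s)$. The intersection form on $H^2(F_s;\mathbb{R})$ is negative of the Cartan matrix of the ADE diagram of $\Gamma$, hence negative definite; as the intersection pairing on harmonic $2$-forms is $\int(|\omega^+|^2-|\omega^-|^2)\,d\mathrm{vol}$, this forces $\omega^+=0$. Thus $\mathcal{H}_{L^2}(F_s)=\mathcal{H}^2_-(F_s)$.

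For (b), Proposition~\ref{diracspinorsthatdecaydecayfast} only gives $|\omega|=O(r^{-3})$, so we need to eliminate any indicial contribution in the window $(-4,-3]$. I would perform an asymptotic expansion at infinity of $\omega$ against homogeneous $\widehat{D}_{0;V}$-harmonic forms: the critical rates arise from eigenforms of the Hodge--de~Rham operator on $\mathbb{S}^3/\Gamma$, pulled back via the cone. The anti-self-dual, $\Gamma$-invariant homogeneous harmonic $2$-forms on $\mathbb{R}^4/\Gamma$ with decay rate in $(-4,-3]$ correspond to specific spherical modes on $\mathbb{S}^3$, which turn out to be $\m{SU}(2)_L$-equivariant and therefore not $\Gamma$-invariant for any nontrivial $\Gamma\subset \mathrm{SU}(2)$; equivalently, the leading $r^{-3}$ term would generate a parallel $\Gamma$-invariant anti-self-dual $2$-form on $\mathbb{R}^4/\Gamma$, which vanishes by the same representation-theoretic argument used in degree~$1$. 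Iterating standard indicial root shift arguments (e.g.\ \cite{lockhart1985elliptic}) then bootstraps $|\omega|=O(r^{-4})$ and correspondingly $|\nabla^k\omega|=O(r^{-4-k})$, so $\omega\in \Omega^{\bullet;k+1,\alpha}_{\m{AC};-4}$, proving $\mu_{\mathcal{H}}\le -4$. The sharpness (that $-4$ is attained) follows because each cohomology class of $H^2(F_s)$ is represented by some nonzero $\omega$, and $\omega$ cannot decay faster than $r^{-4}$ without being zero by unique continuation at infinity on the cone.

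The main technical obstacle is the indicial analysis on $\mathbb{S}^3/\Gamma$: one has to show that the Hodge--de~Rham eigenspaces corresponding to rates strictly between $-4$ and $-3$ carry no nontrivial $\Gamma$-invariant anti-self-dual vectors. I expect this to reduce to decomposing the space of harmonic $2$-forms on $\mathbb{R}^4$ under the left $\mathrm{SU}(2)_L \times \mathrm{SU}(2)_R$-action (identifying $\mathbb{R}^4\cong\mathbb{H}$, with $\Gamma\subset \mathrm{SU}(2)_L$ and with $\wedge^2_-$ carrying the trivial $\mathrm{SU}(2)_L$-representation) and observing that the relevant intermediate rate contributions vanish upon taking $\Gamma$-invariants; this matches the Bando--Kasue--Nakajima principle for Kähler/dimension~$4$ that already appears in the remark improving ALE rates from $1-m$ to $-m$.
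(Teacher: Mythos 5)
Your proof of the identification $\mathcal{H}_{\mathrm{AC};\beta}(N_\zeta/S)\cong \mathcal{H}^2_{-}(N_\zeta/S)$ (your part (a)) is sound and takes a reasonable, somewhat more explicit route than the paper: the paper simply cites Lockhart--McOwen and Joyce for the isomorphism $\underline{\mathbb{R}}\oplus\mathcal{H}^\bullet(N_\zeta/S)\cong\m{H}^\bullet(N_\zeta/S)$, whereas you unpack it via Bochner in degrees $1,3$, compactly supported cohomology in degree $2$, and the negative definiteness of the intersection form (Cartan matrix), with Hausel--Hunsicker--Mazzeo supplying the $L^2$-Hodge theorem. Both get you there; yours is more self-contained.

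The genuine gap is in your part (b), the upgrade to $\mu_{\mathcal{H}}=-4$. Your indicial-root argument contains a misstatement: you write that ``the leading $r^{-3}$ term would generate a parallel $\Gamma$-invariant anti-self-dual $2$-form on $\mathbb{R}^4/\Gamma$,'' but a homogeneous harmonic $2$-form of degree $-3$ on the cone is not a parallel (constant) form --- constant forms sit at rate $0$. So the reduction to ``no $\Gamma$-invariant element of $\wedge^2_-$'' does not address the modes you need to rule out. Worse, that reduction fails on its own terms: $-\mathrm{id}$ acts trivially on $\wedge^2$, so for $\Gamma=\mathbb{Z}_2\subset\mathrm{SU}(2)$ \emph{all} of $\wedge^2_-$ is $\Gamma$-invariant, yet the Eguchi--Hanson $L^2$-harmonic $2$-form still decays like $r^{-4}$. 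What you would actually have to do is compute the spectrum of the Hodge--de Rham operator on $\mathbb{S}^3/\Gamma$ restricted to the relevant degrees and verify the absence of critical rates in $(-4,-3]$; you gesture at this but do not carry it out, and the representation-theoretic shortcut you propose in its place is incorrect.

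The paper bypasses all of this. It invokes Seaman's refined Kato inequality for anti-self-dual $2$-forms on $4$-manifolds,
\begin{align*}
    |\m{d}|\Psi||\leq\sqrt{\tfrac{2}{3}}\,|\nabla^{g_{\zeta;V}}\Psi|,
\end{align*}
improving the generic Dirac constant $\sqrt{(m-1)/m}=\sqrt{3/4}$. Feeding this into the maximum-principle argument of Proposition~\ref{diracspinorsthatdecaydecayfast} means replacing the power $|\Psi|^{(m-2)/(m-1)}=|\Psi|^{2/3}$ by $|\Psi|^{1/2}$: the auxiliary function still satisfies $f=O(r^{2-m})=O(r^{-2})$, but now $|\Psi|=f^{2}=O(r^{-4})$ instead of $|\Psi|=f^{3/2}=O(r^{-3})$. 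This is a purely analytic, pointwise improvement that requires no indicial-root computation and works uniformly for all $\Gamma\subset\mathrm{SU}(2)$. If you want to keep the indicial-root route, you need to actually exhibit the spectral gap on $\mathbb{S}^3/\Gamma$ for $2$-forms; as written, your sketch does not do so.
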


\begin{proof}

In some cases we can even further improve the Kato inequality \eqref{Katoinequalty}. The only vertical harmonic forms on $N_\zeta$ are anti-selfdual two forms. Following the results of Walter Seaman \cite{seaman1991} we improve the Kato inequality to 
\begin{align}
\label{improvedKatoinequalty}
    |\m{d}|\Psi||\leq\sqrt{\frac{2}{3}}|\nabla^{g_{\zeta;V}}\Psi|.
\end{align}
This is achieved by a closer inspection of the Weitzenböck-curvature. Then by following the arguments of the proof of Proposition \ref{diracspinorsthatdecaydecayfast} with $|\Psi|^{2/3}$ being replaced by $|\Psi|^{1/2}$ we deduce the statement. Results of \cite{lockhart1987fredholm} and \cite{joyce2000compact} ensure the existence of an isomorphism of flat vector bundles
\begin{align*}
    \underline{\mathbb{R}}\oplus\mathcal{H}^\bullet(N_\zeta/S)\cong\m{H}^\bullet(N_\zeta/S).
\end{align*}  
\end{proof}

\begin{lem}
\label{bundlesexist}
    The bundles $\mathcal{H}_{\m{AC};\beta}(N_\zeta/S)$ and $\mathcal{C}o\mathcal{H}_{\m{AC};\beta}(N_\zeta/S)$ always exist and coincide with 
    \begin{align*}
        \mathcal{H}_{\m{AC};\beta}(N_\zeta/S)=\zeta^*\mathcal{H}_{\m{AC};\beta}(\mathfrak{M}/\mathfrak{P})
    \end{align*}
    and
    \begin{align*}
        \mathcal{C}o\mathcal{H}_{\m{AC};\beta}(N_\zeta/S)=\zeta^*\mathcal{C}o\mathcal{H}_{\m{AC};\beta}(\mathfrak{M}/\mathfrak{P}).
    \end{align*}
\end{lem}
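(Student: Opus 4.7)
The plan is to construct the bundles at the universal level over $\mathfrak{P}$ and then obtain the statement on $N_\zeta/S$ by pullback along $\zeta$. First, I would observe that the universal moduli bundle $\mathfrak{k}\colon \mathfrak{M}\rightarrow \mathfrak{P}$ carries a smooth family of ACF Riemannian structures induced by the universal four form $\Phi_{\mathfrak{M}}$ from Proposition \ref{ACFstructuresfrompullbacks4}/\ref{ACFstructuresfrompullbacks6}, asymptotic fibrewise to the normal cone bundle $N_0\rightarrow S$. This yields a smooth family of vertical Hodge--de Rham operators $\widehat{D}_{\mathfrak{M}/\mathfrak{P};V}$, whose fibrewise realisations on AC-weighted Hölder spaces are semi-Fredholm by \cite{lockhart1985elliptic,marshal2002deformations}, with the same critical rate set as the operators $\widehat{D}_{\zeta;V}$.

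Next, I would show that the kernel dimension is locally constant on $\mathfrak{P}$. Lemma \ref{adiabaticmircokernels} identifies $\mathcal{H}_{\m{AC};\beta}$ with $\mathcal{H}_{L^2}$ for $\beta\in[1-m,0)$, and classical $L^2$-Hodge theory for (Q)ALE spaces \cite{lockhart1987fredholm,joyce2000compact} expresses the latter purely through topological data of the fibre. Since the diffeomorphism type of the fibre is constant on each connected component of $\mathfrak{P}\setminus \mathfrak{W}$, and lower semi-continuous across the walls $\mathfrak{W}$, the topological identification upgrades semi-continuity to local constancy. The kernels therefore assemble into a vector bundle $\mathcal{H}_{\m{AC};\beta}(\mathfrak{M}/\mathfrak{P})\rightarrow\mathfrak{P}$. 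The cokernel bundle $\mathcal{C}o\mathcal{H}_{\m{AC};\beta}(\mathfrak{M}/\mathfrak{P})$ is obtained dually, either via Hodge-star duality on AC-weighted Hölder spaces between $\widehat{D}_{\mathfrak{M}/\mathfrak{P};V}$ and its formal adjoint at the conjugate weight, or by identifying it with a complementary subspace to the image of a uniformly Fredholm realisation.

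Finally, since $N_\zeta=\zeta^*\mathfrak{M}$ by construction, the vertical Hodge--de Rham operator on $\nu_\zeta\colon N_\zeta\rightarrow S$ is the pullback of $\widehat{D}_{\mathfrak{M}/\mathfrak{P};V}$ along $\zeta$. Naturality of the fibrewise kernel and cokernel constructions under pullback of smooth families of semi-Fredholm operators then immediately yields the claimed identifications
\begin{align*}
\mathcal{H}_{\m{AC};\beta}(N_\zeta/S)=\zeta^*\mathcal{H}_{\m{AC};\beta}(\mathfrak{M}/\mathfrak{P}),\qquad \mathcal{C}o\mathcal{H}_{\m{AC};\beta}(N_\zeta/S)=\zeta^*\mathcal{C}o\mathcal{H}_{\m{AC};\beta}(\mathfrak{M}/\mathfrak{P}).
\end{align*}

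The main obstacle will be verifying local constancy of the kernel rank across the walls $\mathfrak{W}\subset\mathfrak{P}$, where fibres of $\mathfrak{M}$ acquire stratified singularities and the usual $L^2$-Hodge theorems for smooth (Q)ALE models do not directly apply. I would address this by combining the uniform analytic framework of \cite{majewskiDirac} for singular families with the explicit description of wall crossings as Gromov--Hausdorff degenerations within the equivariant universal moduli from Section \ref{Parameter Spaces of Hyperkähler and Calabi-Yau (Q)ALE-Spaces}, so that both sides of the identification are computed against the same topological invariant, and the semi-continuity argument forces rank constancy across $\mathfrak{W}$ as well.
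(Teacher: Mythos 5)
The paper's proof is a single sentence: the bundles exist because they are determined by the algebraic (GIT/hyperkähler-quotient) construction of the universal family $\mathfrak{k}\colon\mathfrak{M}\to\mathfrak{P}$ — in effect pointing to the tautological bundles of Corollary~\ref{Ktheorygenerators} and the identification $\m{H}^2(M_\zeta)\cong\mathfrak{h}_\Gamma$ of Corollary~\ref{cohomologyequalcartan}, which live universally over the parameter space, so that both $\mathcal{H}_{\m{AC};\beta}$ and $\mathcal{C}o\mathcal{H}_{\m{AC};\beta}$ assemble automatically into (stratified) bundles over $\mathfrak{M}/\mathfrak{P}$. The pullback identification is then immediate from $N_\zeta = \zeta^*\mathfrak{M}$. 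Your route is genuinely different: you attempt to prove the existence analytically, by constructing the universal family of semi-Fredholm operators $\widehat{D}_{\mathfrak{M}/\mathfrak{P};V}$, arguing local constancy of the kernel rank, and then invoking naturality of kernel bundles under pullback. This is a legitimate alternative strategy, and the last step (pullback naturality) is correct and matches the spirit of the paper. The algebraic route has the advantage of bypassing the analytic continuity issues entirely, since the harmonic bundle is identified outright with a fixed local system twisted by frame data, rather than having to be extracted from a variable family of operators.

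There is, however, a genuine problem in your semi-continuity argument across the walls $\mathfrak{W}\subset\mathfrak{P}$. You claim that "the semi-continuity argument forces rank constancy across $\mathfrak{W}$ as well," but this is almost certainly false and not needed. When a path in $\mathfrak{P}$ crosses a wall, the paper's wall-crossing classification (Propositions~\ref{smoothtypeofXzeta4}/\ref{smoothtypeofXzeta6} and the cited Lemma of Craw--Ishii) says a curve or a divisor in the fibre can be contracted; correspondingly $\m{H}^2_c$ of the singular fibre over a wall point drops in rank, so the $L^2$-harmonic cohomology jumps. Lower semi-continuity of the kernel rank does not "upgrade" to constancy here, because the topological invariant you are comparing against (fibre cohomology via Lockhart--McOwen / Joyce) is itself discontinuous across $\mathfrak{W}$. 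What rescues the lemma is that it does not assert constant rank: $\mathfrak{k}\colon\mathfrak{M}\to\mathfrak{P}$ is only a stratified fibration, and the assertion is that the harmonic and coharmonic data assemble into bundles over that stratified space, which then pull back to genuine vector bundles on $S$ exactly when $\m{im}(\zeta)$ avoids $\mathfrak{W}$ (case (i) of the wall-crossing propositions). You should either drop the rank-constancy claim and argue for a stratified bundle directly, or restrict to sections $\zeta$ with $\m{im}(\zeta)\subset\mathfrak{P}\setminus\mathfrak{W}$, where on each connected component your constancy argument is sound and you recover the paper's statement for the smooth case.
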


\begin{proof}
    The existence of the bundles of vertically AC-harmonic and AC-coharmonic forms on $\mathfrak{M}$ follows directly from the algebraic construction of $\mathfrak{M}$.
\end{proof}

\begin{ex}
    Let $N_\zeta$ be as above with $\Gamma=\mathbb{Z}_2$. Then in \cite[Thm. 3.26]{platt} it was shown that $\mu_{\mathcal{K}}=-4$ and $\mu_{\mathcal{C}o\mathcal{K}}=-2$ and
    \begin{align*}
        rk(\mathcal{C}o\mathcal{K}_{\m{AC};\beta-1})=\left\{\begin{array}{ll}
             0,&\text{if } \beta\in(0,-2) \\
             6,&\text{if } \beta\in(-2,-4)
        \end{array}\right.
    \end{align*}
\end{ex}

\subsection{Uniform Elliptic Theory}
\label{Uniform Ellitpic Theory}

The following sections will be devoted on sketching the main ingredients needed to state the main theorems of the author's work in \cite{majewskiDirac}.\\

The bundles of vertically harmonic and coharmonic bundles are finite dimensional Dirac bundles\footnote{For $\mathcal{C}o\mathcal{H}_{\m{AC};\beta}$ we need $\beta\notin\mathcal{C}(\widehat{D}_0)$.}. The operator $\widehat{D}^t_\zeta$ restricts to a family of elliptic operators on $S$ of Dirac type. These families $\mathfrak{D}^t_{\mathcal{K};\beta}$ and $\mathfrak{D}^t_{\mathcal{C}o\mathcal{K};\beta-1}$ admit well-defined adiabatic limits
\begin{align*}
    \mathfrak{D}_{\mathcal{K};\beta}\coloneqq&\lim_{t\to0}\mathfrak{D}^t_{\mathcal{K};\beta}\\
    \mathfrak{D}_{\mathcal{C}o\mathcal{K};\beta-1}\coloneqq&\lim_{t\to0}\mathfrak{D}^t_{\mathcal{C}o\mathcal{K};\beta-1}
\end{align*}
which will be referred to as the adiabatic residues of $\widehat{D}^t_\zeta$.

\begin{defi}
We will refer to the space 
\begin{align*}
    \mathfrak{Ker}_{\m{ACF};\beta}(\widehat{D}^0_\zeta)=\m{ker}\left(\mathfrak{D}_{\mathcal{K};\beta}\right)\und{.5cm}\mathfrak{CoKer}_{\m{ACF};\beta-1}(\widehat{D}^0_\zeta)=\m{coker}\left(\mathfrak{D}_{\mathcal{C}o\mathcal{K};\beta-1}\right).
\end{align*}
as the adiabatic kernel and adiabatic cokernel.
\end{defi}

\begin{defi}
    A resolution $\rho_{t^2\cdot\zeta}\colon (N_{t^2\cdot\zeta},\Phi_{t^2\cdot\zeta},g_{t^2\cdot\zeta})\dashrightarrow (N_0,\Phi_0,g_0)$ is called isentropic if 
    \begin{align*}
        \m{H}^{\bullet}(N_\zeta)\cong\mathfrak{Ker}_{\m{ACF};\beta}(\widehat{D}^0_\zeta)\oplus\m{H}^\bullet(S).
    \end{align*}
\end{defi}
    An isentropic process in statistical mechanics is an adiabatic process that is reversible. In the same manner an isentropic resolution can be modelled by its adiabatic limit.\\

The adiabatic residues have a geometric interpretation. Let $(N_\zeta,g^t_\zeta)$ satisfy 
\begin{align}
    \m{H}^\bullet(N_\zeta/S)\cong\underline{\mathbb{R}}\oplus\mathcal{H}^\bullet(N_\zeta/S).
\end{align}
The adiabatic residue $\mathfrak{D}_{\mathcal{K};\beta}=\mathfrak{D}_{\mathcal{K};L^2}$ of the Hodge--de Rham operator on $(N_\zeta,g_\zeta)$ is given by the Gauß-Manin-Hodge--de Rham operator
\begin{align*}
    \mathfrak{D}_{GM} \colon\bigoplus_{p+q=\bullet}\Omega^p\left(S, \mathcal{H}^q(N_\zeta/S)\right)\rightarrow\bigoplus_{p+q=\bullet}\Omega^p\left(S, \mathcal{H}^q(N_\zeta/S)\right)
\end{align*}
given by the Hodge--de Rham operator twisted by the flat Gauß-Manin connection on $\mathcal{H}^\bullet(N_\zeta/S)\rightarrow S$. In particular, this operator can be seen a folding of the elliptic complex induced by the local system
\begin{align*}
    L^2GM^\bullet(\nu_\zeta\colon N_\zeta\rightarrow S)=(\Omega^{\bullet}(S,\mathcal{H}^\bullet(N_\zeta/S)),\m{d}_{GM}).
\end{align*}

By classical results of algebraic topology, we can identify the adiabatic kernel with the second page of the Leray-Serre-spectral sequence. 

\begin{thm}[\cite{bott1982differential}]
There exists a spectral sequence $E^{p,q}_N$ associated to the double complex
\begin{align*}
    (K^{\bullet_1,\bullet_2},\m{d})=(\check{C}^{\bullet_1}(\nu_\zeta^{-1}\mathcal{U},\Omega^{\bullet_2}),\delta+(-1)^{\bullet_1}\m{d}_{dR})
\end{align*}
converging to the cohomology of $N_\zeta$, i.e. $\tot{p+q=\bullet}{}{E^{p,q}_\infty}=\m{H}^\bullet(N_\zeta)$. Here $\mathcal{U}$ is a good cover of $S$. Moreover, the second page of this spectral sequence can be identified with 
\begin{align*}
    (\check{\m{H}}^{\bullet_1}(\nu_\zeta^{-1}\mathcal{U},\m{H}^{\bullet_2}(N_\zeta/S)),\m{d}^{p,q}_2).
\end{align*}
\end{thm}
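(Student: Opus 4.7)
The plan is to prove this by analysing the two spectral sequences associated with the double complex $(K^{\bullet_1,\bullet_2},\delta+(-1)^{\bullet_1}\m{d}_{dR})$, where $K^{p,q}=\check{C}^{p}(\nu_\zeta^{-1}\mathcal{U},\Omega^q)=\prod_{i_0<\dots<i_p}\Omega^q(\nu_\zeta^{-1}(U_{i_0\dots i_p}))$. Both spectral sequences converge to the total cohomology $\m{H}^\bullet(\mathrm{Tot}(K),\delta+(-1)^{\bullet_1}\m{d}_{dR})$, and the theorem follows by identifying this total cohomology with $\m{H}^\bullet(N_\zeta)$ via one filtration and with the claimed $E_2^{p,q}$ via the other.

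First I would consider the spectral sequence $F^{p,q}$ obtained by filtering rows and computing the Čech differential $\delta$ first. Choosing a smooth partition of unity $\{\psi_i\}$ subordinate to the good cover $\mathcal{U}$, pulled back to $\{\nu_\zeta^*\psi_i\}$ on $N_\zeta$, yields the standard Mayer-Vietoris contracting homotopy showing that the augmented Čech complex with values in the fine sheaf $\Omega^q$ is acyclic. Consequently $F_1^{p,q}=0$ for $p>0$ and $F_1^{0,q}=\Omega^q(N_\zeta)$, so $F_2^{0,q}=\m{H}^q(N_\zeta)$ and the sequence degenerates. This identifies $\tot{p+q=\bullet}{}{E^{p,q}_\infty}$ with $\m{H}^\bullet(N_\zeta)$.

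Next I would analyse the other spectral sequence $E^{p,q}$, obtained by filtering columns and computing $\m{d}_{dR}$ first. Since $\mathcal{U}$ is a good cover, each finite intersection $U_{i_0\dots i_p}$ is contractible, and the Riemannian fibration $\nu_\zeta\colon N_\zeta\to S$ is locally trivial in the topological sense; hence $\nu_\zeta^{-1}(U_{i_0\dots i_p})\simeq (N_\zeta)_s$ deformation retracts onto a typical fibre. The Poincaré lemma in fibre direction then gives
\begin{align*}
    E_1^{p,q}=\check{C}^p(\nu_\zeta^{-1}\mathcal{U},\m{H}^q(N_\zeta/S))=\check{C}^p(\mathcal{U},\m{H}^q(N_\zeta/S)),
\end{align*}
where $\m{H}^q(N_\zeta/S)$ denotes the presheaf $U\mapsto \m{H}^q(\nu_\zeta^{-1}(U))$, which on the good cover coincides with the locally constant sheaf of fibrewise cohomology. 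Taking the remaining Čech differential yields $E_2^{p,q}=\check{\m{H}}^p(\nu_\zeta^{-1}\mathcal{U},\m{H}^q(N_\zeta/S))$, as claimed. Combining the two computations gives convergence to $\m{H}^\bullet(N_\zeta)$.

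The main technical point to verify is local triviality of $\nu_\zeta$ over the contractible opens of the good cover; this requires some care because $(N_\zeta,g_\zeta)$ may be stratified when $\m{im}(\zeta)$ crosses the walls $\mathfrak{W}\subset \mathfrak{P}$. However, local triviality here is needed only topologically, and follows from the fact that $\nu_\zeta$ is the pullback $\zeta^*\mathfrak{k}$ of the universal moduli bundle $\mathfrak{k}\colon\mathfrak{M}\to\mathfrak{P}$, which is a locally trivial fibre bundle of topological spaces, together with the fact that de Rham cohomology on the (possibly stratified) model fibre $(N_\zeta)_s$ agrees with its singular cohomology. Beyond this, the proof reduces to the standard Čech–de Rham double complex argument, and one may alternatively simply cite Bott–Tu \cite{bott1982differential}.
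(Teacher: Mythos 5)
The paper does not prove this theorem; it is stated as a citation to Bott--Tu \cite{bott1982differential}, so there is no internal proof to compare against. Your reconstruction of the standard \v{C}ech--de Rham double complex argument is correct: the first filtration (Mayer--Vietoris collapse via a pulled-back partition of unity) identifies the abutment with $\m{H}^\bullet(N_\zeta)$, and the second filtration (fibrewise Poincar\'e lemma over contractible trivializing opens) identifies $E_2^{p,q}$ with $\check{\m{H}}^p(\mathcal{U},\m{H}^q(N_\zeta/S))$. This is exactly the argument expected when one cites Bott--Tu, and you are right to flag local triviality of $\nu_\zeta$ as the one hypothesis that must be checked in context.

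One caveat on your final paragraph: you justify local triviality by saying $\mathfrak{k}\colon\mathfrak{M}\to\mathfrak{P}$ is ``a locally trivial fibre bundle of topological spaces,'' but the paper only asserts that $\kappa$ (hence $\mathfrak{k}$) restricts to a smooth fibre bundle over the regular strata $\mathfrak{P}\setminus\mathfrak{W}$; over the walls it is merely a map of stratified spaces. So the pullback argument gives local triviality of $\nu_\zeta$ only when $\m{im}(\zeta)\subset\mathfrak{P}\setminus\mathfrak{W}$, which is the smooth case of Proposition \ref{smoothtypeofXzeta4}/\ref{smoothtypeofXzeta6}. That is in fact the case in which the paper applies the theorem (the ACF resolutions feeding into Proposition \ref{L2E2} and Corollary \ref{collapsingss} have $\zeta$ avoiding the walls), so the gap does not affect the intended use, but the blanket topological local-triviality claim as you stated it is not supported by the paper. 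If $\m{im}(\zeta)$ does meet $\mathfrak{W}$ transversely one would need a separate argument (e.g.\ Thom--Mather control data giving a homotopy-local triviality of the stratified fibration) to salvage the $E_1$ identification.
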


\begin{prop}
\label{L2E2}
Let us assume that $\Gamma\subset\m{SU}(m/2)$ and  $\m{H}^\bullet(N_\zeta/S)\cong\underline{\mathbb{R}}\oplus\mathcal{H}^\bullet(N_\zeta/S)$. There exists an isomorphism 
\begin{align*}
    E^{p,q}_2(N_\zeta)\cong\left\{\begin{array}{ll}
         \m{H}^p(S)&\text{if } q=0 \\
         \m{H}^p(S,\mathcal{H}^q(N_\zeta/S))&\text{else}
    \end{array}\right.
\end{align*}
i.e. 
\begin{align*}
     E^{\bullet,\bullet}_2(N_\zeta)\cong\m{H}^\bullet(S)\oplus\m{H}^\bullet(L^2GM^\bullet(\nu_\zeta\colon N_\zeta\rightarrow S)).
\end{align*}
Moreover, for $0<N$ the differentials $d_{2N}=0$. In particular,
    \begin{align*}
        \mathfrak{Ker}_{\m{ACF};\beta}(\widehat{D}^0_\zeta)\cong L^2E^{\bullet,\bullet}_2(N_\zeta).
    \end{align*}
\end{prop}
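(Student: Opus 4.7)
The plan is to prove the three claims in sequence: the description of $E_2$, the identification of the adiabatic kernel with $L^2 E_2$, and the vanishing of the even differentials. First, I apply the standard Leray--Serre spectral sequence to the fibration $\nu_\zeta\colon N_\zeta\to S$, which on the $E_2$-page gives $E_2^{p,q}\cong \m{H}^p(S;\m{H}^q(N_\zeta/S))$ with coefficients in the local system of fibrewise cohomology. The hypothesis $\m{H}^\bullet(N_\zeta/S)\cong \underline{\mathbb{R}}\oplus \mathcal{H}^\bullet(N_\zeta/S)$ splits this local system: the trivial summand, concentrated in degree zero, contributes $\m{H}^p(S)$ on the bottom row, while the harmonic summand contributes $\m{H}^p(S;\mathcal{H}^q(N_\zeta/S))$ for $q>0$, establishing the stated formula for $E_2^{p,q}$.

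Second, I identify the adiabatic kernel with the $E_2$-page via Hodge theory on $S$. From the discussion preceding the statement, the adiabatic residue $\mathfrak{D}_{\mathcal{K};\beta}$ is the Gauss--Manin--Hodge--de Rham operator $\mathfrak{D}_{GM}$ acting on $\Omega^\bullet(S,\mathcal{H}^\bullet(N_\zeta/S))$, obtained by folding the elliptic complex of the local system $L^2 GM^\bullet(\nu_\zeta\colon N_\zeta\to S)$. Since the Gauss--Manin connection is flat and $S$ is closed, the Hodge theorem yields $\m{ker}(\mathfrak{D}_{GM})\cong \m{H}^\bullet(S;\mathcal{H}^\bullet(N_\zeta/S))\oplus \m{H}^\bullet(S)$, which coincides exactly with $L^2 E_2^{\bullet,\bullet}(N_\zeta)$. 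Combined with Step 1, this gives the final assertion of the proposition, modulo the degeneration statement for the even differentials.

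Third, for the vanishing $d_{2N}=0$ I use that $\Gamma\subset \m{SU}(m/2)$ forces the fibres $\nu_\zeta^{-1}(s)$ to be Calabi--Yau ALE, whose $L^2$-harmonic cohomology, by the standard Kähler/Hodge arguments for Calabi--Yau ALE spaces, is concentrated in the middle real degree $m/2$. Consequently the non-zero rows of $E_2$ are confined to $q=0$ and $q=m/2$, and $d_r\colon E_r^{p,q}\to E_r^{p+r,q-r+1}$ can only be non-trivial when both source and target lie in these two rows. A direct bidegree check shows that the only potentially non-zero higher differential is $d_{m/2+1}$, which is odd-indexed whenever $m/2$ is even, so in these cases $d_{2N}=0$ for all $N>0$ follows at once. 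The main obstacle is the case $m/2$ odd, where $d_{m/2+1}$ itself is even-indexed: I would eliminate it by exploiting the Kähler fibration structure inherited from $\Gamma\subset \m{SU}(m/2)$ together with a Deligne-type degeneration principle, or alternatively by appealing to the explicit harmonic representatives coming from the universal moduli bundle $\mathfrak{M}\to \mathfrak{P}$ constructed in Section \ref{Resolutions of Spin(7)-Normal Cones}, whose canonical extensions to the total space trivialise the Massey-type operations underlying the higher Leray differentials.
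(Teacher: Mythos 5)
The paper gives no explicit proof of Proposition~\ref{L2E2}, so a direct comparison is not possible; on its merits, your Steps~1 and~2 are essentially sound, but Step~3 rests on a factual error and a vague fallback, so the degeneration claim $d_{2N}=0$ is not actually established.

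Steps~1 and~2 are fine. The Leray--Serre $E_2$-page together with the splitting $\m{H}^\bullet(N_\zeta/S)\cong\underline{\mathbb{R}}\oplus\mathcal{H}^\bullet(N_\zeta/S)$ immediately gives the stated $E_2^{p,q}$, and the identification of $\mathfrak{Ker}_{\m{ACF};\beta}$ with $\m{H}^\bullet(L^2GM^\bullet)$ follows from $\mathfrak{D}_{\mathcal{K};\beta}=\mathfrak{D}_{GM}$ (stated in the text) and Hodge theory for the flat Gauss--Manin complex over the closed base $S$. One small bookkeeping slip: $\m{ker}(\mathfrak{D}_{GM})$ is $\m{H}^\bullet(S,\mathcal{H}^\bullet(N_\zeta/S))$ alone; the $\m{H}^\bullet(S)$ summand comes from the trivial $\underline{\mathbb{R}}$ factor and is not in the domain of $\mathfrak{D}_{GM}$, so it should not appear in $\m{ker}(\mathfrak{D}_{GM})$.

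The real gap is in Step~3. Your premise that the fibrewise $L^2$-harmonic cohomology of a Calabi--Yau ALE space is concentrated in the single degree $m/2$ is false outside $m=4$. For $m=6$, crepant resolutions of $\mathbb{C}^3/\Gamma$ have cohomology supported in \emph{even} degrees $0,2,4$ by the McKay correspondence (Conjecture~\ref{itoreidconjecture} and the cited Joyce result), and the compactly-supported image can occupy degrees $2$ \emph{and} $4$; the example of $\mathbb{C}^3/\mathbb{Z}_3$ with exceptional $\mathbb{P}^2$ already shows this. So the claim ``rows confined to $q=0$ and $q=m/2$'' fails, and the two-row bidegree analysis that you build on it collapses. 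Your fallback for $m/2$ odd, invoking a ``Deligne-type degeneration principle'' or properties of the universal bundle $\mathfrak{M}\to\mathfrak{P}$, is not an argument but a gesture.

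The argument that actually works is a parity argument, not a two-row argument. From the McKay correspondence the fibrewise $L^2$-harmonic cohomology is supported in \emph{even} $q$, and the trivial row $q=0$ is also even, so $E_2$ is supported in even $q$. The differential $d_r$ shifts $q$ by $1-r$; this preserves the parity of $q$ when $r$ is odd and flips it when $r$ is even. By induction on $r$: $d_2$ flips parity, hence vanishes, so $E_3=E_2$; $d_3$ preserves parity, so $E_4$ remains supported on even $q$; $d_4$ flips parity, hence vanishes; and so on. This shows $d_{2N}=0$ for all $N>0$ uniformly in $m$, without any assumption on $\dim S$ or middle-degree concentration. (One could alternatively split cases: for $m=4$ your two-row argument is valid since $\mathcal{H}^q$ lives only in $q=2$; for $m\geq 6$ one has $\dim S\leq 2$, killing all $d_r$ with $r\geq 3$ by bidegree. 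But the parity argument is cleaner and covers both cases at once.)
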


\begin{nota}
    We will denote by 
    \begin{align*}
\beta_i(N_\zeta)\coloneqq \sum_{p+q=i}\m{dim}(E^{p,q}_2(N_\zeta))-b_i(S).
    \end{align*}    
\end{nota}

\begin{cor}
\label{collapsingss} 
    A $\rho_{t^2\cdot\zeta}\colon (N_{t^2\cdot\zeta},\Phi_{t^2\cdot\zeta},g_{t^2\cdot\zeta})\dashrightarrow (N_0,\Phi_0,g_0)$ is isentropic, if and only if the spectral sequence $E^{\bullet,\bullet}_{N}$ collapses on the second page.
\end{cor}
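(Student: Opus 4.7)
The plan is to combine Proposition~\ref{L2E2} directly with the standard convergence estimate for the Leray--Serre spectral sequence. First, Proposition~\ref{L2E2} supplies the identification
\begin{align*}
    E^{\bullet,\bullet}_2(N_\zeta)\cong \m{H}^\bullet(S)\oplus L^2E^{\bullet,\bullet}_2(N_\zeta)\cong \m{H}^\bullet(S)\oplus\mathfrak{Ker}_{\m{ACF};\beta}(\widehat{D}^0_\zeta),
\end{align*}
so the right-hand side of the isentropic condition is, as a graded vector space, nothing but the second page of the spectral sequence.

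Second, convergence of $E^{p,q}_N$ to a filtration of $\m{H}^\bullet(N_\zeta,\mathbb{R})$ combined with the fact that each page is a subquotient of its predecessor gives the standard dimension inequality
\begin{align*}
    b_n(N_\zeta)=\sum_{p+q=n}\m{dim}\,E^{p,q}_\infty\leq \sum_{p+q=n}\m{dim}\,E^{p,q}_2,
\end{align*}
with equality in every total degree if and only if all differentials $d_r$ with $r\geq 2$ vanish, i.e.\ the spectral sequence collapses on $E_2$. Proposition~\ref{L2E2} already ensures $d_{2N}=0$ for $N\geq 1$, so in practice collapse reduces to the vanishing of the odd-page differentials $d_3,d_5,\ldots$.

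For the forward implication, suppose the resolution is isentropic. Then $b_n(N_\zeta)=\m{dim}(\m{H}^n(S)\oplus\mathfrak{Ker}_{\m{ACF};\beta}^n(\widehat{D}^0_\zeta))$ for every $n$, which by the first step equals $\sum_{p+q=n}\m{dim}\,E^{p,q}_2$. Equality in the dimension inequality forces $E_r=E_2$ for all $r\geq 2$. Conversely, if the spectral sequence collapses on the second page, then $E^{p,q}_\infty=E^{p,q}_2$, and since we work over $\mathbb{R}$ the induced filtration on $\m{H}^\bullet(N_\zeta)$ splits, yielding $\m{H}^\bullet(N_\zeta)\cong\m{H}^\bullet(S)\oplus\mathfrak{Ker}_{\m{ACF};\beta}(\widehat{D}^0_\zeta)$ as graded vector spaces, which is precisely the isentropic condition.

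There is no real obstacle here once Proposition~\ref{L2E2} is in place; the only point requiring minor care is the passage from the graded identification $E_2\cong E_\infty$ to an abstract isomorphism with $\m{H}^\bullet(N_\zeta)$, which is automatic over a field via a splitting of the filtration. The content of the corollary thus lies entirely in the identification of the adiabatic kernel with the $L^2$-part of the second page, established earlier.
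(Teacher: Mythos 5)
Your proof is correct and follows the same approach implicit in the paper, which states the corollary immediately after Proposition~\ref{L2E2} without further argument: identify $E_2^{\bullet,\bullet}(N_\zeta)\cong\m{H}^\bullet(S)\oplus\mathfrak{Ker}_{\m{ACF};\beta}(\widehat{D}^0_\zeta)$ via Proposition~\ref{L2E2}, then use the standard subquotient dimension inequality $b_n(N_\zeta)=\sum_{p+q=n}\dim E^{p,q}_\infty\leq\sum_{p+q=n}\dim E^{p,q}_2$ together with the splitting of the abutment filtration over $\mathbb{R}$. The only extra care you took — noting that $d_{2N}=0$ from Proposition~\ref{L2E2} so that collapse reduces to vanishing of odd-page differentials, and that the filtration splits over a field — is exactly the bookkeeping needed to make the paper's unstated argument rigorous.
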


\begin{defi}
    We define the approximate kernel and cokernel of $\widetilde{D}^{pre}_{\zeta;t}$ by\\
    
\begin{minipage}{\linewidth}
    \begin{align*}
        \m{xker}_{\beta}(\widetilde{D}^{pre}_{\zeta;t})=\begin{array}{c}
             \m{ker}\left(\mathfrak{D}_{\mathcal{K};\beta}\colon  \Omega^{\bullet;k+1,\alpha}_{t}\left(S,\mathcal{H}^\bullet_{\m{AC};\beta}(N_\zeta/S)\right)\rightarrow  \Omega^{\bullet;k,\alpha}_{t}\left(S, \mathcal{H}^\bullet_{\m{AC};\beta}(N_\zeta/S)\right)\right)\\
             \oplus\\
             \m{ker}\left(D \colon\Omega^{\bullet;k+1,\alpha}_{\m{CFS};\beta;\epsilon}(X;\m{APS})\rightarrow \Omega^{\bullet;k,\alpha}_{\m{CFS};\beta-1;\epsilon}(X)\right)
        \end{array}
    \end{align*}
        \end{minipage}
    and
    
\resizebox{1.0\linewidth}{!}{
\begin{minipage}{\linewidth}
    \begin{align*}
        \m{xcoker}_{\beta-1}(\widetilde{D}^{pre}_{\zeta;t})=\begin{array}{c}
             \m{coker}\left(\mathfrak{D}_{\mathcal{C}o\mathcal{K};\beta-1}\colon  \Omega^{\bullet;k+1,\alpha}_{t}\left(S,\mathcal{C}o\mathcal{H}^\bullet_{\m{AC};\beta}(N_\zeta/S)\right)\rightarrow  \Omega^{\bullet;k,\alpha}_{t}\left(S,\mathcal{C}o \mathcal{H}^\bullet_{\m{AC};\beta}(N_\zeta/S)\right)\right)\\
             \oplus\\
             \m{coker}\left(D \colon\Omega^{\bullet;k+1,\alpha}_{\m{CFS};\beta;\epsilon}(X;\m{APS})\rightarrow \Omega^{\bullet;k,\alpha}_{\m{CFS};\beta-1;\epsilon}(X)\right).
        \end{array}
    \end{align*}
    \end{minipage}}\\
    
\end{defi}

Before we state the main result of \cite{majewskiDirac} we need to prove the following technical Lemma. This Lemma essentially ensures that the adiabatic limit of the operator $\widehat{D}^t_\zeta$ is \say{well-behaved}, i.e. that a uniform elliptic estimate holds on the complement of the adiabatic kernel.

\begin{lem}
\label{valueofphi}
    Given $\Psi\in \Omega^{\bullet;k,\alpha}_{\m{ACF};\beta-1}(N_\zeta)$, we can bound
    \begin{align*}
        \left|\left|L_{\m{AC};-\beta-m-1}\{\widehat{D}_{\zeta;V};\widehat{D}^t_{\zeta;H}\}\pi_{\mathcal{K};\beta}\Psi\right|\right|_{C^{k,\alpha}_{\m{ACF};\beta-1}}\lesssim t^\phi\cdot\left|\left|\pi_{\mathcal{K};\beta}\Psi\right|\right|_{C^{k+1,\alpha}_{\m{ACF};\beta}}.
    \end{align*}
for $\phi\geq 1$.
\end{lem}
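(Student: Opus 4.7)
My plan is to exploit that $\pi_{\mathcal{K};\beta}\Psi$ is fibrewise harmonic with sharp asymptotic decay, so that the operator $L_{\m{AC};-\beta-m-1}\{\widehat{D}_{\zeta;V};\widehat{D}^t_{\zeta;H}\}$ only sees the off-harmonic commutator of $\widehat{D}^t_{\zeta;H}$ with the vertical Hodge decomposition, plus the curvature term carrying an explicit $t$ factor. The structure of $\widehat{D}^t_\zeta$ — namely $\m{d}^{1,0}+\m{d}^{0,1}+t^{-1}(\m{d}^{1,0}+\m{d}^{0,1})^{*}+t\cdot\m{cl}(F_{H_\zeta}-\rho_\zeta^*F_{H_0})$ — means that every non-trivial contribution to $\widehat{D}^t_\zeta\pi_{\mathcal{K};\beta}\Psi$ is either horizontal (captured by the adiabatic residue $\mathfrak{D}_{\mathcal{K};\beta}$ on the harmonic bundle) or lies in $\mathcal{I}_{\m{AC};\beta-1}$, hence in the domain of the right-inverse $L_{\m{AC};-\beta-m-1}$.

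First, I would decompose $\widehat{D}^t_{\zeta;H}\pi_{\mathcal{K};\beta}\Psi$ using the vertical Hodge splitting $\mathcal{C}o\mathcal{H}_{\m{AC};\beta}\oplus\mathcal{I}_{\m{AC};\beta-1}$. The harmonic piece is annihilated by $L_{\m{AC};-\beta-m-1}$ (which, by construction and by Lemma \ref{adiabaticmircokernels}, restricts to a bounded right-inverse of $\widehat{D}_{\zeta;V}$ on the image component with appropriate Fredholm-dual rate $-\beta-m-1$). Thus, what $L_{\m{AC};-\beta-m-1}\{\widehat{D}_{\zeta;V};\widehat{D}^t_{\zeta;H}\}$ really measures is the projection of the horizontal derivative onto the non-harmonic complement, i.e. the failure of the bundle of vertically harmonic forms to be parallel with respect to the Ehresmann connection $H_\zeta$, together with the curvature correction.

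Second, I would track the $t$-powers. The explicit $t\cdot\m{cl}(F_{H_\zeta}-\rho_\zeta^*F_{H_0})$ term contributes at least $t^{1}$. For the commutator of $\widehat{D}^t_{\zeta;H}$ with the projector $\pi_{\mathcal{K};\beta}$, note that $H_\zeta$ is $t$-independent but acts through the $t^2$-rescaled vertical metric: inverting $\widehat{D}_{\zeta;V}$ on $\mathcal{I}_{\m{AC};\beta-1}$ carries a factor of $t$ coming from the $t^{-1}$ weight of $(\m{d}^{0,1})^{*}$ in $\widehat{D}^t_\zeta$. Combined with the sharp decay $|\nabla^{k}\pi_{\mathcal{K};\beta}\Psi|_{g^t_\zeta}=\mathcal{O}(r^{1-m-k})$ established in Proposition~\ref{diracspinorsthatdecaydecayfast}, and the compatibility of the weights $\beta$ and $\beta-1$ in $\mathfrak{C}^{k,\alpha}_{\m{ACF};\beta-1;t}$ versus $C^{k+1,\alpha}_{\m{ACF};\beta}$, the net $t$-exponent is $\geq 1$.

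The main obstacle will be the bookkeeping of weights near the transition between the ACF region (where the AC polyhomogeneous expansion and the curvature decay $|F_{H_\zeta}-\rho_\zeta^*F_{H_0}|=\mathcal{O}(r^{\gamma-2})$ dominate) and the compact core (where $t^{-1}$-scaled vertical derivatives could in principle amplify constants). I would handle this by splitting into $r\lesssim 1$ and $r\gg 1$: in the inner region the explicit $t^{1}$ from the curvature term and the bounded geometry suffice, while in the outer region the AC elliptic estimate of \cite{lockhart1985elliptic} together with the improved decay of $\pi_{\mathcal{K};\beta}\Psi$ upgrades the $t$-power. Either way the exponent is bounded below by $\phi\geq 1$, uniformly in $k$ and $\alpha$.
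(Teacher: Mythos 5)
Your proposal takes a genuinely different route from the paper's argument. The paper's proof is a short algebraic computation: it records the anticommutation identities that the bidegree pieces of the de Rham differential on a fibration satisfy,
\begin{align*}
    \{\m{d}^{1,0},\m{d}^{1,0}\}=-2\{\m{d}^{0,1},\m{d}^{2,-1}\},\quad \{\m{d}^{0,1},\m{d}^{0,1}\}=0,\quad \{\m{d}^{1,0},\m{d}^{0,1}\}=0,\quad \{\m{d}^{1,0},\m{d}^{2,-1}\}=0,
\end{align*}
together with the analogous identities for the codifferential, and then computes $\{\widehat{D}_{\zeta;V};\widehat{D}^t_{\zeta;H}\}$ directly. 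The point of those identities is that the cross terms of the anticommutator between the vertical and horizontal Dirac pieces collapse to explicit curvature-type (bidegree $(2,-1)$) contributions, from which the $t$-power can be read off. Your proposal, by contrast, works via the vertical Hodge splitting and decay: you exploit $\widehat{D}_{\zeta;V}\pi_{\mathcal{K};\beta}\Psi=0$ to reduce the anticommutator to $\widehat{D}_{\zeta;V}\widehat{D}^t_{\zeta;H}\pi_{\mathcal{K};\beta}\Psi$, then interpret $L_{\m{AC};-\beta-m-1}$ of that as the image component of the horizontal derivative of a vertically harmonic section, and finally try to track $t$-powers region by region. This is a sensible geometric picture, and the qualitative reduction in your first two paragraphs is correct and in fact parallel to what the algebraic identities say.

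The gap is in your third paragraph. The $t^\phi$ exponent with $\phi\geq 1$ is the entire content of the lemma, and your bookkeeping does not actually establish it. You assert that ``inverting $\widehat{D}_{\zeta;V}$ on $\mathcal{I}_{\m{AC};\beta-1}$ carries a factor of $t$ coming from the $t^{-1}$ weight of $(\m{d}^{0,1})^{*}$ in $\widehat{D}^t_\zeta$,'' but this is not justified: in the paper's conventions the indicial/vertical operator $\widehat{D}_{\zeta;V}$ is the unscaled $\m{d}^{0,1}+(\m{d}^{0,1})^{*}$ (the $t$-dependence sits in $\widehat{D}^t_\zeta$, not in the vertical model), so its right inverse is $t$-independent and supplies no extra power. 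The genuine source of the $t$-power is the collapse of the cross anticommutators $\{\m{d}^{1,0},\m{d}^{0,1}\}$, $\{(\m{d}^{1,0})^{*},(\m{d}^{0,1})^{*}\}$ and their mixed variants to curvature terms of definite bidegree, which is exactly what the listed identities encode — and this step is absent from your argument. Without identifying those identities (or an equivalent computation of $\{\widehat{D}_{\zeta;V};\widehat{D}^t_{\zeta;H}\}$ as an explicit zeroth-order operator with a visible $t$-prefactor), the claim that the exponent is $\geq 1$ does not follow, and the inner/outer splitting in your last paragraph cannot rescue it since the uncontrolled term is global, not localized near $r\gg 1$.
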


\begin{proof}
    Notice, that the de Rham differential $\m{d}=\m{d}^{1,0}+\m{d}^{0,1}+\m{d}^{2,-1}$ on a fibration satisfies the identities 
    \begin{align*}
        \{\m{d}^{1,0},\m{d}^{1,0}\}=&-2\{\m{d}^{0,1},\m{d}^{2,-1}\},\hspace{0.3cm}\{\m{d}^{0,1},\m{d}^{0,1}\}=0,\hspace{0.3cm}\{\m{d}^{2,-1},\m{d}^{2,-1}\}=0\\
        \{\m{d}^{1,0},\m{d}^{0,1}\}=&0,\hspace{0.3cm}\{\m{d}^{1,0},\m{d}^{2,-1}\}=0.
    \end{align*}
    The same identities hold for the codifferential $\m{d}^*=\m{d}^{-1,0}+\m{d}^{0,-1}+\m{d}^{-2,1}$. The statement follows from a direct computation for $L_{\m{AC};-\beta-m-1}\{\widehat{D}_{\zeta;V};\widehat{D}^t_{\zeta;H}\}$.
\end{proof}

Before we state the main result of \cite{majewskiDirac} we need to prove the following technical Lemma. This Lemma essentially ensures that the adiabatic limit of the operator $\widehat{D}^t_\zeta$ is \say{well-behaved}, i.e. that a uniform elliptic estimate holds on the complement of the adiabatic kernel.

\begin{thm}\cite[Thm. 9.1]{majewskiDirac}
\label{lineargluingHodgedeRham}
Let $\beta\notin\mathcal{C}(\widehat{D}_0)$ and 
\begin{align*}
        \iota\geq&\phi-m/2-\beta\und{1.0cm}\pi\geq\phi+ m/2+\beta-1-\alpha
\end{align*}
whereby $\phi\geq 1$. The map 
\begin{align}
    D^{pre}_{\zeta;t} \colon \mathfrak{D}\Omega^{\bullet;k+1,\alpha}_{\beta;t}(X_{\zeta;t})\rightarrow\mathfrak{C}\Omega^{\bullet;k,\alpha}_{\beta-1;t}(X_{\zeta;t})
\end{align}
is a bounded Fredholm map, whose kernel and cokernel satisfy

\begin{equation}
\label{lineargluingexactsequenceHodgedeRham}
\adjustbox{scale=0.9,center}{
    \begin{tikzcd}
        \mathcal{H}^\bullet(X_{\zeta;t},g^{pre}_{\zeta;t})\arrow[r,"i_{\beta;t}",hook]&
             \m{xker}_{\beta}(D^{pre}_{\zeta;t})\arrow[r,"\m{ob}_{\beta;t}"]&\m{xcoker}_{\beta-1}(D^{pre}_{\zeta;t})\arrow[r,"p_{\beta;t}",two heads]&\mathcal{H}^\bullet(X_{\zeta;t},g^{pre}_{\zeta;t}).
    \end{tikzcd}}
\end{equation}
Furthermore, the uniform elliptic estimate 
    \begin{align}
        \label{unifromDiracbelow}\left|\left|\Psi\right|\right|_{\mathfrak{D}^{k+1,\alpha}_{\beta;t}}\lesssim \left|\left|D^{pre}_{\zeta;t}\Psi\right|\right|_{\mathfrak{C}^{k,\alpha}_{\beta-1;t}}
    \end{align}
    holds for all $\Psi\in\m{xker}_\beta(D_{\zeta;t}^{pre})^\perp$. Hence, if $\m{ob}_{\beta;t}=0$ there exists uniform bounded map
\begin{align}
    R^{pre}_{\zeta;t} \colon\mathfrak{C}\Omega^{\bullet;k,\alpha}_{\beta-1;t}(X_{\zeta;t},\wedge^\bullet T^*X_{\zeta;t})\rightarrow \mathfrak{D}\Omega^{\bullet;k+1,\alpha}_{\beta;t}(X_{\zeta;t})
\end{align}
that restricts to a right-inverse.
\end{thm}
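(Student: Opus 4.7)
The plan is to invoke the general uniform elliptic theory for Dirac operators on orbifold resolutions developed in \cite{majewskiDirac} and verify that the Hodge--de Rham operator $D^{pre}_{\zeta;t}$ fits into that framework. The first step is therefore to confirm that the tuple $(\wedge^\bullet T^*X_{\zeta;t},\m{cl}_{g^{pre}_{\zeta;t}},g^{pre}_{\zeta;t},\nabla^{g^{pre}_{\zeta;t}},\epsilon_{g^{pre}_{\zeta;t}})$ is a graded Hermitian Dirac bundle with associated Dirac operator $D^{pre}_{\zeta;t} = \m{d}+\m{d}^{*^{pre}_{\zeta;t}}$, and to identify its normal operator on the CF model with $\widehat D_0$ as defined in Section \ref{The Hodge-de Rham Operator of an Orbifold Resolution}. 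Once this identification is made, the indicial/vertical operator is $\widehat D_{0;V}$, and its indicial roots are precisely the critical set $\mathcal{C}(\widehat D_0)$, so the hypothesis $\beta \notin \mathcal{C}(\widehat D_0)$ guarantees that the realisations on $\Omega^{\bullet;k+1,\alpha}_{\m{CFS};\beta;\epsilon}(X;\m{APS})$ and $\Omega^{\bullet;k+1,\alpha}_{\m{CF};\beta}(\widehat D_0;\m{APS})$ are semi-Fredholm with the correct domain structure.

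Next, I would construct an approximate right-inverse by a standard two-scale gluing scheme. Given $\Psi \in \mathfrak{C}\Omega^{\bullet;k,\alpha}_{\beta-1;t}(X_{\zeta;t})$, decompose $\Psi$ via the cutoffs $\chi_i,\chi_i^t$ into an ACF piece supported near $\Upsilon_t$ (rescaled by $\delta_t$ to live on $N_\zeta$) and a CFS piece supported away from the exceptional set. On each piece solve using the right-inverses furnished by $D$ and $\widehat D^t_\zeta$, then glue using cutoffs in the transition annulus $A_{(2\epsilon,3\epsilon)}(S)$. The commutator $[\m{d}+\m{d}^{*^{pre}_{\zeta;t}},\chi_i^t]$ produces an error localised in the overlap, whose size is controlled by $|\m{d}\chi_i^t|\lesssim\epsilon^{-1}$ times the weighted norm of the parametrix; because the weight $r^\beta$ in the overlap is pinned at scale $\epsilon$, and by choosing $\iota,\pi$ as in the hypothesis, this error contracts in the $\mathfrak{C}^{k,\alpha}_{\beta-1;t}$-norm as $t\to0$, so a Neumann series argument upgrades the parametrix to an actual right-inverse modulo $\m{xker}_\beta(D^{pre}_{\zeta;t})$.

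The heart of the argument, and the step I expect to be the main obstacle, is the uniform elliptic estimate \eqref{unifromDiracbelow} on $\m{xker}_\beta(D^{pre}_{\zeta;t})^\perp$. The difficulty is that the adiabatic residues $\mathfrak{D}_{\mathcal{K};\beta},\mathfrak{D}_{\mathcal{C}o\mathcal{K};\beta-1}$ can support fibrewise harmonic forms whose energy is not controlled by ordinary weighted Hölder norms, so naively one loses a factor of $t^{\kappa}$ coming from the collapsing of the vertical direction. This is precisely the reason for the adapted norms $\left|\left|.\right|\right|_{\mathfrak{D}^{k+1,\alpha}_{\beta;t}}$ and $\left|\left|.\right|\right|_{\mathfrak{C}^{k,\alpha}_{\beta-1;t}}$ with the extra $t^{-\kappa}$-weight on the $\mathcal{K}$- and $\mathcal{C}o\mathcal{K}$-components. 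To make these work, I would split the argument into (i) the horizontal/coimage component, handled by the uniform ACF estimate for $\widehat D^t_\zeta$ combined with the Fredholm theory of $D$ in the CFS region, and (ii) the vertically harmonic component, where Lemma \ref{valueofphi} provides the crucial control: the off-block coupling $L_{\m{AC};-\beta-m-1}\{\widehat D_{\zeta;V};\widehat D^t_{\zeta;H}\}\pi_{\mathcal{K};\beta}$ is bounded by $t^{\phi}$ with $\phi\geq 1$, so that for the stated ranges of $\iota,\pi$ it can be absorbed into the left-hand side.

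Finally, to obtain the four-term exact sequence \eqref{lineargluingexactsequenceHodgedeRham}, I would compare actual and approximate kernels/cokernels by means of the parametrix. The map $i_{\beta;t}$ sends a genuine harmonic form $\Psi \in \mathcal{H}^\bullet(X_{\zeta;t},g^{pre}_{\zeta;t})$ to the pair consisting of its vertical $L^2$-projection onto $\mathcal{H}_{\m{AC};\beta}(N_\zeta/S)$ (restricted to the ACF region) and its CFS restriction, which by the elliptic estimate determine $\Psi$ uniquely; injectivity follows because the parametrix reconstructs $\Psi$ from this data. The obstruction $\m{ob}_{\beta;t}$ is then defined as the composition $\widehat D^t_\zeta\oplus D$ on $\m{xker}_\beta$, landing in $\m{xcoker}_{\beta-1}$ precisely because the approximate kernel elements fail to glue into an exact solution by the very obstruction captured by the $\mathcal{C}o\mathcal{K}$-component of the error terms produced in the parametrix construction. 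Exactness at $\m{xcoker}_{\beta-1}$ and the surjection $p_{\beta;t}$ follow dually by applying the same argument to $D^{pre;*}_{\zeta;t}$. When $\m{ob}_{\beta;t}=0$ the parametrix is a true right-inverse, and the uniform estimate \eqref{unifromDiracbelow} yields $R^{pre}_{\zeta;t}$ with the required uniform bound.
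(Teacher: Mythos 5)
The statement you are proving is not actually proved in this paper: it is quoted verbatim with the citation $\cite[Thm.\ 9.1]{majewskiDirac}$, the author's companion paper on uniform elliptic theory of Dirac operators on orbifold resolutions. What \emph{is} done here is to set up the hypotheses so that the imported theorem applies to the Hodge--de Rham operator: Section~\ref{The Hodge-de Rham Operator of an Orbifold Resolution} verifies that $(\wedge^\bullet T^*X_{\zeta;t},\m{cl}_{g^{pre}_{\zeta;t}},g^{pre}_{\zeta;t},\nabla^{g^{pre}_{\zeta;t}},\epsilon_{g^{pre}_{\zeta;t}})$ is a graded Hermitian Dirac bundle with the right normal, vertical/indicial, and ACF model operators, and Lemma~\ref{valueofphi} supplies the constant $\phi\geq 1$ controlling the anticommutator $L_{\m{AC};-\beta-m-1}\{\widehat{D}_{\zeta;V};\widehat{D}^t_{\zeta;H}\}$ on the vertically harmonic block---exactly the quantity that appears in the hypotheses $\iota\geq\phi-m/2-\beta$, $\pi\geq\phi+m/2+\beta-1-\alpha$.

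Your sketch is a plausible high-level reconstruction of the argument one would expect to find in \cite{majewskiDirac}, and you have correctly identified the role of Lemma~\ref{valueofphi} as the key input controlling the vertically harmonic piece of the adiabatic splitting. Since there is no proof in the present paper to compare against, I cannot confirm or deny that your parametrix-plus-Neumann-series scheme matches the companion paper's route. Two places where your sketch stays at the level of hand-waving and would need real work: (i) you assert that ``for the stated ranges of $\iota,\pi$'' the off-block coupling can be absorbed, but you do not derive those thresholds from the $t^\phi$-bound and the $t^{-\kappa}$-weighting in $\eqref{adiabaticnormszeta}$, which is where the specific form $\phi-m/2-\beta$ and $\phi+m/2+\beta-1-\alpha$ must come from; (ii) you gesture at exactness of the four-term sequence $\eqref{lineargluingexactsequenceHodgedeRham}$ by ``duality'' and the parametrix, but do not actually show that the image of $i_{\beta;t}$ equals the kernel of $\m{ob}_{\beta;t}$, or that the cokernel of $\m{ob}_{\beta;t}$ maps isomorphically under $p_{\beta;t}$---both of which require a concrete description of which approximate-kernel elements fail to glue and how that failure is detected in $\m{xcoker}_{\beta-1}$.
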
 

So far we have only established uniform bounds for the right-inverse of the Hodge--de Rham operator for the naive-preglued-metric. However, the next Proposition ensures that the results extend to the Hodge--de Rham operator associated $\m{Spin}(7)$-structure $\Phi^{pre}_{\zeta;t}$.

\begin{prop}\cite[Prop. 9.4]{majewskiDirac}
\label{tameuniformboundedness}
Let $\rho_t\colon(X_t,g_t)\dashrightarrow (X,g)$ be a $(k+1)$-tame smooth Gromov--Hausdorff resolution with respect to 
\begin{align*}
    \rho_{\zeta;t}\colon(X_{\zeta;t},g^{pre}_{\zeta;t})\dashrightarrow (X,g).
\end{align*}
The $L^2$-projection induces a natural identification
\begin{align*}
    \mathcal{H}^\bullet(X_{\zeta;t},g^{pre}_{\zeta;t})\cong\mathcal{H}^\bullet(X_{\zeta;t},g_t).
\end{align*}
and Hodge theory yields a further identification 
\begin{align*}
    \mathcal{H}^\bullet(X_{\zeta;t},g^{pre}_{\zeta;t})\cong\m{H}^\bullet(X_{\zeta;t})\cong \m{H}^\bullet(X)\oplus \m{H}^\bullet(N_\zeta)/\m{H}^\bullet(S).
\end{align*}
If moreover $\m{ob}_{\beta;t}=0$, then 
\begin{align*}
    \mathcal{H}^\bullet(X_{\zeta;t},g^{pre}_{\zeta;t})\cong \m{H}^\bullet(X)\oplus \m{H}^\bullet(S,\mathcal{H}^\bullet(N_\zeta/S))
\end{align*}
and $D_{t}$ admits a uniform bounded right-inverse, i.e. there exists a map Fredholm map
\begin{align}
    R_{t} \colon\mathfrak{C}\Omega^{\bullet;k,\alpha}_{\beta-1;t}(X_{\zeta;t})\rightarrow \mathfrak{D}\Omega^{\bullet;k+1,\alpha}_{\beta;t}(X_{\zeta;t})
\end{align}
satisfying 
\begin{align*}
    \left|\left|R_{t}\Psi\right|\right|_{\mathfrak{C}^{k+1,\alpha}_{\beta;t}}\lesssim \left|\left|\Psi\right|\right|_{\mathfrak{C}^{k,\alpha}_{\beta-1;t}}.
\end{align*}
\end{prop}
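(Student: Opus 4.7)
The plan is to reduce the statement to Theorem~\ref{lineargluingHodgedeRham} via the tame diffeomorphism $f_t \colon X_t \xrightarrow{\sim} X_{\zeta;t}$. Setting $g'_t := f_t^* g_t$, the operator $D'_t := \m{d} + \m{d}^{*_{g'_t}}$ on $X_{\zeta;t}$ is conjugate to $D_t$ via $f_t^*$, so it suffices to construct a uniformly bounded right-inverse for $D'_t$ with respect to the adapted norms on $X_{\zeta;t}$ and then transport it back.

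For the two isomorphisms of $\mathcal{H}^\bullet(X_{\zeta;t}, g^{pre}_{\zeta;t})$, the first follows from ordinary Hodge theory on the closed manifold $X_{\zeta;t}$: both $\mathcal{H}^\bullet(X_{\zeta;t}, g^{pre}_{\zeta;t})$ and $\mathcal{H}^\bullet(X_{\zeta;t}, g'_t)$ are finite-dimensional and canonically isomorphic to $\m{H}^\bullet(X_{\zeta;t})$ via passage to cohomology classes, so the composition
\begin{align*}
    \mathcal{H}^\bullet(X_{\zeta;t}, g'_t) \hookrightarrow \Omega^\bullet(X_{\zeta;t}) \twoheadrightarrow \mathcal{H}^\bullet(X_{\zeta;t}, g^{pre}_{\zeta;t}),
\end{align*}
where the surjection is $L^2_{g^{pre}_{\zeta;t}}$-orthogonal projection, is an isomorphism. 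The second identification is then Proposition~\ref{cohomofXt} applied to $X_{\zeta;t}$.

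When $\m{ob}_{\beta;t} = 0$, the exact sequence \eqref{lineargluingexactsequenceHodgedeRham} degenerates into an isomorphism $\mathcal{H}^\bullet(X_{\zeta;t}, g^{pre}_{\zeta;t}) \cong \m{xker}_\beta(D^{pre}_{\zeta;t})$. By construction the right-hand side is the fibre product of the orbifold kernel $\m{ker}_{\m{CFS};\beta}(D) \cong \m{H}^\bullet(X)$ with the adiabatic kernel $\mathfrak{Ker}_{\m{ACF};\beta}(\widehat{D}^0_\zeta)$ over $\m{ker}_{\m{CF};\beta}(\widehat{D}_0)$. Proposition~\ref{L2E2} identifies the adiabatic kernel with the $L^2$-part of the second page of the Leray--Serre spectral sequence of $\nu_\zeta$, namely $\m{H}^\bullet(S)$ in bidegree $q=0$ together with $\m{H}^\bullet(S, \mathcal{H}^\bullet(N_\zeta/S))$ in bidegrees $q>0$, and the fibre product merges the $\m{H}^\bullet(S)$-factor with the restriction $\m{H}^\bullet(X) \to \m{H}^\bullet(S)$. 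This yields the stated $\m{H}^\bullet(X) \oplus \m{H}^\bullet(S, \mathcal{H}^\bullet(N_\zeta/S))$, consistently with Corollary~\ref{collapsingss}.

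For the uniformly bounded right-inverse, write $D'_t = D^{pre}_{\zeta;t} + E_t$, where $E_t$ is a first-order differential operator whose coefficients depend algebraically on $g'_t - g^{pre}_{\zeta;t}$ and $\nabla^{g^{pre}_{\zeta;t}}(g'_t - g^{pre}_{\zeta;t})$. The adapted norms $\mathfrak{D}^{k+1,\alpha}_{\beta;t}$ and $\mathfrak{C}^{k,\alpha}_{\beta-1;t}$ are built so that the weight shift from $\beta$ to $\beta-1$ exactly absorbs one derivative; consequently, the $C^{k+1}_{-1;t}$-smallness of $g'_t - g^{pre}_{\zeta;t}$ supplied by the $(k+1)$-tameness hypothesis of Definition~\ref{ktame} translates into $\|E_t\|_{\mathfrak{D}^{k+1,\alpha}_{\beta;t} \to \mathfrak{C}^{k,\alpha}_{\beta-1;t}} \to 0$ as $t \to 0$. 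Combined with the uniform bound on $R^{pre}_{\zeta;t}$ from Theorem~\ref{lineargluingHodgedeRham}, the Neumann series for $\m{Id} + E_t R^{pre}_{\zeta;t}$ converges for $t$ small, and
\begin{align*}
    R_t := (f_t^{-1})^* \circ R^{pre}_{\zeta;t} \circ (\m{Id} + E_t R^{pre}_{\zeta;t})^{-1} \circ f_t^*
\end{align*}
is the required uniformly bounded right-inverse of $D_t$. The main technical obstacle lies in verifying the operator estimate on $E_t$ uniformly-in-$t$ across the three geometric regions (CFS bulk, collar CF, and ACF near the exceptional set) of $X_{\zeta;t}$: namely, that the weight conventions in Definition~\ref{ktame} and in the adapted norms $\mathfrak{D}, \mathfrak{C}$ are compatible in precisely the way needed to turn a metric perturbation of size $o(1)$ in $C^{k+1}_{-1;t}$ into an operator perturbation of size $o(1)$ between the two weighted Hölder spaces.
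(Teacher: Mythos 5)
This proposition is imported verbatim from \cite[Prop.\ 9.4]{majewskiDirac}, so the present paper contains no local proof against which to check your argument. Within the framework the paper sets up, however, your reduction is the natural one: pulling back along the tame diffeomorphism $f_t$, the $L^2$-projection argument (which is correct once one notes that closed forms are $L^2_{g^{\mathrm{pre}}}$-orthogonal to $\mathrm{im}(\mathrm{d}^{*_{g^{\mathrm{pre}}}})$, so the projection respects cohomology classes), Proposition~\ref{cohomofXt} for the topological identification, Proposition~\ref{L2E2} and the fibre-product description of $\m{xker}_\beta$ for the $\m{ob}_{\beta;t}=0$ case, and finally a Neumann series around $R^{pre}_{\zeta;t}$. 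The structure is almost certainly what the cited proof does.

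Two points deserve more than the hand-wave you give them. First, the ``merging'' of the $\m{H}^\bullet(S)$-factor with $\m{H}^\bullet(X)|_S$ in the fibre product is not a formality: the identification proceeds through the matching condition across the CFS--CF--ACF interfaces that is part of the content of Theorem~\ref{lineargluingHodgedeRham}, and it depends on $\beta$ being separated from the critical rates so that $\m{ker}_{\m{CFS};\beta}(D)\cong\m{H}^\bullet(X)$ actually holds; stating this as if Proposition~\ref{L2E2} alone delivered it elides a real step. Second, and more seriously, the operator estimate $\|E_t\|_{\mathfrak{D}^{k+1,\alpha}_{\beta;t}\to\mathfrak{C}^{k,\alpha}_{\beta-1;t}}\to 0$ does not follow from $(k{+}1)$-tameness in $C^{k+1}_{-1;t}$ by mere bookkeeping of weights and derivatives. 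The adapted norms \eqref{adiabaticnormszeta} carry a $t^{-\kappa}$ enhancement on the vertically harmonic and coharmonic components, which is invisible in the $C^{k+1}_{-1;t}$ norm measuring tameness. One must therefore verify that the off-diagonal block of $E_t$ carrying $\pi_{\mathcal{C}o\mathcal{I};\beta}$-data into $\varpi_{\mathcal{C}o\mathcal{K};\beta-1}$-data is of size $O(t^{\kappa+\epsilon})$ — otherwise the $t^{-\kappa}$ amplification in the target norm destroys the smallness needed for the Neumann series. You correctly flag this as the ``main technical obstacle,'' but as written the sketch treats it as a bookkeeping exercise, when in fact it is precisely the block-structure estimate that the analysis in \cite{majewskiDirac} exists to establish.
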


\begin{rem}
By the exactness of \eqref{lineargluingexactsequenceHodgedeRham} we know that an obstruction to $\m{ob}_{\beta;t}=0$ is the surjectivity of the map $\iota_{\beta;t}$. As $\iota_{\beta;t}$ is injective, it is surjective if and only if $\m{dim}\left(\mathcal{H}^\bullet(X_{\zeta;t},\Phi^{pre}_{\zeta;t})\right)=\m{dim}(\m{xker}_\beta(D^{pre}_{\zeta;t}))$. If $\beta<0$ is larger than the biggest negative critical rate, then by Proposition \ref{cohomofXt} and \eqref{lineargluingexactsequenceHodgedeRham} the former holds true if and only if the resolution $\rho_{t;\zeta}\colon (X_{\zeta,t},\Phi^{pre}_{\zeta;t})\dashrightarrow (X,\Phi)$ is isentropic. By Corollary \ref{collapsingss} this is equivalent to the collapsing of the Leray-Serre-spectral sequence on the second page.
\end{rem}

\section{Adiabatic Spin(7)-Orbifold Resolutions}
\label{Adiabatic Spin(7)-Orbifold Resolutions}

The goal of this section is to construct adiabatic smooth Gromov--Hausdorff resolutions of $\m{Spin}(7)$-structures. Building on the analytic framework developed in \cite{majewskiDirac}, we will identify the local data needed to resolve CF-$\m{Spin}(7)$ normal cones along the singular strata of a $\m{Spin}(7)$-orbifold, and construct suitable ACF-$\m{Spin}(7)$ spaces using techniques from symplectic and algebraic geometry. By interpolating between these ACF-$\m{Spin}(7)$-structures and the $\m{Spin}(7)$-orbifold structure, we construct a family of $\m{Spin}(7)$-structures converging to the $\m{Spin}(7)$-orbifold, whose torsion vanishes in the orbifold limit.


\subsection{Tamed Isentropic Smooth Gromov--Hausdorff Resolutions}
\label{Tamed Isentropic Smooth Gromov--Hausdorff Resolutions}

We begin by describing the geometric idea which motivates the analytic set-up of this chapter. Let $(X,\Phi)$ be a compact torsion-free $\m{Spin}(7)$-orbifold. Suppose that $(X,\Phi)$ arises as the Gromov--Hausdorff limit of a family of smooth torsion-free $\m{Spin}(7)$-manifolds
\begin{align*}
        \rho_{\zeta,t}\colon (X_{\zeta,t},\Phi_{\zeta,t})
        \dashrightarrow
        (X,\Phi),
\end{align*}
where $t>0$ denotes the resolution scale and $\zeta$ denotes a parameter defining a resolution of $X$. Away from the singular set of $X$, the maps
\begin{align*}
        \rho_{t}\colon X_{\zeta,t}\dashrightarrow X
\end{align*}
identify $X_{\zeta,t}$ with $X$ up to an error which tends to zero as $t\to 0$. Near the singular strata, the geometry is instead modelled on suitable resolutions of the corresponding normal cones depending $\zeta$.\\

The compactness theory of the moduli space $\mathfrak{Spin}(7)[X_{\zeta,t}]$ discussed in Section \ref{Compactifications of Moduli Spaces of Spin(7)-Manifolds and Orbifold-Degenerations}, in the spirit of Anderson, Cheeger, Naber and Jiang\cite{anderson1990convergence,cheeger1986collapsing,cheeger1990collapsing,cheegernaber2015regularity,jiang2020l2curvatureboundsmanifolds}, suggests that such a degeneration should be described by a smooth background resolution together with a family of adiabatic Ricci-flat, or equivalently adiabatic torsion-free, geometric structures. In the present setting we therefore do not try to construct the torsion-free structures directly. Instead, we first consider a family of smooth $\m{Spin}(7)$-structures
\begin{align*}
        \Phi^{pre}_{\zeta,t}\in \Gamma(X_{\zeta,t},Cay_+(X_{\zeta,t}))
\end{align*}
which interpolate between the orbifold structure $\Phi$ on $X$ and the chosen local resolution models, depending on $\zeta$, near the singular strata. These structures should be regarded as pre-glued $\m{Spin}(7)$-structures which encode the expected Gromov--Hausdorff geometry of the resolution.\\

The aim is then to find conditions on $\Phi^{pre}_{\zeta,t}$ which guarantee a posteriori the existence of the genuine torsion-free $\m{Spin}(7)$-structure $\Phi_{\zeta,t}$ on $X_{\zeta,t}$ which it tames\footnote{In the sense of Definition \ref{ktame}.} by $\Phi^{pre}_{\zeta,t}$. The existence theorem in Section \ref{Existence of torsion-free Spin(7) Structures on Orbifold Resolutions}
will be proved; if the pre-glued structure satisfies the required geometric conditions, it will satisfy uniform estimates, has sufficiently small torsion and in particular, then it can be uniquely deformed to a torsion-free $\m{Spin}(7)$-structure $\Phi_{\zeta,t}$.\\

Let $D_{\Phi^{pre}_{\zeta,t}}\coloneqq\m{d}+\m{d}^{*_{g_{\Phi^{pre}_{\zeta,t}}}}$ denote the Hodge--de Rham operator associated with the metric induced by $\Phi^{pre}_{\zeta,t}$ and assume that the resolution $(X_{\zeta,t},\Phi^{pre}_{\zeta,t})$ is isentropic, in the sense that the obstruction map $\m{ob}_{0,t}=0$ in the linear gluing sequence for $D_{\Phi^{pre}_{\zeta,t}}$ vanishes. This seems to be a rather technical assumption, but we will see in the following section, how this assumption is related to a conjecture on the string cohomology of the orbifold.\\ 

Since we assumed isentropicity, Proposition \ref{tameuniformboundedness} gives a canonical identification of finite-dimensional kernels
\begin{align*}
        \m{ker}(D_{\Phi^{pre}_{\zeta,t}})
        \cong
        \m{xker}_{0}(D^{pre}_{\zeta,t}).
\end{align*}
By the description of the extended kernel in the tame limit, this space is naturally identified with the fibre product
\begin{align*}
        \m{xker}_{0}(D^{pre}_{\zeta,t})
        \cong
        \mathfrak{Ker}_{\m{ACF};0}(\widehat{D}^{0}_{\zeta})
        \times_{\m{ker}_{\m{CF};0}(\widehat{D}_{0})}
        \m{ker}_{\m{CFS};0}(D_{\Phi}).
\end{align*}
Here $\m{ker}_{\m{CFS};0}(D_{\Phi})$ is the contribution from the orbifold limit, $\mathfrak{Ker}_{\m{ACF};0}(\widehat{D}^{0}_{\zeta})$ is the contribution from the resolved normal models, and the fibre product imposes the matching condition along the common conically fibred limit.\\

Now suppose that $\Phi_{\zeta,t}$ is a torsion-free $\m{Spin}(7)$-structure on $X_{\zeta,t}$ which is tamed by $\Phi^{pre}_{\zeta,t}$. Since torsion-freeness is equivalent to
\begin{align*}
        D_{\Phi_{\zeta,t}}\Phi_{\zeta,t}=0,
\end{align*}
the form $\Phi_{\zeta,t}$ defines an element
\begin{align*}
        \Phi_{\zeta,t}
        \in
        \m{ker}(D_{\Phi_{\zeta,t}})
        \cap
        \Gamma(X_{\zeta,t},Cay_+(X_{\zeta,t})).
\end{align*}
Using the tame identification of kernels, and the fact that $\Phi_{\zeta,t}$ is hence a small deformation of $\Phi^{pre}_{\zeta,t}$, we may identify $\Phi_{\zeta,t}$ with an element
\begin{align*}
        (\Phi_{\zeta}\times_{\Phi_0}\Phi)
        \in
        \mathfrak{Ker}_{\m{ACF};0}(\widehat{D}^{0}_{\zeta})
        \times_{\m{ker}_{\m{CF};0}(\widehat{D}_{0})}
        \m{ker}_{\m{CFS};0}(D_{\Phi}).
\end{align*}
The second component is the original orbifold $\m{Spin}(7)$-structure $\Phi$ on $X$. The first component is an asymptotically conically fibred $\m{Spin}(7)$-structure on the resolved normal model,
\begin{align*}
        \Phi_{\zeta}
        \in
        \mathfrak{Ker}_{\m{ACF};0}(\widehat{D}^{0}_{\zeta})
        \cap
        \Gamma(N_{\zeta},Cay_+(N_{\zeta})),
\end{align*}
which resolves the conical fibred $\m{Spin}(7)$-structure
\begin{align*}
        \Phi_{0}
        \in
        \m{ker}_{\m{CF};0}(\widehat{D}_{0})
        \cap
        \Gamma(N_{0},Cay_+(N_{0})).
\end{align*}
The fibre-product condition precisely says that $\Phi_{\zeta}$ and $\Phi$ have the same asymptotic normal cone $\Phi_0$.\\

Consequently, we argue a posteriori that in order to construct a tame $\m{Spin}(7)$-orbifold resolution we will need to construct an ACF-$\m{Spin}(7)$-structure satisfying
\begin{empheq}[box=\colorbox{blue!10}]{align}
\label{adiabaticACFSpin7}
        \Phi_\zeta
        \in
        \mathfrak{Ker}_{\m{ACF};0}(\widehat{D}^{0}_\zeta)
        \cap
        \Gamma(N_\zeta,Cay_+(N_\zeta)),
        \qquad
        \Phi_\zeta\asymp \Phi_0 .
\end{empheq}

\subsection{Spin(7)-Normal Cones}
\label{Spin(7)-Normal Cones}
Let $(X,\Phi)$ be a $\m{Spin}(7)$-orbifold and $S\subset X$ a singular stratum. Recall, that the normal cone structure $\Phi_0$ of $\Phi$ with respect to $S$ is the CF-$\m{Spin}(7)$-structure on the normal cone $\nu_0\colon N_0\rightarrow S$ induced by the restriction of $\Phi$ to $S$. Given a tubular neighbourhood $ \m{exp}_{g_\Phi}\colon\m{Tub}_{5\epsilon}(S)\hookrightarrow X$, we can identify $\Phi_0$ as the leading order term, in an expansion 
\begin{align*}
    \m{exp}_{g_\Phi}^*\Phi=\Phi_0+\Phi_{hot}\und{1.0cm}|\Phi_{hot}|_{g_{\Phi_0}}=\mathcal{O}(r)
\end{align*}
in terms of the distance $r$ to the singular stratum. In the following section we will analyse the structure $\Phi_0$ in more detail. We will show that it defines an element in 
\begin{align*}
    \Phi_0\in \m{ker}_{CF,0}(\widehat{D}_0)\cap \Gamma(N_0,Cay_+(N_0))
\end{align*}
and relate this behaviour to Donaldson's notion of adiabatic torsion-free, fibred, exceptional holonomy metrics \cite{donaldson2016adiabaticlimitscoassociativekovalevlefschetz}.

\begin{lem}
\label{expansionOmegaProperties}
    Let $\m{exp}_{g_\Phi} \colon\m{Tub}_{5\epsilon}(S)\hookrightarrow (X,\Phi)$ be the exponential neighbourhood of $S$ with respect to $g_\Phi$ and let $\Phi$ be torsion-free. The torsion-free $\m{Spin}(7)$-structure and the corresponding Riemannian structure $g_\Phi$ expands as
    \begin{align*}
        \m{exp}_{g_\Phi}^*\Phi&=\Phi_0+\Phi_{hot}&\und{1.0cm}|\Phi_{hot}|_{g_0}&=\mathcal{O}(r^2)\\
        \m{exp}_{g_\Phi}^*g_\Phi&=g_0+g_{hot}&\und{1.0cm}|g_{hot}|_{g_0}&=\mathcal{O}(r^2).
    \end{align*}
    Moreover, 
    \begin{align}
    \label{dOmegahot}
        \nabla^{\oplus}\Phi_0=0\und{1.0cm}
        \m{d}\Phi_{hot}=-\m{d}^{2,-1}\Phi_0.
    \end{align}
\end{lem}

\begin{proof}
    The first property follows from the fact that the singular strata of a $\m{Spin}(7)$-orbifold are locally the fixed point sets of isometries and hence totally geodesic, i.e. $\m{II}_{S,g_\Phi}=0$.
    Relative to the singular stratum $S$ the $\m{Spin}(7)$-structure $\Phi$ expands as 
    \begin{align*}
        \m{exp}_{g_\Phi}^*\Phi=\Phi_0+\Phi_1+\mathcal{O}(r^2),
    \end{align*}
    where $\Phi_1$ on $V_i\in \mathfrak{X}^{1,0}(N_0)$ and $W\in \mathfrak{X}^{0,1}(N_0)$ is given by 
    \begin{align*}
        \left<\Phi_1(V_1,...,V_4),W\right>=-\sum_{i=1}^4\Phi(V_1,...,A_{g_\Phi,S}(V_i,W),...,V_4),
    \end{align*}
    where $\left<A_{g_\Phi,S}(W)V_i,V_i\right>=\left<\m{II}_{g_\Phi,S}(V_i,V_j),W\right>$ denote the Weingarten map of $S$ with respect to $g_\Phi$. Further, we used that terms of the form $\Phi(V_1,..,W,..,V_4)$ vanish for all fixed-point sets of $\m{Spin}(7)$-isometries.\\
    
    Furthermore, the Levi-Civita connection $\nabla^{g_\Phi}$ on $X$ induces a connection $\nabla^{\oplus_0}$ on $N_0$ that is metric with respect to $g_0$ and whose torsion can be identified with the curvature of the Ehresmann connection $H_0$. Since the singular strata of Riemannian orbifolds are totally geodesic, as they are locally fixed point sets of isometries the Levi-Civita connection $\nabla^{g_\Phi}$ expands as 
\begin{align*}
    \nabla^{g_\Phi}=\nabla^{\oplus_0}+\mathcal{O}(r);
\end{align*}
the connection $\nabla^{\oplus_0}=\nu_0^*\nabla^{g_s}\oplus \nabla^{\nu}$ is given by the direct sum connection of the Levi-Civita connection on $S$ lifted along $H_0$ and the fibrewise flat connection.
The torsion-free-ness of $\Phi$ implies that 
\begin{align*}
    \nabla^{g_\Phi}\Phi=\nabla^{\oplus_0}\Phi_0+\mathcal{O}(r)
\end{align*}
and hence that $\Phi_0$ is parallel with respect to $\nabla^{\oplus_0}$. The latter implies that the holonomy of the connection $\nabla^{\oplus_0}$ is contained in the intersection of $\m{Spin}(7)\cap \m{SO}(H)\times\m{SO}(V)$.
\end{proof}

We will continue by analysing the normal cone structures depending on the codimension of the singular stratum explicitly and express the normal cone structures in natural tensors on the normal cone bundle. 

Recall from \ref{Riemannian Orbifolds as Conically Fibred Singular Spaces} that there exists a canonical $\m{N}_{\m{Spin}(7)}$-reduction of the $\m{Spin}(7)$-frame bundle
\begin{align*}
    \phi\colon \m{Fr}(X/S,\Phi)\rightarrow S
\end{align*}
of frames normalising the action of the isotropy group $\Gamma$.\\

\textbf{Codimension Four:} Let $S\subset X^{sing}$ be a singular stratum of codimension four, isotropy $\Gamma\subset\m{SU}(2)\cong\m{Sp}(1)$. There exist representations 
\begin{align*}
    \varrho_{\mathbb{R}^4}\colon &\m{N}_{\m{Spin}(7)}(\Gamma)\rightarrow \m{SO}(\mathbb{R}^4)\\
    \varrho_{\mathbb{H}/\Gamma}\colon &\m{N}_{\m{Spin}(7)}(\Gamma)\rightarrow \m{N}_{\m{SO}(\mathbb{H})}(\Gamma)\\
    \varrho_{\m{Im}(\mathbb{H})}\colon &\m{N}_{\m{Spin}(7)}(\Gamma)\rightarrow \m{SO}(\m{Im}(\mathbb{H})
\end{align*}
such that 
\begin{align*}
    TS\cong&\m{Fr}(X/S,\Phi)\times_{\m{N}_{\m{Spin}(7)}(\Gamma)}\mathbb{R}^4\\
    N_0\cong& \m{Fr}(X/S,\Phi)\times_{\m{N}_{\m{Spin}(7)}(\Gamma)}\mathbb{H}/\Gamma\\
    \wedge^2_+T^* S\cong&\m{Fr}(X/S,\Phi) \times_{\m{N}_{\m{Spin}(7)}(\Gamma)}\m{Im}(\mathbb{H})^*.
\end{align*}
Let $\left[\theta_{S}\wedge\theta_S\right]_+\in\Omega^2_{hor,+}(\m{Fr}(X/S,\Phi),\m{Im}(\mathbb{H})^*)^{\m{N}_{\m{Spin}(7)}(\Gamma)}$ be the self-dual Soldering form corresponding to the isomorphism 
\begin{align*}
    \m{Fr}(X/S,\Phi)\times_{\m{N}_{\m{Spin}(7)}(\Gamma)}\m{Im}(\mathbb{H})^*\cong \wedge^2_+T^* S.
\end{align*}
The CF-$\m{Spin}(7)$-structure on the normal cone can be expressed as 
\begin{align*}
    \Phi_0=\Phi^{4,0}_0+\Phi^{2,2}_0+\Phi^{0,4}_0
\end{align*}
whereby lifted to $\m{Fr}(X/S,\Phi)\times \mathbb{H}/\Gamma$ the components can be expressed as
\begin{align*}
    \hat{\Phi}^{4,0}_0=\phi^*\m{vol}_{g_S},\hspace{0,5cm}\hat{\Phi}^{2,2}_0=-\m{tr}(\left[\theta_{S}\wedge\theta_S\right]_+\wedge\underline{\omega}_0)\und{0.5cm}\hat{\Phi}^{0,4}_0=\frac{1}{6}\m{tr}(\underline{\omega}_0\wedge\underline{\omega}_0).
\end{align*}
Here $\underline{\omega}_0$ denotes the standard $\m{SU}(2)$-structure on $\mathbb{H}$. Moreover, the CF-$\m{Spin}(7)$-structure satisfies 
\begin{align*}
    \m{d}\Phi_0=\m{d}^{2,-1}\Phi_0\Leftrightarrow \widehat{D}_{0}\Phi_0=0.
\end{align*}

\textbf{Codimension Six.} Let $S\subset X^{sing}$ be a singular stratum of codimension six, isotropy $\Gamma\subset\m{SU}(3)$. There exist representations 
\begin{align*}
    \varrho_{\mathbb{C}}\colon &\m{N}_{\m{Spin}(7)}(\Gamma)\rightarrow \m{SO}(\mathbb{C})\\
    \varrho_{\mathbb{C}^3/\Gamma}\colon &\m{N}_{\m{Spin}(7)}(\Gamma)\rightarrow \m{N}_{\m{SO}(\mathbb{C}^3)}(\Gamma)
\end{align*}
such that 
\begin{align*}
    TS\cong&\m{Fr}(X/S,\Phi)\times_{\m{N}_{\m{Spin}(7)}(\Gamma)}\mathbb{R}^2\\
    N_0\cong& \m{Fr}(X/S,\Phi)\times_{\m{N}_{\m{Spin}(7)}(\Gamma)}\mathbb{C}^3/\Gamma
\end{align*}
Let $\theta_{S}\in\Omega^1_{hor,+}(\m{Fr}(X/S,\Phi),\mathbb{C})^{\m{N}_{\m{Spin}(7)}(\Gamma)}$ be the Soldering form corresponding to the isomorphism $\m{Fr}(X/S,\Phi)\times_{\m{N}_{\m{Spin}(7)}(\Gamma)}\mathbb{C}^*\cong T^* S$. The CF-$\m{Spin}(7)$-structure on the normal cone can be expressed as 
\begin{align*}
    \Phi_0=\Phi^{2,2}_0+\Phi^{1,3}_0+\Phi^{0,4}_0
\end{align*}
whereby lifted to $\m{Fr}(X/S,\Phi)\times \mathbb{C}^3/\Gamma$ the components can be expressed as
\begin{align*}
    \hat{\Phi}^{2,2}_0=\phi^*\m{vol}_{g_S}\wedge\omega_0,\hspace{0,5cm}\hat{\Phi}^{1,3}_0=-\m{tr}(\theta_{S}\wedge\theta_0)\und{0.5cm}\hat{\Phi}^{0,4}_0=\frac{1}{2}\m{tr}(\omega_0\wedge\omega_0).
\end{align*}
Here $(\omega_0,\theta_0)$ denotes the standard $\m{SU}(3)$-structure on $\mathbb{C}^3$. Moreover, the CF-$\m{Spin}(7)$-structure satisfies 
\begin{align*}
    \m{d}\Phi_0=\m{d}^{2,-1}\Phi_0\Leftrightarrow \widehat{D}_{0}\Phi_0=0.
\end{align*}

\textbf{Higher Codimension:} For codimension seven and eight singular strata, the CF-$\m{Spin}(7)$-structures are trivial in the following sense. The normal cone bundle of codimension seven singular strata is trivialisable, i.e. 
\begin{align*}
    N_0\cong\mathbb{T}^1\times \m{Im}(\mathbb{O})
\end{align*}
and the normal cone structure is given by 
\begin{align*}
    \Phi_0=\Phi_0^{1,3}+\Phi_0^{0,4}=\theta\wedge\phi_0 +\psi_0
\end{align*}
where $(\phi_0,\psi_0)$ is the standard $G_2$-structure on $\m{Im}(\mathbb{O})$. For codimension eight-singular strata, i.e. conical singularities the normal cone structure is given by the standard $\m{Spin}(7)$-structure 
\begin{align*}
    \Phi_0=\left<.,.\times_0.\times_0.\right>_0.
\end{align*}
on $\mathbb{O}/\Gamma$.\\

We will continue by discussing the notion of fibred $\m{Spin}(7)$-structures and the notion of adiabatic torsion-freeness as introduced by Donaldson in \cite{donaldson2016adiabaticlimitscoassociativekovalevlefschetz}. We will see that the $\m{Spin}(7)$-normal cone structure of a singular stratum naturally defines an adiabatic torsion-free $\m{Spin}(7)$-structure. Later in this section, we will construct resolutions of the normal cone bundle and we will see that the notion of torsion-free $\m{Spin}(7)$-structures is the right setting for the construction of resolutions of $\m{Spin}(7)$-orbifolds.\\

Let $\pi\colon (X,\Phi)\rightarrow (B,g_B)$ be a $\m{Spin}(7)$-structure inducing a Riemannian fibration over $(B,g_B)$. The Riemannian structure induced by $\Phi$ induces an Ehresmann connection $H$ such that 
\begin{align*}
    g_\Phi=g^{2,0}_\Phi+g^{0,2}_\Phi=\pi^*g_B\oplus g^{0,2}_\Phi
\end{align*}
and $\Phi=\sum_{k=0}^4\Phi^{k,4-k}$. We define a family of $\m{Spin}(7)$-structures $\Phi^t$ on $X$ by 
\begin{align*}
    \Phi^t\coloneqq \sum_{k=0}^4t^k\Phi^{4-k,k}\Rightarrow g_{\Phi^t}=g^{2,0}_\Phi+t^2g^{0,2}_\Phi
\end{align*}
which geometrically corresponds to scaling the fibres by $t$. Donaldson's program on adiabatic torsion-free exceptional holonomy spaces aims at understanding the adiabatic limit of special holonomy structures, extracting the adiabatic special holonomy data and constructing torsion-free special holonomy spaces given such an adiabatic datum.

\begin{defi}
A fibred $\m{Spin}(7)$-structure $\pi\colon (X,\Phi)\rightarrow (B,g_B)$ is called \textbf{adiabatic torsion-free} if 
    \begin{align*}
        \m{d}\Phi^t=\mathcal{O}(t), 
    \end{align*}
    i.e. the family $\Phi^t$ is formally torsion-free in the adiabatic limit $\lim_{t\to 0}(X, \Phi^t)$. 
\end{defi}

\begin{lem}
    A fibred $\m{Spin}(7)$-structure $\pi\colon (X,\Phi)\rightarrow (B,g_B)$ is adiabatic torsion-free if and only if 
    \begin{align*}
        \m{d}\Phi=\m{d}^{2,-1}\Phi.
    \end{align*}
\end{lem}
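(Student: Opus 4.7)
The plan is to expand $\m{d}\Phi^t$ with respect to the bidegree decomposition induced by the Ehresmann splitting $TX \cong H \oplus VX$ and to compare the resulting powers of $t$ order by order.

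First, using $\m{d} = \m{d}^{1,0} + \m{d}^{0,1} + \m{d}^{2,-1}$ together with $\Phi^t = \sum_{k=0}^{4} t^{k} \Phi^{4-k,k}$, a direct computation yields, for every bidegree $(p,q)$ with $p+q = 5$,
\begin{equation*}
(\m{d}\Phi^t)^{p,q} \;=\; t^{q}\,\m{d}^{1,0}\Phi^{p-1,q} \;+\; t^{q-1}\,\m{d}^{0,1}\Phi^{p,q-1} \;+\; t^{q+1}\,\m{d}^{2,-1}\Phi^{p-2,q+1}.
\end{equation*}
I would then measure this in the $g_{\Phi^t} = g^{2,0}_{\Phi} + t^{2}\,g^{0,2}_{\Phi}$-norm: this is the norm in which $\Phi^t$ itself is a unit-norm Cayley form, and it is the intrinsic norm attached to the adiabatic $\m{Spin}(7)$-structure $\Phi^t$. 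Under this norm a pure $(p,q)$-form rescales by $t^{-q}$ relative to $g_\Phi$, so
\begin{equation*}
\bigl|(\m{d}\Phi^t)^{p,q}\bigr|_{g_{\Phi^t}} \;\lesssim\; \bigl|\m{d}^{1,0}\Phi^{p-1,q}\bigr|_{g_\Phi} + t^{-1}\bigl|\m{d}^{0,1}\Phi^{p,q-1}\bigr|_{g_\Phi} + t\,\bigl|\m{d}^{2,-1}\Phi^{p-2,q+1}\bigr|_{g_\Phi}.
\end{equation*}

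The condition $\m{d}\Phi^t = \mathcal{O}(t)$ then amounts to the vanishing of the coefficients of $t^{-1}$ and $t^{0}$ in every bidegree $(p,q)$. Letting $(p,q)$ range over all bidegrees summing to $5$, this forces $\m{d}^{0,1}\Phi^{a,b} = 0$ and $\m{d}^{1,0}\Phi^{a,b} = 0$ for every component $\Phi^{a,b}$ of $\Phi$, i.e., $(\m{d}^{1,0} + \m{d}^{0,1})\Phi = 0$, which is exactly $\m{d}\Phi = \m{d}^{2,-1}\Phi$. The converse is then a one-line check: if this identity holds, only the $\m{d}^{2,-1}$-contribution survives in each bidegree, yielding $|\m{d}\Phi^t|_{g_{\Phi^t}} \lesssim t\cdot|\m{d}^{2,-1}\Phi|_{g_\Phi} = \mathcal{O}(t)$.

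The only conceptual obstacle is pinning down the norm in which $\mathcal{O}(t)$ is understood. I would justify the choice of $g_{\Phi^t}$ by observing that the adiabatic torsion-freeness of $\Phi^t$ must be measured with respect to the metric that $\Phi^t$ itself induces, not with respect to the fixed background $g_\Phi$; equivalently, one can pass to the rescaled vertical bundle in which the intrinsic weights of all bidegree pieces agree. Once this normalisation is fixed, the proof reduces entirely to the algebraic bookkeeping in the $(p,q)$-bigrading displayed above.
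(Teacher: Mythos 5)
Your proposal is correct and follows essentially the same route as the paper: split $\m{d}$ into its $(1,0)$, $(0,1)$, and $(2,-1)$ pieces, use the $g_{\Phi^t}$-rescaling of bidegree-$(p,q)$ pieces by $t^{-q}$, and read off that the coefficients of $t^{-1}$ and $t^{0}$ must vanish. The paper simply collapses your bidegree-by-bidegree bookkeeping into the single identity $\left|\m{d}\Phi^t\right|_{g_{\Phi^t}}^2=\left|\m{d}^{1,0}\Phi+t^{-1}\m{d}^{0,1}\Phi+t\,\m{d}^{2,-1}\Phi\right|_{g_{\Phi}}^2$, which is equivalent to your expansion by the $g_{\Phi^t}$-orthogonality of distinct bidegrees.
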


\begin{proof}
    In the presence of the Ehresmann connection induced by $\Phi$, the de Rham differential splits into 
    \begin{align*}
        \m{d}=\m{d}^{1,0}+\m{d}^{0,1}+\m{d}^{2,-1}.
    \end{align*}
    We compute 
    \begin{align*}
        \left|\m{d}\Phi^t\right|_{g_{\Phi^t}}^2=\left|\m{d}^{1,0}\Phi+t^{-1}\cdot\m{d}^{0,1}\Phi+t\cdot\m{d}^{2,-1}\Phi\right|_{g_{\Phi}}^2
    \end{align*}
which only satisfies $\mathcal{O}(t)$ if $\m{d}^{1,0}\Phi=0$ and $\m{d}^{0,1}\Phi=0$.
\end{proof}

\begin{lem}\cite[Eq. (2.3)]{goette2014adiabatic}
    Let $\pi\colon (X,\Phi)\rightarrow (B,g_B)$ be a fibred $\m{Spin}(7)$-structure with minimal fibres. Then $\Phi$ is adiabatic torsion-free if and only if 
    \begin{align*}
        \widehat{D}\Phi\coloneqq\m{cl}_{g_{\Phi}}\circ \nabla^{\oplus}\Phi=0
    \end{align*}
    where $\m{cl}_{g_\Phi}\colon T^*X\otimes \wedge^\bullet T^*X\rightarrow \wedge^\bullet T^*X $ denote the Clifford multiplication.
\end{lem}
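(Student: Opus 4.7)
The plan is to identify $\widehat{D}$ with the bidegree-preserving part of the Hodge--de Rham operator $D_\Phi = \m{d} + \m{d}^{*_\Phi}$ with respect to the splitting $TX = H \oplus V$ induced by $\Phi$, and then to use the self-duality $*_\Phi \Phi = \Phi$ to translate the vanishing of $\widehat{D}\Phi$ into the vanishing of the two bidegree-preserving components of $\m{d}\Phi$. The previous Lemma has already reduced the adiabatic torsion-free condition to $\m{d}^{1,0}\Phi = 0$ and $\m{d}^{0,1}\Phi = 0$ (since $\m{d}\Phi = \m{d}^{2,-1}\Phi$ is equivalent to the vanishing of the other two bidegree components), so it suffices to characterise these two conditions in terms of $\widehat{D}$.

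First I would decompose the de Rham differential as $\m{d} = \m{d}^{1,0} + \m{d}^{0,1} + \m{d}^{2,-1}$, where $\m{d}^{2,-1}$ is contraction with the curvature $F_H$ of the Ehresmann connection induced by $g_\Phi$. Minimality of the fibres ensures that the codifferential admits the dual decomposition $\m{d}^{*_\Phi} = \m{d}^{-1,0} + \m{d}^{0,-1} + \m{d}^{-2,1}$ with no additional mean-curvature term, and that the diagonal pieces $\m{d}^{-1,0}$, $\m{d}^{0,-1}$ are the formal $L^2$-adjoints of $\m{d}^{1,0}$, $\m{d}^{0,1}$ respectively.

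Next I would relate $\widehat{D}$ and $D_\Phi$. The direct-sum connection satisfies $\nabla^{g_\Phi} = \nabla^{\oplus} + \tfrac{1}{2}\overline{\Lambda}(T_{\oplus})$, and, by minimality of the fibres, the torsion $T_\oplus$ reduces (up to its adjoint) to the curvature two-form $F_H \in \Omega^2(X;V)$; the vertical second-fundamental-form contributions cancel in trace. Under Clifford multiplication, $F_H$ and its adjoint contribute exactly the bidegree $(2,-1)$ and $(-2,1)$ pieces of $D_\Phi$, yielding
\begin{align*}
    D_\Phi - \widehat{D} = \m{d}^{2,-1} + \m{d}^{-2,1}, \qquad \widehat{D} = \m{d}^{1,0} + \m{d}^{0,1} + (\m{d}^{1,0})^{*_\Phi} + (\m{d}^{0,1})^{*_\Phi}.
\end{align*}
This is the crucial identification and, I expect, the main technical obstacle: it requires careful bookkeeping of bidegrees of $T_\oplus$ and a verification that the remaining second-fundamental-form contributions to $\nabla^{g_\Phi} - \nabla^\oplus$ are trace-free, which is precisely where minimality enters.

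Finally I would apply the formula to $\Phi$. Because $*_\Phi \Phi = \Phi$, we have $\m{d}^{*_\Phi}\Phi = -*_\Phi \m{d}\Phi$, so each bidegree component of $\m{d}^{*_\Phi}\Phi$ is Hodge-dual to the corresponding bidegree component of $\m{d}\Phi$; in particular $(\m{d}^{1,0})^{*_\Phi}\Phi$ and $(\m{d}^{0,1})^{*_\Phi}\Phi$ vanish as soon as $\m{d}^{1,0}\Phi$ and $\m{d}^{0,1}\Phi$ do. Since the four pieces $\m{d}^{1,0}\Phi,\, \m{d}^{0,1}\Phi,\, (\m{d}^{1,0})^{*_\Phi}\Phi,\, (\m{d}^{0,1})^{*_\Phi}\Phi$ sit in mutually distinct bidegrees of $\wedge^\bullet T^*X$, their sum $\widehat{D}\Phi$ vanishes if and only if each piece does; by the self-duality argument this collapses to the two conditions $\m{d}^{1,0}\Phi = 0$ and $\m{d}^{0,1}\Phi = 0$, which by the previous lemma is precisely the adiabatic torsion-free condition $\m{d}\Phi = \m{d}^{2,-1}\Phi$.
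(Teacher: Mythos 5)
The paper does not prove this lemma — it cites \cite[Eq.\ (2.3)]{goette2014adiabatic} — so there is no in-paper argument to compare against, and the proposal must be judged on its own.

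Your overall strategy (decompose by bidegree, use self-duality of $\Phi$ to collapse the codifferential pieces, reduce to the previous lemma) is the right shape, and you correctly flag Step~2 as the delicate point. However, the operator identity you assert there is not correct: $D_\Phi - \widehat{D} = \m{cl}\circ(\nabla^{g_\Phi}-\nabla^{\oplus})$ does \emph{not} consist only of the $(2,-1)$ and $(-2,1)$ pieces. Writing $\m{d} = \widehat{\m{d}} + \tau$ with $\widehat{\m{d}} = e^i\wedge\nabla^{\oplus}_{e_i}$, the torsion correction $\tau$ has bidegree pieces $(1,0)$, $(0,1)$ and $(2,-1)$ — exactly as the paper's own remark $\m{d} = \m{d}_{\nabla^{g_V}} - \hat{\iota}_{\m{II}} + \m{d}^{0,1} + \hat{\iota}_{F_H}$ records. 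The $(2,-1)$ piece is $\hat{\iota}_{F_H}$, but there is also a genuine $(1,0)$ piece $-\hat{\iota}_{\m{II}}$ involving the \emph{full} second fundamental form, and a $(0,1)$ piece involving the O'Neill $A$-tensor. Consequently $\widehat{\m{d}}^{1,0}\neq \m{d}^{1,0}$ and $\widehat{\m{d}}^{0,1}\neq\m{d}^{0,1}$ as operators, and the identity you need reduces to showing that $\hat{\iota}_{\m{II}}\Phi = 0$ and the $A$-correction to $\Phi$ vanish.

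Your claim that the second-fundamental-form contributions "cancel in trace" is where the gap lies. Minimality kills only $k = \m{tr}_V\m{II}$. If one tracks $\hat{\iota}_{\m{II}}\Phi$ on the bidegree components of a codimension-four Cayley form, the contribution on $\Phi^{0,4}=\m{vol}_V$ is indeed $-k^{\flat}\wedge\m{vol}_V$ and vanishes under minimality, but on $\Phi^{2,2}=-\m{tr}_+(\underline{\omega}_H\wedge\underline{\omega}_V)$ the result is governed by the anticommutators $\{\m{II}(h_a),J_i\}$, which lie in $\wedge^2_-V^*$ and vanish only if the trace-free part of $\m{II}$ vanishes, i.e.\ the fibres are totally geodesic, not merely minimal. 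So either the lemma tacitly needs totally geodesic fibres (as holds for the normal cone, and as is automatic in Goette's Seifert setting where $\dim V=1$), or one must supply an additional algebraic argument — using the specific structure of the Cayley form and the adiabatic torsion-free hypothesis — showing that the $\pm(1,0)$ and $\pm(0,1)$ corrections annihilate $\Phi$. As written, your proposal asserts the conclusion of that argument without providing it, and the bidegree bookkeeping it leans on is in fact false at the operator level.
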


\subsection{Local Chen--Ruan Systems and the Generalised McKay Correspondence}
\label{Local Chen--Ruan-Systems and the Generalised McKay-Correspondence}

In this section we introduce the Chen--Ruan local systems associated with the singular strata of a $\m{Spin}(7)$-orbifold and relate them to the McKay-correspondence. We consider orbifolds whose isotropy groups act on the normal representations through finite subgroups of $\m{SU}(V)$. In codimension four this means that the local isotropy groups are conjugate to finite subgroups of
\begin{align*}
    \m{Sp}(1)\cong \m{SU}(2).
\end{align*}
The inertia orbifold records the conjugacy classes of these groups, and the age grading determines the corresponding degree shifts in Chen--Ruan cohomology; we follow the standard treatments of Chen--Ruan cohomology and inertia orbifolds in \cite{chen2004new,adem2007orbifolds,moerdijk2002orbifolds}.
For $\Gamma\subset \m{SU}(2)$ every non-trivial conjugacy class has age one, so the local Chen--Ruan contribution is concentrated in degree two.\\

By the McKay correspondence \cite{mckay1980graphs,reid2002mckay}, these degree-two classes are identified with the second cohomology of the ALE hyperkähler resolutions of $\mathbb{H}/\Gamma$. In the codimension-four case this identification is realised geometrically by Kronheimer's construction of ALE hyperkähler spaces \cite{kronheimer1989construction,kronheimer}. Thus,
over a codimension-four stratum $S$, one obtains a flat vector bundle $\mathfrak{H}^\bullet_\Gamma$ which can be interpreted either as the degree-two Chen--Ruan local system or as the bundle of McKay cohomology groups of the local ALE resolutions. Combining this identification with the hyperkähler parameter space gives the resolution
parameter bundle
\begin{align*}
    \mathfrak{P}_\Gamma\coloneqq \wedge^2_+T^*S\otimes \mathfrak{H}^2_\Gamma .
\end{align*}
Sections of $\mathfrak{P}_\Gamma$ assign to each point of $S$ a stability condition for the ALE resolution of the normal cone fibre.\\

Using the equivariant universal McKay family, we show that such sections produce ACF $\m{Spin}(7)$-resolutions of the normal cone bundle. Moreover, the adiabatic torsion-free condition is equivalent to the harmonicity of the corresponding $\mathfrak{H}^2_\Gamma$-valued self-dual two-form. Thus smooth adiabatic codimension-four resolutions are parametrised by regular harmonic representatives of classes in
\begin{align*}
\m{H}^2(S,\mathfrak{H}^2_\Gamma),
\end{align*}
which are precisely the local Chen--Ruan classes associated with the stratum.\\

While the later construction uses this result in codimension four, the formalism is designed to extend to any quotient $V/\Gamma$ whose crepant Calabi--Yau or hyperkähler resolutions are governed by McKay-type quiver moduli. In such cases the same mechanism identifies Chen--Ruan local systems with the cohomology of local resolutions and turns quiver stability parameters into geometric resolution parameters. This broader perspective includes the construction of moduli spaces via stability conditions and quiver varieties in the work of King \cite{king1994moduli}, Crawley--Boevey
\cite{crawley2001geometry}, Craw--Ishii \cite{craw2004flops}, and Yamagishi \cite{yamagishi2025moduliI,yamagishi2025moduliII}.

\subsubsection{Chen--Ruan Cohomology}
\label{Chen--Ruan-Cohomology}

We begin by recalling the part of Chen--Ruan cohomology which is relevant for the local structure of a $\m{Spin}(7)$-orbifold near a singular stratum. Let $S\subset X$ be a connected singular stratum with isotropy group $\Gamma$. Locally near $S$, the orbifold is modelled on a quotient
\begin{align*}
        H\oplus V/\Gamma ,
\end{align*}
where $H$ is the tangent representation along the stratum and $V$ is the normal representation. In the cases relevant for us, $\Gamma$ acts trivially on $H$ and as a finite subgroup of $\m{Sp}(V)$ or $\m{SU}(V)$ on $V$.\\

In the $\m{Spin}(7)$-context, the codimension four and codimension six strata are of this form, with
\begin{align*}
        \Gamma\subset \m{Sp}(1)\cong \m{SU}(2)
        \qquad\text{or}\qquad
        \Gamma\subset \m{SU}(3),
\end{align*}
respectively, while in codimension eight, there are possible finite subgroups that are not contained in $\m{Sp}(2)$ or $\m{SU}(4)$.\\

The Chen--Ruan grading is determined by the action of $\Gamma$ on the normal representation $V$.

\begin{defi}
Let $g\in \m{SU}(V)$ be an element of finite order and let $\{e^{2\pi \m{i}\theta_1},\dots,e^{2\pi \m{i}\theta_{m/2}}\}$
be the eigenvalues of $g$ on $V$, with $\theta_j\in[0,1)$. The \textbf{age} of $g$ is defined by
\begin{align*}
        \m{age}_V(g)
        \coloneqq
        \sum_{j=1}^{m/2} \theta_j .
\end{align*}
Since $g\in \m{SU}(V)\subset\m{SO}(V)$, we have $\m{det}(g)=1$  and hence $\m{age}_V(g)\in\{0,1,...,m/2\}$.
\end{defi}

Moreover, the age only depends on the conjugacy class of $g$. Hence, for every finite subgroup $\Gamma\subset \m{SU}(V)$, it defines a function
\begin{align*}
        \m{age}_V\colon \m{Con}(\Gamma)\longrightarrow \mathbb{Z}_{\geq 0}.
\end{align*}

\begin{rem}
Equivalently, if $\log_{[0,2\pi)}$ denotes the branch of the logarithm whose
eigenvalues have imaginary parts in $[0,2\pi)$, then
\begin{align*}
        \m{age}_V(g)
        =
        \frac{1}{2\pi\m{i}}
        \m{tr}\bigl(\log_{[0,2\pi)}(g)\bigr).
\end{align*}
\end{rem}

\begin{lem}
Let $\Gamma\subset \m{SU}(V)\subset \m{Spin}(7)$ be a finite subgroup. Then
the normaliser $\m{N}_{\m{Spin}(7)}(\Gamma)$ acts through $\m{N}_{\m{Spin}(7)}(\Gamma)\rightarrow \m{N}_{\m{SO}(V)}(\Gamma)$ on $\m{Con}(\Gamma)$ by
conjugation. Moreover, the age function is invariant under this action.
\end{lem}

\begin{proof}
If $n\in \m{N}_{\m{Spin}(7)}(\Gamma)$, then
\begin{align*}
        n\Gamma n^{-1}=\Gamma.
\end{align*}
Therefore conjugation by $n$ defines an automorphism of $\Gamma$ and hence a
permutation of the set of conjugacy classes $\m{Con}(\Gamma)$. Since $g$ and
$ngn^{-1}$ are conjugate as linear transformations of the normal representation
$V$, they have the same eigenvalues with multiplicity. Consequently,
\begin{align*}
        \m{age}_V(ngn^{-1})=\m{age}_V(g).
\end{align*}
\end{proof}

\begin{defi}
For $l\in\mathbb{Z}_{\geq 0}$, define
\begin{align*}
        \m{Con}^l(\Gamma)
        \coloneqq
        \{(g)\in \m{Con}(\Gamma)\mid \m{age}_V(g)=l\}
\end{align*}
and set $\mathfrak{n}_{\Gamma,l}\coloneqq\#\m{Con}^l(\Gamma).$
\end{defi}

We now recall the inertia orbifold. We follow
\cite[Sec. 4]{adem2007orbifolds} and \cite[Chap. 6.4]{moerdijk2002orbifolds}.

\begin{defi}
Let $X$ be an orbifold and let $X^\bullet$ be a proper foliating Lie groupoid
presenting $X$. The \textbf{inertia Lie groupoid}
$\mathcal{I}X^\bullet$ is defined by
\begin{align*}
        \mathcal{I}X^0
        &\coloneqq
        \{\phi\in X^1\mid s(\phi)=t(\phi)\},\\
        \mathcal{I}X^1
        &\coloneqq
        \{(\kappa,\phi)\in X^1\times \mathcal{I}X^0
        \mid s(\kappa)=s(\phi)\},
\end{align*}
with source and target maps given by $s(\kappa,\phi)=\phi$ and $t(\kappa,\phi)=\kappa\phi\kappa^{-1}$ respectively.
\end{defi}

The groupoid $\mathcal{I}X^\bullet$ presents an orbifold, denoted
$\mathcal{I}X$, called the \textbf{inertia orbifold} of $X$. Equivalently, if
\begin{equation*}
\begin{tikzcd}
	{\Lambda(X^1)} && {X^1} \\
	\\
	{X^0} && {X^0\times X^0}
	\arrow[from=1-1, to=1-3]
	\arrow[from=1-1, to=3-1]
	\arrow["\lrcorner"{anchor=center, pos=0.125}, draw=none, from=1-1, to=3-3]
	\arrow["{(s,t)}"{description}, from=1-3, to=3-3]
	\arrow["\Delta"{description}, from=3-1, to=3-3]
\end{tikzcd}
\end{equation*}
denotes the space of loops of the presenting groupoid, then
\begin{align*}
        \mathcal{I}X^\bullet
        \cong
        X^\bullet\ltimes \Lambda(X^1),
\end{align*}
where $X^\bullet$ acts on $\Lambda(X^1)$ by conjugation.

\begin{ex}
If $X^\bullet=\Gamma\ltimes M$ is an action groupoid, then
\begin{align*}
        \mathcal{I}X^\bullet
        \cong
        \Gamma\ltimes
        \left(
            \bigsqcup_{g\in\Gamma}\m{Fix}(g,M)
        \right),
\end{align*}
where $h\in\Gamma$ maps $h\colon \m{Fix}(g,M)\rightarrow\m{Fix}(hgh^{-1},M), x\longmapsto h.x$. Thus the inertia orbifold decomposes according to conjugacy classes in
$\Gamma$.
\end{ex}

\begin{defi}
Let $X$ be an orbifold whose isotropy groups act on their normal
representations through finite subgroups of $\m{SU}(V)$. The
\textbf{Chen--Ruan cohomology} of $X$ with coefficients in a ring $R$ is the
graded $R$-module
\begin{align*}
        \m{H}^\bullet_{\m{CR}}(X,R)
        \coloneqq
        \bigoplus_{(g)}
        \m{H}^{\bullet-2\m{age}(g)}
        \bigl(\mathcal{I}X_{(g)},R\bigr),
\end{align*}
where the sum is taken over the connected components of the inertia orbifold,
labelled locally by conjugacy classes of the corresponding isotropy groups.
The function $\m{age}(g)$ is computed from the action of $g$ on the normal
representation.
\end{defi}

We now specialise this definition to a tubular neighbourhood of a singular
stratum. Let $S\subset X$ be a connected singular stratum with isotropy group
$\Gamma$ and normal representation $V$. Let
\begin{align*}
        Fr_{\m{Spin}(7)}(X/S)\longrightarrow S
\end{align*}
be the corresponding $\m{N}_{\m{Spin}(7)}(\Gamma)$-reduction of the
$\m{Spin}(7)$-frame bundle along $S$. Then the normal cone bundle is
\begin{align*}
        N_0
        \cong
        Fr_{\m{Spin}(7)}(X/S)
        \times_{\m{N}_{\m{Spin}(7)}(\Gamma)}
        V/\Gamma .
\end{align*}
The inertia orbifold of the normal cone bundle is described by
\begin{align*}
        \mathcal{I}N_0
        \cong
        Fr_{\m{Spin}(7)}(X/S)
        \times_{\m{N}_{\m{Spin}(7)}(\Gamma)}
        \left(
            \bigsqcup_{(g)\in\m{Con}(\Gamma)}
            V^g/C(g)
        \right),
\end{align*}
where $V^g\coloneqq \m{Fix}(g,V)$ and $C(g)\subset\Gamma$ denotes the centraliser of $g$. Since the age is invariant under the normaliser action, the above decomposition
can be grouped according to the age. Define
\begin{align*}
        N_{\Gamma,l}
        \coloneqq
        Fr_{\m{Spin}(7)}(X/S)
        \times_{\m{N}_{\m{Spin}(7)}(\Gamma)}
        \left(
            \bigsqcup_{(g)\in\m{Con}^l(\Gamma)}
            V^g/C(g)
        \right).
\end{align*}
Consequently, the inertia orbifold $\mathcal{I}X$ can be covered by 
\begin{align*}
        \mathcal{I}X
        \cong\mathcal{I}(X\backslash S) \cup 
        \bigsqcup_{l\geq 0} N_{\Gamma,l}.
\end{align*}
The component corresponding to the identity element has age zero and is the ordinary local contribution
\begin{align*}
        N_{\Gamma,0}\cong N_0 .
\end{align*}
The components with $l>0$ give the additional Chen--Ruan contributions. For later use it is convenient to record the base of each local inertia component. Set
\begin{align*}
        S_{\Gamma,l}
        \coloneqq
        Fr_{\m{Spin}(7)}(X/S)
        \times_{\m{N}_{\m{Spin}(7)}(\Gamma)}
        \m{Con}^l(\Gamma).
\end{align*}
Then there is a natural finite covering map
\begin{align*}
        \pi_{\Gamma,l}\colon S_{\Gamma,l}\longrightarrow S
\end{align*}
whose degree is $\mathfrak{n}_{\Gamma,l}$, up to the monodromy induced by the normaliser action. The space $N_{\Gamma,l}$ is naturally fibred over $S_{\Gamma,l}$, with fibre $V^g/C(g)$ over the point labelled by $(g)$.

\begin{defi}
The \textbf{Chen--Ruan local system of age $l$} associated with the stratum
$S$ is
\begin{align*}
        \mathfrak{H}^{2l}_\Gamma
        \coloneqq
        (\pi_{\Gamma,l})_*\underline{\mathbb{R}}_{S_{\Gamma,l}}[2l].
\end{align*}
Equivalently,
\begin{align*}
        \mathfrak{H}^{2l}_\Gamma
        \cong
        Fr_{\m{Spin}(7)}(X/S)
        \times_{\m{N}_{\m{Spin}(7)}(\Gamma)}
        \mathbb{R}^{\m{Con}^l(\Gamma)}[2l].
\end{align*}
We also write
\begin{align*}
        \mathfrak{H}^\bullet_\Gamma
        \coloneqq
        \bigoplus_{l>0}\mathfrak{H}^{2l}_\Gamma .
\end{align*}
\end{defi}

With this notation, the local Chen--Ruan contribution of the stratum $S$ is
\begin{align*}
        \bigoplus_{l>0}
        \m{H}^{\bullet-2l}(S,\mathfrak{H}^{2l}_\Gamma).
\end{align*}
Thus, if $X$ has a single singular stratum $S$ of this type, then as a graded vector space
\begin{align}
\label{CRcohomolocalsystems}
        \m{H}^\bullet_{\m{CR}}(X,\mathbb{R})
        \cong\m{H}^\bullet(X,\mathbb{R})
        \oplus\bigoplus_{0<l}\m{H}^{\bullet-2l}(S,\mathfrak{H}^{2l}_\Gamma).
\end{align}

\begin{rem}
The above construction is formally close to the local description of intersection
cohomology. Indeed, the local systems
\begin{align*}
\mathfrak{H}^{2l}_\Gamma(R)
\coloneqq(\pi_{\Gamma,l})_*\underline{R}_{S_{\Gamma,l}}[2l],
\und{1.0cm} \mathfrak{H}^\bullet_\Gamma(R)\coloneqq
\bigoplus_{l>0}\mathfrak{H}^{2l}_\Gamma(R),
\end{align*}
may be regarded as the coefficient systems carried by the non-trivial local inertia components of $X$. The Chen--Ruan grading shifts these local systems by twice the age. Equivalently, the age function defines a perversity on the strata of the inertia orbifold. One can therefore use the pair consisting of the local system $\mathfrak{H}^\bullet_\Gamma(R)$ and this age-induced perversity to define an intersection-cohomology-type complex adapted to the orbifold stratification.\\

In this sense, the Chen--Ruan cohomology should be viewed as the part of such a theory which records the local cohomology supported along the singular strata. We will not use this intersection cohomology interpretation directly, but it is useful conceptually. The same age shift which defines the Chen--Ruan grading also determines the perversity with which the local contribution of a singular stratum is inserted into the global cohomology theory.
\end{rem}

In particular, suppose that $S$ has codimension four. Then
$V\cong\mathbb{H}\cong\mathbb{C}^2 $ and $\Gamma\subset\m{Sp}(1)\cong \m{SU}(2)$. If $g\neq 1$, then the eigenvalues of $g$ are of the form $\{e^{2\pi\m{i}\theta},e^{2\pi\m{i}(1-\theta)}\}$
with $0<\theta<1$. Hence
\begin{align*}
        \m{age}(g)=1
\end{align*}
for every non-trivial $g\in\Gamma$. Therefore all non-trivial local Chen--Ruan contributions over $S$ have degree shift two. In this case $\mathfrak{H}^\bullet_\Gamma=\mathfrak{H}^2_\Gamma$
and
\begin{align*}
        \m{H}^\bullet_{\m{CR}}(X,\mathbb{R})\cong\m{H}^\bullet(X,\mathbb{R})\oplus\m{H}^{\bullet-2}(S,\mathfrak{H}^2_\Gamma).
\end{align*}

\subsubsection{Local McKay-Duality}
\label{Local McKay-Duality}

We now reinterpret the Chen--Ruan local systems introduced above in terms of the McKay correspondence. The point is that, for a quotient singularity $V/\Gamma$, the Chen--Ruan cohomology parametrises orbifold resolutions of the quotient orbifold equipped with (Q)ALE-hyperkähler- or (Q)ALE-Calabi--Yau-structures respectively. \\

In this paper we will use this picture primarily in codimension four. This is the case in which the local singularity is modelled on
\begin{align*}
\mathbb{H}/\Gamma,\qquad\Gamma\subset \m{Sp}(1)\cong\m{SU}(2)
\end{align*}
and the resolution theory is completely controlled by Kronheimer's \cite{kronheimer1989construction,kronheimer}
gravitational instanton construction. It is also the case most directly relevant for the non-compactness phenomena in the moduli space of torsion-free $\m{Spin}(7)$-structures as discussed in Section \ref{The Moduli Space of Torsion-Free Spin(7)-Structures and Smooth Gromov--Hausdorff Resolutions}.\\

Nevertheless, the formulation below is deliberately written in a way which separates the representation-theoretic input from the special features of codimension four. Whenever the algebraic geometry of crepant/hyperkähler resolutions of $V/\Gamma$ is controlled by a McKay-type quiver construction, the same formal mechanism applies. Namely, the conjugacy classes of $\Gamma$
define the local Chen--Ruan systems, the quiver construction identifies these systems with the cohomology of the local resolutions, and the space of stability parameters becomes the fibre of a resolution-parameter bundle over the singular stratum. In the Calabi--Yau case this includes the familiar description of crepant resolutions via $\Gamma$-constellations and GIT quotients, while in the hyperkähler case it includes Nakajima--Kronheimer
quiver varieties.\\

Thus the codimension-four case should be viewed as the model situation in which all analytic and geometric ingredients are available unconditionally. Higher codimension cases can be incorporated into the same framework provided one has a sufficiently uniform family of Calabi--Yau or hyperkähler resolutions of $V/\Gamma$, together with the required equivariance under the normaliser of $\Gamma$ and the corresponding McKay identification of local cohomology.

\begin{rem}
Let $\Gamma \subset \m{SU}(m)$ be a finite subgroup, so that the quotient $\mathbb{C}^{m/2}/\Gamma$ has Gorenstein singularities. The existence and geometry of crepant resolutions of $\mathbb{C}^{m/2}/\Gamma$ depend strongly on the dimension.
\begin{itemize}
    \item For $m = 2$, these are the classical Kleinian singularities. Crepant resolutions always exist and are unique; they correspond to the minimal resolutions of rational surface singularities.
    \item For $m = 3$, the singularities are Gorenstein and always admit crepant resolutions, though in general such resolutions are not unique and may be related by flops. 
    \item For $m = 4$, crepant resolutions do not need to exist. Even when they do, they may be non-unique or disconnected in the birational geometry sense.
\end{itemize}
\end{rem}

We keep the discussion/notations and definitions as general as possible but will focus on the codimension four case, i.e. $\Gamma\subset \m{SU}(2)$.\\

Let $V$ be either a complex Hermitian vector space or a quaternionic Hermitian vector space. We write
\begin{align*}
G=\m{SU}(V)\coloneqq \m{Aut}(V,\omega_0,\Upsilon_0)
\oder{1.0cm}
G=\m{Sp}(V)\coloneqq \m{Aut}(V,\underline{\omega}_0),
\end{align*}
depending on the context, and let $\Gamma\subset G$ be a finite subgroup. We denote by
\begin{align*}
    R_\Gamma\coloneqq \mathbb{C}[\Gamma]
\end{align*}
the regular representation, by $\m{Rep}(\Gamma)$ the representation ring, and by $\m{Irr}(\Gamma)$ the set of irreducible complex representations of $\Gamma$. We also set $\m{Irr}_0(\Gamma)\coloneqq\m{Irr}(\Gamma)\backslash 1$
where $\mathbf{1}$ denotes the trivial representation.\\

The elementary character theory of finite groups gives a canonical equality of cardinalities
\begin{align*}
\#\m{Con}(\Gamma)=\#\m{Irr}(\Gamma).
\end{align*}
More precisely, the irreducible characters form a basis of the space of class functions on $\Gamma$. Thus the vector space generated by conjugacy classes may be identified, after extending scalars to $\mathbb{C}$, with the dual of the representation ring. In the McKay correspondence this representation theoretic identification is refined geometrically.

\begin{defi}
Let $ \Gamma \subset \m{SU}(V) $ be a finite subgroup. The associated \textbf{McKay quiver} $ Q_\Gamma $ is defined as follows:
\begin{itemize}
    \item The set of vertices $ Q_\Gamma^0 $ is given by the isomorphism classes of irreducible representations $ \varrho \in \mathrm{Irr}(\Gamma) $.
    \item The number of arrows from $ \varrho $ to $ \sigma $ is determined by the multiplicities $ a_{\varrho\sigma} $ in the decomposition
    \begin{align*}
        V \otimes R_\varrho \cong \bigoplus_{\sigma \in \m{Irr}(\Gamma)} a_{\varrho\sigma} R_\sigma.
    \end{align*}
\end{itemize}
\end{defi}

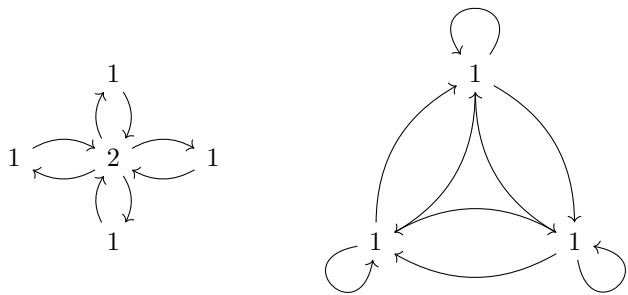
\begin{figure}
    \centering
    \begin{tikzcd}
	& 1 &&&& {1} \\
	1 & 2 & 1 && {} && {} \\
	& 1 &&& {1} && {1}
	\arrow[bend left, from=1-2, to=2-2]
	\arrow[from=1-6, to=1-6, loop, in=125, out=55, distance=10mm]
	\arrow[bend left, from=1-6, to=3-5]
	\arrow[bend left, from=1-6, to=3-7]
	\arrow[bend left, from=2-1, to=2-2]
	\arrow[bend left, from=2-2, to=1-2]
	\arrow[bend left, from=2-2, to=2-1]
	\arrow[bend left, from=2-2, to=2-3]
	\arrow[bend left, from=2-2, to=3-2]
	\arrow[bend left, from=2-3, to=2-2]
	\arrow[bend left, from=3-2, to=2-2]
	\arrow[bend left, from=3-5, to=1-6]
	\arrow[from=3-5, to=3-5, loop, in=260, out=190, distance=10mm]
	\arrow[bend left, from=3-5, to=3-7]
	\arrow[bend left, from=3-7, to=1-6]
	\arrow[bend left, from=3-7, to=3-5]
	\arrow[from=3-7, to=3-7, loop, in=350, out=280, distance=10mm]
\end{tikzcd}
    \caption{McKay Quiver of the group $D_4\subset\m{SU}(2)$ and the group $\mathbb{Z}_3\subset\m{SU}(3)$.}
    \label{D4quiver}
\end{figure}

\begin{figure}[h!]
\centering
\text{$A_n$ }\dynkin[scale=0.5, edge length=1.2cm]{A}{}\hspace{1.0cm}\text{$D_n$ }\dynkin[scale=0.5, edge length=1.2cm]{D}{}
\vspace{1em}
\text{$E_6$ }\dynkin[scale=0.5, edge length=1.2cm]{E}{6}\\
\text{$E_7$ }
\dynkin[scale=0.5, edge length=1.2cm]{E}{7}
\vspace{1em}
\text{$E_8$ }\dynkin[scale=0.5, edge length=1.2cm]{E}{8}
\caption{Dynkin diagrams of types $A_n$, $D_n$, and the exceptional types $E_6$, $E_7$, and $E_8$.}
\end{figure}

\begin{figure}[h!]
    \centering
\begin{tikzcd}
	{\varrho_0} & {\varrho_1} & {\varrho_2} & \cdots & {\varrho_{n-1}} & {\varrho_n} \\
	&&& {} \\
	{} & \bullet & \bullet & \cdots & \bullet & \bullet
	\arrow[bend left, dotted, from=1-1, to=1-2]
	\arrow[bend left, dotted, from=1-2, to=1-1]
	\arrow[bend left, from=1-2, to=1-3]
	\arrow[bend left, from=1-3, to=1-2]
	\arrow[bend left, from=1-3, to=1-4]
	\arrow[bend left, from=1-4, to=1-3]
	\arrow[bend left, from=1-4, to=1-5]
	\arrow["{\text{McKay Correspondence}}"{description}, squiggly, from=1-4, to=3-4]
	\arrow[bend left, from=1-5, to=1-4]
	\arrow[bend left, from=1-5, to=1-6]
	\arrow[bend left, from=1-6, to=1-5]
	\arrow[no head, from=3-2, to=3-3]
	\arrow[no head, from=3-3, to=3-4]
	\arrow[no head, from=3-4, to=3-5]
	\arrow[no head, from=3-5, to=3-6]
\end{tikzcd}
    \caption{The McKay correspondence of $\mathbb{Z}_n\subset\m{SU}(2)$. The quiver $Q_{\mathbb{Z}_n}$ corresponds to the extended $A_{n-1}$ Dynkin diagram.}
    \label{fig:enter-label}
\end{figure}

In the case $\Gamma\subset \m{Sp}(1)$, the McKay quiver $Q_\Gamma$ is the extended Dynkin diagram of type ADE. If $A_\Gamma$ denotes
the adjacency matrix of this graph, then
\begin{align*}
C_\Gamma\coloneqq 2I-A_\Gamma
\end{align*}
is the affine Cartan matrix. Removing the vertex corresponding to the trivial representation gives the finite ADE Cartan matrix. We denote the corresponding real Cartan space by
\begin{align*}
    \mathfrak{h}_\Gamma\cong\mathbb{R}^{\m{Irr}_0(\Gamma)}.
\end{align*}
The Cartan form on $\mathfrak{h}_\Gamma$ will again be denoted by $C_\Gamma$.

\begin{defi}
The space of integral stability parameters is
\begin{align*}
    \Theta_{\mathbb{Z}}(\Gamma)\coloneqq\left\{\theta\in \m{Hom}{\mathbb{Z}}(\m{Rep}(\Gamma),\mathbb{Z})|\theta(R\Gamma)=0\right\}.
\end{align*}
We set $\Theta_{\m{Im}(\mathbb{C})}(\Gamma)
\coloneqq \Theta_{\mathbb{Z}}(\Gamma)\otimes_{\mathbb{Z}}\mathbb{R}$ and $\Theta_{\m{Im}(\mathbb{H})}(\Gamma)
\coloneqq \m{Im}(\mathbb{H})\otimes_{\mathbb{R}}\Theta_{\m{Im}(\mathbb{C})}(\Gamma)$.
\end{defi}

Using the decomposition of the regular representation into irreducibles, one obtains natural identifications
\begin{align*}
\Theta_{\m{Im}(\mathbb{C})}(\Gamma)
\cong\mathfrak{z}(\mathfrak{pu}(R_\Gamma)^\Gamma)\cong\mathfrak{h}_\Gamma
\cong\mathbb{R}^{\m{Irr}_0(\Gamma)} .
\end{align*}
Indeed, an element of the centre of $\mathfrak{pu}(R_\Gamma)^\Gamma$ acts by a scalar on each non-trivial isotypical summand of $R_\Gamma$, subject to the trace-zero condition imposed by the regular representation. Moreover, the age grading defines a grading on the $\Theta_{\m{Im}(\mathbb{C})}$ via its identification with $\mathbb{R}^{\m{Irr}_0(\Gamma)}\cong \mathbb{R}^{\m{Con}^{0<}(\Gamma)}$.\\

The wall arrangement in the parameter space is the finite union of hyperplanes on which the corresponding quiver quotient is singular. We denote it by
    \begin{align*}
        \mathcal{W}_\Gamma\coloneqq \left\{\theta\in \Theta_{\m{Im}(\mathbb{C})}(\Gamma)\left|\vec{v}\in \mathbb{N}^{\m{Irr}(\Gamma)}\text{ such that }v_\rho\leq \dim(\rho),\text{ and }\sum_{\rho\in \m{Irr}_0(\Gamma)}(v_{\rho}-v_{\rho_0})\m{dim}(\rho)\theta_\rho=0\right.\right\}
    \end{align*}
and write
\begin{align*}
\Theta^{\m{reg}}_{\m{Im}(\mathbb{C})}(\Gamma)
\coloneqq
\Theta_{\m{Im}(\mathbb{C})}(\Gamma)\backslash \mathcal{W}_\Gamma .
\end{align*}
In the hyperkähler case we similarly set
\begin{align*}
\Theta^{\m{reg}}_{\m{Im}(\mathbb{H})}(\Gamma)
\coloneqq\Theta_{\m{Im}(\mathbb{H})}(\Gamma)\backslash
(\m{Im}(\mathbb{H})\otimes \mathcal{W}_\Gamma).
\end{align*}

The following proposition will be used in the case
\begin{align*}
\Gamma\subset \m{Sp}(1)\cong \m{SU}(2).
\end{align*}
In this case Kronheimer's construction gives a complete description of the hyperkähler ALE resolutions of $\mathbb{H}/\Gamma$ in terms of the McKay-quiver of $\Gamma$ and its stability parameters \cite{kronheimer1989construction,kronheimer}. We package this construction as a universal family over the hyperkähler parameter space
\begin{align*}
\Theta_{\m{Im}(\mathbb{H})}(\Gamma).
\end{align*}
The equivariance under the normaliser of $\Gamma$ will be essential below, because it allows the universal family to be associated to the $\m{N}_{\m{Spin}(7)}(\Gamma)$-reduction over a singular stratum.\\

Similar statements hold, with suitable modifications, in higher-dimensional situations whenever the relevant crepant Calabi--Yau or hyperkähler resolutions of $V/\Gamma$ are described by moduli spaces of McKay-quiver representations. For finite subgroups
\begin{align*}
\Gamma\subset \m{SU}(3),
\end{align*}
this is known for projective crepant resolutions of $\mathbb{C}^3/\Gamma$; Craw--Ishii \cite{craw2004flops} proved the result for Abelian subgroups, while Yamagishi\cite{yamagishi2025moduliI,yamagishi2025moduliII} proved the Craw--Ishii conjecture for arbitrary finite subgroups of
$\m{SL}_3(\mathbb{C})$, showing that every projective crepant resolution is a moduli space of $\theta$-stable $\Gamma$-constellations for a suitable generic stability condition. For symplectic quotient singularities there are analogous Nakajima-quiver descriptions in the wreath-product case
\begin{align*}
\Gamma_n=\Gamma\wr\mathfrak{S}_n
\subset \m{Sp}(n), \qquad \Gamma\subset \m{SL}_2(\mathbb{C}),
\end{align*}
where the corresponding symplectic resolutions are Hilbert schemes of points on the minimal resolution of $\mathbb{C}^2/\Gamma$ and their birational models are obtained by variation of GIT for Nakajima quiver varieties \cite{nakajima1994instantons,nakajima1999lectures,bellamy2018birational}. In particular, this gives examples in
\begin{align*}
\m{Sp}(2)
\end{align*}
by taking $n=2$. In dimension four, i.e. for subgroups of
\begin{align*}
\m{SU}(4),
\end{align*}
there is no comparable general theorem for all finite subgroups. The same formalism applies only in those cases where a crepant Calabi--Yau resolution, or a hyperkähler resolution in the symplectic case, is known and is realised by a suitable McKay-quiver moduli space.\\

Thus the codimension-four case treated below is the unconditional model case. The higher-dimensional analogues should be read as conditional on the existence of an equivariant universal family of crepant resolutions with the same quiver-theoretic properties.

\begin{prop}[Universal Moduli Spaces of McKay-Quiver Representations \cite{kronheimer1989construction,kronheimer,majewskithesis}]
\label{universalMcKaymoduli}
Let $\Gamma\subset\m{Sp}(1)$ be a finite subgroup. 
\begin{itemize}
    \item There exists a stratified fibration, smooth over the regular stability conditions $\Theta^{reg}_{\m{Im}(\mathbb{H})}(\Gamma)\coloneqq \Theta_{\m{Im}(\mathbb{H})}(\Gamma)\backslash\mathcal{W}_\Gamma$
\begin{equation*}
\begin{tikzcd}
	{\mathcal{Q}_\Gamma} && \\
	\\
	{\mathcal{M}_\Gamma} && {\Theta_{\mathrm{Im}(\mathbb{H})}(\Gamma)}
	\arrow["\pi_\Gamma"{description}, from=1-1, to=3-1]
	\arrow["\mu_\Gamma"{description}, from=1-1, to=3-3]
	\arrow["\kappa_\Gamma"{description}, from=3-1, to=3-3]
\end{tikzcd}
\end{equation*}
The map $\pi_\Gamma$ is a $\m{PU}(R_\Gamma)^{\Gamma}$-principal bundle. There exists a decomposition 
\begin{align*}
    T\mathcal{Q}_\Gamma\cong K_\Gamma\oplus \hat{H}_\Gamma\oplus V\pi_\Gamma
\end{align*}
the connection $K_\Gamma\oplus \hat{H}_\Gamma$ connection one form $\mathfrak{a}_\Gamma\in \mathcal{A}(\pi_\Gamma)$ is $\m{PU}(R_\Gamma)^{\Gamma}$-invariant and descends to a connection $\hat{H}_\Gamma\oplus V\kappa_\Gamma$ on $\kappa_\Gamma$.
\item There exists $\underline{\omega}_{\mathcal{M}_\Gamma}\in \Omega^{0,2}(\mathcal{M}_\Gamma,\m{Im}(\mathbb{H}))$ such that 
\begin{align*}
    \m{d}\underline{\omega}_{\mathcal{M}_\Gamma}=\m{d}^{1,0}\underline{\omega}_{\mathcal{M}_\Gamma}=-\left<\kappa_\Gamma^*\theta_{\m{Im}(\mathbb{H})}\wedge \pi_{\mathfrak{z}(\mathfrak{pu}(R_\Gamma)^\Gamma)}F_{\mathfrak{a}_\Gamma}\right>.
\end{align*}
Moreover, the fibres $(M_\zeta,\underline{\omega}_\zeta)$ of $\kappa_\Gamma$ are stratified hyperkähler spaces, and smooth manifolds for $\zeta\in \Theta^{reg}_{\m{Im}(\mathbb{H})}(\Gamma)$. 
\item The group $\m{N}_{\m{SO}(V)}(\Gamma)$ acts on $\mathcal{M}_\Gamma$ such that all the maps, connections and the vertical hyperkähler form are equivariant. Moreover, the group $\mathbb{R}_{>0}$ acts via a universal dilation map $\delta_t\colon \mathcal{M}_\Gamma\rightarrow \mathcal{M}_\Gamma$, equivariant with respect to the dilation $\kappa_\Gamma\circ \delta_t=\delta_{t^2}\circ \kappa_\Gamma$ such that 
\begin{align*}
    \delta_t^*\underline{\omega}_{t^2\zeta}=t^2\underline{\omega}_{\zeta}.
\end{align*}
The identification of the central fibre $(\kappa^{-1}(0),\underline{\omega}_{0})\coloneqq (M_0,\underline{\omega}_0)\cong (V/\Gamma,\underline{\omega}_0)$ is equivariant with respect to both actions by $\m{N}_{\m{SO}(V)}(\Gamma)$ and by $\mathbb{R}_{>0}$ 
\item 
 There exists a universal resolution map 
\begin{align*}
    \rho_{\mathcal{M}_\Gamma}\colon \mathcal{M}_\Gamma\dashrightarrow M_0
\end{align*}
such that $\rho_\zeta\colon (M_\zeta,\underline{\omega}_\zeta)\dashrightarrow (M_0,\underline{\omega}_0)\cong (V,\underline{\omega}_0)$ are ALE of rate $\nu=-4$, i.e.
\begin{align*}
    \nabla^{k}((\rho_\zeta)_*\underline{\omega}_{\zeta}-\underline{\omega}_0)= \mathcal{O}(r^{-4-k})
\end{align*}
and $\mathfrak{a}_\Gamma$ 
is an ASD-instanton of the same rate\footnote{i.e. $\nabla^{k}F_{\mathfrak{a}_\zeta}=\mathcal{O}(r^{-4-k})$}. 
\item There exists an isomorphism of graded vector spaces 
\begin{align*}
    \m{H}^\bullet(M_\zeta)\cong \m{H}^{4-\bullet}_c(M_\zeta)\cong \m{H}^\bullet_{\m{CR}}(V/\Gamma)\cong (\mathbb{R}[0]\oplus \mathfrak{h}^{\bullet}_\Gamma,C_\Gamma)
\end{align*}
In particular, $\m{H}_2(M_\zeta,\mathbb{Z})\cong \Theta(\Gamma)$ is generated by $\underline{\omega}_\zeta$-holomorphic spheres\footnote{There exists a $q\in \m{Im}(\mathbb{H})$ such that $C_i\subset \rho_\zeta^{-1}([0/\Gamma])$ is $\left<\underline{\omega}_\zeta,q\right>$-holomorphic. Moreover, $C_i$ is holomorphic Lagrangian with respect to $\left<\underline{\omega}_\zeta,q^\perp\right>$.}, forming the exceptional set; intersecting according to the Cartan-form $C_\Gamma$.
\item The Gromov--Hausdorff distance of $(M_{\zeta},\underline{\omega}_\zeta)$ and $(M_{\zeta'},\underline{\omega}_{\zeta'})$ is proportional to $|\zeta-\zeta'|$ and hence $(M_{t^2\zeta},\underline{\omega}_{t^2\zeta})\xrightarrow[GH]{t\to 0}(M_0,\underline{\omega}_0)$. 
\item There exists a universal distance function $\mathfrak{w}_{\m{AC}}\colon \mathcal{M}_\Gamma\rightarrow[0,\infty)$ such that $w_{\m{AC;\zeta}}=\mathfrak{w}_{\m{AC}}|_{M_\zeta}\asymp _{r\to \infty} r$ such that the $\underline{\omega}_\zeta$-Hodge--de Rham operator satisfies the uniform weighted AC Schauder estimate 
\begin{align*}
    ||\eta||_{C^{k+1,\alpha}_{\m{AC},\underline{\omega}_\zeta,\beta}}\leq C_{k,\alpha,\beta}\left(||D_{\underline{\omega}_\zeta}\eta||_{C^{k,\alpha}_{\m{AC},\underline{\omega}_\zeta,\beta}}+||\eta||_{C^{0}_{\m{AC},\underline{\omega}_\zeta,\beta}}\right).
\end{align*}
Here $C_{k,\alpha,\beta}>0$ is independent of $\zeta\in \Theta^{\m{reg}}_{\m{Im}(\mathbb{H})}(\Gamma)$. The kernel of $D_{\underline{\omega}_\zeta}$ consists of smooth two forms in $\mathcal{O}(r^{-4})$ and can be identified with 
\begin{align*}
    \mathcal{H}^2_{-4}(M_\zeta)\cong \mathfrak{h}^2_\Gamma.
\end{align*}
\end{itemize}
 \end{prop}

 \begin{figure}[h!]
  \centering
  \includegraphics[width=0.4\linewidth]{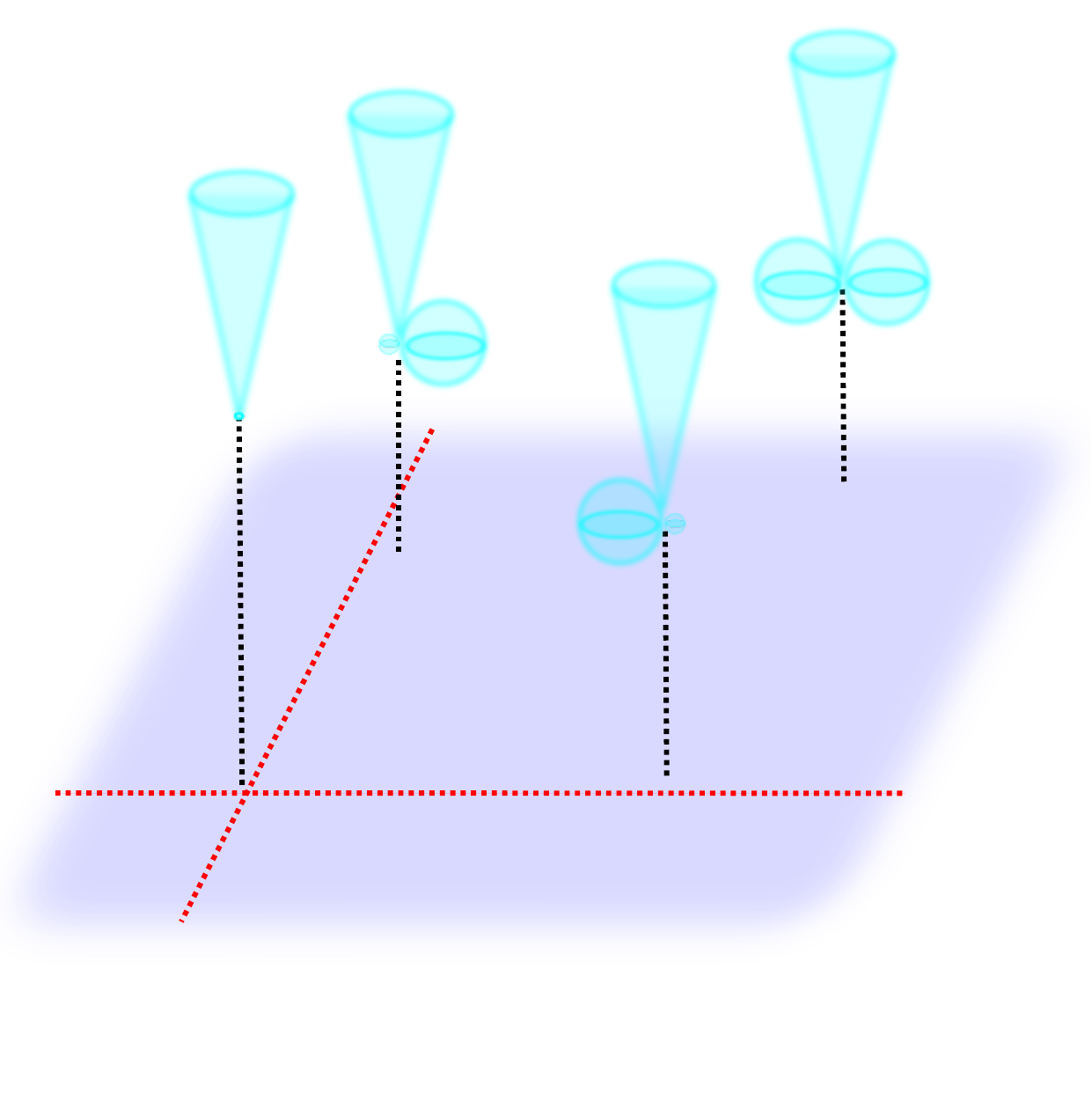}
  \caption{Parameter space of resolutions of $V/\Gamma$.}
\end{figure}

Using the decomposition of $W/\Gamma\cong H\oplus V/\Gamma$ we naturally are able to identify the normaliser of $\Gamma$ in $\m{Spin}(7)$ with the group
\begin{align*}
    \m{N}_{\m{Spin}(7)}(\Gamma)\cong \m{Spin}(7)\cap \m{SO}(H)\times \m{N}_{\m{SO}(V)}(\Gamma)\cong \m{Sp}(1)_{\top}\m{Sp}(1)_{+}\m{N}_{\m{Sp}(1)_{\perp}}(\Gamma)
\end{align*}
By identifying $\mathbb{H}\cong V$, we deduce that $\m{Sp}(1)_+=\m{Sp}(1)_{R}$ and $\m{Sp}(1)_{\perp}=\m{Sp}(1)_{L}$, we recover the action of $\m{N}_{\m{Spin}(7)}(\Gamma)$ factors through $\m{N}_{\m{SO}(4)}(\Gamma)$. The following table includes all possible normalizers and quiver symmetry groups of finite subgroups of $\m{Sp}(1)$.
\begin{align*}
\begin{array}{c|c|c|c}
\text{ADE type} 
& \Gamma\subset \m{Sp}(1) 
& N_{\m{SO}(4)}(\Gamma) 
& \m{Aut}(\Delta_\Gamma)
\\ \hline
A_0 
& C_1=\{1\} 
& \m{SO}(4) 
& 1
\\[4pt]
A_1 
& C_2=\{\pm 1\} 
& \m{SO}(4) 
& 1
\\[4pt]
A_{m-1},\ m\geq 3 
& C_m 
& \m{Pin}(2)\times_{\mathbb{Z}_2} \m{Sp}(1)_R
& \mathbb Z_2
\\[12pt]
D_4 
& Q_8=\m{Dic}_2 
& 2O\times_{\mathbb{Z}_2} \m{Sp}(1)_R 
& S_3
\\[12pt]
D_{n+2},\ n\geq 3 
& \m{Dic}_n 
&\m{Dic}_{2n}\times_{\mathbb{Z}_2} \m{Sp}(1)_R 
& \mathbb Z_2
\\[12pt]
E_6 
& 2T 
& 2O\times_{\mathbb{Z}_2} \m{Sp}(1)_R
& \mathbb Z_2
\\[12pt]
E_7 
& 2O 
& 2O\times_{\mathbb{Z}_2} \m{Sp}(1)_R
& 1
\\[12pt]
E_8 
& 2I 
& 
2I\times_{\mathbb{Z}_2} \m{Sp}(1)_R 
& 1
\end{array}
\end{align*}

Now using the identification of $\m{Im}(\mathbb{H})\cong \wedge^2 _+V^*\cong \wedge^2_-H^*$ as $\m{N}_{\m{Spin}(7)}(\Gamma)$-representations, we define the parameter bundle 
\begin{align*}
    \mathfrak{P}_\Gamma\coloneqq \wedge^2_{g_S,+}T^*S\otimes \mathfrak{H}^2_{\Gamma}.
\end{align*}
Naturally, points of $\mathfrak{P}_\Gamma$ correspond to hyperkähler ALE spaces resolving the normal-cone fibre of its base point in $s$. In particular, using the equivariance established in Proposition \ref{universalMcKaymoduli}, we conclude that there exists a bundle $\mathfrak{N}_\Gamma\coloneqq \m{Fr}(X/S,\Phi)\times_{\m{N}_{\m{Spin}(7)}(\Gamma)}\mathcal{M}_\Gamma$ such that 
\begin{equation*}
    \begin{tikzcd}
	{\mathfrak{N}_\Gamma} && \\
	\\
	{\mathfrak{P}_\Gamma} && S
	\arrow["{\mathfrak{k}_\Gamma}"{description}, from=1-1, to=3-1]
	\arrow["{\mathfrak{n}_\Gamma}"{description}, from=1-1, to=3-3]
	\arrow["{\mathfrak{p}_\Gamma}"{description}, from=3-1, to=3-3]
\end{tikzcd}
\end{equation*}
whose $\mathfrak{n}_\Gamma$-fibres are given by the universal moduli spaces $\mathcal{M}_\Gamma$. Moreover, as the action of $\m{N}_{\m{Spin}(7)}(\Gamma)$ preserves the stratified structure of $\mathcal{M}_\Gamma$ by means of preserving the walls $\m{Im}(\mathbb{H})\otimes\mathcal{W}_\Gamma$, there exists a stratification of $\mathfrak{P}_S$ given by 
\begin{align*}
    \wedge^2_+T^*S\otimes \mathfrak{W}_{\Gamma}\subset \mathfrak{P}_\Gamma
\end{align*}
of singular parameters. These parameters correspond to partial ALE resolutions of the normal cone fibres.\\

We further equip the bundle $\mathfrak{k}_\Gamma \colon\mathfrak{N}_\Gamma\rightarrow \mathfrak{P}_\Gamma $ with the structure of a stratified Riemannian fibration by 
    \begin{align*}
        g_{\mathfrak{N}_\Gamma}\coloneqq g_{\mathfrak{N}_\Gamma}^{2,0}+g_{\mathfrak{N}_\Gamma}^{0,2}=\mathfrak{n}^*_\Gamma g_S+[g_{\mathcal{M}_\Gamma}].
    \end{align*}
and endow it with the \textbf{universal Cayley form }
    \begin{align*}
        \Phi_{\mathfrak{N}_\Gamma}\coloneqq\Phi_{\mathfrak{N}_\Gamma}^{4,0}+\Phi_{\mathfrak{N}_\Gamma}^{2,2}+\Phi_{\mathfrak{N}_\Gamma}^{0,4}
    \end{align*}
    given by \footnote{The Soldering form of $S$ is a form $\theta_S\in \Omega^1(S,TS)$. We identify $[\theta_S\wedge\theta_S]_+$ with the Soldering form of the self-dual two forms on $S$.}
    \begin{align*}
        \Phi_{\mathfrak{N}_\Gamma}^{4,0}\coloneqq&\mathfrak{n}^*_\Gamma\m{vol}_S\\
        \Phi_{\mathfrak{N}_\Gamma}^{2,2}\coloneqq&-\left<\mathfrak{n}^*_\Gamma\left[\theta_{S}\wedge\theta_S\right]_+\wedge[\underline{\omega}_{\mathcal{M}_\Gamma}]\right>\\
        \Phi_{\mathfrak{N}_\Gamma}^{0,4}\coloneqq&\frac{1}{6}\left<[\underline{\omega}_{\mathcal{M}_\Gamma}]\wedge[\underline{\omega}_{\mathcal{M}_\Gamma}]\right>,
    \end{align*}
    and the \textbf{universal ACF-distance} function 
    \begin{align*}
        w_{\m{AC}}=[\mathfrak{w}_{\m{AC}}].
    \end{align*}
    The universal resolution map $\rho_{\mathcal{M}_\Gamma}\colon \mathcal{M}_\Gamma\dashrightarrow M_0$ induces a \textbf{universal resolution map} 
\begin{equation*}
    \begin{tikzcd}
	{{\rho_{\mathfrak{N}_\Gamma}}\colon\mathfrak{N}_\Gamma} & N_0
	\arrow[dashed, two heads, from=1-1, to=1-2]
\end{tikzcd}
\end{equation*}
Notice, that the dilation action $\mathbb{R}_{>0}$-action 
    \begin{align*}
        \delta_t \colon\mathfrak{P}_\Gamma\rightarrow \mathfrak{P}_\Gamma,\, \zeta\mapsto t^2\cdot\zeta
    \end{align*}
    lifts to an action 
    \begin{align*}
        \delta_t \colon\mathfrak{N}_\Gamma\rightarrow\mathfrak{N}_\Gamma
    \end{align*}
   such that 
   \begin{align*}
       \delta_t^*\Phi_{\mathfrak{N}_\Gamma}=\Phi_{\mathfrak{N}_\Gamma}^{4,0}+t^2\cdot\Phi_{\mathfrak{N}_\Gamma}^{2,2}+t^4\cdot\Phi_{\mathfrak{N}_\Gamma}^{0,4}\und{1.0cm}\delta_t^*w_{\m{AC}}=tw_{\m{AC}}.
   \end{align*}
In order to construct an ACF-resolution of the normal cone bundle we have to pick a point in each fibre of $\mathfrak{P}_\Gamma$ varying smoothly. Hence, such resolution are characterised by section 
\begin{align*}
    \zeta\in \Gamma(S,\mathfrak{P}_\Gamma)\cong \Omega^2_+(S,\mathfrak{H}_\Gamma).
\end{align*}
The resolution is then constructed via the pullback
\begin{equation*}
    \begin{tikzcd}
	&& {\mathfrak{N}_\Gamma} && \\
	{N_\zeta} &&&& {N_0} \\
	&& {\mathfrak{P}_\Gamma} \\
	S &&&& S
	\arrow[two heads, from=1-3, to=3-3]
	\arrow[hook, from=2-1, to=1-3]
	\arrow[dotted, from=2-1, to=2-5]
	\arrow["\lrcorner"{anchor=center, pos=0.125}, draw=none, from=2-1, to=3-3]
	\arrow[two heads, from=2-1, to=4-1]
	\arrow[hook', from=2-5, to=1-3]
	\arrow["\lrcorner"{anchor=center, pos=0.125, rotate=-90}, draw=none, from=2-5, to=3-3]
	\arrow[two heads, from=2-5, to=4-5]
	\arrow["\zeta"{description}, hook, from=4-1, to=3-3]
	\arrow["0"{description}, hook', from=4-5, to=3-3]
\end{tikzcd}
\end{equation*}
whereby 
    \begin{align*}
        (N_\zeta,\Phi_\zeta)\coloneqq (\zeta^*\mathfrak{N}_\Gamma,\zeta^*\Phi_{\mathfrak{N}_\Gamma})
    \end{align*}
Naturally, the base point of each fibre of $\mathfrak{P}_\Gamma$ corresponds to its zero section and we identify the bundle
\begin{align*}
    (N_0,\Phi_0)\cong 0^*(\mathfrak{N}_\Gamma,\Phi_{\mathfrak{N}_\Gamma}).
\end{align*}
The space $(N_\zeta,\Phi_\zeta)$ is a smooth noncompact manifold with a smooth $\m{Spin}(7)$-structure for all $\zeta$ such that 
\begin{align*}
    \m{im}(\zeta)\cap \wedge^2T^*_+S\otimes \mathfrak{W}_\Gamma=\emptyset,
\end{align*}
and outside an exceptional set $\Upsilon_\zeta\coloneqq \rho_\zeta^{-1}(0_S)$, which is of codimension two, the resolution map $\rho_\zeta$ defines a diffeomorphism to the regular part of the normal cone bundle.\\

The following proposition states the asymptotic properties of the constructed $\m{Spin}(7)$-structure $\Phi_\zeta$.

\begin{prop}
\label{ACFstructuresfrompullbacks4}
    Let $\zeta\in \Gamma(S,\mathfrak{P}^{reg}_\Gamma)$. The space $(N_\zeta,\Phi_\zeta)$ is an ACF-$\m{Spin}(7)$-resolution 
    \begin{align*}
        \rho_{\zeta}\colon (N_\zeta,\Phi_\zeta)\dashrightarrow(N_0,\Phi_0).
    \end{align*}
    of rate $-4$ with respect to $0^*w_{\m{AC}}=\m{dist}_{g_0}(S)=r$. Moreover, pulling back the universal dilation morphism yields a diffeomorphism 
    \begin{align*}
        \delta_t\colon(N_{t^2\cdot\zeta},\Phi_{t^2\cdot\zeta})\cong (N_\zeta,\Phi^t_\zeta).
    \end{align*}
\end{prop}

\begin{proof}
By \cite[Thm. 8.2.3]{joyce2000compact} there exists an one form $\tau\in\Omega^1(\mathbb{H}/\Gamma,\m{Im}(\mathbb{H}))$ such that 
\begin{align*}
    (\pi_{\zeta(s)})_*\underline{\omega}_{\zeta(s)}-\underline{\omega}_0=\m{d}\underline{\tau}_{\zeta(s)}
\end{align*}
and 
\begin{align*}
    \nabla^k\m{d}\underline{\tau}_\zeta=\mathcal{O}(r^{-4-k})
\end{align*}
we can deduce that 
\begin{align*}
    (\nabla^{\Phi_0})^k\left((\rho_\zeta)_*\Phi_\zeta-\Phi_0\right)=\mathcal{O}(r^{-4})\und{1.0cm}(\nabla^{\Phi_0,V})^k\left((\rho_\zeta)_*\Phi_\zeta-\Phi_0\right)=\mathcal{O}(r^{-4-k})
\end{align*}
as 
\begin{align*}
    (\rho_\zeta)_*\hat{\Phi}_\zeta-\hat{\Phi}_0=&-\left<(\kappa_0)^*\left[\theta_{S}\wedge\theta_S\right]_+\wedge(\rho_\zeta)_*((\kappa^*\overline{\zeta})^*\m{d}^{0,1}\underline{\tau})\right>\\
    &+\frac{1}{6}\left<\left(\tilde{0}^*\underline{\omega}_{\mathcal{M}_\Gamma}+(\rho_\zeta)_*(\kappa^*\overline{\zeta})^*\underline{\omega}_{\mathcal{M}_\Gamma}\right)^{0,2,0}\wedge(\rho_\zeta)_*(\kappa^*\overline{\zeta})^*(\m{d}^{0,1}\underline{\tau})\right>.
\end{align*}
\end{proof}

\subsubsection{Orbifold McKay-Correspondence}
\label{Orbifold McKay-Correspondence}

We now relate the local McKay correspondence to the adiabatic torsion-free condition for resolutions of the normal cone. In the previous sections we constructed, for every regular section
\begin{align*}
\zeta\in\Gamma(S,\mathfrak{P}_\Gamma)
\cong
\Omega^2_+(S,\mathfrak{H}^2_\Gamma),
\end{align*}
an ACF $\m{Spin}(7)$-resolution
\begin{align*}
\rho_\zeta\colon
(N_\zeta,\Phi_\zeta)
\dashrightarrow
(N_0,\Phi_0)
\end{align*}
of the normal cone bundle along the codimension-four stratum $S$. The purpose of this section is to identify the condition under which this resolution is adiabatic torsion-free.\\

The answer is cohomological. Under the local McKay correspondence, the parameter bundle
\begin{align*}
\mathfrak{P}_\Gamma=\wedge^2_+T^S\otimes\mathfrak{H}^2_\Gamma
\end{align*}
is built from the degree-two Chen--Ruan local system. Thus a section $\zeta$ determines a class
\begin{align*}
    [\zeta]\in \m{H}^2(S,\mathfrak{H}^2_\Gamma)\subset \m{H}^4(\m{H}^\bullet(S,\mathfrak{H}^\bullet_\Gamma))
\end{align*}
whenever it is closed. The theorem below says that the adiabatic torsion-free condition is precisely the differential-geometric representative of this cohomological condition: the leading torsion terms vanish exactly when $\zeta$ is harmonic with respect to the flat McKay local system and the metric $g_S$.

\begin{thm}
\label{codimensionfourMcKayDuality}
    Let $[\zeta]\in \m{H}^\bullet(S,\mathfrak{H}^\bullet_\Gamma)$ corresponds to a $g_S$-harmonic $\mathfrak{H}_\Gamma$-twisted differential form $\zeta\in \Gamma(S,\mathfrak{P}_\Gamma)\subset \Omega^\bullet(S,\mathfrak{H}^\bullet_\Gamma)$. Then the ACF-space 
    \begin{align*}
        \rho_\zeta\colon (N_\zeta,\Phi_\zeta)\dashrightarrow (N_0,\Phi_0)
    \end{align*}
    is adiabatic torsion-free. Conversely, $\rho_\zeta\colon (N_\zeta,\Phi_\zeta)\dashrightarrow (N_0,\Phi_0)$ is adiabatic torsion-free if $\zeta\in \Omega^\bullet(S,\mathfrak{H}^\bullet_\Gamma)$ is harmonic and hence determines a unique class $[\Phi]\oplus[\zeta]\in \m{H}^4_{\m{CR}}(X,\mathbb{R})\cong \m{H}^4_{\m{dR}}(X,\mathbb{R})\oplus \m{H}^2(S,\mathfrak{H}^2_\Gamma)$.
\end{thm}

\begin{proof}
We define $\overline{\zeta}\in\Omega^0(\m{Fr}(X/S,\Phi),\Theta_{\m{Im}(\mathbb{H})}(\Gamma))^{\m{N}_{\m{Spin}(7)}(\Gamma)}$ via 
\begin{align*}
    \zeta=\left<\overline{\zeta},\left[\theta_{S}\wedge\theta_S\right]_+\right>
\end{align*}
and the identification of $\Theta_{\m{Im}(\mathbb{H})}(\Gamma)=\m{Im}(\mathbb{H})\otimes\Theta_{\m{Im}(\mathbb{C})}(\Gamma)\cong \m{Im}(\mathbb{H})\otimes \mathfrak{z}(\mathfrak{pu}(R_\Gamma)^{\Gamma})$.\\

We will lift the forms from $N_\zeta$ to the bundle $\overline{\zeta}^*\mathcal{M}_\Gamma$, where $\overline{\zeta}\colon \m{Fr}(X/S,\Phi)\rightarrow \Theta_{\m{Im}(\mathbb{H})}$ denote the $\m{N}_{\m{Spin}(7)}(\Gamma)$-equivariant map lifting $\zeta$.

Now, consider the pullback of $\kappa \colon\mathcal{M}_\Gamma\rightarrow \Theta_{\m{Im}(\mathbb{H})}$ along $\overline{\zeta}$.
    \begin{equation*}
         \begin{tikzcd}
            \overline{\zeta}^*\mathcal{M}_\Gamma\arrow[r,"\kappa^*\overline{\zeta}"]\arrow[d,swap,"\overline{\zeta}^*\kappa"]&\mathcal{M}_\Gamma\arrow[d,"\kappa"]\\
            \m{Fr}(X/S,\Phi)\arrow[r,"\overline{\zeta}"]&\Theta_{\m{Im}(\mathbb{H})}(\Gamma)
        \end{tikzcd}
    \end{equation*}

As the group $\m{N}_{\m{Spin}(7)}(\Gamma)$ acts freely from the right on $\m{Fr}(X/S,\Phi)\times \mathcal{M}_\Gamma$, it acts freely on $\overline{\zeta}^*\mathcal{M}_\Gamma$ and the projection $\psi_\zeta \colon\overline{\zeta}^*\mathcal{M}_\Gamma\rightarrow N_\zeta$ defines a principal $\m{N}_{\m{Spin}(7)}(\Gamma)$-bundle.

\begin{equation*}
    \begin{tikzcd}
        &\overline{\zeta}^*\mathcal{M}_\Gamma\arrow[dl,swap,"\psi_\zeta"]\arrow[r,"\kappa^*\overline{\zeta}"]\arrow[d,"\overline{\zeta}^*\kappa"]&\mathcal{M}_\Gamma\arrow[d,"\kappa"]\\
        N_\zeta\arrow[dr,"\nu_\zeta"]&\m{Fr}(X/S,\Phi)\arrow[r,"\overline{\zeta}"]\arrow[d,"\phi"]&\Theta_{\m{Im}(\mathbb{H})}(\Gamma)\\
        &S&
    \end{tikzcd}
\end{equation*}
where 
\begin{align*}
    \nu_\zeta\colon N_\zeta=\overline{\zeta}^*\mathcal{M}_\Gamma/\m{N}_{\m{Spin}(7)}(\Gamma)\rightarrow S
\end{align*}
denotes the quotient mapping onto $S$.\\

The Levi-Civita connection on $S$ pulls back to a connection $(\overline{\zeta}^*\kappa)^*\varphi=\varphi_\zeta$ on $\psi_\zeta \colon\overline{\zeta}^*\mathcal{M}_\Gamma\rightarrow N_\zeta$. Furthermore, 
the $\m{N}_{\m{Spin}(7)}(\Gamma)$-invariant connection ${\check{H}}$ on $\kappa \colon\mathcal{M}_\Gamma\rightarrow \Theta_{\m{Im}(\mathbb{H})}(\Gamma)$ gives rise to a splitting of the horizontal subbundle of $(\overline{\zeta}^*\kappa)^*\varphi$. Consequently, the tangent bundle of $\overline{\zeta}^*\mathcal{M}_\Gamma$ decomposes into 
\begin{align*}
    T\overline{\zeta}^*\mathcal{M}_\Gamma\cong&(\overline{\zeta}^*\kappa)^*H_S\oplus\overline{\zeta}^*K\oplus\underline{\mathfrak{n}_{\m{Spin}(7)}(\Gamma)}_{\overline{\zeta}^*\mathcal{M}_\Gamma}\\
    \eqqcolon&T^{1,0,0}\overline{\zeta}^*\mathcal{M}_\Gamma\oplus T^{0,1,0}\overline{\zeta}^*\mathcal{M}_\Gamma\oplus T^{0,0,1}\overline{\zeta}^*\mathcal{M}_\Gamma.
\end{align*}
In particular, $(\overline{\zeta}^*\kappa)^*H_S$ defines an $\m{N}_{\m{Spin}(7)}(\Gamma)$ equivariant connection on $\phi\circ\overline{\zeta}^*\kappa \colon\overline{\zeta}^*\mathcal{M}_\Gamma\rightarrow S$, and thus descends to an Ehresmann connection $H_\zeta$ on $\nu_\zeta\colon N_\zeta\rightarrow S$.\\

Differential forms on $\overline{\zeta}^*\mathcal{M}_\Gamma$ decompose into
\begin{align*}
    \Omega^{p,q,r}(\overline{\zeta}^*\mathcal{M}_\Gamma)=\Gamma(\overline{\zeta}^*\mathcal{M}_\Gamma,\wedge^p(\overline{\zeta}^*\kappa)^* H_S^* \otimes \wedge^p(\kappa^*\overline{\zeta})^*K^* \otimes\wedge^r\underline{\mathfrak{n}_{\m{Spin}(7)}(\Gamma)}_{\overline{\zeta}^*\mathcal{M}_\Gamma}^*)
\end{align*}
and the de Rham differential can be identified with 
\begin{equation*}
    \m{d}
    =\begin{cases}
        \m{d}^{1,0}&=\m{d}^{1,0,0}+\m{d}^{0,1,0}+\m{d}^{2,-1,0}\\
        \m{d}^{0,1}&=\m{d}^{0,0,1}\\
        \m{d}^{2,-1}&=\m{d}^{2,0,-1}.
    \end{cases}
\end{equation*}

We identify $\Phi_\zeta$ with a four form on $\overline{\zeta}^*\mathcal{M}_\Gamma$ given by 
\begin{align*}
    \hat{\Phi}_\zeta^{4,0}&=(\phi\circ(\overline{\zeta}^*\kappa))^*\m{vol}_S\\
    \hat{\Phi}_\zeta^{2,2}&=-\left<(\overline{\zeta}^*\kappa)^*\left[\theta_{S}\wedge\theta_S\right]_+\wedge((\kappa^*\overline{\zeta})^*\underline{\omega}_{\mathcal{M}_\Gamma})^{0,2,0}\right>\\
    \hat{\Phi}_\zeta^{0,4}&=\frac{1}{6}\left<((\kappa^*\overline{\zeta})^*\underline{\omega}_{\mathcal{M}_\Gamma})^{0,2,0}\wedge((\kappa^*\overline{\zeta})^*\underline{\omega}_{\mathcal{M}_\Gamma})^{0,2,0}\right>.
\end{align*}
We will now proceed by computing the torsion $\m{d}\Phi_\zeta$ and prove that if $\zeta$ is harmonic, $\m{d}\Phi_\zeta=\m{d}^{2,-1}\Phi^{2,2}_\zeta+\m{d}^{2,-1}\Phi^{0,4}_\zeta$.

First we compute
\begin{align*}
    \psi_\zeta^*\m{d}\Phi_\zeta^{4,0}=&\m{d}(\overline{\zeta}^*\kappa\circ\phi)^*\m{vol}_S\\
    =&\m{d}(\overline{\zeta}^*\kappa\circ\phi)^*\m{vol}_S\\
    =&(\overline{\zeta}^*\kappa\circ\phi)^*\m{d}\m{vol}_S\\
    =&0
\end{align*}
To see that the harmonicity of $\zeta$ implies that $\Phi_\zeta$ defines an adiabatic torsion-free $\m{Spin}(7)$-structure, we compute

\begin{align*}
    \m{d}_{\varphi}\zeta=&\m{d}^{1,0}\left<\left[\theta_{S}\wedge\theta_S\right]_+,\overline{\zeta}\right>=\left<\left[\theta_{S}\wedge\theta_S\right]_+\wedge(\overline{\zeta}^*\theta_{\Theta_{\m{Im}(\mathbb{H})}(\Gamma)})^{1,0}\right>.
\end{align*}
where $\theta_{\Theta_{\m{Im}(\mathbb{H})}(\Gamma)}\in \Omega^1(\Theta_{\m{Im}(\mathbb{H})}(\Gamma),\m{Im}(\mathbb{H})\otimes \mathfrak{z}(\mathfrak{pu}(R_\Gamma)^{\Gamma}))$ denotes the Soldering form on $\Theta_{\m{Im}(\mathbb{H})}(\Gamma)$. We use that $\varphi$ is torsion-free, as well as
\begin{align*}
    \m{d}^{1,0,0}((\kappa^*\overline{\zeta})^*\underline{\omega}_{\mathcal{M}_\Gamma})^{0,2,0}=&(\m{d}(\kappa^*\overline{\zeta})^*\underline{\omega}_{\mathcal{M}_\Gamma})^{1,2,0}-\m{d}^{0,1,0}((\kappa^*\overline{\zeta})^*\underline{\omega}_{\mathcal{M}_\Gamma})^{1,1,0}\\
    =&((\kappa^*\overline{\zeta})^*\m{d}\underline{\omega}_{\mathcal{M}_\Gamma})^{1,2,0}\\
    &-\m{d}^{0,1,0}((\kappa^*\overline{\zeta})^*\underline{\omega}_{\mathcal{M}_\Gamma})^{1,1,0}\\
    =&-\left((\kappa^*\overline{\zeta})^*\left<\kappa^*\theta_{\Theta_{\m{Im}(\mathbb{H})}(\Gamma)}\wedge\pi_{\mathfrak{z}(\mathfrak{pu}(R_\Gamma)^{\Gamma})}F_{\mathfrak{a}_\Gamma}\right>\right)^{1,2,0}\\
    &-\m{d}^{0,1,0}((\kappa^*\overline{\zeta})^*\underline{\omega}_{\mathcal{M}_\Gamma})^{1,1,0}\\
    =&-\left<\overline{\zeta}^*(\kappa^*\overline{\zeta})^*\theta_{\Theta_{\m{Im}(\mathbb{H})}(\Gamma)}\wedge(\kappa^*\overline{\zeta})^*\pi_{\mathfrak{z}(\mathfrak{pu}(R_\Gamma)^{\Gamma})}F_{\mathfrak{a}_\Gamma}\right>^{1,2,0}\\
    &-\m{d}^{0,1,0}((\kappa^*\overline{\zeta})^*\underline{\omega}_{\mathcal{M}_\Gamma})^{1,1,0}\\
    =&-\left<(\overline{\zeta}^*(\kappa^*\overline{\zeta})^*\theta_{\Theta_{\m{Im}(\mathbb{H})}(\Gamma)})^{1,0,0}\wedge((\kappa^*\overline{\zeta})^*\pi_{\mathfrak{z}(\mathfrak{pu}(R_\Gamma)^{\Gamma})}F_{\mathfrak{a}_\Gamma})^{0,2,0}\right>\\
    &-\left<(\overline{\zeta}^*(\kappa^*\overline{\zeta})^*\theta_{\Theta_{\m{Im}(\mathbb{H})}(\Gamma)})^{0,1,0}\wedge((\kappa^*\overline{\zeta})^*\pi_{\mathfrak{z}(\mathfrak{pu}(R_\Gamma)^{\Gamma})}F_{\mathfrak{a}_\Gamma})^{1,1,0}\right>\\
    &-\m{d}^{0,1,0}((\kappa^*\overline{\zeta})^*\underline{\omega}_{\mathcal{M}_\Gamma})^{1,1,0}
\end{align*}
whereby 
\begin{align*}
    \m{d}^{0,1,0}((\kappa^*\overline{\zeta})^*\underline{\omega}_{\mathcal{M}_\Gamma})^{1,1,0}=-\left<(\overline{\zeta}^*(\kappa^*\overline{\zeta})^*\theta_{\Theta_{\m{Im}(\mathbb{H})}(\Gamma)})^{0,1,0}\wedge((\kappa^*\overline{\zeta})^*\pi_{\mathfrak{z}(\mathfrak{pu}(R_\Gamma)^{\Gamma})}F_{\mathfrak{a}_\Gamma})^{1,1,0}\right>
\end{align*}

and 
\begin{align*}
    \m{d}^{0,1,0}((\kappa^*\overline{\zeta})^*\underline{\omega}_{\mathcal{M}_\Gamma})^{0,2,0}=&(\m{d}(\kappa^*\overline{\zeta})^*\underline{\omega}_{\mathcal{M}_\Gamma})^{0,3,0}\\
    =&-\left<(\kappa^*\overline{\zeta})^*\left(\kappa^*\theta_{\Theta_{\m{Im}(\mathbb{H})}(\Gamma)}\wedge\pi_{\mathfrak{z}(\mathfrak{pu}(R_\Gamma)^{\Gamma})}F_{\mathfrak{a}_\Gamma}\right)\right>^{0,3,0}\\
    =&-\left<(\overline{\zeta}^*(\kappa^*\overline{\zeta})^*\theta_{\Theta_{\m{Im}(\mathbb{H})}(\Gamma)})^{0,1,0}\wedge((\kappa^*\overline{\zeta})^*\pi_{\mathfrak{z}(\mathfrak{pu}(R_\Gamma)^{\Gamma})}F_{\mathfrak{a}_\Gamma})^{0,2,0}\right>.
\end{align*}
Hence, we deduce that 

\begin{align*}
    (\m{d}^{1,0,0}+\m{d}^{0,1,0})((\kappa^*\overline{\zeta})^*\underline{\omega}_{\mathcal{M}_\Gamma})^{0,2,0}=&\left<(\overline{\zeta}^*\kappa)^*(\overline{\zeta}^*\theta_{\Theta_{\m{Im}(\mathbb{H})}(\Gamma)})^{1,0}\wedge((\kappa^*\overline{\zeta})^*\pi_{\mathfrak{z}(\mathfrak{pu}(R_\Gamma)^{\Gamma})}F_{\mathfrak{a}_\Gamma})^{0,2,0}\right>.
\end{align*}

Further, as the pull back of the Soldering form satisfies
\begin{align*}
    \m{d}(\overline{\zeta}^*\kappa)^*\left[\theta_{S}\wedge\theta_S\right]_+=&(\overline{\zeta}^*\kappa)^*\m{d}\left[\theta_{S}\wedge\theta_S\right]_+\\
    =&(\overline{\zeta}^*\kappa)^*[\varphi\overset{\circ}{\wedge}\left[\theta_{S}\wedge\theta_S\right]_+]\in\Omega^{2,1}(\overline{\zeta}^*\mathcal{M}_\Gamma,\m{Im}(\mathbb{H}))^{\m{N}_{\m{Spin}(7)}(\Gamma)}
\end{align*}
we deduce, that if $\zeta$ is harmonic
\begin{align*}
    \psi_\zeta^*\m{d}\Phi^{2,2}_\zeta=&-\left<(\overline{\zeta}^*\kappa)^*\left[\theta_{S}\wedge\theta_S\right]_+\wedge\left<(\overline{\zeta}^*\kappa)^*(\overline{\zeta}^*\theta_{\Theta_{\m{Im}(\mathbb{H})}(\Gamma)})^{1,0}\wedge((\kappa^*\overline{\zeta})^*\pi_{\mathfrak{z}(\mathfrak{pu}(R_\Gamma)^{\Gamma})}F_{\mathfrak{a}_\Gamma})^{0,2,0}\right>\right>\\
    &+\psi^*_\zeta\m{d}^{2,-1}\Phi^{2,2}_\zeta\\
    =&-\left<(\overline{\zeta}^*\kappa)^*\underbrace{\left<\left[\theta_{S}\wedge\theta_S\right]_+\wedge(\overline{\zeta}^*\theta_{\Theta_{\m{Im}(\mathbb{H})}(\Gamma)})^{1,0}\right>}_{\m{d}_{\varphi}\zeta=0}\wedge((\kappa^*\overline{\zeta})^*\pi_{\mathfrak{z}(\mathfrak{pu}(R_\Gamma)^{\Gamma})}F_{\mathfrak{a}_\Gamma})^{0,2,0}\right>\\
    &+\psi^*_\zeta\m{d}^{2,-1}\Phi^{2,2}_\zeta\\
    =&\psi^*_\zeta\m{d}^{2,-1}\Phi^{2,2}_\zeta
\end{align*}
Further,
\begin{align*}
    \psi_\zeta^*\m{d}^{1,0}\Phi_\zeta^{0,4}=&\m{d}^{1,0,0}\hat{\Phi}_\zeta^{0,4}\\
    =&\frac{1}{6}\m{d}^{1,0,0}\left<((\kappa^*\overline{\zeta})^*\underline{\omega}_{\mathcal{M}_\Gamma})^{0,2,0}\wedge((\kappa^*\overline{\zeta})^*\underline{\omega}_{\mathcal{M}_\Gamma})^{0,2,0}\right>\\
    =&\frac{1}{3}\left<\m{d}^{1,0,0}((\kappa^*\overline{\zeta})^*\underline{\omega}_{\mathcal{M}_\Gamma})^{0,2,0}\wedge((\kappa^*\overline{\zeta})^*\underline{\omega}_{\mathcal{M}_\Gamma})^{0,2,0}\right>\\
    =&-\frac{1}{3}\left<\left<(\overline{\zeta}^*\kappa)^*(\overline{\zeta}^*\theta_{\Theta_{\m{Im}(\mathbb{H})}(\Gamma)})^{1,0}\wedge((\kappa^*\overline{\zeta})^*\pi_{\mathfrak{z}(\mathfrak{pu}(R_\Gamma)^{\Gamma})}F_{\mathfrak{a}_\Gamma})^{0,2,0}\right>\wedge((\kappa^*\overline{\zeta})^*\underline{\omega}_{\mathcal{M}_\Gamma})^{0,2,0}\right>\\
    =&-\frac{1}{3}\left<(\overline{\zeta}^*\kappa)^*(\overline{\zeta}^*\theta_{\Theta_{\m{Im}(\mathbb{H})}(\Gamma)})^{1,0}\wedge\left<((\kappa^*\overline{\zeta})^*\pi_{\mathfrak{z}(\mathfrak{pu}(R_\Gamma)^{\Gamma})}F_{\mathfrak{a}_\Gamma})^{0,2,0}\wedge((\kappa^*\overline{\zeta})^*\underline{\omega}_{\mathcal{M}_\Gamma})^{0,2,0}\right>\right>\\
    =&-\frac{1}{3}\left<(\overline{\zeta}^*\kappa)^*(\overline{\zeta}^*\theta_{\Theta_{\m{Im}(\mathbb{H})}(\Gamma)})^{1,0}\wedge\left<((\kappa^*\overline{\zeta})^*(\underbrace{\pi_{\mathfrak{z}(\mathfrak{pu}(R_\Gamma)^{\Gamma})}F_{\mathfrak{a}_\Gamma}\wedge\underline{\omega}_{\mathcal{M}_\Gamma}}_{=0}))^{0,4,0}\right>\right>\\
    =&0
\end{align*}

vanishes as a consequence of $F_{\mathfrak{a}_\Gamma}$ being fibrewise ASD.
\end{proof}

\begin{thm}
\label{alladiabaticresolutions}
    Every smooth adiabatic torsion-free $\m{Spin}(7)$-ACF-resolution of rate $-4$ comes from the above construction.
\end{thm}

\begin{proof}
    Let $\rho'\colon (N',\Phi')\dashrightarrow (N_0,\Phi_0)$ be a smooth adiabatic torsion-free $\m{Spin}(7)$-ACF-resolution of rate $-4$. Let $\iota_s\colon \mathbb{H}/\Gamma\hookrightarrow N_0$ denote the fibre inclusion at $s\in S$. The restriction $M_s\coloneqq \rho'^{-1}(\mathbb{H}/\Gamma)$, defines a smooth resolution 
    \begin{align*}
        \rho'_s\colon M_s\dashrightarrow \mathbb{H}/\Gamma
    \end{align*}
    and metrically is a smooth ALE space of rate $-4$. Moreover, for every $0\neq \beta\in \wedge^2_+T^*_sS$, $\left<\beta,\Phi'|_{s}\right>\asymp_{r\to \infty} \left<\beta,\Phi_0|_{s}\right>$ is a torsion-free hyperkähler resolution as $\Phi'$ is adiabatic torsion-free and the pullback preserves the closed-ness. Consequently, using the universal moduli description of \cite{kronheimer1989construction} the fibre has to be diffeomorphic to the ADE-quiver variety of $\Gamma$ and $\left<\beta,\Phi'_s\right>$ can be assigned to a unique parameter inside the Weyl-chambers of $\m{H}^2(M')\cong \m{h}_\Gamma$. Since $(N',\Phi')$ is smooth, the assignment yields a class $[\Phi']\in \Omega^2_+(S,\m{H}^2(N'/S))$ such that $\m{im}([\Phi'])$ does not intersect the bundle walls. Moreover, by the equivalence of the universal moduli space $\mathcal{M}_\Gamma$ under the action of $\m{N}_{\m{Spin}(7)}(\Gamma)$ the bundle $\m{H}^2(N'/S)\cong \m{Fr}(X/S,\Phi)\times_{\m{N}_{\m{Spin}(7)}(\Gamma)} \mathfrak{h}_\Gamma\cong \mathfrak{H}^2_\Gamma$. By Theorem \ref{codimensionfourMcKayDuality} the corresponding section has to be harmonic and we conclude the statement.
\end{proof}

\begin{thm}
\label{isentropicityequalcrepantresolution}
    Let $(X,\Phi)$ be a compact $\m{Spin}(7)$-orbifold whose isotropy groups are conjugate to subgroups of $\m{SU}(n)$ and let 
    \begin{align*}
        \rho_{t}\colon (X_t,\Phi_t)\dashrightarrow (X,\Phi)
    \end{align*}
   be a $1$-tame $\m{Spin}(7)$-orbifold resolution with respect to an interpolating $\Phi_t'$. Then the isentropicity of $(X_t,\Phi_t')$ is equivalent to the isomorphism 
    \begin{align*}
        \m{H}^\bullet_{\m{CR}}(X,\mathbb{R})\cong \m{H}^\bullet(X_t,\mathbb{R}).
    \end{align*}
\end{thm}

\section{Existence of Torsion-Free $\m{Spin}(7)$ Structures on Orbifold Resolutions}
\label{Existence of torsion-free Spin(7) Structures on Orbifold Resolutions}

In the following section we will solve the fixpoint problem \eqref{fixpointproblem} to deform the pre-glued $\m{Spin}(7)$-structures on $X_{\zeta;t}$ constructed in Section \ref{Adiabatic Spin(7)-Orbifold Resolutions} to a torsion-free structure. In contrast to Joyce' general existence result of torsion-free $\m{Spin}(7)$ structures deforming those whose torsion has a small potential, we will use the adapted norms constructed in \cite{majewskiDirac} to prove an existence result for resolutions of $\m{Spin}(7)$-orbifolds. This existence result will only depend on the smallness of the torsion of the pre-glued $\m{Spin}(7)$-structure $\Phi^{pre}_{\zeta;t}$.

\subsection{Resolutions of Spin(7)-Orbifolds}
\label{Resolutions of Spin(7)-Orbifolds}
In the following section we will describe the data needed to resolve a $\m{Spin}(7)$-orbifold.
\begin{manualassumption}{1}
\label{assTypeiSpin7}    
Let us assume that there exists a family of torsion-free ACF-$\m{Spin}(7)$-spaces $(N_{t^2\cdot\zeta},\Phi_{t^2\cdot \zeta})$ resolving the family of CF-$\m{Spin}(7)$-orbifold $(N_0,\Phi_0)$, i.e. there exists maps 
\begin{equation*}
\begin{tikzcd}
        (N_\zeta,\Phi^t_\zeta)\arrow[d,dashed,"\rho_\zeta"]\arrow[r,"\delta_t"]&(N_{t^2\cdot\zeta},\Phi_{t^2\cdot\zeta})\arrow[d,dashed,"\rho_{t^2\cdot\zeta}"]\\
        (N_0,\Phi^t_0)\arrow[r,"\delta_t"]&(N_0,\Phi_0)
\end{tikzcd}
\end{equation*}
whose exceptional set $\rho_\zeta^{-1}(0)=\Upsilon_\zeta$ is of codimension $>0$ such that
\begin{align*}
    \Phi_\zeta\in \mathfrak{Ker}_{\m{ACF};0}(\widehat{D}_{\zeta})\cap\Gamma(N_\zeta,Cay_+(N_\zeta).
\end{align*}
In particular, the torsion of $\Phi^t_\zeta$ satisfies both
\begin{align*}
    \Phi_{t^2\cdot\zeta}\xrightarrow[C^\infty_{loc}]{t\to0}\Phi_0\und{1.0cm}\m{d}\Phi_{t^2\cdot\zeta}\xrightarrow[C^\infty_{loc}]{t\to0}\m{d}\Phi_0\,\text{ on }N_\zeta\backslash\Upsilon_\zeta.
\end{align*}
The Riemannian structure associated to $\Phi_{t^2\cdot\zeta}$ decomposes into 
\begin{align*}
    \delta^*_tg_{t^2\cdot \zeta}=\nu_\zeta^*g_S+t^2\cdot g_\zeta.
\end{align*}
Moreover $\Phi_{t^2\cdot \zeta}$ is of rate $\upsilon=-m$, i.e.
\begin{align*}
    (\nabla^{\Phi^t_{V,\nu}})^k((\rho_\zeta)_*\Phi_{t^2\cdot\zeta;V}-\Phi_{0;V})=&\mathcal{O}(r^{-m-k})\\
    (\nabla^{g_0})^k((\rho_\zeta)_*\Phi_{t^2\cdot\zeta}-\Phi_{0})=&\mathcal{O}(r^{-m}).
\end{align*}
\end{manualassumption}

\begin{defi}
    We interpolate between the torsion-free $\m{Spin}(7)$ orbifold structure on $X$ and the adiabatic torsion-free ACF $\m{Spin}(7)$-structure on $B_{3\epsilon}(N_\zeta)$ using the cut-off function ${\chi_3}$, i.e.
\begin{align}
    \label{pregluedspin7}\Phi^{pre}_{\zeta;t}=\Theta(\Phi_{t^2\cdot\zeta}\cup_t\Phi).
\end{align}
\end{defi}

\begin{lem}
\label{pregluedmetricclosetoDiracmetric}
Let $g^{pre}_{\zeta;t}\coloneqq g_{t^2\cdot\zeta}\cup_t g$ be the Riemannian metric on $X_{t;\zeta}$ constructed via interpolating the Riemannian orbifold structure and the ACF-structure on $N_{t^2\cdot\zeta}$. Then $g_{\Phi^{pre}_{\zeta;t}}$ is $2$-tame with respect to $g^{pre}_{\zeta;t}$, i.e. 
\begin{align*}
    \left|\left|\cup_t(g_{\Phi^{pre}_{\zeta;t}}-g^{pre}_{\zeta;t})\right|\right|_{C^{2}_{\m{ACF};-1;t}\oplus C^2_{\m{CFS};-1;\epsilon}}\lesssim t^\lambda.
\end{align*}
\end{lem}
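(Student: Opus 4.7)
The difference $g_{\Phi^{pre}_{\zeta;t}}-g^{pre}_{\zeta;t}$ vanishes outside $\m{Tub}_{5\epsilon}(S)$, where both reduce to $g_\Phi$. Inside, working relative to the CF reference Cayley form $\Phi_0$, introduce the perturbations $\eta_1:=\Phi_{t^2\cdot\zeta}-\Phi_0$ and $\eta_2:=\Phi_{hot}$, so that $\widetilde\Phi^{pre}_{\zeta;t}-\Phi_0=(1-\chi_3)\eta_1+\eta_2$. Since $\Phi_{t^2\cdot\zeta}$ and $\Phi=\Phi_0+\eta_2$ are already Cayley forms, the tubular identity of Lemma \ref{C0Festimateslem} applied to each of them (combined with the leading-order tangency $\pi_{\tau,\Phi_0}\Phi_1=\Phi_1$ from Lemma \ref{expansionOmegaProperties} for $\eta_2$) yields
$$\pi_{\nu,\Phi_0}\eta_i=Q_{\Phi_0}(\eta_i)=O(|\eta_i|^2_{g_{\Phi_0}}),\qquad i=1,2.$$

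The plan is to Taylor expand both the Cayley projection $\Theta$ and the smooth map $\gamma\colon Cay_+(W)\to Met(W)$ about $\Phi_0$, and to exploit cancellation of the linear parts. For any small $\zeta$,
$$\gamma(\Theta(\Phi_0+\zeta))=g_{\Phi_0}+D\gamma(\Phi_0)[\pi_{\tau,\Phi_0}\zeta]+O(|\zeta|^2)=g_{\Phi_0}+D\gamma(\Phi_0)[\zeta]+O(|\zeta|^2),$$
where the second equality uses the above quadratic control of $\pi_{\nu,\Phi_0}$. Applied to $\zeta=(1-\chi_3)\eta_1+\eta_2$, this produces the expansion of $g_{\Phi^{pre}_{\zeta;t}}$. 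On the other hand, Taylor expanding $g_{t^2\cdot\zeta}=\gamma(\Phi_{t^2\cdot\zeta})$ and $g_\Phi=\gamma(\Phi_0+\eta_2)$ about $\Phi_0$ gives the compatibility $g_{hot}=g_\Phi-g_{\Phi_0}=D\gamma(\Phi_0)[\eta_2]+O(|\eta_2|^2)$ and an identical expansion of $g^{pre}_{\zeta;t}=(1-\chi_3)g_{t^2\cdot\zeta}+\chi_3\, g_{\Phi_0}+g_{hot}$. Subtracting cancels all linear terms and leaves the pointwise bound
$$\bigl|g_{\Phi^{pre}_{\zeta;t}}-g^{pre}_{\zeta;t}\bigr|_{g^{pre}_{\zeta;t}}\lesssim |\eta_1|^2+|\eta_2|^2.$$

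The perturbations are controlled by Assumption \ref{assTypeiSpin7} and Section \ref{Spin(7)-Normal Cones}: since $\delta_t$ identifies $\Phi_{t^2\cdot\zeta}$ with $\Phi^t_\zeta$ and the Cayley-form pointwise norms are invariant under the fibrewise rescaling (the $t^{4-k}$ scaling of $\Phi^{k,4-k}$-components precisely compensates the $t^{-(4-k)}$ scaling of the metric norm), the ACF rate $-m$ gives $|\eta_1|\lesssim r^{-m}$ in $N_\zeta$ coordinates, while the torsion-free tubular expansion gives $|\eta_2|=|\Phi_{hot}|\lesssim r$ in $N_0$ coordinates. Translating the quadratic bound together with its $\nabla^{g_0}$-derivatives (controlled by the higher-order ACF estimates of Assumption \ref{assTypeiSpin7} and by $\nabla^{g_0}\Phi_{hot}=O(1)$) into the weighted norms of Section \ref{Weighted Function Spaces on Orbifold Resolutions} produces the claimed $\lesssim t^\lambda$ bound in both the ACF and CFS components. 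The main obstacle is the weight bookkeeping across the gluing morphism $\cup_t$: derivatives of the cutoff $\chi_3$ contribute factors $\nabla\chi_3=O(\epsilon^{-1})=O(t^{-\lambda})$ in the transition annulus, which must be absorbed by the decay of the $\eta_i$'s. This imposes a mild compatibility on the gluing scale, for instance $m(1-\lambda)\geq\lambda$ (automatic for $\lambda\leq m/(m+1)$), after which $1$-tameness of $\rho_{\zeta;t}\colon(X_{\zeta;t},g_{\Phi^{pre}_{\zeta;t}})\dashrightarrow(X,g)$ with respect to the interpolating resolution in the sense of Definition \ref{ktame} follows.
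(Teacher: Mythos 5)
Your strategy — exploit the cancellation of the linear part of the Taylor expansion of $\gamma$ (precomposed with $\Theta$) across a gluing — is indeed what the paper's proof does, and the quadratic estimates from Lemma~\ref{C0Festimateslem} and the tangency $\pi_{\tau,\Phi_0}\Phi_1=\Phi_1$ from Lemma~\ref{expansionOmegaProperties} are the right inputs. However, there is a genuine gap: you anchor all Taylor expansions at the single reference form $\Phi_0$, and this only works on the matching annulus where both $\eta_1=\Phi_{t^2\cdot\zeta}-\Phi_0$ and $\eta_2=\Phi_{hot}$ are $C^0$-small. In the core of $N_\zeta$ (the region $r=\mathcal{O}(1)$ containing the exceptional set $\Upsilon_\zeta$) the form $\Phi_0$ is degenerate, $\rho_\zeta$ is not a diffeomorphism, and $|\eta_1|$ is $\mathcal{O}(1)$; the expansion $\gamma(\Theta(\Phi_0+\zeta))=g_{\Phi_0}+D\gamma(\Phi_0)[\zeta]+\mathcal{O}(|\zeta|^2)$ is therefore inapplicable, and your claimed pointwise output $\lesssim|\eta_1|^2+|\eta_2|^2$ degenerates to $\mathcal{O}(1)$ there, which is useless. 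Note that this is precisely the region that the $C^1_{\m{ACF};-1;t}$ component of the tameness norm still sees — the weight is $\mathcal{O}(1)$ in the core, so an $\mathcal{O}(1)$ pointwise error cannot be absorbed.

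The paper's proof avoids this by using two different expansion anchors: $\Phi$ for the orbifold/CFS side and $\Phi_{t^2\cdot\zeta}$ for the ACF side. In the core, $\chi_3=0$ kills the $(\gamma(\Phi_0)-\gamma(\Phi_{t^2\cdot\zeta}))$-term in the second expansion, and what remains is of the schematic form $\bigl\{\partial_\sigma(T_{\Phi_\sigma}\gamma\circ\pi_{\tau;\Phi_\sigma})\bigr\}(\Phi_{hot})$ evaluated along a path between $\Phi_{t^2\cdot\zeta}$ and $\Phi_0$ — a quantity linear in $\Phi_{hot}$ with a bounded coefficient, hence $\lesssim|\Phi_{hot}|\sim tr$. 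Multiplied by the weight this gives $\mathcal{O}(t)$, well inside $t^\lambda$. To repair your proof you should split the support of $(1-\chi_4)\cdot(g_{\Phi^{pre}_{\zeta;t}}-g^{pre}_{\zeta;t})$ into a matching annulus (where your $\Phi_0$-expansion is fine) and a compact core, and on the core re-expand about $\Phi_{t^2\cdot\zeta}$, accepting a merely linear error in $\Phi_{hot}$. The rest of your bookkeeping (the $|\eta_1|\lesssim r^{-m}$ decay, $|\eta_2|\lesssim r_0$, the absorption of the $\nabla\chi_3=\mathcal{O}(t^{-\lambda})$ factors, and the compatibility condition $m(1-\lambda)\geq\lambda$) is sound and matches the paper's implicit choices.
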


\begin{proof}
The map $\gamma\colon Cay_+\rightarrow \m{Met}_+$ is analytic and hence, by using the expansion of $\Theta$ we deduce that  
    \begin{align*}
        \gamma(\Phi^{pre}_{\zeta;t})=&\gamma(\Phi)+T_\Phi\gamma\left(\pi_{\Phi,\top}\left((1-\chi_3)(\Phi_{t^2\cdot\zeta}-\Phi\right))\right)+h.o.t.\\
        \gamma(\Phi^{pre}_{\zeta;t})=&\gamma(\Phi_{t^2\cdot\zeta})+T_{\Phi_{t^2\cdot\zeta}}\gamma\left(\pi_{\Phi_{t^2\cdot\zeta},\top}\left(\chi_3(\Phi-\Phi_{t^2\cdot\zeta})\right)\right)+h.o.t.
    \end{align*}
Then on $X_{\zeta;t}\backslash U_{3\epsilon}$
    \begin{align*}
        g_{\Phi^{pre}_{\zeta;t}}-g^{pre}_{\zeta;t}=&\gamma(\Phi^{pre}_{\zeta;t})-\gamma(\Phi)-(1-\chi_3)(\gamma(\Phi_{t^2\cdot\zeta})-\gamma(\Phi))\\
        =&(1-\chi_3)\left\{T_{\Phi}\gamma\left(\pi_{\Phi,\top}\left(\Phi_{t^2\cdot\zeta}-\Phi\right)\right)-\gamma(\Phi_{t^2\cdot\zeta})-\gamma(\Phi)\right\}+h.o.t.\\
        =&(1-\chi_3)\left\{T_{\Phi}\gamma\circ\pi_{\Phi,\top}-T_{\Phi_{t;\sigma}}\gamma\right\}(\Phi_{t^2\cdot\zeta}-\Phi)+h.o.t.
    \end{align*}
where $\Phi_{t;\sigma}$ is a path of Cayley forms connecting $\Phi_{t^2\cdot\zeta}$ and $\Phi$ . In the same way, on $U_{4\epsilon}$ we have 
\begin{align*}
        g_{\Phi_{\zeta;t}}-g^{pre}_{\zeta;t}=&\gamma(\Phi^{pre}_{\zeta;t})-\gamma(\Phi_{t^2\cdot\zeta})-\chi_3(\gamma(\Phi)-\gamma(\Phi_{t^2\cdot\zeta}))\\
        =&\chi_3\left\{T_{\Phi_{t^2\cdot\zeta}}\gamma\left(\pi_{\Phi_{t^2\cdot\zeta},\top}\left(\Phi-\Phi_{t^2\cdot\zeta}\right)\right)-\gamma(\Phi)-\gamma(\Phi_{t^2\cdot\zeta})\right\}+h.o.t.\\
        =&\chi_3\left\{T_{\Phi_{t^2\cdot\zeta}}\gamma\circ \pi_{\Phi_{t^2\cdot\zeta},\top}-T_{\Phi_{s}}\gamma\right\}(\Phi_{t^2\cdot\zeta}-\Phi)+h.o.t.\quad.
\end{align*} 
The $C^{2}_{\m{ACF};-1;t}\oplus C^2_{\m{CFS};-1;\epsilon}$-bound follows directly.
\end{proof}

By now using the $2$-tameness of $g_{\Phi^{pre}_{\zeta;t}}$ Proposition \ref{tameuniformboundedness} implies the following corollary.

\begin{cor}
\label{uniformlyboundedrightinverse}
    If $\rho_{\zeta;t}\colon(X_{\zeta;t},\Phi^{pre}_{\zeta;t})\dashrightarrow (X,\Phi)$ is isentropic, the Hodge--de Rham operator $D_{\Phi^{pre}_{\zeta;t}}$ admits a right-inverse satisfying 
    \begin{align*}
        \left|\left|R_{\Phi^{pre}_{\zeta;t}}\Psi\right|\right|_{\mathfrak{D}^{2,\alpha}_{\beta;t}}\lesssim\left|\left|\Psi\right|\right|_{\mathfrak{C}^{1,\alpha}_{\beta;t}}.
    \end{align*}
\end{cor}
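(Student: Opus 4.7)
The plan is to deduce the corollary by chaining together the two immediately preceding results: the $1$-tameness established in Lemma~\ref{pregluedmetricclosetoDiracmetric} and the general right-inverse theorem, Proposition~\ref{tameuniformboundedness}. No new analytic input should be required; the work is to check that the hypotheses of Proposition~\ref{tameuniformboundedness} are met when one takes the family of metrics to be $g_{\Phi^{pre}_{\zeta;t}}$ rather than the naive interpolated metric $g^{pre}_{\zeta;t}$.

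First, I would invoke Lemma~\ref{pregluedmetricclosetoDiracmetric}, which gives the uniform bound
\begin{align*}
    \left|\left|\cup_t(g_{\Phi^{pre}_{\zeta;t}}-g^{pre}_{\zeta;t})\right|\right|_{C^{1}_{\m{ACF};-1;t}\oplus C^1_{\m{CFS};-1;\epsilon}}\lesssim t^\lambda.
\end{align*}
Since $\lambda>0$, this is exactly the statement that the identity map $f_t=\m{id}\colon(X_{\zeta;t},g_{\Phi^{pre}_{\zeta;t}})\to (X_{\zeta;t},g^{pre}_{\zeta;t})$ realises $\rho_{\zeta;t}\colon(X_{\zeta;t},g_{\Phi^{pre}_{\zeta;t}})\dashrightarrow (X,g_\Phi)$ as a $1$-tame smooth Gromov-Hausdorff resolution in the sense of Definition~\ref{ktame}, with interpolating model $\rho_{\zeta;t}\colon(X_{\zeta;t},g^{pre}_{\zeta;t})\dashrightarrow(X,g_\Phi)$.

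Next, I would use the isentropic assumption. By definition of isentropic in this paper (cf.\ the discussion following Proposition~\ref{tameuniformboundedness} and Corollary~\ref{collapsingss}), isentropicity at the relevant rate is equivalent to the vanishing of the obstruction map $\m{ob}_{\beta;t}=0$ appearing in the sequence \eqref{lineargluingexactsequenceHodgedeRham} of Theorem~\ref{lineargluingHodgedeRham}. With $1$-tameness and $\m{ob}_{\beta;t}=0$ both in place, Proposition~\ref{tameuniformboundedness} applies verbatim to the Hodge--de Rham operator $D_{\Phi^{pre}_{\zeta;t}}$ associated to $g_{\Phi^{pre}_{\zeta;t}}$, yielding a right-inverse $R_{\Phi^{pre}_{\zeta;t}}$ with the asserted uniform bound between the $\mathfrak{C}^{0,\alpha}_{\beta-1;t}$- and $\mathfrak{D}^{1,\alpha}_{\beta;t}$-norms.

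The only subtle point I would be careful with is bookkeeping the rate $\beta$ and the constants $\iota,\pi$ from Theorem~\ref{lineargluingHodgedeRham}: one must check that the choice of $\beta$ at which isentropicity is assumed lies in the admissible window $\beta\notin\mathcal{C}(\widehat{D}_0)$ and satisfies the inequalities relating it to $\iota$, $\pi$ and $\phi\geq 1$ from Lemma~\ref{valueofphi}. This is not a genuine obstacle but is the only place where a concrete numerical verification is needed; once this is recorded, the corollary follows immediately from the two cited results.
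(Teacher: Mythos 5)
Your proposal is correct and follows exactly the paper's own argument: the paper derives the corollary by invoking the $1$-tameness of $g_{\Phi^{pre}_{\zeta;t}}$ with respect to $g^{pre}_{\zeta;t}$ established in Lemma~\ref{pregluedmetricclosetoDiracmetric} and then applying Proposition~\ref{tameuniformboundedness} under the vanishing of the obstruction map. Your additional remark about verifying the admissible range of $\beta$ and the constants $\iota$, $\pi$, $\phi$ is sound bookkeeping, though the paper does not spell this out at this stage.
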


Before we start constructing adiabatic torsion-free ACF spaces $(N_\zeta,\Phi_\zeta)$ resolving the normal cone, and proving the existence result of torsion-free orbifold resolutions, we will prove an a priori bound on the torsion of the pre-glued $\m{Spin}(7)$-structure, assuming the data for Assumption \ref{assTypeiSpin7} is given.


\begin{nota}
    Let $\vartheta$ be a positive real number, satisfying the following 
        \begin{align}
        \label{vartheta}
        \vartheta < \min\left\{
                \begin{array}{l}
                \lambda(2- \beta),\\
                -\lambda\beta+ (1-\lambda)m, \\
                \frac{m}{2} - \kappa +  \lambda,\\
                \frac{m}{2} - \kappa +(1-\lambda)m - \lambda.
                \end{array}
                \right.
    \end{align}
\end{nota}

\begin{prop}[A Priori Bound]
\label{apprioriboundonlemma}
The pre-glued $\m{Spin}(7)$-structure $\Phi^{pre}_{\zeta;t}$ constructed by interpolating the orbifold $\m{Spin}(7)$-structure $\Phi$ and the adiabatic torsion-free ACF-structure $\Phi_\zeta$, resolving the CF-normal cone structure $\Phi_0$ assumed in Assumption \ref{assTypeiSpin7}, satisfies the priori bound 
\begin{align}
\label{aprioribound}
    \left|\left|\m{d}\Phi^{pre}_{\zeta;t}\right|\right|_{\mathfrak{C}^{0,\alpha}_{\beta-1;t}}\lesssim&t^{\vartheta}.
\end{align}
\end{prop}

\begin{rem}
We will establish this a priori bound in a general form, so that the optimal bound in specific examples can be easily determined.
\end{rem}

\begin{proof}
We begin by bounding the ACF-part of the torsion of $\Phi^{pre}_{\zeta;t}$. Recall, 
\begin{align*}
    \pi_{\Phi,\top}=\pi_{1\oplus 7\oplus 35;\Phi}
\end{align*}
is the projection onto the components that are tangential at $\Phi$ to the bundle of $\m{Spin}(7)$ structures and 
\begin{align*}
    \pi_{\Phi,\perp}=1-\pi_{\Phi,\top}
\end{align*}
denotes the projection on to the normal components.\\

We begin by bounding the ACF-part of the torsion of $\Phi^{pre}_{\zeta;t}$. Notice, that we can expand $\Phi^{pre}_{\zeta;t}$ by
\begin{align*}
    \Theta(\widetilde{\Phi}^{pre;t}_{\zeta})=&\Theta(\Phi^t_{\zeta}+\chi^t_3(\Phi^t-\Phi^t_\zeta))\\
    =&\Phi^t_{\zeta}+\pi_{\Phi^t_\zeta,\top}\left(\chi^t_3(\Phi^t-\Phi^t_\zeta)\right)+Q_{\Phi^t_\zeta}\left(\chi^t_3(\Phi^t-\Phi^t_\zeta)\right)\\
    =&\widetilde{\Phi}^{pre;t}_{\zeta}+\pi_{\Phi^t_\zeta,\perp}\left(\chi^t_3(\Phi^t-\Phi^t_\zeta)\right)+Q_{\Phi^t_\zeta}\left(\chi^t_3(\Phi^t-\Phi^t_\zeta)\right)
\end{align*}
where by $\widetilde{\Phi}^{pre;t}_{\zeta}=\Phi^t_{\zeta}+\chi^t_3(\Phi^t-\Phi^t_\zeta)=\delta_t^*(\Phi+(1-\chi_3)(\Phi_{t^2\cdot\zeta}-\Phi))$.\\

Hence, we bound the ACF-part of the torsion of $\Phi^{pre}_{\zeta;t}$ by\\
\begin{minipage}{\linewidth}
\begin{align*}
    \left|\left|(1-\chi^t_4)\m{d}\Phi^{pre;t}_{\zeta}\right|\right|_{\mathfrak{C}^{0,\alpha}_{\m{ACF};\beta-1;t}}\lesssim &\underbrace{\left|\left|(1-\chi^t_4)\m{d}\widetilde{\Phi}^{pre}_{\zeta;t}\right|\right|_{\mathfrak{C}^{0,\alpha}_{\m{ACF};\beta-1;t}}}_{(I)}\\
    &+\underbrace{\left|\left|(1-\chi^t_4)\m{d}\left(\pi_{\Phi^t_\zeta,\perp}\left(\chi^t_3(\Phi^t-\Phi^t_\zeta)\right)\right)\right|\right|_{\mathfrak{C}^{0,\alpha}_{\m{ACF};\beta-1;t}}}_{(II)}\\
    &+\underbrace{\left|\left|(1-\chi^t_4)\m{d}\left(Q_{\Phi^t_\zeta}\left(\chi^t_3(\Phi^t-\Phi^t_\zeta)\right)\right)\right|\right|_{\mathfrak{C}^{0,\alpha}_{\m{ACF};\beta-1;t}}}_{(III)}.
\end{align*}
\end{minipage}

Since,
\begin{align*}
        \m{d}\widetilde{\Phi}^{pre;t}_{\zeta}=&\m{d}\chi^t_3\wedge(\Phi^t-\Phi^t_\zeta)+(1-\chi^t_3)\m{d}\Phi^t_{\zeta}
\end{align*}
we are able to bound

\begin{align*}
    (I)=&\left|\left|(1-\chi^t_4)\m{d}\widetilde{\Phi}^{pre;t}_{\zeta}\right|\right|_{\mathfrak{C}^{0,\alpha}_{\m{ACF};\beta-1;t}}\\
    \lesssim&\max\left\{t^{-\lambda\beta+(1-\lambda)m},t^{m/2-\kappa-\lambda+(1-\lambda)m},\max\left\{t^{1+\lambda(1-\beta)},t^{1+m/2-\kappa}\right\}\right\}.
\end{align*}
We further bound 
\begin{align*}
    (II)\lesssim&\underbrace{\left|\left|\m{d}(1-\chi^t_4)\left(\pi_{\Phi^t_\zeta,\perp}\left(\chi^t_3(\Phi^t-\Phi^t_\zeta)\right)\right)\right|\right|_{C^{0,\alpha}_{\m{ACF};\beta-1;t}}}_{(i)}\\
    &+\underbrace{\left|\left|\m{d}\chi^t_4\wedge\left(\pi_{\Phi^t_\zeta,\perp}\left(\chi^t_3(\Phi^t-\Phi^t_\zeta)\right)\right)\right|\right|_{\mathfrak{C}^{0,\alpha}_{\m{ACF};\beta-1;t}}}_{(ii)}\\
\end{align*}
whereby 
\begin{align*}
    (i)\lesssim&\underbrace{\left|\left|(1-\chi^t_4)\m{d}\left(\pi_{\Phi^t_\zeta,\perp}\left(\chi^t_3(\Phi^t-\Phi^t_\zeta)\right)\right)\right|\right|_{C^{0,\alpha}_{\m{ACF};\beta-1;t}}}_{\lesssim t^{\lambda(1-\beta)}\max\{t^\lambda,t^{(1-\lambda)m}\}}\\
    &+\underbrace{\left|\left|\m{d}\chi^t_4\wedge\left(\pi_{\Phi^t_\zeta,\perp}\left(\chi^t_3(\Phi^t-\Phi^t_\zeta)\right)\right)\right|\right|_{C^{0,\alpha}_{\m{ACF};\beta-1;t}}}_{\lesssim t^{-\lambda\beta}\max\{t^{2\lambda},t^{(1-\lambda)m}\}},
\end{align*}
where we used that 
\begin{align*}
    \pi_{\Phi^t_\zeta,\perp}\Phi^t_{hot}=&\pi_{\Phi^t_0,\perp}\Phi^t_{hot}+(\pi_{\Phi^t_\zeta,\perp}-\pi_{\Phi^t_0,\perp})\Phi^t_{hot}\\
    =&\mathcal{O}(t^2r^2)+\mathcal{O}(r^{-m})
\end{align*}
and hence 

\begin{align*}
    (ii)\lesssim&\left|\left|\pi_{\mathcal{I};\beta-1}\left(\m{d}\chi^t_4\wedge\left(\pi_{\Phi^t_\zeta,\perp}\left(\chi^t_3(\Phi^t-\Phi^t_\zeta)\right)\right)\right)\right|\right|_{C^{0,\alpha}_{\m{ACF};\beta-1;t}}\\
    &+t^{-\kappa}\left|\left|\pi_{\mathcal{C}o\mathcal{K};\beta-1}\left(\m{d}\chi^t_4\wedge\left(\pi_{\Phi^t_\zeta,\perp}\left(\chi^t_3(\Phi^t-\Phi^t_\zeta)\right)\right)\right)\right|\right|_{C^{0,\alpha}_{t}}\\
    \lesssim& \max\left\{t^{-\lambda\beta}\max\{t^{2\lambda},t^{(1-\lambda)m}\},t^{m/2-\kappa}\max\{t^\lambda,t^{(1-\lambda)m-\lambda}\}\right\}
\end{align*}
Finally, we bound 
\begin{align*}
    (III)\lesssim&\left|\left|(1-\chi^t_4)\m{d}\left(Q_{\Phi^t_\zeta}\left(\chi^t_3(\Phi^t-\Phi^t_\zeta)\right)\right)\right|\right|_{C^{0,\alpha}_{\m{ACF};\beta-1;t}}\\
    &+\left|\left|\m{d}\chi^t_4\wedge\left(Q_{\Phi^t_\zeta}\left(\chi^t_3(\Phi^t-\Phi^t_\zeta)\right)\right)\right|\right|_{\mathfrak{C}^{0,\alpha}_{\m{ACF};\beta-1;t}}\\
    \lesssim&\max\left\{\left|\left|\m{d}\Phi^t_\zeta\right|\right|_{C^{0,\alpha}_{\m{ACF};\beta-1;t}},\left|\left|1\right|\right|_{C^{0,\alpha}_{\m{ACF};\beta-1;t}}\right\}\left|\left|\chi^t_3(\Phi^t-\Phi^t_\zeta)\right|\right|_{C^{1,\alpha}_{\m{ACF};0;t}}^2\\
    &+t^{-\kappa+m/2-\lambda}\left|\left|\chi^t_3(\Phi^t-\Phi^t_\zeta)\right|\right|_{C^{0,\alpha}_{\m{ACF};0;t}}^2\\
    \lesssim&\max\left\{t^{\lambda(1-\beta)}\max\{t^{2(1-\lambda)m},t^{2\lambda}\},t^{m/2-\kappa-\lambda}\max\{t^{(1-\lambda)m},t^{\lambda}\}\right\}
\end{align*}

Combing these estimates, we deduce that 

\begin{align*}
    \left|\left|(1-\chi^t_4)\m{d}\Phi^{pre}_{\zeta;t}\right|\right|_{\mathfrak{C}^{0,\alpha}_{\m{ACF};\beta-1;t}}\lesssim t^\vartheta.
\end{align*}
We will continue with the CFS-part of the a priori estimate on the torsion of the pre-glued $\m{Spin}(7)$-structure $\Phi^{pre}_{\zeta;t}$. We expand 
\begin{align*}
    \Phi^{pre}_{\zeta;t}=&\Theta(\widetilde{\Phi}^{pre}_{\zeta;t})\\
    =&\Theta(\Phi+(1-\chi_3)(\Phi_{t^2\cdot\zeta}-\Phi))\\
    =&\Phi+\pi_{\Phi,\top}\left((1-\chi_3)(\Phi_{t^2\cdot\zeta}-\Phi)\right)+Q_{\Phi}\left((1-\chi_3)(\Phi_{t^2\cdot\zeta}-\Phi)\right)
\end{align*}
and deduce that 
\begin{align*}
    \m{d}\Phi^{pre}_{\zeta;t}=\m{d}\pi_{\Phi,\top}\left((1-\chi_3)(\Phi_{t^2\cdot\zeta}-\Phi)\right)+\m{d}Q_{\Phi}\left((1-\chi_3)(\Phi_{t^2\cdot\zeta}-\Phi)\right).
\end{align*}

\begin{align*}
    \left|\left|\chi_2\m{d}\Phi^{pre}_{\zeta;t}\right|\right|_{C^{0,\alpha}_{\m{CFS};\beta-1;t}}   \lesssim&\left|\left|\chi_2\m{d}\pi_{\Phi,\top}\left((1-\chi_3)(\Phi_{t^2\cdot\zeta}-\Phi)\right)\right|\right|_{C^{0,\alpha}_{\m{CFS};\beta-1;t}}\\
    &+\left|\left|\chi_2\m{d}Q_{\Phi}\left((1-\chi_3)(\Phi_{t^2\cdot\zeta}-\Phi)\right)\right|\right|_{C^{0,\alpha}_{\m{CFS};\beta-1;t}}\\
    \lesssim&\max\left\{t^{\lambda(2-\beta)},t^{\lambda(2-\beta)+(1-\lambda)m}\right\}\lesssim t^{\lambda(2-\beta)}
\end{align*}
where we used Lemma \ref{expansionOmegaProperties} to expand 
\begin{align*} 
    \pi_{\Phi,\top}=\pi_{\tau,\Phi_0}+\left(\underbrace{\pi_{\Phi,\top}-\pi_{\tau,\Phi_0}}_{\mathcal{O}(r^2)}\right).
\end{align*}

Combining the ACF and CFS-part of the estimate we deduce the a priori bound 
\begin{align*}
    \left|\left|\m{d}\Phi^{pre}_{\zeta;t}\right|\right|_{\mathfrak{C}^{0,\alpha}_{\beta-1;t}}\lesssim&t^{\vartheta}
\end{align*}
on the torsion of the preglued $\m{Spin}(7)$-structure $\Phi^{pre}_{\zeta;t}$.
\end{proof}

\begin{rem}

As $\vartheta>0$ 
\begin{align*}
    \left|\left|\m{d}\Phi^{pre}_{\zeta;t}\right|\right|_{\mathfrak{C}^{0,\alpha}_{\beta-1;t}}\xrightarrow[t\to0]{}0.
\end{align*}
Hence, we will refer to 
\begin{align*}
    [\rho_{\zeta;t}\colon(X_{\zeta;t},\Phi^{pre}_{\zeta;t})\dashrightarrow (X,\Phi)]\in \cat{GHRes}(X,\Phi)
\end{align*}
as an adiabatic torsion-free $\m{Spin}(7)$-orbifold resolution.
\end{rem}

\subsection{Deformation to Torsion-Free $\m{Spin}(7)$ Structures on $X_{\zeta;t}$}
\label{Deformation to torsion-free Spin(7) Structures on widetildeXzeta;t}

This section will be devoted to proving the main theorem of this paper. We will use the \say{a priori bound} from section \ref{Adiabatic Spin(7)-Orbifold Resolutions} to show that the only obstruction to deform the pre-glued structure to a torsion-free structure is $\m{ob}_{\beta;t}=0$.\\

Recall that $\vartheta$ is a positive real number, satisfying 
        \begin{align*}
        \vartheta < \min\left\{
                \begin{array}{l}
                \lambda(2- \beta),\\
                -\lambda\beta+ (1-\lambda)m, \\
                \frac{m}{2} - \kappa +  \lambda,\\
                \frac{m}{2} - \kappa +(1-\lambda)m - \lambda.
                \end{array}
                \right.
    \end{align*}

\begin{manualassumption}{8}
\label{ass8Spin7}
    We assume that the resolution $\rho_{\zeta;t}\colon (X_{\zeta;t},\Phi^{pre}_{\zeta;t})\dashrightarrow(X,\Phi)$ is an isentropic resolution. 
\end{manualassumption}

Following this we will now use the derived results to proof the following main theorem.

\begin{thm}[Existence of Torsion-Free $\m{Spin}(7)$-Orbifold Resolutions]
\label{mainthm}
Let 
\begin{align*}
    (X_{\zeta;t},\Phi^{pre}_{\zeta;t})\dashrightarrow (X,\Phi)
\end{align*} denote the adiabatic-torsion-free $\m{Spin}(7)$-orbifold resolution constructed in Section \ref{Adiabatic Spin(7)-Orbifold Resolutions}, such that
\begin{align}
    \left|\left|\m{d}\Phi^{pre}_{\zeta;t}\right|\right|_{\mathfrak{C}^{0,\alpha}_{\beta-1;t}}\lesssim t^{\vartheta}
\end{align}
where $\vartheta$ satisfies \eqref{vartheta}. If 
\begin{align*}
    \rho_{\zeta;t}\colon (X_{\zeta;t},\Phi_{\zeta;t})\dashrightarrow(X,\Phi)
\end{align*}
is an isentropic resolution, there exists a torsion-free $\m{Spin}(7)$-orbifold resolution
\begin{align*}
    [\rho_{\zeta;t}\colon (X_{\zeta;t},\Phi_{\zeta;t})\dashrightarrow(X,\Phi)]\in\cat{GHRes}(X,\Phi)
\end{align*} 
satisfying 
\begin{align}
    \left|\left|\Phi_{\zeta;t}-\Phi^{pre}_{\zeta;t}\right|\right|_{\mathfrak{D}^{1,\alpha}_{\beta;t}}&\lesssim t^{\vartheta}
\end{align}
Here $\Phi_{\zeta;t}=\Theta(\Phi^{pre}_{\zeta;t}+\eta_{\zeta;t})$ where $\eta_{\zeta;t}$ is a unique solution to the fixpoint problem (\ref{fixpointproblem}).
\end{thm}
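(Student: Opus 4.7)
The plan is to solve the fixpoint problem \eqref{fixpointproblem} via Banach's contraction mapping theorem in the adapted space $\mathfrak{C}\Omega^{\bullet;0,\alpha}_{\beta-1;t}(X_{\zeta;t})$. Let $R_{\zeta;t} \coloneqq R_{\Phi^{pre}_{\zeta;t}}$ denote the uniformly bounded right-inverse of $D_{\Phi^{pre}_{\zeta;t}}$ supplied by Corollary \ref{uniformlyboundedrightinverse}, which exists because $\rho_{\zeta;t}$ is an isentropic resolution and (by Lemma \ref{pregluedmetricclosetoDiracmetric}) the Riemannian metric $g_{\Phi^{pre}_{\zeta;t}}$ is $1$-tame with respect to $g^{pre}_{\zeta;t}$. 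Define the map
\begin{align*}
    F_{\zeta;t}(\gamma) \coloneqq -D_{\Phi^{pre}_{\zeta;t}}\pi_{-,\Phi^{pre}_{\zeta;t}}\{Q_{\Phi^{pre}_{\zeta;t}}(R_{\zeta;t}\gamma)\} - D^{op}_{\Phi^{pre}_{\zeta;t}}\left(\Phi^{pre}_{\zeta;t}+\pi_{+,\Phi^{pre}_{\zeta;t}}\{Q_{\Phi^{pre}_{\zeta;t}}(R_{\zeta;t}\gamma)\}\right)
\end{align*}
on the closed ball $B_{r_t} \subset \mathfrak{C}\Omega^{\bullet;0,\alpha}_{\beta-1;t}(X_{\zeta;t})$ of radius $r_t \coloneqq K\cdot t^\vartheta$, with $K>0$ to be chosen. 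A fixpoint of $F_{\zeta;t}$ produces the sought $\eta_{\zeta;t} = R_{\zeta;t}\gamma$ and hence the torsion-free $\m{Spin}(7)$-structure $\Phi_{\zeta;t} = \Theta(\Phi^{pre}_{\zeta;t}+\eta_{\zeta;t})$.

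To show $F_{\zeta;t}$ sends $B_{r_t}$ into itself, I would decompose $F_{\zeta;t}(\gamma) = -D^{op}_{\Phi^{pre}_{\zeta;t}}\Phi^{pre}_{\zeta;t} + \mathcal{Q}_{\zeta;t}(\gamma)$, where $\mathcal{Q}_{\zeta;t}$ collects the quadratic contributions. The linear defect $\|D^{op}_{\Phi^{pre}_{\zeta;t}}\Phi^{pre}_{\zeta;t}\|_{\mathfrak{C}^{0,\alpha}_{\beta-1;t}}$ is controlled by $\|\m{d}\Phi^{pre}_{\zeta;t}\|_{\mathfrak{C}^{0,\alpha}_{\beta-1;t}}$ using the (approximate) self-duality of $\Phi^{pre}_{\zeta;t}$, so the a priori bound of Proposition \ref{apprioriboundonlemma} yields the $t^\vartheta$-decay. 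For $\mathcal{Q}_{\zeta;t}$, I would apply the quadratic estimates of Lemma \ref{C1Festimates} together with Lemma \ref{C0Festimateslem}, obtaining schematically
\begin{align*}
    \|\mathcal{Q}_{\zeta;t}(\gamma)\|_{\mathfrak{C}^{0,\alpha}_{\beta-1;t}} \lesssim \|Q_{\Phi^{pre}_{\zeta;t}}(R_{\zeta;t}\gamma)\|_{\mathfrak{D}^{1,\alpha}_{\beta;t}} \lesssim \|R_{\zeta;t}\gamma\|_{\mathfrak{D}^{1,\alpha}_{\beta;t}}^2 \lesssim \|\gamma\|_{\mathfrak{C}^{0,\alpha}_{\beta-1;t}}^2,
\end{align*}
where the last inequality uses the uniform boundedness of $R_{\zeta;t}$. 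Choosing $K$ large and $t$ small, the self-mapping property $\|F_{\zeta;t}(\gamma)\|_{\mathfrak{C}^{0,\alpha}_{\beta-1;t}} \leq Ct^\vartheta + C'r_t^2 \leq r_t$ holds.

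For the contraction property, the difference $F_{\zeta;t}(\gamma_1) - F_{\zeta;t}(\gamma_2)$ involves only the quadratic terms, and by Lemma \ref{C1Festimates} satisfies
\begin{align*}
    \|F_{\zeta;t}(\gamma_1) - F_{\zeta;t}(\gamma_2)\|_{\mathfrak{C}^{0,\alpha}_{\beta-1;t}} \lesssim \|R_{\zeta;t}(\gamma_1-\gamma_2)\|_{\mathfrak{D}^{1,\alpha}_{\beta;t}}\bigl(\|R_{\zeta;t}\gamma_1\|_{\mathfrak{D}^{1,\alpha}_{\beta;t}} + \|R_{\zeta;t}\gamma_2\|_{\mathfrak{D}^{1,\alpha}_{\beta;t}}\bigr),
\end{align*}
which on $B_{r_t}$ is bounded by $C r_t \|\gamma_1 - \gamma_2\|_{\mathfrak{C}^{0,\alpha}_{\beta-1;t}}$. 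For sufficiently small $t$, $Cr_t < 1/2$, giving contraction and hence a unique fixpoint $\gamma_{\zeta;t}\in B_{r_t}$. Setting $\eta_{\zeta;t} \coloneqq R_{\zeta;t}\gamma_{\zeta;t}$, the bound $\|\eta_{\zeta;t}\|_{\mathfrak{D}^{1,\alpha}_{\beta;t}} \lesssim t^\vartheta$ follows immediately, and $\Theta(\Phi^{pre}_{\zeta;t}+\eta_{\zeta;t})$ defines the torsion-free resolution by construction.

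The principal technical obstacle will be verifying that $\Phi^{pre}_{\zeta;t} + R_{\zeta;t}\gamma$ lies pointwise inside the $\m{GL}_+$-equivariant tubular neighbourhood $\m{Tub}_\rho(Cay_+)$ uniformly in $t$, so that $Q_{\Phi^{pre}_{\zeta;t}}$ can actually be applied and the quadratic estimates in Lemma \ref{C1Festimates} are valid. This requires an embedding of the adiabatic Hölder space $\mathfrak{D}^{1,\alpha}_{\beta;t}$ into a pointwise-bounded space with norm independent of $t$; the delicate point is that the weight structure combines the $t^{-\kappa}$-rescaling of vertical harmonic modes with the CF/CFS weights, and one must verify that the exponents in \eqref{vartheta} are precisely those needed to balance the $C^0$-smallness of $R_{\zeta;t}\gamma$ against the quadratic error. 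Once this uniform $C^0$-control is established, the remaining estimates reduce to routine applications of the Lemmas \ref{C0Festimateslem} and \ref{C1Festimates} in each of the ACF and CFS regions.
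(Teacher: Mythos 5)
Your overall strategy — solving \eqref{fixpointproblem} by contraction in $\mathfrak{C}\Omega^{\bullet;0,\alpha}_{\beta-1;t}(X_{\zeta;t})$ with the right-inverse $R_{\zeta;t}$ provided by isentropicity — is the same as the paper's, and the role you give to Proposition \ref{apprioriboundonlemma}, Corollary \ref{uniformlyboundedrightinverse}, and Lemma \ref{C1Festimates} is correct. But there is a genuine gap in your quadratic estimate. You write
\begin{align*}
\|Q_{\Phi^{pre}_{\zeta;t}}(R_{\zeta;t}\gamma)\|_{\mathfrak{D}^{1,\alpha}_{\beta;t}}\lesssim\|R_{\zeta;t}\gamma\|_{\mathfrak{D}^{1,\alpha}_{\beta;t}}^2
\end{align*}
with an implicit $t$-independent constant, and conclude self-mapping and contraction from $r_t\sim t^\vartheta\to0$. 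But Lemma \ref{C1Festimates} is a pointwise (unweighted) estimate; when you lift it to the $t$-dependent weighted Hölder norms $\mathfrak{C}^{0,\alpha}_{\beta-1;t}$, $\mathfrak{D}^{1,\alpha}_{\beta;t}$, the product of two $\beta$-weighted quantities lives at rate $2\beta$, not $\beta-1$, and the norm of the constant function $\|1\|_{C^{0,\alpha}_{\m{CFS};-\beta;t}}$ (or its ACF analogue) scales like a \emph{negative} power of $t$ (roughly $t^{\lambda\beta}$ with $\beta<0$), as does the factor $t^{\kappa-\beta-m/2-\alpha}$ arising from the $\varpi_{\mathcal{C}o\mathcal{K}}$-piece of the adiabatic norm. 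The true estimate is $\|\mathcal{Q}_{\zeta;t}(\gamma_1)-\mathcal{Q}_{\zeta;t}(\gamma_2)\|\lesssim q(t)\,\|\gamma_1-\gamma_2\|\,(\|\gamma_1\|+\|\gamma_2\|)$ with $q(t)\to\infty$ as $t\to0$. The contraction then only closes if $q(t)\,t^\vartheta\to0$, which is a \emph{lower} bound on $\vartheta$ in terms of $\lambda$, $\beta$, $\kappa$, $\alpha$, $m$; this lower bound must be checked to be compatible with the upper bound \eqref{vartheta} coming from the a priori torsion estimate, and this compatibility check is the analytic heart of the proof. Without tracking $q(t)$ your statement "for sufficiently small $t$, $Cr_t<1/2$" is unjustified.

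A second, smaller gap: the contraction mapping produces a fixpoint $\gamma_{\zeta;t}\in\mathfrak{C}\Omega^{\bullet;0,\alpha}_{\beta-1;t}$, hence $\eta_{\zeta;t}=R_{\zeta;t}\gamma_{\zeta;t}\in\mathfrak{D}\Omega^{\bullet;1,\alpha}_{\beta;t}$, so $\Phi_{\zeta;t}=\Theta(\Phi^{pre}_{\zeta;t}+\eta_{\zeta;t})$ is only a $C^{1,\alpha}$-regular Cayley form. To obtain a genuine torsion-free $\m{Spin}(7)$-structure you must argue that $\eta_{\zeta;t}$ is smooth. The paper does this by rewriting the fixpoint equation as a nonlinear elliptic equation
\begin{align*}
\left[D_{\Phi^{pre}_{\zeta;t}}\pi_{-}+\left\{D_{\Phi^{pre}_{\zeta;t}}\pi_{-}+D^{op}_{\Phi^{pre}_{\zeta;t}}\pi_{+}\right\}Q_{\Phi^{pre}_{\zeta;t}}\circ\right]\eta_{\zeta;t}=-D^{op}_{\Phi^{pre}_{\zeta;t}}\Phi^{pre}_{\zeta;t}
\end{align*}
whose linearisation at $\eta=0$ is elliptic (hence elliptic for $C^0$-small $\eta$ by openness), and bootstrapping via \cite[Thm.~10.7]{agmon1964estimates}. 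Your proposal omits this step. Your closing paragraph correctly anticipates the need for uniform $C^0$ control of $R_{\zeta;t}\gamma$ so that $\Theta$ and $Q_{\Phi^{pre}}$ are defined — that concern is legitimate and is implicitly handled by the choice of radius $\varrho(t)$ and the weight conventions — but the two gaps above are the substantive missing pieces.
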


As discussed in Section \ref{The Spin(7)-Deformation Problem} we will essentially use the contraction mapping principle in a suitable function space to obtain a solution and then argue that this solution is already a smooth one.

\begin{proof}[Proof of Theorem \ref{mainthm}]
In order to improve the readability of the following prove, we will drop the subscript $\zeta$ everywhere.\\

In the following we will show that the map
\begin{align*}
    \mathcal{C}_{t} \colon\mathfrak{C}^{0,\alpha}_{\beta-1;t}(X_{t},\wedge^\bullet T^*X_{t})\rightarrow\mathfrak{C}^{0,\alpha}_{\beta-1;t}(X_{t},\wedge^\bullet T^*X_{t})
\end{align*}
\begin{align*}
       \eta\mapsto &-D^{op}_{\Phi^{pre}_{t}}\Phi^{pre}_{t}-D_{\Phi_{t}^{pre}}\pi_{-;\Phi^{pre}_t}\left\{Q_{\Phi_{t}^{pre}}(R_{\Phi^{pre}_{t}}\eta)\right\}\\
       &-D^{op}_{\Phi_{t}^{pre}}\pi_{+;\Phi^{pre}_{t}}\left\{Q_{\Phi_{t}^{pre}}(R_{\Phi^{pre}_{t}}\eta)\right\}
\end{align*}
restricted to a small enough neighbourhood of $0\in\mathfrak{C}^{0,\alpha}_{\beta-1;t}$, defines a contractive self-mapping. By using the contraction  mapping principal and the ellipticity, we will deduce the existence and smoothness of the fixpoint.\\

Firstly, by Proposition \ref{apprioriboundonlemma} we know that 
\begin{align*}
    \left|\left|\mathcal{C}_{t}(0)\right|\right|_{\mathfrak{C}^{0,\alpha}_{\beta-1;t}}=\left|\left|D^{op}_{\Phi_{t}^{pre}}\Phi^{pre}_{t}\right|\right|_{\mathfrak{C}^{0,\alpha}_{\beta-1;t}}\asymp\left|\left|\m{d}\Phi^{pre}_{t}\right|\right|_{\mathfrak{C}^{0,\alpha}_{\beta-1;t}}\lesssim e(t)=t^\vartheta.
\end{align*}

Secondly, we need to show that $\mathcal{C}_{t}$ is a contraction. Hence, we bound 
\begin{align*}
    \left|\left|\mathcal{C}_{t}(\eta_1)-\mathcal{C}_{t}(\eta_2)\right|\right|_{\mathfrak{C}^{0,\alpha}_{\beta-1;t}}\lesssim (I)+(II)
\end{align*}
for 
\begin{align*}
    (I)=&\left|\left|D_{\Phi^{pre}_{t}}\pi_{-;\Phi^{pre}_{t}}\left\{Q_{\Phi_{t}^{pre}}(R_{\Phi^{pre}_{t}}\eta_1)-Q_{\Phi_{t}^{pre}}(R_{\Phi^{pre;t}}\eta_2)\right\}\right|\right|_{\mathfrak{C}^{0,\alpha}_{\beta-1;t}}\\
    (II)=&\left|\left|D^{op}_{\Phi_{t}^{pre}}\pi_{+;\Phi^{pre}_{t}}\left\{Q_{\Phi_{t}^{pre}}(R_{\Phi^{pre}_{t}}\eta_1)-Q_{\Phi_{t}^{pre}}(R_{\Phi^{pre}_{t}}\eta_2)\right\}\right|\right|_{\mathfrak{C}^{0,\alpha}_{\beta-1;t}}.
\end{align*}
These terms, will be bounded individually  by
\begin{align*}
        (I)=&\underbrace{\left|\left|\chi_2D_{\Phi^{pre}_{t}}\pi_{-;\Phi^{pre}_{t}}\left\{Q_{\Phi_{t}^{pre}}(R_{\Phi^{pre}_{t}}\eta_1)-Q_{\Phi_{t}^{pre}}(R_{\Phi^{pre;t}}\eta_2)\right\}\right|\right|_{C^{0,\alpha}_{\m{CFS};\beta-1;t}}}_{(i)}\\
        &+\underbrace{\left|\left|(1-\chi^t_4)D_{\Phi^{pre;t}}\pi_{-;\Phi^{pre;t}}\left\{Q_{\Phi^{pre;t}}(R_{\Phi^{pre;t}}\delta_t^*\eta_1)-Q_{\Phi^{pre;t}}(R_{\Phi^{pre;t}}\delta_t^*\eta_2)\right\}\right|\right|_{\mathfrak{C}^{0,\alpha}_{\m{ACF};\beta-1;t}}}_{(ii)}
\end{align*}

whereby we bound
\begin{align*}
    (i)\lesssim& \max\left\{\left|\left|\m{d}\Phi^{pre;t}_\zeta\right|\right|_{C^{0,\alpha}_{\m{CFS};\beta-1;t}}\left|\left|1\right|\right|_{C^{0,\alpha}_{\m{CFS};-2\beta;t}},\left|\left|1\right|\right|_{C^{0,\alpha}_{\m{CFS};-\beta;t}}\right\}\\
    &\cdot\left|\left|\chi_1R_{\Phi^{pre}_{t}}(\eta_1-\eta_2)\right|\right|_{C^{1,\alpha}_{\m{CFS};\beta;t}}\left(\left|\left|\chi_1R_{\Phi^{pre}_{t}}\eta_1\right|\right|_{C^{1,\alpha}_{\m{CFS};\beta;t}}+\left|\left|\chi_1R_{\Phi^{pre;t}}\eta_2\right|\right|_{C^{1,\alpha}_{\m{CFS};\beta;t}}\right)\\
    \lesssim&q(t)\cdot\left|\left|\eta_1-\eta_2\right|\right|_{\mathfrak{C}^{0,\alpha}_{\beta-1;t}}\left(\left|\left|\eta_1\right|\right|_{\mathfrak{C}^{0,\alpha}_{\beta-1;t}}+\left|\left|\eta_2\right|\right|_{\mathfrak{C}^{0,\alpha}_{\beta-1;t}}\right)
\end{align*}
\begin{align*}
    (ii)=&\left|\left|\pi_{\mathcal{I};\beta-1}(1-\chi^t_4)D_{\Phi^{pre;t}}\pi_{-;\Phi^{pre;t}}\left\{Q_{\Phi^{pre;t}}(R_{\Phi^{pre;t}}\delta_t^*\eta_1)-Q_{\Phi^{pre;t}}(R_{\Phi^{pre;t}}\delta_t^*\eta_2)\right\}\right|\right|_{C^{0,\alpha}_{\m{ACF};\beta-1;t}}\\
    &+t^{-\kappa}\left|\left|\varpi_{\mathcal{C}o\mathcal{K};\beta-1}(1-\chi^t_4)D_{\Phi^{pre;t}}\pi_{-;\Phi^{pre;t}}\left\{Q_{\Phi^{pre;t}}(R_{\Phi^{pre;t}}\delta_t^*\eta_1)-Q_{\Phi^{pre;t}}(R_{\Phi^{pre}_{t}}\delta_t^*\eta_2)\right\}\right|\right|_{C^{0,\alpha}_{t}}\\
    \lesssim&\max\{1,t^{\kappa-\beta-m/2-\alpha}\}\max\left\{\left|\left|\m{d}\Phi^{pre;t}_\zeta\right|\right|_{C^{0,\alpha}_{\m{ACF};\beta-1;t}}\left|\left|1\right|\right|_{C^{0,\alpha}_{\m{ACF};-2\beta;t}},\left|\left|1\right|\right|_{C^{0,\alpha}_{\m{ACF};-\beta;t}}\right\}\\
    &\cdot\left|\left|(1-\chi^t_5)R_{\Phi^{pre;t}}(\delta_t^*\eta_1-\delta_t^*\eta_2)\right|\right|_{\mathfrak{C}^{1,\alpha}_{\m{ACF};\beta;t}}\\
    &\cdot\left(\left|\left|(1-\chi^t_5)R_{\Phi^{pre;t}}\delta_t^*\eta_1\right|\right|_{\mathfrak{C}^{1,\alpha}_{\m{ACF};\beta;t}}+\left|\left|(1-\chi^t_5)R_{\Phi^{pre;t}}\delta_t^*\eta_2\right|\right|_{\mathfrak{C}^{1,\alpha}_{\m{ACF};\beta;t}}\right)\\
    \lesssim&q(t)\cdot\left|\left|\eta_1-\eta_2\right|\right|_{\mathfrak{C}^{0,\alpha}_{\beta-1;t}}\left(\left|\left|\eta_1\right|\right|_{\mathfrak{C}^{0,\alpha}_{\beta-1;t}}+\left|\left|\eta_2\right|\right|_{\mathfrak{C}^{0,\alpha}_{\beta-1;t}}\right)
\end{align*}

where we used Lemma \ref{pregluedmetricclosetoDiracmetric}, which ensures that $D^{pre}_t$ and $D_{\Phi^{pre}_t}$ are close and that 
\begin{align*}
    \left|\left|\varpi_{\mathcal{C}o\mathcal{K};\beta-1}(1-\chi^t_4)D^{pre;t}_\zeta\eta\right|\right|_{C^{0,\alpha}_t}\lesssim&\left|\left|\varpi_{\mathcal{C}o\mathcal{K};\beta-1}\widehat{D}^{t}_{\zeta;H}(1-\chi^t_4)\eta\right|\right|_{C^{0,\alpha}_t}\\
    &+\left|\left|\varpi_{\mathcal{C}o\mathcal{K};\beta-1}(D^{pre;t}_\zeta-\widehat{D}^{t}_{\zeta;H})(1-\chi^t_4)\eta\right|\right|_{C^{0,\alpha}_t}\\
    &+\left|\left|\varpi_{\mathcal{C}o\mathcal{K};\beta-1}\m{cl}_{g^{pre;t}_\zeta}(\m{d}\chi^t_4)\eta\right|\right|_{C^{0,\alpha}_t}\\
    \lesssim&t^{m/2-\beta-\alpha}\left|\left|(1-\chi^t_5)\eta\right|\right|_{C^{1,\alpha}_{\m{ACF};\beta;t}}
\end{align*}
The bounds for $(II)$ are analogous and thus we deduce that  

\begin{align*}
    (I)+(II)\lesssim&q(t)\cdot \left|\left|\eta_1-\eta_2\right|\right|_{\mathfrak{C}^{0,\alpha}_{\beta-1;t}}\left(\left|\left|\eta_1\right|\right|_{\mathfrak{C}^{0,\alpha}_{\beta-1;t}}+\left|\left|\eta_2\right|\right|_{\mathfrak{C}^{0,\alpha}_{\beta-1;t}}\right)
\end{align*}
where 
\begin{align*}
    q(t)=\min\left\{\lambda\beta,\kappa-m/2-\alpha\right\}
\end{align*}
Now, for $\eta\in\mathfrak{C}^{0,\alpha}_{\beta-1;t}$ satisfying 
\begin{align*}
    \left|\left|\eta\right|\right|_{\mathfrak{C}^{0,\alpha}_{\beta-1;t}}<\varrho(t)
\end{align*}
where $\varrho(t)$ satisfies
\begin{align*}
    t^{\vartheta}<\varrho(t)< q(t)^{-1}\und{1.0cm}q(t)\rho(t)^2<e(t)
\end{align*}
$\mathcal{C}_{t}$ defines a self-mapping and by the contraction mapping principal, we obtain a fixpoint $\eta_{t}$ of $\mathcal{C}_{t}$. Consequently, the torsion-free $\m{Spin}(7)$-structure $\Phi_{t}=\Theta(\Phi^{pre}_{t}+\eta_{t})$ on $X_{t}$ satisfies 

\begin{align}
    \left|\left|\Phi_{t}-\Phi^{pre}_{t}\right|\right|_{\mathfrak{D}^{1,\alpha}_{\beta;t}}\lesssim&\left|\left|D_{\Phi^{pre}_{t}}\left(\eta_{t}+Q_{\Phi_{t}^{pre}}(\eta_{t})\right)\right|\right|_{\mathfrak{C}^{0,\alpha}_{\beta-1;t}}\\\nonumber
    \lesssim&\left|\left|*_{\Phi^{pre}_{t}}\m{d}\pi_{+;\Phi^{pre}_{t}}Q_{\Phi_{t}^{pre}}(\eta_{t})\right|\right|_{\mathfrak{C}^{0,\alpha}_{\beta-1;t}}+\left|\left|\m{d}\Phi^{pre}_{t}\right|\right|_{\mathfrak{C}^{0,\alpha}_{\beta-1;t}}\\\nonumber
    \lesssim& q(t)\rho(t)^2+e(t)\lesssim t^\vartheta
\end{align}

Finally, we will prove the smoothness of $\Phi_{t}$. We will prove this by using an elliptic bootstrap to show that the solution $\eta_{t}$ is smooth. First notice that a solution to 
\begin{align*}
    -D_{\Phi_{t}^{pre}}\eta_{t}=&D^{op}_{\Phi_{t}^{pre}}\Phi^{pre}_{t}+D_{\Phi_{t}^{pre}}\pi_{-;\Phi^{pre}_{t}}\left\{Q_{\Phi_{t}^{pre}}(\eta_{t})\right\}\\
    &+D^{op}_{\Phi_{t}^{pre}}\pi_{+;\Phi^{pre}_{t}}\left\{Q_{\Phi_{t}^{pre}}(\eta_{t})\right\}
\end{align*}
can be seen as a solution to 
\begin{align*}
        \left[D_{\Phi_{t}^{pre}}\pi_{-;\Phi^{pre}_{t}}+\left\{D_{\Phi_{t}^{pre}}\pi_{-;\Phi^{pre}_{t}}+D^{op}_{\Phi_{t}^{pre}}\pi_{+;\Phi^{pre}_{t}}\right\}Q_{\Phi_{t}^{pre}}\circ\right]\eta_{t}=-D^{op}_{\Phi_{t}^{pre}}\Phi^{pre}_{t}
\end{align*}
where we can see $\left[D_{\Phi_{t}^{pre}}\pi_{-;\Phi^{pre}_{t}}+\left\{D_{\Phi_{t}^{pre}}\pi_{-;\Phi^{pre}_{t}}+D^{op}_{\Phi_{t}^{pre}}\pi_{+;\Phi^{pre}_{t}}\right\}Q_{\Phi_{t}^{pre}}\circ\right]$ as a nonlinear, first order differential operator that depends smoothly on the 1-jet of $\Phi^{pre}_{t}$. Further notice, that 
\begin{align*}
    \left\{D_{\Phi_{t}^{pre}}\pi_{-;\Phi^{pre}_{t}}+D^{op}_{\Phi_{t}^{pre}}\pi_{+;\Phi^{pre}_{t}}\right\}Q_{\Phi_{t}^{pre}}(0)=0
\end{align*}
and hence, we can deduce that 
\begin{align*}
\left[D_{\Phi_{t}^{pre}}\pi_{-;\Phi^{pre}_{t}}+\left\{D_{\Phi_{t}^{pre}}\pi_{-;\Phi^{pre}_{t}}+D^{op}_{\Phi_{t}^{pre}}\pi_{+;\Phi^{pre}_{t}}\right\}Q_{\Phi_{t}^{pre}}\circ\right]    
\end{align*}

is elliptic\footnote{Its linearisation is an elliptic first order differential operator.} for $C^0$-small enough $\eta$, as ellipticity is an open condition. For $\eta\in C^{0,\alpha}$ the right hand side of the above equation is also in $C^{0,\alpha}$, but as the right hand side is smooth, \cite[Thm. 10.7]{agmon1964estimates} ensures that $\eta$ lies in $C^{1,\alpha}$. Elliptic bootstrapping, yields the desired smoothness. 
\end{proof}

\begin{rem}
A concrete choice of parameters that satisfies all the assumptions made in the proof is
\begin{align*}
    \lambda \approx 0.65, \quad \alpha \ll1, \quad \beta \ll0,  \und{0.5cm}  \kappa = \frac{m}{2} + \beta - \frac{\alpha}{2}.
\end{align*}
With this choice, the exponent $\vartheta$ appearing in the a priori estimate satisfies 
\begin{empheq}[box=\colorbox{blue!10}]{align*}
    \vartheta \approx 2.6, 
\end{empheq}
ensuring that the torsion is sufficiently small to apply the contraction mapping argument and complete the gluing construction.
\end{rem}

\subsubsection{Deformation to Torsion-Free $G_2$-Structures}
\label{Deformation to Torsion-Free G2 Structures}

Let $(Y,\varphi)$ be a $G_2$ orbifold, whose singular strata of codimension $m$ is given by $P\subset Y$. In the following we will resolve the $\m{Spin}(7)$-orbifold 
\begin{align*}
    (X,\Phi)=(\mathbb{T}^1\times Y ,\m{d}\theta\wedge\varphi+*_{\varphi}\varphi)
\end{align*}
in an $\mathbb{T}^1$-equivariant way. Such a $\mathbb{T}^1$-equivariant torsion-free $\m{Spin}(7)$-structure $\Phi_{\zeta;t}$ on $X_{\zeta;t}=\mathbb{T}^1\times Y_{\zeta;t}$ yields a torsion-free $G_2$ on $Y_{\zeta;t}$. This construction thus generalises and extends the work of Joyce and Karigiannis \cite{joyce2017new}.\\

In order to apply Theorem \ref{mainthm}, we need to verify that the operator $D^{pre}_{\zeta;t}$ is isentropic, i.e. $\m{ob}_{\beta;t}=0$. By Proposition \ref{tameuniformboundedness} and Corollary \ref{tameuniformboundedness} it suffices to compute the total cohomology of $Y_{\zeta;t}$ and compare its dimension with the one of $\m{xker}_\beta(D^{pre}_{\zeta;t})$.\\

\begin{lem}
Let $P\subset Y$ be a singular stratum of codimension $m=4$. The cohomology of $Y_{\zeta;t}$ is given by 
\begin{align*}
    \m{H}^k(Y_{\zeta;t})=\m{H}^{k-2}(P,\mathcal{H}^2(\zeta^*\mathfrak{M}/P))\otimes \m{H}^k(Y)
\end{align*}
Furthermore, the cohomology of $Y_{\zeta;t}$ is given by 
\begin{align*}
    \m{H}^\bullet(Y_{\zeta;t})=\m{H}^\bullet(Y)\oplus\m{H}^\bullet(Q_\zeta)/\m{H}^\bullet(P).
\end{align*}
\end{lem}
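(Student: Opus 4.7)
The plan is to reduce the statement to the cohomological computations already established for orbifold resolutions in Sections~\ref{The Cohomology of Resolved Orbifolds} and \ref{Vertically Harmonic Forms}, applied directly to the $G_2$-resolution $Y_{\zeta;t}$. The key observation is that the $\mathbb{T}^1$-equivariant construction produces $X_{\zeta;t}=\mathbb{T}^1\times Y_{\zeta;t}$ from replacing a tubular neighbourhood of $P\subset Y$ by the bounded portion of the ACF fibration $Q_\zeta\coloneqq\zeta^*\mathfrak{M}\to P$, so all the topological machinery applies verbatim with $(X,S,N_\zeta)$ replaced by $(Y,P,Q_\zeta)$.

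First, I would invoke Proposition~\ref{cohomofXt} directly in this setting. Its proof is purely topological, using only that $Q_\zeta$ retracts onto its bounded portion and $\m{Tub}_{5\epsilon}(P)$ onto $P$, together with the Mayer--Vietoris long exact sequence. This yields the second displayed formula
\[
    \m{H}^\bullet(Y_{\zeta;t})\cong \m{H}^\bullet(Y)\oplus \m{H}^\bullet(Q_\zeta)/\m{H}^\bullet(P).
\]

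Next, I would refine the summand $\m{H}^\bullet(Q_\zeta)/\m{H}^\bullet(P)$ via the Leray--Serre spectral sequence of the fibration $\nu_\zeta\colon Q_\zeta\to P$. Since the fibres are hyperkähler ALE $4$-manifolds with $\Gamma\subset\m{SU}(2)$, Lemma~\ref{imporveddecay} identifies the vertical cohomology local system as
\[
    \m{H}^\bullet(Q_\zeta/P)\cong \underline{\mathbb{R}}[0]\oplus \mathcal{H}^2(Q_\zeta/P),
\]
with $\mathcal{H}^2(Q_\zeta/P)$ concentrated in degree $2$. Proposition~\ref{L2E2} then identifies the $E_2$-page and guarantees collapse on the second page (the higher differentials $d_{2N}$ vanish by the cited result, and $d_2$ vanishes for parity reasons since vertical cohomology is supported in even degree only). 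Hence
\[
    \m{H}^k(Q_\zeta)\cong \m{H}^k(P)\oplus \m{H}^{k-2}\!\left(P,\mathcal{H}^2(Q_\zeta/P)\right),
\]
and the quotient by the image of $\nu_\zeta^*\colon \m{H}^\bullet(P)\hookrightarrow \m{H}^\bullet(Q_\zeta)$ kills the first summand.

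Substituting this into the formula of the first step gives the degree-wise decomposition
\[
    \m{H}^k(Y_{\zeta;t})\cong \m{H}^{k-2}\!\left(P,\mathcal{H}^2(\zeta^*\mathfrak{M}/P)\right)\oplus \m{H}^k(Y),
\]
matching the statement (the $\otimes$ in the lemma is a typographical slip for $\oplus$). The only nontrivial point is collapse of the spectral sequence, but this has been prepared by Proposition~\ref{L2E2} and the hyperkähler rigidity of the fibres, so the argument reduces to assembling existing ingredients rather than fresh analysis.
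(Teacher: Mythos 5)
Your overall strategy is sound and almost certainly the intended one: the second displayed formula follows from a verbatim repeat of Proposition~\ref{cohomofXt}, and the first should come from the Leray--Serre spectral sequence of $\nu_\zeta\colon Q_\zeta\to P$ together with Lemma~\ref{imporveddecay}. You also correctly spot that $\otimes$ is a typo for $\oplus$.

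However, your collapse argument has a gap. With the fibre cohomology $\underline{\mathbb{R}}[0]\oplus\mathcal{H}^2(Q_\zeta/P)$ concentrated in degrees $0$ and $2$, the $E_2$-page has two nonzero rows, and the differential that survives dimension and parity constraints is the \emph{odd} one, $d_3\colon E_3^{0,2}=\m{H}^0(P,\mathcal{H}^2(Q_\zeta/P))\to E_3^{3,0}=\m{H}^3(P)$. Proposition~\ref{L2E2} only asserts $d_{2N}=0$ for $N>0$, and ``parity'' only kills $d_2$ (which is already $d_{2\cdot 1}$ and thus covered by that proposition anyway). Neither input says anything about $d_3$. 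In fact, by Corollary~\ref{collapsingss} the vanishing of $d_3$ is precisely the isentropicity condition; unlike the codimension-$6$ case where $d_3$ lands in $\m{H}^3$ of a two-dimensional base and is automatically zero, here $\dim P=3$ and $d_3$ is a genuine obstruction. The first formula of the lemma therefore holds only under the isentropicity hypothesis that is present in the ambient Corollary~\ref{ExistenceofTFG2Resolutions}, and your proof should either invoke that hypothesis explicitly or supply a direct argument that $d_3=0$ in the specific $\mathbb{T}^1$-equivariant setting; as written, the collapse is asserted but not justified.
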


By working in a $\mathbb{T}^1$-equivariant pregluing datum, we deduce the following corollary.

\begin{cor}[Existence of Torsion-Free $G_2$-Orbifold Resolutions]
\label{ExistenceofTFG2Resolutions}
Let $(Y_{\zeta;t},\varphi_{\zeta;t})$ be a resolution of the $G_2$-orbifold $(Y,\varphi)$, such that $\mathbb{T}^1\times Y_{\zeta;t},\m{d}s\wedge\varphi^{pre}_{\zeta;t}+*_{g_{\varphi^{pre}_{\zeta;t}}}\varphi^{pre}_{\zeta;t})$ is a $\mathbb{T}^1$-equivariant, isentropic, adiabatic $\m{Spin}(7)$-resolution of the orbifold $(\mathbb{T}^1\times Y ,\m{d}s\wedge\varphi+*_{g_\varphi}\varphi)$. Then there exists a torsion-free $G_2$-structure $\varphi_{\zeta;t}$ on $Y_{\zeta;t}$ satisfying 
\begin{align}
    \left|\left|\varphi_{\zeta;t}-\varphi^{pre}_{\zeta;t}\right|\right|_{X_{\zeta;t}}\lesssim t^{\vartheta}.
\end{align}
Again, $\vartheta$ satisfies \eqref{vartheta} and depends on the choices of $\alpha$, $\beta$, $\lambda$ and $\kappa$ as well as the pregluing error.
\end{cor}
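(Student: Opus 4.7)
The plan is to apply Theorem \ref{mainthm} to the $\mathbb{T}^1$-equivariant $\m{Spin}(7)$-orbifold resolution
\begin{align*}
    \rho_{\zeta;t}\colon (\mathbb{T}^1\times Y_{\zeta;t}, \Phi^{pre}_{\zeta;t})\dashrightarrow (\mathbb{T}^1\times Y, \Phi),
\end{align*}
where $\Phi^{pre}_{\zeta;t}=\m{d}s\wedge\varphi^{pre}_{\zeta;t}+*_{g_{\varphi^{pre}_{\zeta;t}}}\varphi^{pre}_{\zeta;t}$. By assumption this resolution is adiabatic torsion-free and isentropic, so Proposition \ref{apprioriboundonlemma} supplies the a priori torsion bound required by Theorem \ref{mainthm}. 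The theorem then produces a torsion-free $\m{Spin}(7)$-structure $\Phi_{\zeta;t}=\Theta(\Phi^{pre}_{\zeta;t}+\eta_{\zeta;t})$, where $\eta_{\zeta;t}$ is the unique fixpoint of the contraction $\mathcal{C}_{\zeta;t}$ on a small ball in $\mathfrak{C}^{0,\alpha}_{\beta-1;t}$, satisfying the norm bound $\left|\left|\Phi_{\zeta;t}-\Phi^{pre}_{\zeta;t}\right|\right|_{\mathfrak{D}^{1,\alpha}_{\beta;t}}\lesssim t^\vartheta$.

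The crucial intermediate step is to show that $\Phi_{\zeta;t}$ is itself $\mathbb{T}^1$-invariant, so that it descends to a torsion-free $\m{G}_2$-structure on $Y_{\zeta;t}$. Since $\Phi^{pre}_{\zeta;t}$ is $\mathbb{T}^1$-invariant by construction, the preglued metric $g^{pre}_{\zeta;t}$, the Hodge-de Rham operator $D^{pre}_{\zeta;t}$, its right-inverse $R^{pre}_{\zeta;t}$, the projectors $\pi_{\pm,\Phi^{pre}_{\zeta;t}}$, and the quadratic remainder $Q_{\Phi^{pre}_{\zeta;t}}$ are all $\mathbb{T}^1$-equivariant. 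Consequently $\mathcal{C}_{\zeta;t}$ preserves the closed subspace $(\mathfrak{C}^{0,\alpha}_{\beta-1;t})^{\mathbb{T}^1}\subset\mathfrak{C}^{0,\alpha}_{\beta-1;t}$ of $\mathbb{T}^1$-invariant sections, and the Banach fixpoint theorem applied inside this subspace yields a fixpoint which is automatically $\mathbb{T}^1$-invariant. Uniqueness in the ambient space identifies this with $\eta_{\zeta;t}$, so $\eta_{\zeta;t}$, and hence $\Phi_{\zeta;t}$, is $\mathbb{T}^1$-invariant.

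A $\mathbb{T}^1$-invariant positive Cayley form on $\mathbb{T}^1\times Y_{\zeta;t}$ is of the form $\m{d}s\wedge\varphi_{\zeta;t}+*_{g_{\varphi_{\zeta;t}}}\varphi_{\zeta;t}$ for a unique positive $\m{G}_2$-structure $\varphi_{\zeta;t}\in\Omega^3(Y_{\zeta;t})$, and Theorem \ref{torsionfreeequalclosedness} together with its $\m{G}_2$-counterpart shows that closedness of $\Phi_{\zeta;t}$ is equivalent to $\m{d}\varphi_{\zeta;t}=0=\m{d}*_{g_{\varphi_{\zeta;t}}}\varphi_{\zeta;t}$, i.e. to torsion-freeness of $\varphi_{\zeta;t}$. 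The Hölder estimate for $\varphi_{\zeta;t}-\varphi^{pre}_{\zeta;t}$ then follows by restriction from the corresponding bound on $\Phi_{\zeta;t}-\Phi^{pre}_{\zeta;t}$, using that $\m{d}s$ and the $*$-operator on the product are controlled by the $\mathbb{T}^1$-invariant part of $g^{pre}_{\zeta;t}$.

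The main obstacle I anticipate is not the fixpoint argument itself, but the verification that the ambient hypotheses transfer correctly: namely, that the product adiabatic structure really satisfies the isentropy condition required by Theorem \ref{mainthm} on $\mathbb{T}^1\times Y_{\zeta;t}$. This requires checking that the obstruction map $\m{ob}_{\beta;t}$ on $\mathbb{T}^1\times Y_{\zeta;t}$ respects the Künneth decomposition of harmonic forms, so that its vanishing reduces to the analogous statement on $Y_{\zeta;t}$; this is the content of the isentropy assumption placed on the product in the statement, and is the point at which the corollary inherits the genuine geometric content from the $\m{Spin}(7)$ framework.
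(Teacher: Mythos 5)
Your approach matches the paper's: the paper states only that the corollary is deduced "by working in a $\mathbb{T}^1$-equivariant pregluing datum," and your argument is exactly the fleshed-out version of this — run the contraction of Theorem~\ref{mainthm} in the $\mathbb{T}^1$-invariant sector, use uniqueness of the fixpoint to conclude $\mathbb{T}^1$-invariance of $\Phi_{\zeta;t}$, and then read off the $\m{G}_2$-structure from the $\m{d}s$-component.

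One point you pass over a bit quickly: the claim that the contraction $\mathcal{C}_{\zeta;t}$ preserves the $\mathbb{T}^1$-invariant subspace requires the right-inverse $R^{pre}_{\zeta;t}$ to be $\mathbb{T}^1$-equivariant, and Theorem~\ref{lineargluingHodgedeRham} / Proposition~\ref{tameuniformboundedness} only assert \emph{existence} of a uniformly bounded right-inverse, not a canonical one. The cleanest remedies are either to average $R^{pre}_{\zeta;t}$ over the $\mathbb{T}^1$-action (which preserves the uniform operator bound, since the action is by isometries of $g^{pre}_{\zeta;t}$ and commutes with $D^{pre}_{\zeta;t}$), or to redo the linear gluing argument entirely inside the $\mathbb{T}^1$-invariant function spaces — where the isentropy hypothesis on the product, combined with the fact that $\mathbb{T}^1$ acts trivially on the harmonic forms contributing to $\m{xker}_\beta$ and $\m{xcoker}_{\beta-1}$, gives the required invariant right-inverse. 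Either way this is a routine fix; with it included, your proof is complete and agrees with the paper's intent. Your final paragraph raising doubts about the Künneth reduction is not actually a gap: the corollary only needs isentropy of the product resolution, which is assumed, and you never need to descend the obstruction map to $Y_{\zeta;t}$.
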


\subsection{The Holonomy of the Resolved $\m{Spin}(7)$-Structure}
\label{The Holonomy of the Resolved Spin(7)-Structure}

In the following section we will discuss how the holonomy of the resolved metric can be determined. This will be done by computing the $\hat{A}$-genus of $X_{\zeta;t}$.\\

The $\hat{A}$-genus of the resolution can be expressed in terms of the Betti-numbers of $X$ and the $L^2$-Betti numbers of $N_\zeta$. In particular,
\begin{align}
    \hat{A}(X_{\zeta;t})=1+b^7_2(X_{\zeta;t})-b_1(X_{\zeta;t})
\end{align}
holds for $\m{Spin}(7)$-manifolds. By \cite[Thm. 10.6.1]{joyce2000compact} the holonomy of a compact $\m{Spin}(7)$-manifold is the full group $\m{Spin}(7)$, if the $\hat{A}$-genus is equal to 1. Moreover, in latter case both 
\begin{align*}
    b^7_2(X_{\zeta;t})=b_1(X_{\zeta;t})=0.
\end{align*}
vanish. As we have already seen in Section \ref{The Cohomology of Resolved Orbifolds} and under the Assumption \ref{ass8Spin7}, we are able to proof the following statement.

\begin{nota}
    We will denote by 
    \begin{align*}
\beta_i(N_\zeta)\coloneqq \sum_{p+q=i}\m{dim}(E^{p,q}_2(N_\zeta))-b_i(S).
    \end{align*}    
\end{nota}

\begin{prop}
Let us assume that Assumption \ref{ass8Spin7} holds. Then the $\hat{A}(X_{\zeta;t})$ is given by 
\begin{align*}
    \hat{A}(X_{\zeta;t})=1+b^7_2(X)-b_1(X)+\beta^7_2(N_\zeta)-\beta_1(N_\zeta).
\end{align*}
where $\beta^j_i$ are the refined harmonic Betti numbers of $N_\zeta$. Let $S\subset X$ be a singular stratum of type A of codimension four, such that $(X,\Phi)$ admits a resolution $(X_{\zeta;t},\Phi_{\zeta;t})$. Furthermore, in this case 
\begin{align*}
    \beta^7_2(N_\zeta)=\beta_1(N_\zeta)=0
\end{align*}
and thus, $\m{Hol}(g_{\Phi_{\zeta;t}})=\m{Spin}(7)$ if and only if $\m{Hol}(g_\Phi)=\m{Spin}(7)$.
\end{prop}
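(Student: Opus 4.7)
The plan is to compute $\hat{A}(X_{\zeta;t})$ by combining Joyce's topological index identity with the Hodge-theoretic splitting of cohomology provided by Proposition \ref{tameuniformboundedness}, and then to specialise to the codimension four, type A case where a $\m{Spin}(7)$-representation-theoretic computation eliminates the remaining correction terms. Once this is done, the holonomy conclusion follows from Joyce's criterion recalled at the end of Section \ref{Spin(7)-Orbifolds}, since Theorem \ref{mainthm} guarantees that $(X_{\zeta;t},\Phi_{\zeta;t})$ is a compact torsion-free $\m{Spin}(7)$-manifold.

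First I would rewrite Joyce's identity
\begin{align*}
    24\hat{A}(Z)=-1+b_1(Z)-b_2(Z)+b_3(Z)+b_4^+(Z)-2b_4^-(Z)
\end{align*}
in the compact form $\hat{A}(Z)=1+b_2^7(Z)-b_1(Z)$, valid on any compact eight-dimensional orbifold carrying a torsion-free $\m{Spin}(7)$-structure. This reduction uses Poincar\'e duality together with the $\m{Spin}(7)$-irreducible decompositions from Section \ref{Octonionic Linear Algebra and Spin(7)}, and the identification $\mathcal{H}^2_{7,\Phi}\cong \mathcal{H}^4_{7,\Phi}$ induced by the parallel isomorphism $\alpha\mapsto \alpha\wedge\Phi$ followed by Hodge duality. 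Applied to $Z=X_{\zeta;t}$ and combined with the cohomological identification of Proposition \ref{cohomofXt}, sharpened under Assumption \ref{ass8Spin7} via Proposition \ref{tameuniformboundedness}, this yields $b_k^j(X_{\zeta;t})=b_k^j(X)+\beta_k^j(N_\zeta)$, where by definition $\beta_k^j(N_\zeta)$ is the dimension of the $\m{Spin}(7)$-irreducible component of the correction term $\m{H}^\bullet(N_\zeta)/\m{H}^\bullet(S)$. This gives the first stated identity for $\hat{A}(X_{\zeta;t})$.

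Next, for the codimension four, type A case with isotropy $\Gamma\subset\m{SU}(2)$, Lemma \ref{imporveddecay} identifies the vertical harmonic bundle with $\mathcal{H}^2_-(N_\zeta/S)$, concentrated in fibre degree two. Therefore $\m{H}^p(S,\mathcal{H}^q(N_\zeta/S))$ is supported in total degree $p+2\geq 2$, so the correction in degree one vanishes and $\beta_1(N_\zeta)=0$ automatically. For $\beta_2^7(N_\zeta)$ the only contribution is $\m{H}^0(S,\mathcal{H}^2_-(N_\zeta/S))$, and the key point is to show that a vertical anti-self-dual two form $\alpha\in\wedge^2_-V^*$ on the normal cone fibre $V\cong\mathbb{H}/\Gamma$ lies entirely in the $\wedge^2_{21,\Phi}$ component of the $\m{Spin}(7)$-decomposition. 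Using the local expression of the Cayley form
\begin{align*}
    \Phi=\m{vol}_H-\m{tr}_+(\underline{\omega}_H\wedge\underline{\omega}_V)+\m{vol}_V
\end{align*}
of Section \ref{Finite Symmetries of Cayley Forms} and characterising $\wedge^2_{7,\Phi}$ as an eigenspace of $\alpha\mapsto *(\Phi\wedge\alpha)$, the mixed term $\m{tr}_+(\underline{\omega}_H\wedge\underline{\omega}_V)\wedge\alpha$ vanishes because $\underline{\omega}_V$ is self-dual on $V$ while $\alpha$ is anti-self-dual, $\m{vol}_V\wedge\alpha=0$ for degree reasons, and only $\m{vol}_H\wedge\alpha$ survives; its Hodge dual reproduces $\alpha$ with the sign placing it in the $21$-dimensional component.

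The main obstacle I anticipate is this $\m{Spin}(7)$-type verification: one must track the sign conventions in the Cayley form, in the Hodge operator, and in the splitting $W=H\oplus V$ carefully enough to ensure that $\wedge^2_-V^*$ indeed lands in $\wedge^2_{21,\Phi}$ rather than $\wedge^2_{7,\Phi}$. Once this is settled, the two vanishings combine with the first identity to yield $\hat{A}(X_{\zeta;t})=1+b_2^7(X)-b_1(X)=\hat{A}(X)$, and the holonomy equivalence is immediate from Joyce's criterion \cite[Thm.~10.6.1]{joyce2000compact}: on a compact torsion-free $\m{Spin}(7)$-manifold the full holonomy equals $\m{Spin}(7)$ if and only if $\hat{A}=1$.
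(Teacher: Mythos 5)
Your proposal is correct and follows essentially the same route as the paper: Joyce's identity $\hat{A}=1+b_2^7-b_1$, the cohomology splitting from Propositions~\ref{cohomofXt} and~\ref{tameuniformboundedness} under Assumption~\ref{ass8Spin7}, and the fact that the vertical harmonic cohomology sits entirely in $\mathcal{H}^2_-(N_\zeta/S)$ (concentrated in fibre degree two) to kill $\beta_1$ and $\beta^7_2$. The one methodological difference is how the key type-$(7,\Phi)$ computation is carried out: the paper invokes the decomposition of $\wedge^\bullet T^*N_\zeta$ into irreducible $\m{Sp}(1)\m{Sp}(1)\m{Sp}(1)$-representations and records which summands of the Leray--Serre second page feed into $\mathcal{H}^2_{7}$, whereas you verify directly that $\wedge^2_-V^*$ lies in $\wedge^2_{21,\Phi}$ by computing the eigenvalue of $\alpha\mapsto *(\Phi\wedge\alpha)$ using the explicit Cayley form of Section~\ref{Finite Symmetries of Cayley Forms}. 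Both arguments hinge on the same two facts ($\m{vol}_V\wedge\alpha=0$ by degree and $\underline{\omega}_V\wedge\alpha=0$ because self-dual and anti-self-dual two-forms on a four-space wedge to zero), and your explicit eigenvalue computation is a perfectly valid substitute for the representation-theoretic bookkeeping. If anything, your version makes the sign conventions more transparent than the paper's terse statement, which is a small pedagogical gain; the paper's $\m{Sp}(1)^3$-framing is cleaner if one later wants the corresponding statement for codimension six where the normal stabiliser is different.

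One point worth flagging that neither your write-up nor the paper's proof addresses explicitly: the refined Betti numbers $b_k^j$ are computed with respect to a chosen torsion-free $\m{Spin}(7)$-structure, which differs on $X$ (where one uses $\Phi$) and on $X_{\zeta;t}$ (where one uses $\Phi_{\zeta;t}$). The identification $b_k^j(X_{\zeta;t})=b_k^j(X)+\beta_k^j(N_\zeta)$ implicitly uses that, for $t$ small, the harmonic $k$-forms on $X_{\zeta;t}$ split according to Proposition~\ref{tameuniformboundedness} into a piece converging to harmonic forms on $X$ away from $S$ (where the two $\m{Spin}(7)$-structures are $C^\infty_{\m{loc}}$-close, so the type decomposition is preserved in the limit) and a piece localised near $\Upsilon_{\zeta;t}$ (whose type is governed by the ACF model). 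This is consistent with the tame-resolution framework of Section~\ref{Linear Gluing} but is an implicit step in both proofs.
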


\begin{proof}
First notice, that the $\wedge^\bullet T^*N_\zeta$ decomposes in the presence of the Ehresmann connection on $N_\zeta$ into irreducible $\m{Sp}(1)\m{Sp}(1)\m{Sp}(1)\subset\m{Spin}(7)$-representations. As we have assumed that Assumption \ref{ass8Spin7} holds, we know that
\begin{align*}
    \m{H}^1(N_\zeta)/\m{H}^1(S)=\tot{}{1}{\m{H}^\bullet(S,\mathcal{H}^\bullet(N_\zeta/S))}=0
\end{align*}
and we deduce that
\begin{align*}
    \beta_1(N_\zeta)=0
\end{align*}
as $\mathcal{H}^0(N_\zeta/S)\cong\mathcal{H}^1(N_\zeta/S)=0$. The composition into irreducible $\m{Sp}(1)\m{Sp}(1)\m{Sp}(1)$-subbundles induces a decomposition of 
\begin{align*}
    \m{H}^\bullet(N_\zeta)/\m{H}^\bullet(S)
\end{align*}
on the second page of the Leray-Serre spectral sequence as $\Phi_{\zeta}$ satisfies $(\m{d}^{1,0}+\m{d}^{0,1})\Phi_\zeta=0$. In the case $m=4$ the stabilizer of the fibred $\m{Spin}(7)$ is $H\subset\m{Sp}(1)\m{Sp}(1)\m{Sp}(1)$ and
\begin{align*}
    \mathcal{H}^2_{7}(N_\zeta)\cong\m{H}^0(S,\mathcal{H}^2_+(N_\zeta/S))\oplus \m{H}^2_+(S,\mathcal{H}^0(N_\zeta/S))\oplus \m{H}^1_{\Phi_\zeta}(S,\mathcal{H}^1(N_\zeta/S))
\end{align*}
and since $\mathcal{H}^2(N_\zeta/S)=\mathcal{H}^2_-(N_\zeta/S)$
\begin{align*}
    \beta^7_2(N_\zeta)=0
\end{align*}
we conclude the proof.\footnote{Here $\m{H}^1_{\Phi}(S,\mathcal{H}^1(N_\zeta/S))$ is a subspace of $\m{H}^1(S,\mathcal{H}^1(N_\zeta/S))$ satisfying some algebraic condition. But as the latter vanishes, $\m{H}^1_{\Phi}(S,\mathcal{H}^1(N_\zeta/S))$ vanishes identically.}
\end{proof}

\section{Examples of Compact $\m{Spin}(7)$-Manifolds}
\label{Examples of Compact Spin(7)-Manifolds}

In this section, we will illustrate the application of the existence result established in the previous section by discussing concrete examples of compact $\m{Spin}(7)$-manifolds. The construction of such manifolds typically proceeds in two main steps:\\

\begin{itemize}
    \item[1.] Identify a suitable compact $\m{Spin}(7)$-orbifold.
    \item[2.] Verify Assumption \ref{ass8Spin7} for the constructed resolution.\\
\end{itemize}

We begin by addressing the first and most crucial step: How does one find compact $\m{Spin}(7)$-orbifolds that serve as candidates for our construction? Several classes of examples are known or can be systematically produced, which we briefly summarize below.

\begin{nota}
In the following we will denote by $\cat{CY}_m$ the category of $2m$ real dimensional Calabi--Yau manifolds, $\cat{HK}_k$ the category of $4k$ real dimensional Hyperkähler manifolds, $\cat{G}_2$ for the category of $G_2$-manifolds and $\cat{Spin(7)}$ for the category of $\m{Spin}(7)$-manifolds.\footnote{Morphisms are structure preserving smooth structure.}
\end{nota}

\begin{itemize}
    \item \textbf{Flat compact $\m{Spin}(7)$-orbifolds:} These take the form $\mathbb{O}/G$, where $G \subset \m{Spin}(7) \ltimes \mathbb{O}$ is a crystallographic group. Such orbifolds are classified in low dimensions and offer a tractable source of examples.
    \item \textbf{Compact $\cat{CY}_4$ with antiholomorphic involutions:} Given a compact Calabi--Yau 4-fold $(Y, \omega, \Omega)$ equipped with an antiholomorphic involution $\sigma$, one can often construct a $\m{Spin}(7)$-structure on the quotient $Y/\langle \sigma \rangle$, possibly after resolving singularities.
    \item \textbf{Lifted group actions on lower-dimensional spaces:} This includes actions on products such as $(\cat{HK}_1 \times \cat{HK}_1)/G$, $(\mathbb{T}^2\times\cat{CY}_3)/G$, or $\mathbb{T}^1\times\cat{G}_2$, where the symmetry group $G$ can be chosen to preserve an induced $\m{Spin}(7)$-structure.
\end{itemize}

\begin{nota}
In the following we will denote by $\cat{CY}_m$] the category of $2m$ real dimensional Calabi--Yau manifolds, $\cat{HK}_k$ the category of $4k$ real dimensional Hyperkähler manifolds, $\cat{G}_2$ for the category of $G_2$-manifolds and $\cat{Spin(7)}$ for the category of $\m{Spin}(7)$-manifolds.\footnote{Morphisms are structure preserving smooth structure.}
\end{nota}

\subsection{Generalised Kummer Construction}
\label{Generalsied Kummer Construction}

The first set of example of compact $\m{Spin}(7)$-manifolds which we will discuss are the ones first considered by Joyce\cite{Joyce1996b}. The idea of how to construct these examples is by generalising the Kummer construction of K3 surfaces to the $\m{Spin}(7)$-setting. In the following we will discuss these examples in a more general context and show that the estimates on the distance of the torsion-free to the pre-glued structure can be improved. This will extend the work of Platt \cite{platt} in the $G_2$-setting.\\

Let $(\mathbb{O},\Phi_{0},g_{0})$ be the standard flat $\m{Spin}(7)$-structure on $\mathbb{O}$ and let $\Gamma\subset\m{Iso}(\mathbb{O},\Phi_{0})$ be discrete and co-compact subgroup of $\m{Spin}(7)$-isometries. In particular, we have a lattice $\Lambda$ and a subgroup $H\subset\m{Spin}(7)$ such that

\begin{equation*}
    \begin{tikzcd}
        \Lambda\arrow[r,hook]&G\arrow[r, two heads]&H
    \end{tikzcd}
\end{equation*}
is short exact.

\begin{defi}
A resolution of $\mathbb{O}/G$ is called a \textbf{generalised Kummer}-$\m{Spin}(7)$ manifold or a Joyce manifold and is a compact torsion-free $\m{Spin}(7)$-manifold. 
\end{defi}

\begin{prop}
The singular strata of the orbifolds $\mathbb{O}/G$ can always be covered by intersecting affine tori of codimensions four, six, seven and eight. In particular, if the singular strata is of type A and the singular strata is of codimension four, the resolution data exists, if the flat bundle $\mathfrak{H}$ admits a parallel section avoiding the walls.
\end{prop}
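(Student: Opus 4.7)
The plan splits into two independent parts mirroring the statement. For the covering claim, I would first use the Bieberbach-type structure of $G$: the short exact sequence $\Lambda\hookrightarrow G\twoheadrightarrow H$ says $G$ is a crystallographic subgroup of $\m{Spin}(7)\ltimes\mathbb{O}$, with translational lattice $\Lambda$ and finite linear holonomy $H\subset\m{Spin}(7)$. The singular locus of $\mathbb{O}/G$ is the image of
\begin{align*}
    \bigcup_{e\neq g\in G}\m{Fix}(g)\subset\mathbb{O},
\end{align*}
and for any $g=(h,v)$ with $h\neq\mathrm{id}$ the set $\m{Fix}(g)$ is either empty or an affine subspace of dimension $\dim\m{Fix}(h)$. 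Applying the list from Section \ref{Finite Symmetries of Cayley Forms} to $h\in\m{Spin}(7)$, $\dim\m{Fix}(h)\in\{0,1,2,4\}$, so the corresponding affine pieces have codimension $4,6,7,$ or $8$. Each such affine subspace $F$ has stabiliser in $G$ of the form $\Lambda_F\rtimes H_F$ with $\Lambda_F=\Lambda\cap\m{Stab}(F)$ a lattice of $F$; thus the image of $F$ in $\mathbb{O}/G$ is a finite quotient of the affine torus $F/\Lambda_F$. The full singular set is then the union of these (finitely many, up to $G$-equivalence) affine tori, and deeper strata arise exactly from their intersections.

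For the resolution-data claim, restrict to a stratum $S\subset \mathbb{O}/G$ of type A and codimension four with isotropy $\Gamma\subset\m{SU}(2)$. By the discussion above $S$ is (a quotient of) a flat four-torus, and the $\m{N}_{\m{Spin}(7)}(\Gamma)$-reduction $Fr(X/S,\Phi)$ is a flat bundle whose holonomy sits inside the image of $H\cap\m{N}_{\m{Spin}(7)}(\Gamma)$. Under the McKay identification, $\mathfrak{H}=Fr(X/S,\Phi)\times_{\m{N}_{\m{Spin}(7)}(\Gamma)}\mathfrak{z}^*$ is a flat real vector bundle, and the walls $\mathfrak{W}\subset\mathfrak{P}\cong\Omega^2_+(S,\mathfrak{H})$ are precisely $\Omega^2_+(S,\cdot)$ applied to the hyperplane arrangement $\bigcup_\varrho D_{\theta_\varrho}\subset\mathfrak{z}^*$. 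Theorem \ref{torss4} demands a harmonic section of $\Omega^2_+(S,\mathfrak{H})$ whose image avoids $\mathfrak{W}$, and Proposition \ref{smoothtypeofXzeta4} then guarantees smoothness of $N_\zeta$.

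Given a parallel section $h\in\Gamma(S,\mathfrak{H})$ with $h(s)\notin\bigcup_\varrho D_{\theta_\varrho}$ for all $s\in S$, I would build the required $\zeta$ by tensoring with a parallel self-dual two-form on the flat torus: pick any nonzero $\omega\in\Omega^2_+(S)$ which is parallel with respect to the flat Levi-Civita connection (the three-dimensional space $\wedge^2_+T^*S\cong\m{Im}(\mathbb{H})$ yields such $\omega$), and set $\zeta\coloneqq\omega\otimes h$. Since both factors are parallel, $\m{d}_\varphi\zeta=0$, so in particular $\zeta$ is harmonic. Because the walls in $\mathfrak{P}$ have the product form $\wedge^2_+T^*S\otimes D_{\theta_\varrho}$, and $h$ avoids each $D_{\theta_\varrho}$, the section $\zeta$ avoids $\mathfrak{W}$ pointwise. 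Rescaling by $t^2$ and applying Theorem \ref{torss4} and Proposition \ref{ACFstructuresfrompullbacks4} then produces the desired adiabatic torsion-free ACF-$\m{Spin}(7)$-structure on $N_\zeta\to S$.

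The only subtle point is compatibility with the holonomy action at the level of the walls: one must verify that $\mathfrak{W}$ pulls back cleanly to the product hyperplane arrangement in $\wedge^2_+T^*S\otimes\mathfrak{z}^*$, which rests on the $\m{N}_{\m{Spin}(7)}(\Gamma)$-equivariance of the wall structure established in Section \ref{Parameter Space of Hyperkähler ALE Four Manifolds}. Everything else is a direct consequence of the flatness of $S$ together with the Bieberbach description of the singular strata.
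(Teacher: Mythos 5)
Your proposal is correct and takes essentially the same route as the paper, but at a higher resolution of detail than the paper's extremely terse two-sentence argument. For the covering claim, the paper simply writes down $\mathrm{Fix}(g) = \{x_0 + \ker(A-1)\}/\{\Lambda \cap \ker(A-1)\}$ and concludes; you reconstruct this via the Bieberbach structure $\Lambda \hookrightarrow G \twoheadrightarrow H$ and invoke the fixed-subspace dimensions $\{0,1,2,4\}$ from the classification of finite $\mathrm{Spin}(7)$-stabilisers, which is implicit in the paper. For the resolution-data claim, the paper only remarks ``As the ambient metric is flat, harmonic self-dual two forms are parallel and hence non-vanishing,'' leaving the actual construction of the resolution datum to the reader; your tensor construction $\zeta = \omega \otimes h$, the verification $\mathrm{d}_\varphi \zeta = 0$, and the rank-one argument showing $\omega \otimes h$ misses $\mathfrak{W}$ precisely because $h$ misses each $D_{\theta_\varrho}$ all fill genuine gaps in the paper's exposition and are the ``right'' way to unpack it.

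One small imprecision to be aware of: you write that $S$ is ``(a quotient of) a flat four-torus'' but then assume a nonzero parallel $\omega \in \Omega^2_+(S)$ exists. On a flat torus this is automatic; if $S$ were a nontrivial flat quotient whose holonomy acts without fixed vectors on $\wedge^2_+$, no such $\omega$ would exist. The paper's own formula for $\mathrm{Fix}(g)$ tacitly treats $S$ as the torus itself rather than a further quotient, so in the intended setting your argument goes through — but it is worth noting, since the hypothesis in the statement is phrased purely in terms of $\mathfrak{H}$ and does not separately stipulate that $\wedge^2_+T^*S$ admit a parallel section. You also silently drop the twisting bundle $\mathfrak{C}$ from the definition of $\mathfrak{H}$, but since $\mathfrak{C}$ is a flat $C$-bundle on a flat base this does not affect the parallel-section argument.
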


\begin{proof}
    Notice, that the image of the fix point set of an element in $g=(w,A)\in H\ltimes \Lambda=G$ in $\mathbb{O}/G$ is given by an affine torus 
    \begin{align*}
        Fix(g)=\left\{x_0+\m{ker}(A-1)\right\}/\left\{\Lambda\cap\m{ker}(A-1)\right\}.
    \end{align*}
    Consequently, the singular locus is given by the intersection of such affine tori. As the ambient metric is flat, harmonic self-dual two forms are parallel and hence non-vanishing.
\end{proof}

\begin{rem}
Furthermore, if $S$ is of type A, the normal bundle $NS$ can be covered by the trivial bundle $\mathbb{R}^m\times \mathbb{T}^s$. As a consequence, Assumption \ref{ass8Spin7} is easily verifiable. 
\end{rem}

\subsubsection{Improved Estimates for Joyce-Manifolds}
\label{Improved Estimates for Joyce-Manifolds}

The first examples of compact $\m{Spin}(7)$-manifolds were constructed by Joyce in \cite{Joyce1996b} by resolving the orbifold singularities of $\mathbb{T}^8/\mathbb{Z}^4_2$ and the ones of certain weighted projective spaces. Using Theorem \ref{mainthm} we are able to deduce improved estimates for the torsion-free $\m{Spin}(7)$-structure constructed via the generalised Kummer construction. Notice, that $\Phi_{hot}=0$ and consequently, via are able to improve on $\vartheta$.

\begin{thm}
Let $G=\Gamma\ltimes\mathbb{Z}^8$ and $(X_{\zeta;t},\Phi_{\zeta;t})\dashrightarrow (\mathbb{R}^8/G,\Phi_{st})$ be one of the Joyce manifolds constructed in \cite{Joyce1996b} and $\Phi_{\zeta;t}$ its torsion-free $\m{Spin}(7)$-structure. Let $\Phi^{pre}_{\zeta;t}$ be the Cayley form on $X_{\zeta;t}$ constructed from the pregluing process. Then 
\begin{align*}
    \left|\left|\Phi_{\zeta;t}-\Phi^{pre}_{\zeta;t}\right|\right|_{\mathfrak{D}^{1,\alpha}_{\beta;t}}\lesssim t^{\approx4}.
\end{align*}
\end{thm}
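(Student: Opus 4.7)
The key observation is that for the Joyce--Kummer setting, the ambient $\m{Spin}(7)$-orbifold is the flat orbifold $(\mathbb{R}^8/G,\Phi_{st})$. Consequently, in the exponential tubular neighbourhood of any singular stratum $S\subset \mathbb{R}^8/G$, the pullback $\m{exp}_{g_\Phi}^*\Phi$ agrees exactly with the normal cone $\m{Spin}(7)$-structure $\Phi_0$, and the Riemannian structure is already flat. In particular, the higher-order term $\Phi_{hot}$ in the expansion $\m{exp}_{g_\Phi}^*\Phi=\Phi_0+\Phi_{hot}$ vanishes identically, and likewise $g_{hot}=0$ on the tubular neighbourhood.

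The plan is to revisit the proof of Proposition \ref{apprioriboundonlemma} under the additional hypothesis $\Phi_{hot}=0$. On the CFS side, the pre-glued structure takes the form $\widetilde{\Phi}^{pre}_{\zeta;t}=\Phi+(1-\chi_3)(\Phi_{t^2\cdot\zeta}-\Phi_0)$, so the correction term is supported inside the tubular neighbourhood where $1-\chi_3\neq 0$, i.e. strictly on the ACF side of the pre-gluing. Consequently the CFS contribution $\|\chi_2\,\m{d}\Phi^{pre}_{\zeta;t}\|_{C^{0,\alpha}_{\m{CFS};\beta-1;\epsilon}}$ vanishes identically, and the constraint $\vartheta<\lambda(2-\beta)$ coming from the CFS estimate is removed. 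On the ACF side, the terms $(II)$ and $(III)$ in the proof of Proposition \ref{apprioriboundonlemma} simplify: the expression $\pi_{\nu;\Phi^t_\zeta}\bigl(\chi^t_3(\Phi^t_0-\Phi^t_\zeta)\bigr)$ is quadratic in the small quantity $\Phi^t_0-\Phi^t_\zeta$ (since $\pi_{\nu;\Phi^t_\zeta}\Phi^t_\zeta=0$ to leading order), and no $\mathcal{O}(r^{-m})$ contribution from $\Phi_{hot}$ enters.

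After these simplifications, the remaining constraints reduce to
\begin{align*}
    \vartheta<\min\left\{-\lambda\beta+(1-\lambda)m,\ m/2-\kappa+\lambda,\ m/2-\kappa+(1-\lambda)m-\lambda\right\}.
\end{align*}
For $m=4$, setting $\beta$ close to $-4$ (the largest negative critical rate for $\Gamma\subset \m{SU}(2)$ by Lemma \ref{imporveddecay}), $\kappa=m/2+\beta-\alpha/2\approx -2$ with $\alpha\ll 1$, and $\lambda\in(0,4/5)$, each of these three quantities is pushed up to approximately $4$. Thus the a priori estimate becomes $\|\m{d}\Phi^{pre}_{\zeta;t}\|_{\mathfrak{C}^{0,\alpha}_{\beta-1;t}}\lesssim t^{\approx 4}$, and applying Theorem \ref{mainthm} yields $\|\Phi_{\zeta;t}-\Phi^{pre}_{\zeta;t}\|_{\mathfrak{D}^{1,\alpha}_{\beta;t}}\lesssim t^{\approx 4}$.

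The main obstacle is confirming that the improved estimate on the quadratic term $\pi_{\nu;\Phi^t_\zeta}(\Phi^t_0-\Phi^t_\zeta)+Q_{\Phi^t_\zeta}(\Phi^t_0-\Phi^t_\zeta)$ is genuinely of the expected higher order in the ACF weighted norm. This relies on the explicit form of $\Theta$ given in Lemma \ref{C0Festimateslem}, together with the fact that $\Phi^t_\zeta\in Cay_+$ so that the normal projection kills its own leading part; the remaining analysis is a careful book-keeping of the weights against $w_{\m{AC};\beta-1}$ and $w_{\m{CFS};\beta-1;\epsilon}$, combined with the quadratic estimate \eqref{thetaestimate}. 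One must also check that the parameter choices yielding $\vartheta\approx 4$ remain compatible with the APS-type boundary conditions, with the validity of the range $\iota\geq\phi-m/2-\beta$, $\pi\geq\phi+m/2+\beta-1-\alpha$ in Theorem \ref{lineargluingHodgedeRham}, and with the isentropicity of the resolution, which in the flat Kummer setting is guaranteed because the vertical cohomology bundle $\mathcal{H}^2_-(N_\zeta/S)$ is a flat bundle of parallel sections over a flat base $S$.
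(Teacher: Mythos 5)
Your central observation, that for the flat Kummer orbifolds $\mathbb{R}^8/G$ the exponential tubular neighbourhood of each singular stratum is exactly flat so that $\Phi_{hot}=0$ and $g_{hot}=0$, is precisely the point the paper invokes; the paper gives no further justification beyond the one-line remark preceding the theorem, so your proposal is a reasonable attempt to fill that gap. However, two of your intermediate claims are wrong and the third needs justification.

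First, the CFS contribution does \emph{not} vanish identically. The cut-off $\chi_2$ is supported on $r>\epsilon$, while $1-\chi_3$ is supported on $r<3\epsilon$; these overlap on the annulus $\epsilon<r<3\epsilon$. The torsion of $\Phi^{pre}_{\zeta;t}$ is concentrated precisely in the transition region $2\epsilon<r<3\epsilon$ where $\m{d}\chi_3\neq 0$, so $\chi_2\,\m{d}\Phi^{pre}_{\zeta;t}$ is nonzero and the CFS part of the norm is not identically zero. What changes when $\Phi_{hot}=0$ is the \emph{quality} of the CFS bound, not its vanishing: with $\Phi=\Phi_0$ on the tubular neighbourhood, the $\mathcal{O}(r^2)$ correction in the expansion $\pi_{\tau;\Phi}=\pi_{\tau;\Phi_0}+(\pi_{\tau;\Phi}-\pi_{\tau;\Phi_0})$ disappears, so the rate $t^{\lambda(2-\beta)}$ is replaced by a bound of ACF type, $t^{-\lambda\beta+(1-\lambda)m}$. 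Second, $-4$ is not the largest negative critical rate for $\Gamma\subset\m{SU}(2)$; you are conflating the decay rate $\mu_{\mathcal{H}}=-4$ of vertically harmonic forms from Lemma \ref{imporveddecay} with the indicial roots in $\mathcal{C}(\widehat{D}_0)$. The paper's Example 3.1 (Platt's computation for $\Gamma=\mathbb{Z}_2$) records $\mu_{\mathcal{C}o\mathcal{K}}=-2$: the rank of $\mathcal{C}o\mathcal{K}_{\m{AC};\beta-1}$ jumps from $0$ to $6$ as $\beta$ crosses $-2$. Thus $-2$ is a critical rate, and moving $\beta$ into the chamber $(-4,-2)$ requires verifying that the obstruction map $\m{ob}_{\beta;t}$ still vanishes in a regime with nontrivial cokernel contributions. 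Your final sentence asserts isentropicity because the vertical cohomology bundle is parallel over a flat base, but that argument only covers the regime $\beta\in(-2,0)$ handled by Theorem \ref{ChenRaunequalResolisisentropic}; the wall-crossing at $\beta=-2$ is exactly what must be controlled, and it is this, together with the removal of the $\Phi_{hot}$-induced terms in $(I)$, $(II)$, $(III)$ of Proposition \ref{apprioriboundonlemma}, that pushes $\vartheta$ from $\approx 2.6$ up to $\approx 4$.
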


\subsection{Resolution of $\cat{Spin(7)}$ with $\mathbb{Z}_2$-Involutions}
\label{Resolutions of Spin(7) Z2 Involutions}

In this subsection we will discuss how to construct a family of compact $\m{Spin}(7)$-manifolds, starting from a $\m{Spin}(7)$-manifold with involutions.\\

\begin{lem}\cite[Prop. 10.8.6]{joyce2000compact}
    Let $(\hat{X},\hat{\Phi})$ be a $\m{Spin}(7)$-manifold with an involution 
    \begin{align*}
        \tau\colon (\hat{X},\hat{\Phi})\rightarrow (\hat{X},\hat{\Phi}).
    \end{align*}
    The fixpoint-locus of $\tau$ consists of the disjoint union of points and a (non) connected Cayley submanifold. In particular, the quotient $(\hat{X},\hat{\Phi})/\mathbb{Z}_2$ is a $\m{Spin}(7)$-orbifold with codimension four and eight, isotropy $\mathbb{Z}_2$ strata. 
\end{lem}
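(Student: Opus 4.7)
The first step is to exploit that $\tau$ preserves the Cayley form $\hat{\Phi}$, and hence also the associated Riemannian metric $g_{\hat{\Phi}} = \gamma(\hat{\Phi})$. Thus $\tau$ is an isometric involution of a complete Riemannian manifold, and its fixed-point locus $F \coloneqq \mathrm{Fix}(\tau)$ is a disjoint union of connected, closed, totally geodesic submanifolds. The dimension may vary across components, and my goal is to show each component is either $0$- or $4$-dimensional, with the $4$-dimensional ones being Cayley.

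The second step is to analyze the infinitesimal action. At any $p \in F$, the differential $d\tau_p \in \mathrm{GL}(T_p\hat{X})$ is an involution that preserves $\hat{\Phi}_p$, so $d\tau_p \in \mathrm{Stab}(\hat{\Phi}_p) = \mathrm{Spin}(7)$. Since $F$ is totally geodesic, its tangent space at $p$ coincides with the $(+1)$-eigenspace $H_p \coloneqq (T_p\hat{X})^{d\tau_p}$. It therefore suffices to classify involutions $\sigma \in \mathrm{Spin}(7)$ by $\dim H$, where $H = W^\sigma$. This is precisely the content of the classification recalled in Section \ref{Finite Symmetries of Cayley Forms}: the possibilities are $\dim H \in \{0,1,2,4,8\}$, with $H = 0$ requiring $\sigma \in \mathrm{Spin}(7)$ to act by $-1$ on $W$ (which is realised by the nontrivial central element), $\dim H = 4$ requiring $\sigma$ to generate a $\mathbb{Z}_2 \subset \mathrm{Sp}(1)$, and the intermediate cases $\dim H \in \{1,2\}$ requiring $\mathbb{Z}_2 \subset G_2$ or $\mathbb{Z}_2 \subset \mathrm{SU}(3)$ respectively. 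A direct check eliminates these intermediate cases: every order-two element of $\mathrm{SU}(3)$ is conjugate to $\mathrm{diag}(1,-1,-1)$, contributing an additional $2$-dimensional real fixed space on $\mathbb{C}^3$ and hence $\dim H = 4$; similarly, every order-two element of $G_2$ has a $3$-dimensional fixed subspace on $\mathrm{Im}(\mathbb{O})$, again yielding $\dim H = 4$. Only $\dim H \in \{0,4\}$ survive.

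The third step identifies the geometric type of the components. If $\dim H_p = 0$, then $p$ is an isolated fixed point. If $\dim H_p = 4$, then by Case (2) of Section \ref{Finite Symmetries of Cayley Forms} the restriction of $\hat{\Phi}_p$ to $H_p$ is the volume form $\mathrm{vol}_{H_p}$ of the induced inner product. Since $F$ is totally geodesic and the property ``$\hat{\Phi}$ restricts to the volume form'' is pointwise (hence a closed condition on the component), the connected $4$-dimensional components of $F$ are calibrated by $\hat{\Phi}$ and are therefore Cayley submanifolds of $(\hat{X},\hat{\Phi})$.

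Finally, the quotient map $\hat{X} \twoheadrightarrow X = \hat{X}/\mathbb{Z}_2$ is a local diffeomorphism away from $F$, and since $\tau^*\hat{\Phi} = \hat{\Phi}$ the Cayley form descends to a torsion-free Cayley form $\Phi$ on $X \setminus \pi(F)$, where $\pi$ denotes the quotient map. Along the images of the isolated points and the Cayley submanifolds, the local model of the quotient is $\mathbb{O}/\{\pm 1\}$ and $\mathbb{R}^4 \times (\mathbb{H}/\{\pm 1\})$ respectively, matching the codimension $8$ and codimension $4$ singular strata with isotropy $\mathbb{Z}_2$ claimed in the statement. The \textbf{main obstacle} in this argument is the classification step: one must verify that no $\mathbb{Z}_2 \subset \mathrm{Spin}(7)$ produces a fixed subspace of dimension $1$ or $2$, which amounts to a finite case-by-case analysis of order-two elements of $G_2$ and $\mathrm{SU}(3)$; everything else is a direct application of the algebraic framework of Section \ref{Finite Symmetries of Cayley Forms} combined with the standard isometric-involution lemma.
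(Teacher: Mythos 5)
Your argument is correct and is a faithful reconstruction of the standard proof. Note that the paper does not supply its own argument for this Lemma — it simply cites Joyce's Proposition 10.8.6 — so there is nothing in the paper to compare against directly; your proof fills in what the citation leaves implicit. The structure you use (isometric involution $\Rightarrow$ totally geodesic fixed locus; pointwise analysis of $d\tau_p \in \mathrm{Spin}(7)$ via the finite-subgroup classification of Section \ref{Finite Symmetries of Cayley Forms}; elimination of the $\dim H \in \{1,2\}$ cases by checking that every involution in $G_2$ fixes a $3$-plane in $\mathrm{Im}(\mathbb{O})$ and every involution in $\mathrm{SU}(3)$ fixes a complex line in $\mathbb{C}^3$; identification of the $4$-dimensional components as calibrated by $\hat{\Phi}^{4,0} = \mathrm{vol}_H$) is exactly the argument one extracts from Joyce's book. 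Two minor remarks for completeness: you should note explicitly that $\dim H_p$ is constant on each connected component of $\mathrm{Fix}(\tau)$ because the component is a submanifold with tangent space $H_p$, so the dichotomy between isolated points and $4$-dimensional Cayleys is indeed component-by-component; and the excluded case $\dim H = 8$ corresponds to $d\tau_p = \mathrm{id}$, which by the rigidity of isometries of a connected manifold forces $\tau = \mathrm{id}$ globally, so it only occurs when the involution is trivial — worth a sentence since the statement implicitly assumes $\tau \neq \mathrm{id}$.
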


\subsubsection{Resolution of $\cat{CY}_4$ with Antiholomorphic Involutions}
\label{Resolving CY4 with Antiholomorphic Involutions}

In the following subsection we will construct examples of $\m{Spin}(7)$-orbifolds using real homogenous polynomials.

\begin{defi}
    A \textbf{complete intersection} $Z$ of dimension $n$ is the intersection $H_1\cap ...\cap H_k\subset \mathbb{CP}^m$ of $k$-hypersurfaces that intersect transversely so that $n=m-k$.
\end{defi}
By the adjunction formula we know that the canonical bundle of $Z$ is trivial, if 
\begin{align*}
    d_1+...+d_k=m+1
\end{align*}
where $d_i$ denotes the degree of $H_i$. By Yau's proof of the Calabi-conjecture \cite{Yau1977Calabi} a complete intersection carries a Calabi--Yau metric. In a similar manner we can use complete intersections in weighted projective spaces to construct Calabi--Yau orbifolds.\\

The complex conjugation defines an anti-holomorphic involution of $\mathbb{CP}^m$. If all hypersurfaces are cut-out by equations in real coefficients, this antiholomorphic involution restricts to one on $Z$. An equivariant version of the existence result of the Calabi--Yau metric on $Z$ shows that the Calabi--Yau metric is $\mathbb{Z}_2$-invariant. In the following, we will denote this structure by $(Z,\omega,\theta)$.\\

If the real locus $\m{Re}(Z)$ is nonempty, it is a totally geodesic special Lagrangian with respect to the Calabi--Yau structure $(Z,\omega,\theta)$.

\begin{prop}
Let $(Z,\omega,\theta)$ be a $\cat{CY}_4$ given as a complete intersection. Assume that the real locus $\m{Re}(Z)$ of $Z$ is non empty and smooth. Let $\mathfrak{Z}_2\rightarrow \m{Re}(Z)$ be a twofold, unramnified cover of $\m{Re}(Z)$. If there exists a harmonic, self-dual, $\mathfrak{Z}_2$-twisted, nonvanishing two form $\zeta$ on $\m{Re}(Z)$, then the quotient $Z/\mathbb{Z}_2$ admits a resolution 
\begin{align*}
    \rho_{\zeta;t}\colon X_{\zeta;t}\dashrightarrow Z/\mathbb{Z}_2.
\end{align*}
Moreover, the resolution carries a family of torsion-free $\m{Spin}(7)$-structure $\Phi_{\zeta;t}$ resolving the natural one on $(Z/\mathbb{Z}_2,\Phi)$ induced by the Calabi--Yau structure.
\end{prop}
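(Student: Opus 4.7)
The plan is to recognize $Z/\mathbb{Z}_2$ as a $\m{Spin}(7)$-orbifold of the type treated in Sections~\ref{Resolutions of Spin(7)-Normal Cones} and~\ref{Existence of torsion-free Spin(7) Structures on Orbifold Resolutions}, to realise $\zeta$ as a section of the appropriate parameter bundle, and then to verify the hypotheses of Theorem~\ref{mainthm}. First, the Calabi--Yau structure $(\omega,\theta)$ induces a $\m{Spin}(7)$-structure $\hat{\Phi}=\tfrac{1}{2}\omega\wedge\omega+\m{Re}(\theta)$ on $Z$. The antiholomorphic involution $\tau$ (complex conjugation) satisfies $\tau^*\omega=-\omega$ and $\tau^*\theta=\overline{\theta}$, hence $\tau^*\hat{\Phi}=\hat{\Phi}$. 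By \cite[Prop.~10.8.6]{joyce2000compact} the fixed locus is a (possibly disconnected) Cayley submanifold, which by assumption equals the smooth real locus $S\coloneqq\m{Re}(Z)$, a four-dimensional special Lagrangian. Thus $(X,\Phi)\coloneqq(Z/\mathbb{Z}_2,\hat{\Phi}/\mathbb{Z}_2)$ is a $\m{Spin}(7)$-orbifold whose single singular stratum is $S$, of codimension four and isotropy $\mathbb{Z}_2\subset\m{Sp}(1)\subset\m{SU}(2)$.

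Second, I would identify the resolution data. For $\Gamma=\mathbb{Z}_2$ the associated Dynkin diagram is $A_1$, so the Cartan algebra $\mathfrak{z}^*$ is one-dimensional and the normaliser action $\varrho_{\mathfrak{z}^*}\colon \m{N}_{\m{Spin}(7)}(\mathbb{Z}_2)\to\m{GL}(\mathfrak{z}^*)$ factors through $\mathbb{Z}/2$. The twisting bundle $\mathfrak{C}$ is therefore the one associated to the given double cover $\mathfrak{Z}_2\to S$, and Section~\ref{Adiabatic Resolutions of Codimension Four Singularities} yields
\begin{align*}
\mathfrak{P}\cong \wedge^2_+T^*S\otimes L_{\mathfrak{Z}_2},
\end{align*}
where $L_{\mathfrak{Z}_2}$ is the real line bundle associated to $\mathfrak{Z}_2$. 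The hypothesis provides a nowhere-vanishing harmonic section $\zeta\in\Gamma(S,\mathfrak{P})\cong\Omega^2_+(S,\mathfrak{H})$ whose image avoids the wall $\mathfrak{W}\subset\mathfrak{P}$ (which here is simply the zero section of $\mathfrak{P}$). By Proposition~\ref{ACFstructuresfrompullbacks4} and Theorem~\ref{torss4} we obtain a smooth family
\begin{align*}
\rho_\zeta\colon(N_\zeta,\Phi_\zeta)\dashrightarrow(N_0,\Phi_0)
\end{align*}
of ACF $\m{Spin}(7)$-structures resolving the normal cone, adiabatic torsion-free because $\m{d}_\varphi\zeta=0$, and Proposition~\ref{smoothtypeofXzeta4}(i) ensures smoothness of $N_\zeta$ since $\m{im}(\zeta)$ misses $\mathfrak{W}$.

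Third, I would apply the pregluing construction of Section~\ref{Resolutions of Spin(7)-Orbifolds with Type A-Singular Strata} to produce $(X_{\zeta;t},\Phi^{pre}_{\zeta;t})\dashrightarrow(X,\Phi)$, which is adiabatic torsion-free by Lemma~\ref{adiabaticRicciflat}, so Assumption~\ref{ass7Spin7} holds and Proposition~\ref{apprioriboundonlemma} delivers the a priori bound $\|\m{d}\Phi^{pre}_{\zeta;t}\|_{\mathfrak{C}^{0,\alpha}_{\beta-1;t}}\lesssim t^\vartheta$. The main obstacle will be verifying Assumption~\ref{ass8Spin7}, i.e. isentropicity. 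By Theorem~\ref{ChenRaunequalResolisisentropic} this is equivalent to
\begin{align*}
\m{H}^\bullet(X_{\zeta;t},\mathbb{R})\cong \m{H}^\bullet_{\m{CR}}(X,\mathbb{R})
\end{align*}
as graded vector spaces. The Chen--Ruan cohomology in this setting contains, in addition to $\m{H}^\bullet(X,\mathbb{R})$, a twisted sector $\m{H}^{\bullet-2}(S,L_{\mathfrak{Z}_2})$ arising from the unique nontrivial conjugacy class of $\mathbb{Z}_2\subset\m{SU}(2)$, whose age is one. On the resolution side, Lemma~\ref{imporveddecay} identifies the fibrewise harmonic bundle $\mathcal{H}^2_-(N_\zeta/S)$ with $L_{\mathfrak{Z}_2}$ (the Cartan of $A_1$ twisted by $\mathfrak{Z}_2$), and the Leray--Serre spectral sequence together with Proposition~\ref{cohomofXt} gives the matching decomposition. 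Once this is in hand, the full analytical machinery of Theorem~\ref{mainthm} applies: the obstruction $\m{ob}_{\beta;t}$ vanishes, the right-inverse of Corollary~\ref{uniformlyboundedrightinverse} is uniformly bounded, and the contraction mapping argument produces a genuine torsion-free $\m{Spin}(7)$-structure $\Phi_{\zeta;t}=\Theta(\Phi^{pre}_{\zeta;t}+\eta_{\zeta;t})$ on $X_{\zeta;t}$ with $\|\Phi_{\zeta;t}-\Phi^{pre}_{\zeta;t}\|_{\mathfrak{D}^{1,\alpha}_{\beta;t}}\lesssim t^\vartheta$, completing the construction.
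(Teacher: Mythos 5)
Your strategy is the right one and lines up with the way the paper organises its machinery: pass from the antiholomorphic involution to the $\m{Spin}(7)$-orbifold $(Z/\mathbb{Z}_2,\hat\Phi)$ with singular stratum $S=\m{Re}(Z)$ of codimension four and $\mathbb{Z}_2$-isotropy, identify $\mathfrak{P}\cong\wedge^2_+T^*S\otimes L_{\mathfrak{Z}_2}$ with wall the zero section, take $\zeta$ to be the assumed nonvanishing harmonic self-dual twisted two-form, produce the ACF $\m{Spin}(7)$-resolution of the normal cone via Proposition~\ref{ACFstructuresfrompullbacks4} and Theorem~\ref{torss4}, preglue, and apply Theorem~\ref{mainthm}. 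Your bookkeeping of the Chen--Ruan twisted sector and of $\mathcal{H}^2_-(N_\zeta/S)\cong L_{\mathfrak{Z}_2}$ is correct.

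However, there is a genuine gap, and it is the same one the paper itself flags in the remark immediately following the proposition: Assumption~\ref{ass8Spin7} (isentropicity) is not actually verified. You correctly invoke Theorem~\ref{ChenRaunequalResolisisentropic} to reduce isentropicity to the graded isomorphism $\m{H}^\bullet(X_{\zeta;t},\mathbb{R})\cong\m{H}^\bullet_{\m{CR}}(X,\mathbb{R})$, but this is only a reformulation, not a proof. Concretely, by Corollary~\ref{collapsingss} isentropicity is equivalent to the collapse of the Leray--Serre spectral sequence of $\nu_\zeta\colon N_\zeta\to S$ on the second page, and for a codimension-four stratum with $\Gamma\subset\m{SU}(2)$ the only potentially nonzero differential is $d^{3,-2}\colon\m{H}^p(S,\mathcal{H}^2_-(N_\zeta/S))\to\m{H}^{p+3}(S)$; since $S=\m{Re}(Z)$ is a general real $4$-fold and the local coefficient system $L_{\mathfrak{Z}_2}$ may be nontrivial, there is no degree or simply-connectedness argument to rule this differential out (in contrast to the torus and $K3$ strata in Section~\ref{Examples of Resolutions of (HK1xHK1)/G}, where such arguments do work). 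Your phrase \emph{``Once this is in hand, the full analytical machinery of Theorem~\ref{mainthm} applies''} quietly assumes what needs to be shown. To complete the proof one would either have to prove $d^{3,-2}=0$ for these real loci, or add isentropicity as an explicit hypothesis of the proposition.
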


\begin{rem}
It remains an open question if Assumption \ref{ass8Spin7} holds in general.
\end{rem}

    \begin{rem}
        Let $(\omega_{FS},\theta_Z)$ denote the $\m{SU}(4)$-structure induced by the Fubini-Study metric on $Z$. Assume that $(\omega,\theta_Z)$ is a Calabi--Yau structure close to $(\omega_{FS},\theta_Z)$. If $|\zeta_{FS}|>c$ for sufficiently large $c$, then $\zeta$ might be non-vanishing.
    \end{rem}

\begin{ex}
    Let $Z$ denote the complete intersection given by 
    \begin{align*}
        \{[z_0:...:z_5]\in\mathbb{CP}^5|z_0^6+z_1^6+z_2^6-z_3^6-z_5^6-z_6^6=0\}.
    \end{align*}
    The real locus $\m{Re}(Z)$ is given by 
        \begin{align*}
        \{[x_0:...:x_5]\in\mathbb{RP}^5|x_0^6+x_1^6+x_2^6-x_3^6-x_5^6-x_6^6=0\}.
    \end{align*}
    which is diffeomorphic to $(\mathbb{S}^2\times \mathbb{S}^2)/\mathbb{Z}_2$. In particular, $\m{H}^2(\m{Re}(Z),\mathbb{R})=0$ but
    \begin{align*}
        \m{H}^2(\m{Re}(Z),\widetilde{\m{Re}(Z)}\times_{\mathbb{Z}_2}\mathbb{R})=\m{H}^2(\widetilde{\m{Re}(Z)},\mathbb{R})=\mathbb{R}^2.
    \end{align*}
    In particular, 
    \begin{align*}
        b_2^+(\widetilde{\m{Re}(Z)})=\frac{1}{2}(b_2(\widetilde{\m{Re}(Z)})-\sigma(\widetilde{\m{Re}(Z))})=1.
    \end{align*}
    The harmonic representative of $\m{H}^2_+(\widetilde{\m{Re}(Z)},\mathbb{R})$ with respect to the metric on $\m{Re}(Z)$, induced by the Fubini-Study metric, the sum of two volume-forms hence, non-vanishing. 
\end{ex}

\subsubsection{Conjectural Examples in $\cat{SYZ}_4$}
\label{Conjectural Examples in SYZ4}

The SYZ-conjecture is a conjectural relation between mirror symmetry and T-duaily in String Theory. Formulated by Ströminger, Yau and Zaslow in the category $\cat{CY}_3$ the conjecture extends to special holonomy spaces.

\begin{con}\cite{SYZ}
    Let $(X,\Phi)\in \cat{Spin(7)}$. There exists a compact Riemannian four manifold $S$ and a singular fibration 
    \begin{align*}
        \pi\colon X\rightarrow S
    \end{align*}
    by toric Cayleys. Moreover, the mirror-dual $\m{Spin}(7)$-manifold is given by 
    \begin{align*}
        \overline{X}\coloneqq\m{Hom}(\pi_1(\pi^{-1}),\m{U}(1)).
    \end{align*}
\end{con}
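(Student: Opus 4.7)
The plan is to approach this via a degeneration-and-resolution strategy that mirrors the paper's own philosophy: instead of constructing the fibration directly on $(X,\Phi)$, I would seek a path in the moduli space $\mathfrak{Spin}(7)$ whose endpoint is a maximally degenerate $\m{Spin}(7)$-orbifold that is manifestly fibred by Cayley tori, and then transport the fibration structure back along the path using the adiabatic analysis developed in Sections~\ref{Hodge Theory on Tame Gromov-Hausdorff-Resolutions of Riemannian Orbifolds}--\ref{Existence of torsion-free Spin(7) Structures on Orbifold Resolutions}. Concretely, I would first show that $[\Phi]$ can be connected, within the completed moduli space $\overline{\mathfrak{Spin}(7)}\supset \mathfrak{Spin}(7)^{\m{orb}}$, to a boundary point represented by a flat $\m{Spin}(7)$-orbifold of the form $\mathbb{O}/G$ with $G=\Lambda\rtimes H$ as in Section~\ref{Generalsied Kummer Construction}. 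Such flat orbifolds carry an obvious Cayley $\mathbb{T}^4$-fibration over $S_0=\mathbb{R}^4/H'$, coming from the projection $\mathbb{O}\cong\mathbb{R}^4\oplus\mathbb{R}^4\to\mathbb{R}^4$ along a maximal isotropic $H$-invariant subspace. This reduces the conjecture to a \emph{deformation/resolution} statement.

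Next, I would interpret $(X,\Phi)$ as an isentropic, tame Gromov--Hausdorff resolution of such a limit orbifold via Theorem~\ref{mainthm}. The universal family of orbifold resolutions $\rho_{\mathfrak{X}}\colon (\mathfrak{X},\Phi^{pre}_{\mathfrak{X}})\dashrightarrow (X,\Phi)$ from Section~\ref{Universal Families of Resolutions} provides a finite-dimensional smooth family interpolating between the singular fibred model and $(X,\Phi)$. The Cayley fibration $\pi_0\colon \mathbb{O}/G\to S_0$ extends to a \emph{vertical} fibration on the ACF resolution data $\nu_\zeta\colon N_\zeta\to S$ of each singular stratum, essentially by taking the hyperk\"ahler moment-map projection of the Kronheimer fibre and matching it with the ambient torus direction. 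Gluing these vertical fibrations with the base fibration over the smooth part of the orbifold then yields a map $\pi^{pre}_{\zeta;t}\colon X_{\zeta;t}\to S$, which is a singular fibration by almost-Cayley tori. I would then show, using the uniform right-inverse of the Hodge--de Rham operator (Proposition~\ref{tameuniformboundedness}) and the $C^{1,\alpha}$-closeness $\Phi_{\zeta;t}-\Phi^{pre}_{\zeta;t}=\mathcal{O}(t^{\vartheta})$ from Theorem~\ref{mainthm}, that the fibres of $\pi^{pre}_{\zeta;t}$ can be perturbed to genuine Cayley submanifolds by solving a nonlinear deformation problem for Cayley cycles; the linearisation is a twisted Dirac operator on each fibre, which is Fredholm for a generic almost-Cayley $\mathbb{T}^4$ and uniformly invertible on the adiabatic complement.

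For the mirror part, once the toric Cayley fibration $\pi\colon X\to S$ is established, I would mimic the Leung--Yau--Zaslow construction in the $\cat{G}_2$-setting: over the smooth locus $S^{\m{reg}}\subset S$, the dual fibration is $\overline{\pi}\colon \overline{X}^{\m{reg}}=\m{Hom}(\pi_1(\pi^{-1}(\cdot)),\m{U}(1))\to S^{\m{reg}}$, and one equips it with a Spin(7)-structure by hyperk\"ahler-rotating the fibrewise $\m{Sp}(1)\m{Sp}(1)$-structure (the bi-self-dual form $\Phi^{2,2}$ of Section~\ref{Finite Symmetries of Cayley Forms}) and applying fibrewise Fourier--Mukai. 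The compactification across the discriminant locus $\Delta\subset S$ is the deepest issue, and I would approach it by identifying the singular fibres with degenerations of $\mathbb{T}^4$ whose dual degenerations are classified by the local monodromy representation $\pi_1(S^{\m{reg}})\to \m{SL}(4,\mathbb{Z})$, extending $\overline{X}^{\m{reg}}$ locally by the appropriate QALE model from Section~\ref{Parameter Spaces of Hyperkähler- and Calabi-Yau QALE Spaces}.

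The main obstacle, by far, is the very first step: there is no known reason why an arbitrary compact $\m{Spin}(7)$-manifold should be connected within $\overline{\mathfrak{Spin}(7)}$ to a flat orbifold of the required type, nor that the toric-Cayley-fibration structure can be propagated along an arbitrary path in moduli. This is the $\m{Spin}(7)$-analogue of the Kontsevich--Soibelman/Gross--Wilson program in the Calabi--Yau setting, and its resolution likely requires, as a prerequisite, a compactness theorem for $\m{Spin}(7)$-metrics with controlled collapse to four-dimensional base limits, together with a structure result for the singular set of such collapsed limits. A secondary obstacle is the regularity theory for Cayley cycles near their singular strata, which is far less developed than for associative or special Lagrangian cycles; here one would likely need an analogue of Joyce's desingularisation theory for conical singularities adapted to the Cayley setting. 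Hence the proposal splits naturally into an \emph{existence} conjecture (paths to a toric-fibred boundary exist) and a \emph{transport} theorem (given such a path, the present paper's analytic framework carries the fibration back), with the latter being within reach and the former being the truly conjectural core.
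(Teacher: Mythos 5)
The statement you are attempting to prove is labelled in the paper as a \emph{conjecture} (the $\m{Spin}(7)$-analogue of the Strominger--Yau--Zaslow conjecture) and is attributed to \cite{SYZ}; the paper offers no proof, nor does it claim one, and the surrounding text (Section~\ref{Conjectural Examples in SYZ4}) explicitly treats it as a conjectural input for a further conjecture. There is therefore no proof in the paper to compare your attempt against. What you have written is a research programme rather than a proof, and you correctly say so yourself: you identify the ingredients of the present paper that a transport argument would rely on (Theorem~\ref{mainthm}, Proposition~\ref{tameuniformboundedness}, the universal family of Section~\ref{Universal Families of Resolutions}), and you correctly locate the genuinely open problems at the core: the existence of a degeneration of an arbitrary compact $\m{Spin}(7)$-manifold to a boundary point of the required toric-fibred type, a compactness and structure theory for collapsed $\m{Spin}(7)$-limits over a four-dimensional base, and a regularity and desingularisation theory for singular Cayley cycles.

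A few concrete gaps in the sketch are worth flagging beyond the acknowledged ones. First, the projection $\mathbb{O}\cong\mathbb{R}^4\oplus\mathbb{R}^4\to\mathbb{R}^4$ descends to a fibration of $\mathbb{O}/G$ by Cayley $\mathbb{T}^4$'s only when $G=\Lambda\rtimes H$ preserves the chosen splitting and $\Lambda$ intersects the fibre direction in a rank-four lattice; most of the flat orbifolds resolved by Joyce in the generalised Kummer construction of Section~\ref{Generalsied Kummer Construction} do not carry such a fibration over a smooth four-manifold, so even the degenerate endpoint is not automatically fibred. Second, the linearised deformation operator of an almost-Cayley $\mathbb{T}^4$ in the adiabatic regime (a twisted negative Dirac operator along the fibre) has index zero but is generically neither injective nor uniformly invertible; ``uniformly invertible on the adiabatic complement'' is an extra hypothesis and does not follow from Theorem~\ref{lineargluingHodgedeRham} or Proposition~\ref{tameuniformboundedness}, which concern differential forms on the total space rather than normal sections along a moving fibre. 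Third, the compactification of the dual $\overline{X}$ across the discriminant is not purely local: the global monodromy representation and the affine structure on $S$ must be dualised compatibly, and this is precisely where all known formulations of the SYZ programme remain open, including in the Calabi--Yau case. So the programme is sensible in outline, but the conjecture cannot be reduced to the analytic framework of this paper without at least the three inputs above.
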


\begin{rem}
    In the subcategory $\cat{CY}_4$ the fibration is a fibration by special Lagrangian tori. 
\end{rem}

Let $(\hat{X},\hat{\Phi})$ be a SYZ-fibred $\m{Spin}(7)$-manifold. We define an involution 
    \begin{align*}
        \tau\colon (\hat{X},\hat{\Phi})\rightarrow (\hat{X},\hat{\Phi})
    \end{align*}
by fibrewise multiplication by $-1$. The singular strata of $\m{Spin}(7)$-orbifold $(X,\Phi)/\tau$ are sixteen copies of $S$ with isotropy $\mathbb{Z}_2$.

\begin{con}
Let us assume that there exists non-vanishing, harmonic $\zeta\in \Omega^2_+(S)$. Assume that we can resolve $(X,\Phi)/\tau$ by a family 
    \begin{align*}
        \rho_{\zeta;t}\colon (X_{\zeta;t},\Phi_{\zeta;t})\dashrightarrow (X,\Phi)/\tau.
    \end{align*}
Then $(\overline{X},\overline{\Phi})/\overline{\tau}$ can be resolved by a family 
\begin{align*}
    \rho_{\zeta^*;t}\colon (\overline{X}_{\zeta^*;t},\overline{\Phi}_{\zeta^*;t})\dashrightarrow (\overline{X},\overline{\Phi})/\overline{\tau}
\end{align*}
and both $(X_{\zeta;t},\Phi_{\zeta;t})$ and $(\overline{X}_{\zeta^*;t},\overline{\Phi}_{\zeta^*;t})$ are mirror dual.
\end{con}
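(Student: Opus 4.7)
The first step is to unpack how the T–duality underlying SYZ interacts with the involution $\tau$. Writing the SYZ fibration as $\pi\colon \hat X \to S$ with generic Cayley torus fibre $T\cong \mathbb{T}^4$, fibrewise multiplication by $-1$ acts on each such torus with exactly $16$ fixed points, so the singular locus of $(X,\Phi)/\tau$ is naturally a disjoint union of $16$ sections of $\pi$, each diffeomorphic to $S$ and of type A with isotropy $\mathbb{Z}_2\subset \m{SU}(2)$. Under T–duality, the dual fibre $\overline T\cong \m{Hom}(\pi_1(T),\m{U}(1))$ carries an induced involution $\overline\tau$: since $\tau$ is an isometric group automorphism of $T$, dualising gives $\overline\tau=[-1]$ on $\overline T$, which again has $16$ fixed points. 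Hence the singular locus of $(\overline X,\overline\Phi)/\overline\tau$ is topologically the same configuration of $16$ copies of $S$ as on the original side.

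The second step is to produce the resolution datum on the dual side. For strata of codimension four with $\mathbb{Z}_2$-isotropy the parameter bundle reduces to $\mathfrak{P}\cong \wedge^2_+ T^*S\otimes\mathfrak{H}$, with $\mathfrak{H}\to S$ a flat line bundle corresponding to the $A_1$ Cartan. The SYZ T–duality identifies the affine base $S$ with its mirror base canonically, and preserves the Hodge decomposition on $\Omega^2(S)$ as well as the chamber structure of the Eguchi--Hanson moduli bundle over each stratum. Consequently a non-vanishing harmonic $\zeta\in\Omega^2_+(S,\mathfrak{H})$ transports to a non-vanishing harmonic $\zeta^\ast\in\Omega^2_+(S,\mathfrak{H}^\ast)$ on the mirror side. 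Applying Proposition~\ref{ACFstructuresfrompullbacks4} and Theorem~\ref{torss4} to $\zeta^\ast$ yields an adiabatic torsion-free family of ACF-$\m{Spin}(7)$-structures $(N_{t^2\cdot\zeta^\ast},\Phi_{t^2\cdot\zeta^\ast})$, and pre-gluing as in Section~\ref{Resolutions of Spin(7)-Orbifolds with Type A-Singular Strata} produces an adiabatic torsion-free pre-glued structure $\overline{\Phi}^{pre}_{\zeta^\ast;t}$ on $\overline X_{\zeta^\ast;t}$.

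The third step is to invoke Theorem~\ref{mainthm} on the mirror side. The a priori torsion estimate of Proposition~\ref{apprioriboundonlemma} transfers verbatim because the local geometry in a tubular neighbourhood of each $S$–copy is canonically identified with that on the original side. What remains to be checked is the isentropicity Assumption~\ref{ass8Spin7}; by Theorem~\ref{ChenRaunequalResolisisentropic} this amounts to establishing the graded vector space isomorphism $\m{H}^\bullet(\overline X_{\zeta^\ast;t};\mathbb{R})\cong \m{H}^\bullet_{\m{CR}}((\overline X,\overline\Phi)/\overline\tau;\mathbb{R})$. Since the twisted sectors of the Chen--Ruan cohomology on both sides are parametrised by the same $16$ copies of $S$ with trivial normaliser action, and since T–duality preserves real cohomology of the smooth part, isentropicity on one side forces isentropicity on the other. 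Once this is in place, Theorem~\ref{mainthm} yields a torsion-free resolution $\rho_{\zeta^\ast;t}\colon (\overline X_{\zeta^\ast;t},\overline{\Phi}_{\zeta^\ast;t})\dashrightarrow (\overline X,\overline\Phi)/\overline\tau$ satisfying the same $t^\vartheta$-distance estimate.

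The final and genuinely hard step is to establish that the two families $(X_{\zeta;t},\Phi_{\zeta;t})$ and $(\overline X_{\zeta^\ast;t},\overline{\Phi}_{\zeta^\ast;t})$ are mirror dual. The plan is to apply fibrewise T–duality on $X_{\zeta;t}\setminus \Upsilon_{\zeta;t}$, where both spaces are honest Cayley torus fibrations over a common base minus the discriminant, and then to match the Kronheimer ALE pieces glued in over each $S$–copy with their T–dual counterparts; this last match relies on the fact that the universal moduli bundle $\mathfrak{k}\colon \mathfrak{M}\to \mathfrak{P}$ is invariant under the natural T–duality of hyperk\"ahler ALE spaces of type $A_1$, so the local resolution data on both sides are canonically identified. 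The main obstacle is conceptual rather than technical: there is at present no intrinsic mathematical definition of mirror symmetry for $\m{Spin}(7)$-manifolds strong enough to pin down the duality uniquely. A rigorous statement will therefore have to be formulated either categorically, matching a Fukaya-type category of Cayley cycles on one side with a derived category of coherent sheaves on the other, or metrically, as a coincidence of the Gromov--Hausdorff limits $t\to 0$ of both families on the shared SYZ base $S$ together with an extension of the smooth-locus T–duality across the exceptional set. Establishing either formulation rigorously is beyond the scope of the present gluing framework and explains why the statement is posed as a conjecture.
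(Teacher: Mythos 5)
The statement you were given is posed as a \emph{conjecture} in the paper, and the paper offers no proof of it — it appears in a subsection explicitly titled ``Conjectural Examples'' and is stated immediately after the SYZ conjecture itself, which is also unproven. There is therefore no argument in the paper against which to compare yours. You have correctly identified that the statement cannot currently be proved, and you have correctly located the principal obstruction: there is no accepted rigorous definition of mirror symmetry for $\m{Spin}(7)$-manifolds, so the assertion that the two families ``are mirror dual'' cannot even be given precise mathematical content within the present framework, let alone established.

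That said, some of the intermediate steps you \emph{do} attempt are themselves not rigorous and deserve flagging. In Step~2, the claim that SYZ T-duality ``identifies the affine base $S$ with its mirror base canonically'' and thereby transports $\zeta\in\Omega^2_+(S,\mathfrak{H})$ to a harmonic $\zeta^\ast\in\Omega^2_+(S,\mathfrak{H}^\ast)$ is not automatic: in the SYZ picture the two affine structures on the shared base are exchanged by a Legendre transform, not by the identity, and the induced Riemannian metrics on the base need not coincide, so harmonicity of $\zeta$ with respect to one need not imply harmonicity of its image with respect to the other. In Step~3, the transfer of isentropicity is argued by saying ``T-duality preserves real cohomology of the smooth part'' — but this is itself a mirror-symmetry statement, so you are implicitly assuming a special case of the very duality you are trying to establish in Step~4. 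Neither of these issues affects the honest assessment you give at the end, but if this were to be turned into a proof of even the \emph{first} assertion of the conjecture (that the dual orbifold is resolvable), both Steps~2 and~3 would need independent justification that does not appeal to the conjectural duality.

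Your diagnosis of the overall situation — that a rigorous statement would require either a categorical formulation (Fukaya-type versus derived coherent sheaves) or a metric formulation via Gromov--Hausdorff collapse to the shared SYZ base, and that neither is currently available — is sound and matches why the authors leave this as a conjecture.
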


\subsection{Resolutions of $(\cat{HK}_1\times \cat{HK}_1)/G$}
\label{Resolutions of HK1timesHK1Gamma}

In the following subsection, we will focus on a select set of examples of compact $\m{Spin}(7)$-manifolds. These will be constructed using finite group actions on products of compact hyperkähler four-manifolds $\cat{HK}_1$.\\

Given two hyperkähler four-manifolds $(M_i,\underline{\omega}_i,g_i) \in \cat{HK}_1$, we define the natural Cayley form on their product as
\begin{align*}
    \hat{\Phi} = \m{vol}_1 - \m{tr}(\underline{\omega}_1 \wedge \underline{\omega}_2) + \m{vol}_2,
\end{align*}
where $\underline{\omega}_i$ denotes the hyperkähler triple on $M_i$, and $\m{vol}_i$ represents the associated volume forms.

Group actions by isometries on $\cat{HK}_1 \times \cat{HK}_1$ can be divided into two types:
\begin{itemize}
    \item \textbf{Product-type actions:} finite groups acting diagonally on $(M_1, g_1) \times (M_2, g_2)$.
    \item \textbf{Swapping-type actions:} involutions swapping $M_1$ and $M_2$ via an isometry $M_1 \cong M_2$.
\end{itemize}

For the first type of group actions, a finite group $G$ that preserves the Cayley form $\hat{\Phi}$ must act by orientation-preserving isometries and induce a $G$-equivariant isomorphism between the maximal positive-definite subspaces
\begin{align*}
    P_1 \subset \m{H}^2(M_1, \mathbb{R}) \und{1.0cm} P_2 \subset \m{H}^2(M_2, \mathbb{R}),
\end{align*}
with the identification $\m{H}^2(M_2, \mathbb{R})^* \cong \m{H}^2(M_2, \mathbb{R})$ from Poincaré duality. We will discuss the details of these conditions further in Subsection \ref{Actions on HK1}.\\

For the second type of group actions, the involution swapping $M_1$ and $M_2$ preserves the Cayley form $\hat{\Phi}$ if and only if it is a hyperkähler involution. In this case, the orbifold quotient is given by
\begin{align*}
    (M_1 \times M_1) / \mathbb{Z}_2 = \m{Sym}^2(M_1),
\end{align*}
the symmetric product of $M_1$. We will explain how resolutions of these symmetric products are related to the Hilbert schemes of two points. The $\m{Spin}(7)$-orbifold resolutions obtained this way will carry hyperkähler structures. Using the results from Section \ref{Existence of torsion-free Spin(7) Structures on Orbifold Resolutions}, the pre-glued hyperkähler metrics will approximate those constructed via Yau’s solution to the Calabi conjecture. These constructions and their properties will be further explored in Subsection \ref{The Hilbert Scheme Hilb^2(HK1)}.\\

\subsubsection{Actions on $\cat{HK}_1$}
\label{Actions on HK1}

A vast class of examples of compact $\m{Spin}(7)$-orbi-folds are constructed by quotients of products of hyperkähler lines, i.e. $\mathbb{H}/\Lambda\times \mathbb{H}/\Lambda'$, $\mathbb{H}/\Lambda\times K3 $ and $K3\times K3'$. The first example are flat $\m{Spin}(7)$-orbifolds (Bieberbach orbifolds) and their resolutions have been extensively studied by Joyce in \cite{joyce2000compact}. From now on we will assume that $M_i\in\cat{HK}_1$ and that at most one of them is flat.\\

Given a $M\in\cat{HK}_1$ the second cohomology $\m{H}^2(M,\mathbb{R})$ is a real vector space. The intersection form defines a definite bilinear form of signature 
\begin{align*}
    (3,3)\oder{1.0cm} (3,19)
\end{align*}
In the following we will denote by $P\subset \m{H}^2(M,\mathbb{R})$ the maximal positive subspace, corresponding to the span of the hyperkähler structure. The $\m{H}^2(M,\mathbb{R})$ contains a canonical lattice given by 
\begin{align*}
    \m{H}^2(M,\mathbb{Z})\subset\m{H}^2(M,\mathbb{R}).
\end{align*}
Let $G\subset\m{Isom}(M,g)$ denote a finite group acting via isometries. This action induces a representation 
\begin{align*}
    \varrho_G\colon G\rightarrow \m{Aut}(\m{H}^2(M,\mathbb{Z}))\subset\m{Aut}(\m{H}^2(M,\mathbb{R})).
\end{align*}
A particularly fruitful approach to constructing group actions on a K3 surface $M \in \cat{HK}_1$ involves in the first step studying the induced action on its second integral cohomology group $ \m{H}^2(M, \mathbb{Z}) $, which carries the structure of an even, unimodular lattice of signature $ (3,19) $, known as the K3 lattice. Automorphisms of $ M $ that preserve the hyperkähler structure induce lattice automorphisms preserving the intersection form, and thus correspond to elements in $ \m{O}(\m{H}^2(M, \mathbb{Z})) $.\\

The construction of such actions can be approached lattice-theoretically. Given a finite group $ G $, one seeks a faithful representation
\begin{align*}
    \varrho_G\colon  G \to \m{O}(\Lambda_{\m{K3}})
\end{align*}

that satisfies certain geometric constraints. In particular, to lift such a representation to an actual geometric action on a K3 surface, it is necessary that $ \varrho_G $ preserves a positive-definite 3-plane $ P \subset \m{H}^2(M, \mathbb{R}) $, corresponding to the span of the hyperkähler structure $ [\underline{\omega}] $. By the global Torelli theorem \cite[Thm 2.4]{huybrechts2016lectures} for K3 surfaces, such a lattice automorphism lifts to an actual isometry of the surface if and only if it preserves this period plane and satisfies appropriate monodromy conditions.\\

Following the work of Nikulin \cite{nikulin1979finite,van2007nikulin}, this can be formalized by analysing primitive embeddings of the invariant sublattice $ \Lambda^G \subset \Lambda_{\m{K3}} $, ensuring that its orthogonal complement $ (\Lambda^G)^\perp $ satisfies suitable lattice-theoretic conditions (e.g., absence of roots or obstructions to ampleness). Such data determines whether the action lifts to a holomorphic, anti-holomorphic, or non-symplectic automorphism.\\

This method is used to construct suitable actions on $ M_1 \times M_2 $ from diagonal actions $ G \subset \m{Isom}(M_1)\times\m{Isom}(M_2) $ by identifying the positive planes $ P_1 $ and $ P_2 $ via a $ G $-equivariant isometry
\begin{align*}
    \hat{\Phi}^{2,2}\colon  P_1 \xrightarrow{\cong} P_2,
\end{align*}
which ensures that the Cayley form $ \hat{\Phi} $ descends to the quotient $ (M_1\times M_2)/G $, yielding a torsion-free $ \m{Spin}(7) $-structure.

\begin{defi}
    An action of a group $G$ is holomorphic if $P_i\cong\mathbb{R}\oplus\mathbb{C}$ as a $G$-space, and the action is given by $\m{id}\oplus\chi$ for a character $\chi\colon G\rightarrow \mathbb{C}^\times$.
\end{defi}

\begin{cor}
Let $G$ act holomorphically on $(M_1,\underline{\omega}_1)$ and $(M_2,\underline{\omega}_2)$. Then $G$ preserves $\hat{\Phi}^{2,2}$ if and only if $\chi_1=\overline{\chi}_2$.
\end{cor}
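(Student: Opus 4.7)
The plan is to re-express the Cayley form via the Calabi--Yau datum on $M_1\times M_2$ and then check $G$-invariance componentwise. For each factor, write $\omega_i\coloneqq\omega_i^1$ for the Kähler form (the $\mathbb{R}$-summand of $P_i$) and $\Omega_i\coloneqq\omega_i^2+\m{i}\omega_i^3$ for the holomorphic symplectic form (the $\mathbb{C}$-summand). By assumption, $g^*\omega_i=\omega_i$ and $g^*\Omega_i=\chi_i(g)\Omega_i$ for every $g\in G$. The product then carries the $\m{SU}(4)$-structure
\[
    \omega=\omega_1+\omega_2,\qquad\Omega=\Omega_1\wedge\Omega_2,
\]
whose associated Cayley form is $\hat{\Phi}=\tfrac12\omega\wedge\omega+\m{Re}(\Omega)$; unpacking the $(2,2)$-component gives
\[
    \hat{\Phi}^{2,2}=\omega_1\wedge\omega_2+\m{Re}(\Omega_1\wedge\Omega_2).
\]

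First I would observe that the pure Kähler contribution $\omega_1\wedge\omega_2$ is automatically $G$-invariant, so the question reduces to invariance of $\m{Re}(\Omega_1\wedge\Omega_2)$. A direct computation yields
\[
    g^*\m{Re}(\Omega_1\wedge\Omega_2)=\m{Re}\bigl(\chi_1(g)\chi_2(g)\,\Omega_1\wedge\Omega_2\bigr),
\]
and the analogous identity for the imaginary part. Since $\m{Re}(\Omega_1\wedge\Omega_2)$ and $\m{Im}(\Omega_1\wedge\Omega_2)$ are linearly independent, $G$-invariance of $\hat{\Phi}^{2,2}$ is equivalent to $\chi_1(g)\chi_2(g)=1$ for every $g\in G$. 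Because $G$ acts by isometries, $\chi_i$ takes values in $S^1\subset\mathbb{C}^\times$, whence $\chi_2^{-1}=\overline{\chi}_2$ and the condition collapses to $\chi_1=\overline{\chi}_2$.

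The only substantive bookkeeping issue I expect is matching the abstract trace pairing $\m{tr}_+(\underline{\omega}_H\wedge\underline{\omega}_V)$ from the codimension-four decomposition in Section 2.3 with the explicit Calabi--Yau expression $\omega_1\wedge\omega_2+\m{Re}(\Omega_1\wedge\Omega_2)$ written above. It is essential that the isomorphism $\Phi^{2,2}\colon\wedge^2_+H\to\wedge^2_+V^*$ pairs $\Omega_1$ with $\Omega_2$ rather than with $\overline{\Omega}_2$, since the opposite orientation convention would yield $\chi_1=\chi_2$ in place of $\chi_1=\overline{\chi}_2$. I would settle this ambiguity once and for all by evaluating both expressions in a fixed orthonormal frame on the standard model $\mathbb{H}\oplus\mathbb{H}\cong\mathbb{O}$ at a single point, which pins down the orientation-dependent signs unambiguously and closes the proof.
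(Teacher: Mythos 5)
Your algebraic steps are internally consistent: if $\hat\Phi^{2,2}=\omega_1\wedge\omega_2+\m{Re}(\Omega_1\wedge\Omega_2)$, then $g^*$-invariance is indeed equivalent to $\chi_1\chi_2=1$, i.e.\ $\chi_1=\overline\chi_2$. But the ambiguity you flag in your last paragraph is not a bookkeeping detail that can be deferred — it is the entire content of the claim, and resolving it against the paper's own conventions points the other way. The hyperkähler trace in the product Cayley form $\hat\Phi=\m{vol}_1-\m{tr}(\underline\omega_1\wedge\underline\omega_2)+\m{vol}_2$ expands componentwise as $\m{tr}(\underline\omega_1\wedge\underline\omega_2)=\omega_1\wedge\omega_2+\m{Re}\Omega_1\wedge\m{Re}\Omega_2+\m{Im}\Omega_1\wedge\m{Im}\Omega_2=\omega_1\wedge\omega_2+\m{Re}(\Omega_1\wedge\overline\Omega_2)$, i.e.\ the induced isomorphism $P_1\to P_2$ is complex\emph{-linear} ($\Omega_1\mapsto\Omega_2$), not conjugate-linear. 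With that convention, $g^*\m{Re}(\Omega_1\wedge\overline\Omega_2)=\m{Re}(\chi_1\overline\chi_2\,\Omega_1\wedge\overline\Omega_2)$, and invariance forces $\chi_1\overline\chi_2=1$, i.e.\ $\chi_1=\chi_2$.

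The worked $D_4$-example two paragraphs further down confirms this: there the paper writes $\hat\Phi=\m{vol}_1-\omega_1\wedge\omega_2-\m{Re}(\theta_1\wedge\overline{\theta_2})+\m{vol}_2$ explicitly (note the $\overline{\theta_2}$), and the generator $g$ satisfies $g_i^*\theta_i=\m{i}\,\theta_i$, so $\chi_1(g)=\chi_2(g)=\m{i}$. Here $\chi_1=\chi_2$ holds and the Cayley form is preserved, while $\chi_1=\overline\chi_2$ fails ($\m{i}\neq-\m{i}$). So your choice $\Omega_1\wedge\Omega_2$ reproduces the corollary as printed but is inconsistent with the trace form and with the paper's own example; either the corollary has a $\chi_2\leftrightarrow\overline\chi_2$ slip, or the example should pair $\theta_1$ with $\theta_2$. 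The two $(2,2)$-forms really are distinct Cayley forms (they differ by the hyperkähler rotation $K\mapsto -K$ on $M_2$, which replaces $\Omega_2$ by $\overline\Omega_2$), so the frame-by-frame evaluation you propose is exactly the right way to settle it — but you need to actually carry it out and then reconcile the outcome with the example, rather than postpone it as cosmetic bookkeeping.
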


\begin{rem}
    Using the Nikulin-classification \cite{nikulin1979finite,van2007nikulin,huybrechts2016lectures} of (anti-)holo-morphic and (anti-symplectic) $\mathbb{Z}_2$-actions, it is possible to construct a vast number of new examples of compact $\m{Spin}(7)$-manifolds. Using these actions by $\mathbb{Z}_2^d$ we are able to construct $\m{Spin}(7)$-orbifolds with singular strata being diffeomorphic to the $K3$ or products of two tori and Riemannian surfaces. These orbifolds can always be resolved.
\end{rem}

\begin{rem}
    If the singular stratum $S=S_1\times S_2$ is smooth, of codimension four, isotropy type $\mathbb{Z}_2$ and of type $(2,2)$ in $\cat{HK}_1\times \cat{HK}_1$, then there exists canonical, non-vanishing self-dual two form on $S$ given by 
    \begin{align*}
        \zeta=\m{vol}_{S_1}+\m{vol}_{S_2}.
    \end{align*}
\end{rem}

\subsubsection{Examples of Resolutions of $(HK1\times HK1)/G$}
\label{Examples of Resolutions of (HK1xHK1)/G}

In the following we will construct a new example of a $\m{Spin}(7)$-manifold with holonomy equal $\m{Spin}(7)$.

\begin{ex}
We will follow the idea in \cite[Chap. 7.3]{joyce2017new} and pick a K3 surface $\Sigma$ that admits a branched double cover
\begin{align*}
    \Sigma\rightarrow\mathbb{CP}^2
\end{align*}
whose branching set is given by a sixtic $C$ of genus $10$. We define an involution
\begin{align*}
    \alpha_{\Sigma}\colon \Sigma\rightarrow \Sigma
\end{align*}
by swapping the sheets of the branched covering and consequently $\m{Fix}(\alpha_{\Sigma})=C$.\\ 

Furthermore, the $\mathbb{Z}_2$ action on $\mathbb{CP}^2$ by complex conjugation can be lifted to an action 
\begin{equation*}
    \begin{tikzcd}
            \Sigma\arrow[r,"\beta_{\Sigma}"]\arrow[d]&\Sigma\arrow[d]\\
            \mathbb{CP}^2\arrow[r,"\overline{.}"]&\mathbb{CP}^2
    \end{tikzcd}
\end{equation*}
for which $\m{Fix}(\beta_{\Sigma})=\mathbb{S}^2$ and $\m{Fix}(\alpha_{\Sigma}\beta_{\Sigma})=\emptyset$. Now, set 
\begin{align*}
    \mathbb{T}^1=[0,4]/\sim
\end{align*}
an define the $\mathbb{Z}^3_2$-action
\begin{align*}
    \alpha_{\mathbb{T}^4}&\colon (x_1,x_2,x_3,x_4)\mapsto(-x_1,2-x_2,x_3,x_4)\\
    \beta_{\mathbb{T}^4}&\colon (x_1,x_2,x_3,x_4)\mapsto(2-x_1,x_2,-x_3,x_4)\\
    \gamma_{\mathbb{T}^4}&\colon (x_1,x_2,x_3,x_4)\mapsto(-x_1,-x_2,2-x_3,2-x_4)\\
\end{align*}
on the four torus $\mathbb{T}^4$, with 
\begin{align*}
    \m{Fix}(\alpha_{\mathbb{T}^4})=&\left\{{0\choose 2},{1\choose 3},\mathbb{T}^1,\mathbb{T}^1\right\}\\
    \m{Fix}(\beta_{\mathbb{T}^4})=&\left\{{1\choose 3},\mathbb{T}^1,{0\choose 2},\mathbb{T}^1\right\}\\
    \m{Fix}(\gamma_{\mathbb{T}^4})=&\left\{{0\choose 2},{0\choose 2},{1\choose 3},{1\choose 3}\right\}.
\end{align*}
We can lift the action of $\mathbb{Z}_2^3$ to $\mathbb{T}^4\times \Sigma$ via $\alpha= \alpha_{\mathbb{T}^4}\times \alpha_{\Sigma}$, $\beta=\beta_{\mathbb{T}^4}\times \beta_{\Sigma}$ and $\gamma=\gamma_{\mathbb{T}^4}\times \m{id}_{\Sigma}$ whose fixpoint sets are given by 
\begin{align*}
    \m{Fix}(\alpha)=&\left\{{0\choose 2},{1\choose 3},\mathbb{T}^1,\mathbb{T}^1\right\}\times C\\
    \m{Fix}(\beta)=&\left\{{1\choose 3},\mathbb{T}^1,{0\choose 2},\mathbb{T}^1\right\}\times \mathbb{S}^2\\
    \m{Fix}(\gamma)=&\left\{{0\choose 2},{1\choose 3},{0\choose 2},{1\choose 3}\right\}\times \Sigma.
\end{align*}
and the words 
\begin{align*}
    \alpha\beta,\alpha\gamma,\beta\gamma,\alpha\beta\gamma
\end{align*}
act freely on  $\mathbb{T}^4\times \Sigma$.\\

Clearly, this $\mathbb{Z}^3_2$ action preserves the torsion-free $\m{Spin}(7)$-structure 
\begin{align*}
    \hat{\Phi}=\m{vol}_{\mathbb{T}^4}+\m{vol}_{\Sigma}-\m{tr}\left(\underline{\omega}_{\mathbb{T}^4}\wedge\underline{\omega}_{\Sigma}\right)
\end{align*}
on $\mathbb{T}^4\times \Sigma$ and hence, the torsion-free structure $\hat{\Phi}$ descends to a torsion-free $\m{Spin}(7)$-structure $\Phi$ on the orbifold
\begin{align*}
    (\mathbb{T}^4\times \Sigma)/\mathbb{Z}^3_2.
\end{align*}

The singular strata in $(\mathbb{T}^4\times \Sigma)/\mathbb{Z}^3_2$ are nonintersecting and given by two copies of $\mathbb{T}^2\times C$, two copies of $\mathbb{T}^2\times \mathbb{S}^2$ and four copies of $\Sigma$.\\

We can pick the resolution data on $\mathbb{T}^2\times C$ to be determined by $\zeta=\m{vol}_{\mathbb{T}^2}+\m{vol}_{C}$, the resolution data on $\mathbb{T}^2\times \mathbb{S}^2$ by $\zeta=\m{vol}_{\mathbb{T}^2}+\m{vol}_{\mathbb{S}^2}$ and on the $\Sigma$ we can freely pick any combination of the hyperkähler triple as $\zeta$.\\

In order to verify Assumption \ref{ass8Spin7} we compute the Lerray-Serre spectral sequence of $N_\zeta\rightarrow S$. The spectral sequence collapse on the second page, i.e. the maps 
\begin{align*}
    \m{d}^{3,-2}\colon&\m{H}^0(S,\mathbb{R})\rightarrow \m{H}^3(S)\\
    \m{d}^{3,-2}\colon&\m{H}^1(S,\mathbb{R})\rightarrow \m{H}^4(S)     
\end{align*}
as $\Sigma$ is simply connected, and via a degree argument on the product strata $\mathbb{T}^2\times \mathbb{S}^2$ and $\mathbb{T}^2\times C$.\footnote{In the latter case we use that $N_\zeta\rightarrow S\rightarrow \mathbb{T}^2$ is a sequence of fibre bundles and hence $\m{H}^\bullet(S,\m{H}^\bullet(N_\zeta/S))\cong \m{H}^\bullet(\mathbb{T}^2,\m{H}^\bullet(S/\mathbb{T}^2,\m{H}^\bullet(N_\zeta/S)))\cong\m{H}^\bullet(\mathbb{T}^2,\m{H}^\bullet(N_\zeta/\mathbb{T}^2))\cong\m{H}^\bullet(N_\zeta)$.}
\end{ex}

\begin{lem}
The Assumption \ref{ass8Spin7} holds on $X_{\zeta;t}$ and we thus obtain a $\m{Spin}(7)$-structure $\Phi_{\zeta;t}$ satisfying 
\begin{align*}
    \left|\left|\Phi_{\zeta;t}-\Phi^{pre}_{\zeta;t}\right|\right|_{\mathfrak{D}^{1,\alpha}_{\beta;t}}\lesssim t^{\vartheta}.
\end{align*}
Moreover, as $b_1(X_{\zeta;t})=b^7_2(X_{\zeta;t})=0$ the resolved $\m{Spin}(7)$-manifold has full holonomy.
\end{lem}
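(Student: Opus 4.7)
The plan is to verify the remaining Assumption \ref{ass8Spin7}, invoke Theorem \ref{mainthm} to obtain the deformation, and then compute two Betti numbers to conclude that the holonomy is the full group $\mathrm{Spin}(7)$.

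First I would verify isentropicity stratum by stratum. By Corollary \ref{collapsingss}, it suffices to check that the Leray--Serre spectral sequence of $\nu_\zeta\colon N_\zeta\to S$ collapses on the second page for each of the eight singular strata. For the four copies of $\Sigma$ this is immediate: since $\Sigma$ is a $K3$ surface, $H^1(\Sigma,\mathbb{R})=0$, so the only potentially non-trivial differential $\mathrm{d}^{3,-2}\colon H^0(S,\mathcal{H}^2(N_\zeta/S))\to H^3(S,\underline{\mathbb{R}})$ lands in a vanishing target for dimension reasons. For the product strata $\mathbb{T}^2\times C$ and $\mathbb{T}^2\times\mathbb{S}^2$, I would iterate the Leray--Serre argument through the composition $N_\zeta\to S\to\mathbb{T}^2$: the intermediate fibration has simply connected or $K3$-type links which together with the $\mathbb{T}^2$-factor kill $\mathrm{d}^{3,-2}$ by a degree count in each filtration level. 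This is precisely the degree argument outlined in the paragraph preceding the lemma.

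Second, I would combine the adiabatic torsion-free nature of $\Phi^{pre}_{\zeta;t}$ (guaranteed by Theorem \ref{torss4} and the construction of Section \ref{Adiabatic Resolutions of Codimension Four Singularities}) with Proposition \ref{apprioriboundonlemma} to deduce
\begin{align*}
\left\|\mathrm{d}\Phi^{pre}_{\zeta;t}\right\|_{\mathfrak{C}^{0,\alpha}_{\beta-1;t}}\lesssim t^{\vartheta},
\end{align*}
for some admissible $(\lambda,\alpha,\beta,\kappa)$ satisfying \eqref{vartheta}. With Assumption \ref{ass8Spin7} now in hand, Theorem \ref{mainthm} applies directly and produces the torsion-free $\mathrm{Spin}(7)$-structure $\Phi_{\zeta;t}$ together with the stated $\mathfrak{D}^{1,\alpha}_{\beta;t}$-estimate.

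Third, for the holonomy claim I would use the formula $\hat{A}(X_{\zeta;t})=1+b^7_2(X)-b_1(X)+\beta^7_2(N_\zeta)-\beta_1(N_\zeta)$ from Section \ref{The Holonomy of the Resolved Spin(7)-Structure}. All strata are of type A and of codimension four, so the corresponding vertical contributions $\beta_1(N_\zeta)$ and $\beta^7_2(N_\zeta)$ vanish by the proposition cited there. It remains to show $b_1(X)=b^7_2(X)=0$ for the orbifold. The first vanishing is a direct invariant calculation: on $H^1(\mathbb{T}^4\times\Sigma,\mathbb{R})=H^1(\mathbb{T}^4,\mathbb{R})=\langle\mathrm{d}x_1,\mathrm{d}x_2,\mathrm{d}x_3,\mathrm{d}x_4\rangle$, the involutions $\alpha,\beta,\gamma$ act as sign changes that jointly fix no one-form (e.g.\ only $\mathrm{d}x_4$ is $\alpha$- and $\beta$-invariant, but $\gamma$ acts on it by $-1$), so $(H^1)^{\mathbb{Z}_2^3}=0$. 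The main technical step will be the vanishing of $b_2^7(X)$: under the product Spin(7) structure $\hat\Phi=\mathrm{vol}_{\mathbb{T}^4}-\mathrm{tr}(\underline\omega_{\mathbb{T}^4}\wedge\underline\omega_\Sigma)+\mathrm{vol}_\Sigma$, I would decompose $H^2(\mathbb{T}^4\times\Sigma)$ into Künneth summands, identify $H^2_7$ with the subspace determined by the Spin(7)-contraction with $\hat\Phi$, and then take $\mathbb{Z}_2^3$-invariants. The hyperkähler triples of $\mathbb{T}^4$ and $\Sigma$ are exchanged by the isomorphism $\hat\Phi^{2,2}\colon P_1\to P_2$ selected in Subsection \ref{Actions on HK1}, and the $\mathbb{Z}_2^3$-action is chosen to be non-symplectic and to swap the two copies sufficiently to eliminate all invariant contributions to $\wedge^2_7$; a combinatorial check on the induced action on $P_1\oplus P_2$ yields $(H^2_7(\mathbb{T}^4\times\Sigma))^{\mathbb{Z}_2^3}=0$, so $b^7_2(X)=0$. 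Hence $\hat A(X_{\zeta;t})=1$, which by the proposition of Section \ref{The Moduli Space of Torsion-Free Spin(7)-Structures} forces $\mathrm{Hol}(g_{\Phi_{\zeta;t}})=\mathrm{Spin}(7)$. The Spin(7)-decomposition step is the one I expect to require the most care, since it mixes the Künneth splitting with the non-trivial equivariant structure of the hyperkähler triples.
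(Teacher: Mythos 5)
Your overall plan matches the reasoning the paper leaves implicit: isentropicity from the Leray--Serre collapse already verified in the preceding example, Theorem~\ref{mainthm} for the deformation, and the $\hat A$-genus formula of Section~\ref{The Holonomy of the Resolved Spin(7)-Structure} together with the vanishing $\beta_1(N_\zeta)=\beta^7_2(N_\zeta)=0$ for codimension-four type~A strata. Your invariant computation $b_1(X)=0$ is correct.

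The step you defer, $b^7_2(X)=0$, does close, but two clarifications are needed. First, the $\mathbb{Z}_2^3$-action is \emph{diagonal} on $\mathbb{T}^4\times\Sigma$, not a swap of the factors; the only coupling between the factors inside $\wedge^2_7$ is through the equivariant identification $\hat\Phi^{2,2}\colon\wedge^2_+\mathbb{T}^4\to\wedge^2_+\Sigma$, which forces the pointwise $\wedge^2_7$-component of a harmonic $2$-form to lie in the anti-diagonal of $\wedge^2_+\mathbb{T}^4\oplus\wedge^2_+\Sigma$. The $\wedge^2_-\Sigma$-part of a harmonic form sits in $\wedge^2_{21}$, and contributions from $H\otimes V$ are killed by $H^1(\Sigma)=0$, so the anti-diagonal is all there is. Second, since $\hat\Phi^{2,2}$ is $\mathbb{Z}_2^3$-equivariant, the action on the anti-diagonal is read off from the action on $\wedge^2_+\mathbb{T}^4$ alone: with $\omega_I=\m{d}x_{12}+\m{d}x_{34}$, $\omega_J=\m{d}x_{13}+\m{d}x_{42}$, $\omega_K=\m{d}x_{14}+\m{d}x_{23}$ one finds $\alpha$ fixes only $\omega_I$, $\beta$ fixes only $\omega_J$, and $\gamma$ acts trivially, so the joint invariants vanish. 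Hence $b^7_2(X)=0$ and $\hat A(X_{\zeta;t})=1$ as claimed. The characterisation of the action as ``non-symplectic, swapping the two copies'' is therefore misleading; the mechanism is the diagonal sign action on the hyperk\"ahler triples, not any exchange of factors.
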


\begin{ex}
Let $i\in\{1,2\}$ and in the following $g_i(z_1,z_2,z_3)\in \mathbb{R}[z_1,z_2,z_3]_4$ be two homogeneous degree four polynomial with no real solution, e.g. the $g_i(z_1,z_2,z_3)=z_1^4+z_2^4+z_3^4$. Let 
\begin{align*}
    f(z_0,z_1,z_2,z_3)=z_0^4+g_i(z_1,z_2,z_3).
\end{align*}
The hypersurface $M_i$ corresponding to the vanishing locus of $f$ in $\mathbb{CP}^3$ is a K3 surface. Define an action of $D_4=\left<\tau,g|\tau^2=g^4=1,\tau g\tau g=1\right>$ on $M$ by 
\begin{align*}
    g_i\colon [z_0:z_1:z_2:z_3]\mapsto [\m{i}z_0:z_1:z_2:z_3]
\end{align*}
and 
\begin{align*}
    \tau\colon [z_0:...:z_3]\mapsto[\overline{z}_0 \colon\overline{z}_1 \colon\overline{z}_2 \colon\overline{z}_3].
\end{align*}
Further, we have
\begin{align*}
    Fix(g_i)=\left\{[z_1:z_2:z_3]\in \mathbb{CP}^2|g_i(z_1,z_2,z_3)=0\right\}
\end{align*}
and $Fix(\tau)=\emptyset$. Moreover, $g_i^*\omega_i=\omega_i$ and $g_i^*\theta_i=\m{i}\theta_i$, and $\tau^*\omega_i=-\omega_i$ and $\tau^*\theta_i=\overline{\theta}_i$.\\

The $\m{Spin}(7)$-structure 
    \begin{align*}
        \hat{\Phi}=\m{vol}_1-\omega_1\wedge\omega_2-\m{Re}(\theta_1\wedge\overline{\theta_2})+\m{vol}_2
    \end{align*}
is $D_4$-invariant, and hence, the $(X,\Phi)\coloneqq(M_1\times M_2,\hat{\Phi})/D_4$ is a $\m{Spin}(7)$-orbifold, with singular stratum of codimension four and isotropy $\mathbb{Z}_4$, given by 
\begin{align*}
    S\coloneqq &\left\{([z_1:...:z_2],[w_1:...:w_2])\in\mathbb{CP}^2\times \mathbb{CP}^2|\right.\\
    &\left.g_1(z_1,z_2,z_3)=g_2(w_1,w_2,w_3)=0\right\}/\mathbb{Z}_2.
\end{align*}
Notice, that the bundle $\m{Fr}(X/S,\Phi)$ reduces further to the strata being a two-fold cover of a product. In particular, the frame bundle of $S$ reduces to $\m{Fr}(S)=(\m{Fr}(S_1)\times \m{Fr}(S_2))/\mathbb{Z}_2$, which is a $\m{U}(1)^2$-bundle on $S$. We twisted the parameter bundle by $\mathbb{Z}_2\subset Weyl(A_3)$ and pick 
\begin{align*}
    \zeta\coloneqq \m{vol}_{S_1}+\m{vol}_{S_2}\in \Omega^0(S,\mathfrak{P}). 
\end{align*}
\end{ex}
\begin{lem}
    There exists family $\rho_{\zeta;t}\colon (X_{\zeta;t},\Phi_{\zeta;t})\dashrightarrow (X,\Phi)$ torsion-free $\m{Spin}(7)$-orbifold resolution such that 
\begin{align*}
    \left|\left|\Phi_{\zeta;t}-\Phi^{pre}_{\zeta;t}\right|\right|_{\mathfrak{D}^{1,\alpha}_{\beta;t}}\lesssim t^{\vartheta}.
\end{align*}
\end{lem}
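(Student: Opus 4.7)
The plan is to apply the main existence result, Theorem \ref{mainthm}, to the explicit data $(X,\Phi)=(M_1\times M_2,\hat{\Phi})/D_4$ together with the parameter section $\zeta=\m{vol}_{S_1}+\m{vol}_{S_2}\in\Gamma(S,\mathfrak{P})$. The verification splits into three tasks: checking Assumption \ref{ass7Spin7} (existence of an adiabatic torsion-free preglued resolution), securing the a priori torsion estimate of Proposition \ref{apprioriboundonlemma}, and checking Assumption \ref{ass8Spin7} (isentropicity). The only genuinely non-routine ingredient is the last one.

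For the first task I would observe that the isotropy group $\mathbb{Z}_4\subset\m{SU}(2)$ is of McKay type $A_3$, so the parameter bundle $\mathfrak{P}\to S$ and its universal family $\mathfrak{k}\colon\mathfrak{M}\to\mathfrak{P}$ from Section \ref{Adiabatic Resolutions of Codimension Four Singularities} are available, already incorporating the $\mathbb{Z}_2\subset\m{Weyl}(A_3)$-twisting made explicit in the example. Since $S_1\subset M_1$ and $S_2\subset M_2$ are holomorphic curves, they are totally geodesic, and $\zeta$ is therefore parallel with respect to the induced connection $\varphi$. At each point of $S$ the value of $\zeta$ lies in the interior of a Weyl chamber of the $A_3$ root system, so $\m{im}(\zeta)\subset\mathfrak{P}\setminus\mathfrak{W}$; Proposition \ref{smoothtypeofXzeta4} then produces a smooth Riemannian fibre bundle $\nu_\zeta\colon(N_\zeta,g_\zeta)\to S$, and Theorem \ref{torss4} together with Corollary \ref{adiabatickernel4} equips $N_\zeta$ with an adiabatic torsion-free ACF $\m{Spin}(7)$-structure $\Phi_\zeta$. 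Assembling the preglued family $(X_{\zeta;t},\Phi^{pre}_{\zeta;t})$ as in Section \ref{Adiabatic Resolutions of Codimension Four Singularities} verifies Assumption \ref{ass7Spin7}, and Proposition \ref{apprioriboundonlemma} yields the a priori bound $\|\m{d}\Phi^{pre}_{\zeta;t}\|_{\mathfrak{C}^{0,\alpha}_{\beta-1;t}}\lesssim t^\vartheta$.

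The main obstacle will be the isentropicity condition. By Corollary \ref{collapsingss} it reduces to the collapse at the second page of the Leray--Serre spectral sequence of $\nu_\zeta\colon N_\zeta\to S$. Since $\m{H}^\bullet(N_\zeta/S)$ is concentrated in degrees $0$ and $2$ and $\m{dim}(S)=4$, the only potentially non-trivial higher differential is $\m{d}_2^{p,2}\colon E_2^{p,2}\to E_2^{p+3,0}$. My plan is to pull the fibration back along the degree-two cover $\widetilde{S}\to S$ on which $\widetilde{S}\cong S_1\times S_2$ and the local system $\mathcal{H}^2_-(N_\zeta/S)$ becomes trivial; the pulled-back fibration then splits as an external product of two $\mathbb{Z}_4$-minimal-resolution bundles, whose spectral sequence collapses by Künneth, and $\mathbb{Z}_2$-equivariant descent reproduces collapse on $S$. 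Equivalently, one may compute the graded dimensions of the Chen--Ruan cohomology of $(X,\Phi)$, compare with the Betti numbers of $X_{\zeta;t}$ supplied by Proposition \ref{cohomofXt}, and invoke Theorem \ref{ChenRaunequalResolisisentropic}.

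Once both assumptions hold, Theorem \ref{mainthm} applies directly: Corollary \ref{uniformlyboundedrightinverse} provides a uniformly bounded right inverse of the Hodge--de Rham operator $D_{\Phi^{pre}_{\zeta;t}}$, the contraction mapping argument in the proof of Theorem \ref{mainthm} produces a unique fixed point $\eta_{\zeta;t}$ of $\mathcal{C}_t$ with $\|\eta_{\zeta;t}\|_{\mathfrak{C}^{0,\alpha}_{\beta-1;t}}\lesssim t^\vartheta$, and the resulting torsion-free $\m{Spin}(7)$-structure $\Phi_{\zeta;t}=\Theta(\Phi^{pre}_{\zeta;t}+\eta_{\zeta;t})$ satisfies the claimed estimate $\|\Phi_{\zeta;t}-\Phi^{pre}_{\zeta;t}\|_{\mathfrak{D}^{1,\alpha}_{\beta;t}}\lesssim t^\vartheta$.
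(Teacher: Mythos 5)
The overall strategy is right: this lemma is an application of Theorem \ref{mainthm}, and the paper itself supplies no proof, so one must verify Assumption \ref{ass7Spin7} (adiabatic torsion-free pregluing), obtain the a priori bound from Proposition \ref{apprioriboundonlemma}, and verify Assumption \ref{ass8Spin7} (isentropicity). Your first two tasks are handled correctly, modulo the wall-avoiding check for $\zeta$ which you (and the paper) gloss over. The genuine gap is in your isentropicity argument.

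The claim that after pulling back along $\widetilde S = S_1\times S_2\to S$ the fibration ``splits as an external product of two $\mathbb{Z}_4$-minimal-resolution bundles'' cannot be correct. The fibre of $N_\zeta\to S$ is the four-dimensional $A_3$ ALE space resolving $\mathbb{H}/\mathbb{Z}_4$, and this does not factor as a product of two surfaces: indeed there is no two-dimensional ``$\mathbb{Z}_4$-resolution'' at all, since $\mathbb{C}/\mathbb{Z}_4$ is already smooth. Moreover, even though the normal bundle of $S_1\times S_2$ in $M_1\times M_2$ is $N_{S_1}\boxplus N_{S_2}$, the $\m{SU}(2)$-action of $\mathbb{Z}_4$ on the normal fibre does not respect this $\mathbb{C}\oplus\mathbb{C}$ splitting (it is the diagonal $(i,i)$ on that splitting, which has determinant $-1$, so the $A_3$-compatible complex structure is a rotated one), and so the ALE resolution mixes the two factors. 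Pulling back to $\widetilde S$ does kill the $\mathbb{Z}_2\subset\m{Weyl}(A_3)$ monodromy twist, but after that you are still faced with a genuine Leray--Serre computation for a non-product ALE fibration over $S_1\times S_2$, with potential differentials $d_3^{0,2}\colon H^0(\widetilde S,\mathfrak h_{A_3})\to H^3(\widetilde S)$ and $d_3^{1,2}\colon H^1(\widetilde S,\mathfrak h_{A_3})\to H^4(\widetilde S)$; for genus-$3$ curves $S_i$ the target groups are nonzero, so collapse requires an argument — e.g.\ using the tautological classes of Corollary \ref{Ktheorygenerators} to show that the vertical $H^2$-generators extend to global closed classes — rather than a Künneth identity. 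Your proposed fallback via Theorem \ref{ChenRaunequalResolisisentropic} does not escape this: computing $H^\bullet(X_{\zeta;t})$ via Proposition \ref{cohomofXt} requires $H^\bullet(N_\zeta)$, which is exactly the Leray--Serre computation you were trying to avoid, so the Chen--Ruan comparison by itself is circular absent an independent computation of the resolution's Betti numbers. As written, the proposal therefore leaves the isentropicity condition unverified.
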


\subsubsection{The Hilbert Scheme $\m{Hilb}^2(\cat{HK}_1)$}
\label{The Hilbert Scheme Hilb^2(HK1)}

Using Yau's proof of the Calabi-conjecture, it was shown that the Hilbert scheme of two points $\m{Hilb}^2(\cat{HK}_1)$ of a $\cat{HK}_1$, i.e. a crepant resolution of $\m{Sym}^2(\cat{HK}_1)$ carries a hyperkähler metric.\\

Using Theorem \ref{mainthm} we will construct a family of torsion-free $\m{Spin}(7)$-struc-tures on $\m{Hilb}^2(\cat{HK}_1)$ whose holonomy reduces to a subgroup of $\m{Sp}(2)$. As such, we prove that the hyperkähler metric on $\m{Hilb}^2(\cat{HK}_1)$ is \say{close} to the pre-glued metric.\\

Let $(\Sigma,\underline{\omega}_\Sigma)\in \cat{HK}_1$ denote a hyperkähler structure. The second symmetric product $\m{Sym}^2(\cat{HK}_1)=(\cat{HK}_1\times \cat{HK}_1)/\mathbb{Z}_2$ is a $\m{Spin}(7)$-orbifold whose torsion-free $\m{Spin}(7)$-structure is induced by the product structure 
\begin{align*}
    \hat{\Phi}=\m{vol}_{1}+\m{vol}_2-\m{tr}(\underline{\omega}_1\wedge\underline{\omega}_2).
\end{align*}
The singular strata of $\m{Sym}^2(\Sigma)$ is the diagonal $\Sigma$ and its normal bundle can be identified with its tangent bundle, i.e.
\begin{equation*}
\begin{tikzcd}
        T\Sigma\arrow[r,hook]&T\m{Sym}^2(\Sigma)\arrow[r,two heads]&N\Sigma
\end{tikzcd}
\end{equation*}
splits by $N\Sigma\cong T\Sigma\ni v\mapsto (\pm v\oplus\mp v)$. There exists a two sphere of nonvanishing, harmonic, self-dual two forms corresponding to resolutions 
\begin{align*}
    N_{\omega_I}\dashrightarrow N\Sigma/\mathbb{Z}_2.
\end{align*}
The spaces $N_{\omega_I}$ are biholomorphic to $\m{Bl}_I(0\colon \Sigma\hookrightarrow N\Sigma/\mathbb{Z}_2)$ and the resolved space is biholomorphic to 
\begin{align*}
    \m{Bl}_I(\Delta\colon \Sigma\hookrightarrow \m{Sym}^2(\Sigma))\cong\m{Hilb}^2(\Sigma).
\end{align*}

The pre-gluing metric $g^{pre}_{I;t}$ interpolates between the metric on the symmetric product and the metric on the ACF space $N_{\omega_I}$.

\begin{prop}\cite[Cor. 3.9]{joyce2017new}
There exists a Morsian tubular neighbourhood of the diagonal $\Sigma$ in $\m{Sym}^2(\Sigma)$, i.e.
\begin{align*}
    \m{exp}_{g_\Phi}^*\Phi=\Phi+\Phi_{hot}\und{1.0cm}|\Phi_{hot}|=\mathcal{O}(r^2)
\end{align*}
\end{prop}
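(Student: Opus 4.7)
The plan is to exploit the natural $\mathbb{Z}_2$-swap involution $\tau\colon \Sigma\times\Sigma\rightarrow\Sigma\times\Sigma$, $(p_1,p_2)\mapsto(p_2,p_1)$, whose fixed point set is precisely the diagonal $\Delta\cong\Sigma$. First I would verify that $\tau$ is an isometry of the product metric and preserves the Cayley form $\hat{\Phi}=\m{vol}_1+\m{vol}_2-\m{tr}(\underline{\omega}_1\wedge\underline{\omega}_2)$: indeed $\tau^*\m{vol}_1=\m{vol}_2$ and $\tau^*\m{tr}(\underline{\omega}_1\wedge\underline{\omega}_2)=\m{tr}(\underline{\omega}_2\wedge\underline{\omega}_1)=\m{tr}(\underline{\omega}_1\wedge\underline{\omega}_2)$ since the factors are $2$-forms commuting under the wedge on distinct factors. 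Because $\Delta=\m{Fix}(\tau)$ and $\tau$ is an isometry, $\Delta$ is a totally geodesic submanifold of $(\Sigma\times\Sigma,g_{\m{prod}})$.

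Next I would set up Fermi coordinates along $\Delta$ via the normal exponential map
\begin{align*}
    \m{exp}^{\perp}\colon N\Delta\rightarrow\Sigma\times\Sigma,\qquad (y,v)\mapsto(\m{exp}_y(v),\m{exp}_y(-v)),
\end{align*}
where we identify $N_{(y,y)}\Delta\cong\{(v,-v)\colon v\in T_y\Sigma\}\cong T_y\Sigma$. A direct computation shows that under these coordinates $\tau$ corresponds to the normal reflection $(y,v)\mapsto(y,-v)$. Since $\hat{\Phi}$ is $\tau$-invariant, its Taylor expansion in $v$ around $\Delta$ can contain only even-degree terms in $v$. Writing $(\m{exp}^{\perp})^*\hat{\Phi}=\Phi_0+\Phi_1+\Phi_2+\mathcal{O}(|v|^3)$ with $\Phi_k$ homogeneous of degree $k$ in $v$, the invariance forces $\Phi_1=0$, so the expansion actually starts with the quadratic correction. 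The same argument applied to $g_{\m{prod}}$ reproduces the classical fact that the metric expansion of a totally geodesic submanifold has vanishing linear term (Gray, \cite{gray2003tubes}), giving $(\m{exp}^{\perp})^*g_{\m{prod}}=g_0+\mathcal{O}(r^2)$.

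Finally I would descend to the quotient orbifold $\m{Sym}^2(\Sigma)=(\Sigma\times\Sigma)/\mathbb{Z}_2$. The normal cone bundle of $\Sigma\subset\m{Sym}^2(\Sigma)$ is $N_0=N\Delta/\mathbb{Z}_2$ and the orbifold exponential $\m{exp}_{g_{\hat{\Phi}}}\colon N_0\hookrightarrow\m{Sym}^2(\Sigma)$ is obtained from $\m{exp}^{\perp}$ by the $\mathbb{Z}_2$-quotient. Because the Morsian bound $\Phi_{hot}=\mathcal{O}(r^2)$ is a pointwise statement and is manifestly $\mathbb{Z}_2$-invariant upstairs, it descends to the orbifold tubular neighbourhood, yielding the claim $\m{exp}_{g_{\Phi}}^*\Phi=\Phi+\Phi_{hot}$ with $|\Phi_{hot}|_{g_{\Phi_0}}=\mathcal{O}(r^2)$.

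The main obstacle I expect is purely book-keeping: namely, verifying that the identification of the normal cone bundle $N_0=N\Delta/\mathbb{Z}_2$ with the normal cone structure used in Section~\ref{Adiabatic Resolutions of Codimension Four Singularities} is compatible with the symmetric-product normal exponential, and that the induced splitting $TN_0\cong H_0\oplus VN_0$ and CF-$\m{Spin}(7)$-structure $\Phi_0$ agree with the leading term $(\m{exp}^{\perp})^*\hat{\Phi}|_{v=0}$ descended from the product. Once this identification is in place, the $\mathbb{Z}_2$-symmetry argument immediately promotes the a priori $\mathcal{O}(r)$ expansion to the desired Morsian $\mathcal{O}(r^2)$ bound.
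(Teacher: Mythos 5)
Your overall strategy — exploiting the swap involution $\tau$ with $\Delta=\mathrm{Fix}(\tau)$ totally geodesic and arguing via the induced fibre reflection $\sigma\colon(y,v)\mapsto(y,-v)$ — is the same as the paper's, and the result is correct. However, the load-bearing step ``the invariance forces $\Phi_1=0$'' does not actually close the argument, and the gap is precisely what the reference to Joyce--Karigiannis Chap.~3.4 covers.

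The problem is a conflation of two gradings. On a $(p,q)$-component of $(\mathrm{exp}^\perp)^*\hat\Phi$ with $q$ fibre differentials $dv^{i}$ and a coefficient of polynomial degree $k$ in $v$, the reflection acts by $(-1)^{q+k}$, so $\sigma$-invariance only constrains the \emph{total} parity $q+k$ to be even. The Morsian estimate $|\Phi_{hot}|_{g_{\Phi_0}}=\mathcal{O}(r^2)$, by contrast, is a statement purely about the coefficient degree $k$, because $|dv^i|_{g_0}=1$ in the conical metric. The odd-$q$ pieces of the pullback — the $(3,1)$ and $(1,3)$ parts, which vanish at $v=0$ but are absent from $\Phi_0$ — are allowed by $\sigma$-parity to carry $k=1$ (since $q+k=2$ or $4$), and such a term would contribute $\mathcal{O}(r)$ to $\Phi_{hot}$. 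As stated, your ``even-degree terms only'' reasoning does not exclude them. (The analogous worry for the metric, that $\sigma$-parity only says $g_{ai}$ is odd in $v$ and so could be $\mathcal{O}(r)$, is resolved for a different reason: $g_{ai}(y,v)=g(K_a(1),J_i(1))$, and both the $\mathcal{O}(r)$ correction to $K_a$ and the leading part of $J_i$ point in directions that remain orthogonal, so the cross terms vanish to second order in any Fermi coordinate system.)

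What actually eliminates the linear term in the odd-$q$ parts of the form is the combination of $\hat\Phi$ being $\nabla^{g_{\hat\Phi}}$-parallel with $\Delta$ totally geodesic: the coordinate Jacobi fields along the normal geodesics satisfy $P_t^{-1}K_a(t)=K_a(0)+K_a'(0)\,t+\mathcal{O}(t^2)$ with $K_a'(0)=-A_v\partial_a$ tangent to $\Delta$ (vanishing for totally geodesic), and $P_t^{-1}J_i(t)=J_i(0)+\mathcal{O}(t^2)$. Expanding $\hat\Phi(\gamma(0))(P_1^{-1}K_{a_1}(1),\dots)$, the $\mathcal{O}(r)$ contributions to a $(p,q)$ component are sums of $\hat\Phi^{(p,q)}(y,0)$ evaluated on shifted tangent arguments; these die either because $K_a'(0)=0$ or, for $q$ odd, because $\hat\Phi^{(p,q)}(y,0)=0$ by $\tau$-invariance. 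Equivalently, the Morsian metric bound implies $\Gamma-\Gamma_0=\mathcal{O}(r)$, and since $\nabla^{\oplus_0}\Phi_0=0$ while $\nabla^{g_{\hat\Phi}}\hat\Phi=0$, one integrates $\nabla^{\oplus_0}\Phi_{hot}=\mathcal{O}(r)$ from the vanishing initial value. This is the substance of the Gray/Joyce--Karigiannis computation, and it must be added to your $\mathbb{Z}_2$-parity argument to obtain the $\mathcal{O}(r^2)$ estimate; the parity then serves to \emph{upgrade} the odd-$q$ decay to $\mathcal{O}(r^3)$ rather than to provide the $\mathcal{O}(r^2)$ bound in the first place.
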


\begin{proof}
A geodesic neighbourhood of the diagonal in $K3\times K3$ is invariant under the $\mathbb{Z}_2$-action. By Taylor expanding the Cayley form and using the arguments \cite[Chap. 3.4]{joyce2017new} we can eliminate the linear order term in the expansion.
\end{proof}

\begin{prop}
There exists an  $B^3_T(0)\backslash\{0\}\subset\m{Im}(\mathbb{H})$-family of hyperkähler metrics $g_{I;t}$ on $\m{Hilb}^2(\Sigma)$ satisfying 
\begin{align*}
    \left|\left|g_{I;t}-g^{pre}_{I;t}\right|\right|_{\mathfrak{D}^{1,\alpha}_{\beta;t}}\lesssim t^{\vartheta}.
\end{align*}
\end{prop}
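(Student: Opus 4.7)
The plan is to apply Theorem~\ref{mainthm} to the singular stratum $\Delta\Sigma\subset\m{Sym}^2(\Sigma)$ with resolution datum $\omega_I\in\Omega^2_+(\Sigma,\mathfrak{H})$ given by the $I$-linear combination of the parallel hyperk\"ahler triple on $\Sigma$, and then to promote the resulting torsion-free $\m{Spin}(7)$-structure to a hyperk\"ahler one. The normal cone of $\Delta\Sigma$ has codimension four, isotropy $\mathbb{Z}_2\subset\m{SU}(2)$, and fibre $\mathbb{H}/\mathbb{Z}_2$, so the Kronheimer family of Eguchi--Hanson ALE spaces (Section~\ref{Parameter Space of Hyperkähler ALE Four Manifolds}) furnishes the ACF resolutions $\nu_{\omega_I}\colon N_{\omega_I}\to \Sigma$, and since $\omega_I$ is parallel Theorem~\ref{torss4} and Lemma~\ref{imporveddecay} yield the adiabatic torsion-free ACF-$\m{Spin}(7)$-structure required by Assumption~\ref{ass7Spin7}. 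Isentropicity of the resolution (Assumption~\ref{ass8Spin7}) follows from Theorem~\ref{ChenRaunequalResolisisentropic} combined with the G\"ottsche identification $\m{H}^\bullet(\m{Hilb}^2(\Sigma),\mathbb{R})\cong\m{H}^\bullet_{\m{CR}}(\m{Sym}^2(\Sigma),\mathbb{R})$; equivalently, the Leray--Serre spectral sequence of $\nu_{\omega_I}$ collapses on its second page since $\m{H}^\bullet(N_{\omega_I}/\Sigma)$ is concentrated in even degrees.

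The Morsian property of the exponential tubular neighbourhood established in the preceding proposition, namely $|\Phi_{hot}|_{g_{\Phi_0}}=\mathcal{O}(r^2)$, eliminates the order-$r$ contributions in the torsion bound of Proposition~\ref{apprioriboundonlemma} (exactly as in the Joyce-manifold case of Section~\ref{Improved Estimates for Joyce-Manifolds}), so $\|\m{d}\Phi^{pre}_{I;t}\|_{\mathfrak{C}^{0,\alpha}_{\beta-1;t}}\lesssim t^\vartheta$ holds with a sharper exponent $\vartheta$. Applying Theorem~\ref{mainthm} yields a torsion-free $\m{Spin}(7)$-structure $\Phi_{I;t}$ on $\m{Hilb}^2(\Sigma)$ with $\|\Phi_{I;t}-\Phi^{pre}_{I;t}\|_{\mathfrak{D}^{1,\alpha}_{\beta;t}}\lesssim t^\vartheta$. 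Since $\gamma\colon Cay_+\to Met$ is smooth and $\m{GL}_+(8)$-equivariant, an argument analogous to Lemma~\ref{pregluedmetricclosetoDiracmetric} promotes the Cayley form estimate to the metric estimate $\|g_{\Phi_{I;t}}-g^{pre}_{I;t}\|_{\mathfrak{D}^{1,\alpha}_{\beta;t}}\lesssim t^\vartheta$.

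The remaining step, and the main obstacle, is to verify that $g_{I;t}\coloneqq g_{\Phi_{I;t}}$ is genuinely hyperk\"ahler rather than merely of $\m{Spin}(7)$-holonomy. The key observation is that the whole construction is $\m{Sp}(1)$-equivariant under the $\mathbb{S}^2$-action on $\m{Im}(\mathbb{H})$: the preglued Cayley form is assembled from a preglued hyperk\"ahler triple $\underline{\omega}^{pre}_{I;t}$ on $\m{Hilb}^2(\Sigma)$, obtained by patching the Kronheimer hyperk\"ahler ALE metrics on the ACF region with the ambient hyperk\"ahler structure of $\m{Sym}^2(\Sigma)$ via the Case~(2) decomposition of Section~\ref{Finite Symmetries of Cayley Forms}. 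Because the nonlinear operator appearing in the fixpoint problem~\eqref{fixpointproblem} commutes with the $\m{Sp}(1)$-action, uniqueness of the fixpoint within the contraction ball forces $\Phi_{I;t}$ to lie in the $\m{Sp}(1)$-invariant subspace, and hence to arise from a closed hyperk\"ahler triple $\underline{\omega}_{I;t}$. Equivalently, one may invoke Yau's proof of the Calabi conjecture \cite{Yau1977Calabi} to obtain, in the K\"ahler class $[\omega^{pre;t}_I]$ on $\m{Hilb}^2(\Sigma)$, a unique Ricci-flat K\"ahler metric which must then agree with $g_{I;t}$ by uniqueness in the fixpoint problem. Either route produces the claimed $B^3_T(0)\setminus\{0\}$-family of hyperk\"ahler metrics and the stated estimate.
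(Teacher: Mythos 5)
Your overall strategy (apply Theorem~\ref{mainthm} to the codimension-four stratum $\Delta\Sigma\subset\m{Sym}^2(\Sigma)$, verify Assumptions~\ref{ass7Spin7} and~\ref{ass8Spin7}, exploit the Morsian neighbourhood for the a priori bound, then pass from the Cayley-form estimate to the metric estimate via $\gamma$) is the right skeleton, and the verification of isentropicity via the G\"ottsche identification $\m{H}^\bullet(\m{Hilb}^2(\Sigma),\mathbb{R})\cong\m{H}^\bullet_{\m{CR}}(\m{Sym}^2(\Sigma),\mathbb{R})$ together with Theorem~\ref{ChenRaunequalResolisisentropic} is correct. One small caution on the ``equivalently'' you offer: concentration of $\m{H}^\bullet(N_{\omega_I}/\Sigma)$ in even degrees only kills $d_2$; killing $d_3\colon E_3^{p,2}\to E_3^{p+3,0}$ requires the vanishing of the odd Betti numbers of $\Sigma$ (here from simple-connectedness of a K3), so the G\"ottsche route is the one to lean on, not the parity argument.

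The genuine gap is the upgrade from ``torsion-free $\m{Spin}(7)$'' to ``hyperk\"ahler'', which is the actual content of the statement. Your route (a) asserts that $\m{Sp}(1)$-equivariance of the fixpoint problem forces $\Phi_{I;t}$ into an $\m{Sp}(1)$-invariant subspace, but the $\m{Sp}(1)$-action rotates the parameter $I$ and therefore permutes the preglued data $\{\Phi^{pre}_{I;t}\}_{I}$: it does not fix any single $\Phi^{pre}_{I;t}$, so uniqueness of the fixpoint only yields equivariance of the family of solutions $(\Phi_{I;t})_{I}$, not invariance of an individual solution. In particular it does not show that any given $\Phi_{I;t}$ arises from a closed hyperk\"ahler triple. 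Your route (b) requires identifying the Yau metric with the fixpoint solution by uniqueness, but the uniqueness in \eqref{fixpointproblem} is uniqueness within a gauge slice ($\pi_{1\oplus7,\Phi^{pre}}\eta=0$), and the Cayley form of the Yau metric has no reason to land in that slice without a further gauge-fixing argument that you do not supply. The clean argument, which the paper's Section~\ref{The Holonomy of the Resolved Spin(7)-Structure} is set up to provide, is purely topological: by the G\"ottsche formula $\hat{A}(\m{Hilb}^2(\Sigma))=3$ for $\Sigma$ a K3, and by the holonomy proposition (Joyce's Theorem~10.6.1 quoted in Section~\ref{Spin(7)-Orbifolds}), $\hat A=3$ forces the holonomy of \emph{any} torsion-free $\m{Spin}(7)$-structure on $\m{Hilb}^2(\Sigma)$ to equal $\m{Sp}(2)$, i.e.\ the metric $g_{\Phi_{I;t}}$ is automatically hyperk\"ahler. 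You should replace both of your holonomy-reduction arguments with this one.
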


\subsection{Resolutions of $(\cat{CY}_3\times\mathbb{T}^2)/G$}
\label{Resolutions of CY3T2}

Another source of examples of $\m{Spin}(7)$-orbifolds are quotients of a $(Z,\omega,\theta)\in\cat{CY}_3$ and a complex torus. We define the $\m{Spin}(7)$-manifold 
\begin{align*}
    (\mathbb{T}^2\times Z, \m{d}z\wedge\m{d}\overline{z}\wedge\omega +\m{Re}(\m{d}z\wedge\theta)).
\end{align*}
We can construct a $\m{Spin}(7)$-orbifold by matching actions on $Z$ by an action on $\mathbb{T}^2$ to construct $\m{Spin}(7)$-orbifolds. These orbifolds will usually not be simply connected and hence, are not suitable to construct new examples of full holonomy $\m{Spin}(7)$-manifolds.\\

Examples with isotropy $\mathbb{Z}_2$ and codimension four singular strata can be constructed analogously to \cite[Chap. 7.4]{joyce2017new}.

\subsection{Resolutions of $(\cat{G}_2\times\mathbb{T}^1)/\mathbb{Z}_2$}
\label{Resolutions of G2T1}

In the following section we will discuss how to construct compact $\m{Spin}(7)$-manifolds using the extra twisted connected sum of $G_2$-manifolds \cite{nordstrom2023extra}. This example produces a family of $\m{Spin}(7)$-manifolds with a neck stretching phenomena. Using methods from geometric measure theory, we are able to construct ACyl $\m{Spin}(7)$-manifolds. The author is very grateful for the help of Johannes Nordström in the construction of this example. 

\begin{lem}\cite[Prop. 10.8.1]{joyce2000compact}
    Let $(Y,\varphi)$ be $G_2$-manifold with an involution 
    \begin{align*}
        \tau\colon (Y,\varphi)\mapsto (Y,-\varphi).
    \end{align*}
    If the fixpoint-locus is non-empty, then $Fix(\tau)\subset Y$ is a coassociative. 
\end{lem}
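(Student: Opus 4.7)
The plan is to show that $\tau$ is an isometry of $g_\varphi$, so that $\m{Fix}(\tau)$ is a smooth, totally geodesic submanifold, and then to classify the tangent spaces at fixed points via the representation theory of $G_2$. First I would argue that $\tau^*\varphi = -\varphi$ implies $\tau^*g_\varphi = g_\varphi$. The metric $g_\varphi$ is constructed from $\varphi$ by a formula homogeneous of degree three in the 3-form, namely
\[
g_\varphi(u,v)\,\m{vol}_\varphi = \tfrac{1}{6}(\iota_u\varphi)\wedge(\iota_v\varphi)\wedge\varphi,
\]
and the induced volume form transforms with the same overall sign, so $g_{-\varphi} = g_\varphi$ pointwise. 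Applied fibrewise, this gives $\tau^*g_\varphi = g_\varphi$, and a standard fact from Riemannian geometry yields that $\m{Fix}(\tau)\subset Y$ is a disjoint union of smooth, totally geodesic submanifolds, with $T_p\m{Fix}(\tau) = V^+$ equal to the $(+1)$-eigenspace of $T_p\tau$ at any fixed point $p$.

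Second, at a fixed point $p$ set $A := T_p\tau \in \m{GL}(T_pY)$; it is an involution satisfying $A^*\varphi_p = -\varphi_p$. Then $B := -A$ obeys $B^2 = \m{id}$ and $B^*\varphi_p = (-1)^3 A^*\varphi_p = \varphi_p$, so $B$ lies in $\m{Stab}(\varphi_p) \cong G_2$. At this point I would invoke the classification of involutions in the compact simply connected group $G_2$: up to conjugation either $B = \m{id}$ or $B$ is the unique non-trivial involution, whose centraliser is $\m{SO}(4)\subset G_2$ and whose $(+1,-1)$-eigenspace decomposition is $(L,E)$ with $L$ a 3-dimensional associative subspace and $E$ a 4-dimensional coassociative subspace of $T_pY$. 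Translating back to $A = -B$, the $(+1)$-eigenspace $V^+$ is either trivial (so $p$ is an isolated fixed point) or $V^+ = E$ is coassociative of dimension four. The vanishing $\varphi_p|_{V^+} = 0$ can also be verified directly: for $u,v,w\in V^+$, $\tau^*\varphi = -\varphi$ forces $\varphi_p(u,v,w) = -\varphi_p(u,v,w)$.

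Hence each positive-dimensional component of $\m{Fix}(\tau)$ is a 4-dimensional submanifold whose tangent planes are coassociative, which is precisely the desired conclusion. The main obstacle will be justifying the classification of involutions in $G_2$ and identifying their eigenspace decomposition as (associative, coassociative); once this linear-algebraic input is in place the global conclusion follows formally, applied componentwise to $\m{Fix}(\tau)$.
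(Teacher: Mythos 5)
Your proof is correct and follows the expected route: the paper itself gives no proof (the lemma is cited from Joyce's book), but your argument recovers the standard one. The three steps — (i) $g_{-\varphi}=g_\varphi$ so $\tau$ is an isometry, (ii) $\m{Fix}(\tau)$ is therefore totally geodesic with $T_p\m{Fix}(\tau)=V^+$, and (iii) classify the possible $T_p\tau$ — are exactly right, and the direct check that $\varphi|_{V^+}=0$ is a clean way to see isotropy without any classification at all.

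Two small points of polish. First, for $g_{-\varphi}=g_\varphi$ the sign-bookkeeping is a hair subtler than "both sides pick up a minus": the quantity $\tfrac16\iota_u\varphi\wedge\iota_v\varphi\wedge\varphi$ indeed changes sign, but so does $\m{vol}_\varphi$, because $-\varphi$ lies in the \emph{other} $\m{GL}_+(7)$-orbit and hence induces the opposite orientation (equivalently: $-\m{id}\in\m{O}(7)\setminus\m{SO}(7)$ pulls $\varphi_0$ back to $-\varphi_0$ while fixing $g_0$). This is worth saying, since the same observation shows $\tau$ is orientation-reversing, which is what forces $\dim V^+$ to be even. Second, for step (iii) you invoke the fact that $G_2$ has a unique conjugacy class of nontrivial involutions, with fixed subgroup $\m{SO}(4)$ and eigenspace splitting $\m{Im}\mathbb{O}\cong L\oplus E$ into an associative $3$-plane and a coassociative $4$-plane. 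That is a correct, citable fact (it is the statement that $G_2/\m{SO}(4)$ is the unique compact symmetric space of type $G$). An alternative, more self-contained route — which I suspect is closer to Joyce's — is to use only the nondegeneracy of $\varphi$: for $0\neq u\in V^+$, the two vanishings $\varphi|_{\wedge^3 V^+}=0$ and $\varphi|_{V^+\otimes\wedge^2 V^-}=0$ (the latter following from $\tau^*\varphi=-\varphi$ on vectors $u\in V^+$, $v,w\in V^-$) imply $\iota_u\varphi\in V^{+*}\wedge V^{-*}$, and the positivity $\iota_u\varphi\wedge\iota_u\varphi\wedge\varphi\neq 0$ then forces $\dim V^+=4$. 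Both routes are fine; yours is shorter if the involution classification may be cited.

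Finally, your conclusion correctly allows the possibility of isolated fixed points ($V^+=0$, i.e. $T_p\tau=-\m{id}$), which the linear algebra does not exclude. Joyce's Prop.~10.8.1 states this dichotomy explicitly; the paper's phrasing is slightly abbreviated, but your more careful formulation is the accurate one.
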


Given such a tuple $(Y,\varphi,\tau)$, we construct the $\m{Spin}(7)$-orbifold 
\begin{align*}
    (X,\Phi)\coloneqq(\mathbb{S}^1\times Y,\theta\wedge\varphi+*_{\varphi}\varphi)/\tau
\end{align*}
where $\tau$ acts on $\mathbb{S}^1\mapsto \mathbb{S}^1,\theta\mapsto \overline{\theta}$. 

\begin{lem}
    Assume that the coassociative $\m{Fix}(\tau)$ admits a non-vanishing, harmonic, self-dual, $\mathbb{Z}_2$-twisted two form 
    \begin{align*}
        \zeta\in \m{H}^2_+(Fix(\tau),\mathfrak{C}\otimes_{\mathbb{Z}_2}\mathbb{R}),
    \end{align*}
    then there exists a family of torsion-free $\m{Spin}(7)$-orbifold resolutions
    \begin{align*}
        \rho_{\zeta;t}\colon (X_{\zeta;t},\Phi_{\zeta;t})\dashrightarrow (X,\Phi),
    \end{align*}
    such that 
    \begin{align*}
        \left|\left|\Phi_{t;\zeta}-\Phi^{pre}_{\zeta;t}\right|\right|_{\mathfrak{D}^{1,\alpha}_{\beta;t}}\lesssim t^\vartheta.
    \end{align*}
\end{lem}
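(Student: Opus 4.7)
The strategy is to identify the singular strata of $(X,\Phi)$ explicitly, recognise the given harmonic twisted two-form as a section of the parameter bundle $\mathfrak{P}$, and then directly apply the main existence result, Theorem \ref{mainthm}. The fixed locus of the involution $(\theta,y)\mapsto(\overline{\theta},\tau(y))$ on $\mathbb{S}^1\times Y$ is precisely $\{[0],[1/2]\}\times \m{Fix}(\tau)$, which consists of two disjoint copies of the coassociative $L\coloneqq \m{Fix}(\tau)$. Each is of codimension four in the eight-dimensional space and of type A, with isotropy $\mathbb{Z}_2\subset \m{SU}(2)$ acting by $-1$ on the normal bundle $\mathbb{R}\oplus N_{L/Y}\cong\mathbb{R}\oplus\wedge^2_+T^*L$. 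Thus the situation falls under the codimension-four resolution theory of Section \ref{Adiabatic Resolutions of Codimension Four Singularities}.

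Next I would identify the parameter bundle explicitly. Because $\Gamma=\mathbb{Z}_2$ corresponds to the $A_1$ Dynkin diagram, the space of stability conditions is one-dimensional and the normaliser action produces a nontrivial $\mathbb{Z}_2$-twisting bundle $\mathfrak{C}\to L$ governed by the orientation behaviour of the normal bundle under $\tau$; this is precisely what produces the local system $\mathfrak{C}\otimes_{\mathbb{Z}_2}\mathbb{R}$ appearing in the statement. Consequently $\Gamma(L,\mathfrak{P})\cong \Omega^2_+(L,\mathfrak{C}\otimes_{\mathbb{Z}_2}\mathbb{R})$, and the hypothesis provides a harmonic nowhere-vanishing $\zeta$ in this space. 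Applying Proposition \ref{ACFstructuresfrompullbacks4} and Theorem \ref{torss4} then produces a family of ACF-$\m{Spin}(7)$-structures $\rho_\zeta\colon(N_\zeta,\Phi_\zeta)\dashrightarrow(N_0,\Phi_0)$ which is adiabatic torsion-free thanks to the harmonicity of $\zeta$. Combined with the natural $\mathbb{S}^1$-factor away from the fixed points (where the structure is trivially smoothly modelled on the $G_2$-orbifold resolution theory of Corollary \ref{ExistenceofTFG2Resolutions}), the pregluing construction of Section \ref{Resolutions of Spin(7)-Orbifolds with Type A-Singular Strata} yields the interpolating resolution $\rho_{\zeta;t}\colon (X_{\zeta;t},\Phi^{pre}_{\zeta;t})\dashrightarrow (X,\Phi)$ whose torsion satisfies the a priori bound of Proposition \ref{apprioriboundonlemma}, i.e.\ Assumption \ref{ass7Spin7}.

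The main obstacle is verifying isentropicity (Assumption \ref{ass8Spin7}), that is $\m{ob}_{\beta;t}=0$ for a sufficiently large negative $\beta$. Since the singular strata have codimension four with $\Gamma\subset\m{SU}(2)$, Theorem \ref{ChenRaunequalResolisisentropic} reduces this to showing
\begin{align*}
    \m{H}^\bullet(X_{\zeta;t},\mathbb{R})\cong \m{H}^\bullet_{\m{CR}}(X,\mathbb{R})
\end{align*}
as graded vector spaces. Here I would exploit the product structure: by Proposition \ref{cohomofXt} and the Mayer--Vietoris description, the cohomology of $X_{\zeta;t}$ splits into $\m{H}^\bullet(X)\oplus\m{H}^\bullet(N_\zeta)/\m{H}^\bullet(L)$. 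Lemma \ref{imporveddecay} identifies $\mathcal{H}^\bullet(N_\zeta/L)\cong \mathfrak{C}\otimes_{\mathbb{Z}_2}\mathbb{R}$ concentrated in degree two, so the right-hand side becomes $\m{H}^{\bullet-2}(L,\mathfrak{C}\otimes_{\mathbb{Z}_2}\mathbb{R})$, which by Example \ref{inertiagroupoidfortypeA} is exactly the twisted-sector contribution to $\m{H}^\bullet_{\m{CR}}(X,\mathbb{R})$. The identification then reduces to the collapse of the Leray--Serre spectral sequence for $N_\zeta\to L$ at the $E_2$-page; since the vertical cohomology is concentrated in even degree and $\m{dim}(L)=4$, the only potentially nontrivial differential $\m{d}^{3,-2}\colon\m{H}^{\bullet}(L,\mathcal{H}^2(N_\zeta/L))\to \m{H}^{\bullet+3}(L)$ lands in degrees where a parity/degree argument analogous to the examples in Section \ref{Examples of Compact Spin(7)-Manifolds} forces vanishing. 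Once isentropicity is established, Theorem \ref{mainthm} applies directly and yields the torsion-free $\m{Spin}(7)$-orbifold resolution $\Phi_{\zeta;t}$ with the asserted estimate $\|\Phi_{\zeta;t}-\Phi^{pre}_{\zeta;t}\|_{\mathfrak{D}^{1,\alpha}_{\beta;t}}\lesssim t^\vartheta$, completing the argument.
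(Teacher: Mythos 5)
The overall architecture of your proposal is the right one, and it matches what the paper (which leaves this lemma without a written proof) would have to do: identify the two codimension-four, type-A singular strata $\{\pm 1\}\times L$ with $\mathbb{Z}_2 = \{\pm 1\}\subset\mathrm{SU}(2)$ isotropy and normal bundle $\mathbb{R}\oplus\wedge^2_+T^*L$ twisted by $\mathfrak{C}$; recognise the hypothesis as supplying a harmonic, nowhere-vanishing section of $\mathfrak{P}\cong\wedge^2_+T^*L\otimes\mathfrak{H}$; invoke Proposition \ref{ACFstructuresfrompullbacks4} and Theorem \ref{torss4} to build the adiabatic torsion-free ACF resolution; and then close via Theorem \ref{mainthm}. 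The non-vanishing of $\zeta$ keeps its image off the wall $\mathfrak{W}$ (so $N_\zeta$ is smooth by Proposition \ref{smoothtypeofXzeta4}), and the a priori bound of Proposition \ref{apprioriboundonlemma} supplies Assumption \ref{ass7Spin7}; all of this you handle correctly.

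The gap is in your verification of isentropicity (Assumption \ref{ass8Spin7}). You reduce, via Theorem \ref{ChenRaunequalResolisisentropic} and Corollary \ref{collapsingss}, to the collapse of the Leray–Serre spectral sequence of $N_\zeta\to L$ at $E_2$, and then assert that ``a parity/degree argument analogous to the examples in Section \ref{Examples of Compact Spin(7)-Manifolds} forces vanishing'' of $d_3$. That claim does not hold for a general coassociative $L=\mathrm{Fix}(\tau)$. The $E_2$-page has $\mathcal{H}^q(N_\zeta/L)\neq 0$ only for $q\in\{0,2\}$, so $d_2=0$, but the two remaining potentially nontrivial differentials $d_3\colon \m{H}^0(L,\mathcal{H}^2)\to \m{H}^3(L)$ and $d_3\colon \m{H}^1(L,\mathcal{H}^2)\to \m{H}^4(L)$ are only killed in the paper's Section~6 examples because there either $S$ is simply connected (so $\m{H}^3(S)\cong\m{H}^1(S)^*=0$ by Poincar\'e duality) or $S$ fibres over a torus. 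A general coassociative four-manifold $L$ has no such constraint; in particular it may have $b_1(L)\neq 0$, in which case $\m{H}^3(L)\neq 0$ and there is no degree obstruction to $d_3$. You also briefly invoke Corollary \ref{ExistenceofTFG2Resolutions}, but that corollary concerns $\mathbb{T}^1$-equivariant resolutions of $\mathbb{T}^1\times Y$, whereas here the $\mathbb{Z}_2$ acts nontrivially on the circle factor and the quotient carries no residual $\mathbb{T}^1$-action, so it does not directly apply. The correct reading is that the lemma tacitly carries Assumption \ref{ass8Spin7} as a standing hypothesis (as the paper does for the companion lemma in the $\cat{HK}_1\times\cat{HK}_1$ case, where the assumption is verified explicitly afterwards); if one wants isentropicity to follow from the hypotheses alone, one needs a genuine argument for $d_3=0$ in the coassociative setting, which neither you nor the paper provides.
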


A large class of existing examples of $\cat{G}_2$ manifolds are constructed by gluing two $\cat{ACylG}_2$. Often this construction is described by the extra twisted connected sum \cite{nordstrom2023extra}. In this gluing construction, two $\mathbb{T}^1\times \cat{ACylCY}_3$ are joint by twisting the exterior $\mathbb{T}^1$ with the one in the cross section of the $\cat{ACylCY}_3$. A large class of $\cat{ACylCY}_3$ were constructed in \cite{CHNP,Corti_2015} using (weak) Fano threefolds.

   \begin{defi}
        Let $(Z_\pm,\omega_\pm,\theta_\pm)\in\cat{ACyl}\cat{CY}_3$ with cross-section $\mathbb{T}^1\times \Sigma$ where $\Sigma\in\cat{HK}_1$. Assume that there exists an isometry
        \begin{align*}
            \mathfrak{r}\colon (\mathbb{T}^2\times \Sigma,g_{\mathbb{T}^2\times \Sigma})\rightarrow (\mathbb{T}^2\times \Sigma,g_{\mathbb{T}^2\times \Sigma})
        \end{align*}
        such that $(-\m{id}_{\mathbb{R}}\times \mathfrak{r})$ is a translational invariant automorphism of
        $(Cyl(\mathbb{T}^2\times \Sigma),\varphi_{Cyl})$
        where
        \begin{align*}
            \varphi_{Cyl}\coloneqq\m{d}s\wedge\m{d}\theta_1\wedge\m{d}\theta_2+  \m{d}s\wedge\omega_I+\m{d}\theta_1\wedge \omega_J+\m{d}\theta_2\wedge \omega_K,
        \end{align*}
        i.e. $[\mathfrak{r},\tau_s]=\m{id}_{Cyl(\mathbb{T}^2\times \Sigma)}$. The \textbf{extra twisted connected sum} of the building blocks  $(\mathbb{T}^1\times Z_i,\varphi_i\coloneqq\m{d}\theta_1\wedge\omega_i+\m{Re}(\theta_i))$ by $\mathfrak{r}$ is the unique, gauge fixed, torsion-free $G_2$-structure close to the preglued $G_2$-structure obtained gluing along $\mathfrak{r}$. 
    \end{defi}

The hyperkähler-twist $\mathfrak{r}$ restricts to an isometry of $\Sigma$ and acts on a basis of the maximally positive subspace $P\subset \m{H}^2(\Sigma,\mathbb{R})$ by
\begin{align*}
    \mathfrak{r}=\left(\begin{array}{ccc}
         \cos(\theta)&-\sin(\theta) &0 \\
         \sin(\theta)&\cos(\theta) &0\\
         0&0&-1
    \end{array}\right).
\end{align*}

    Let $(Z_\pm,\omega_\pm,\theta_\pm)\in\cat{ACylCY}_3$ and $\tau_\pm\colon (\mathbb{T}^1\times  Z_\pm,\m{d}\theta_2\wedge\omega_\pm+\m{Re}(\theta_\pm))\rightarrow (\mathbb{T}^1\times  Z_\pm,-\m{d}\theta_2\wedge\omega_\pm-\m{Re}(\theta_\pm))$, and let $\mathfrak{r} \colon\mathbb{T}^2\times\Sigma\rightarrow \mathbb{T}^2\times \Sigma$ be a hyperkähler rotation.
    Notice, that an isometry of an ACyl spaces is asymptotic to an isometry of the form 
    \begin{align*}
        \tau=(\tau_l,\tau_{\mathbb{T}^2\times \Sigma}),
    \end{align*}
    where $\tau_l$ denotes the translation by $s\mapsto s+l$. The extra twisted connected sum 
    \begin{align*}
        (Y_{\mathfrak{r};R},\varphi_{\mathfrak{r};R})
    \end{align*}
    admits an $G_2$-antiinvolution if 
    \begin{align*}
        \tau_-\circ\mathfrak{r}=\mathfrak{r}\circ\tau_+.
    \end{align*}

In the following, we will construct two $\cat{ACylG}_2$ manifolds with $G_2$-anti-involutions, whose induced translationally invariant antiinvolutions of the $G_2$-cy-linders are intertwined by the hyperkähler rotation $\mathfrak{r}$.

\begin{ex}
Let $Q$ and $S$ be a real quadric and quartic in $\mathbb{CP}^3$, and let $C = Q \cap S$ be their transverse intersection. Denote by 
\begin{align*}
    \sigma_{\mathbb{CP}^3}\colon [z_0:z_1:z_2:z_3] \mapsto [\overline{z_0} \colon\overline{z_1} \colon\overline{z_2} \colon\overline{z_3}].
\end{align*}
Let $\pi\colon  W \rightarrow \mathbb{CP}^3$ be the $S$-branched double cover given by 
\begin{align*}
    W=\{([z_0:z_1:z_2:z_3],w)\in\mathcal{O}_{\mathbb{CP}^3}(1)|S(z_0,z_1,z_2,z_3)=w^2\}
\end{align*}
Since $S$ is real, there exists an action of $\mathbb{Z}_2^2$ on $W$, generated by the holomorphic sheet-swapping involution
\begin{align*}
    \tau_W\colon  W \rightarrow W, ([z_0:z_1:z_2:z_3],w)\mapsto ([z_0:z_1:z_2:z_3],-w)
\end{align*}
and the antiholomorphic involution 
\begin{align*}
    \sigma_W\colon W\rightarrow W, ([z_0:z_1:z_2:z_3],w)\mapsto ([\overline{z_0} \colon\overline{z_1} \colon\overline{z_2} \colon\overline{z_3}],\overline{w})
\end{align*}
lifted from $\mathbb{CP}^3$. The fixpoints of $\sigma_W$ and $\tau_W$ are 
\begin{align*}
    Fix(\tau_W) = S, \quad   Fix(\sigma_{W})=W(\mathbb{R})\xrightarrow[]{\pi} \mathbb{RP}^3,\text{ branched over }S(\mathbb{R}).
\end{align*}
This $\mathbb{Z}^2_2$-action restricts to $\pi^{-1}(S)\cong S$ and hence, we can lift the $\mathbb{Z}_2^2$-action to the semi-Fano building block
\begin{align*}
    Z_+ \coloneqq \m{Bl}_{\pi^{-1}(C)}(W) \overset{\beta}{\dashrightarrow} W.
\end{align*}
Recall, that $\m{Bl}_{\pi^{-1}(C)}(W)=W\backslash \pi^{-1}(C) \sqcup \mathbb{P}(N (W/\pi^{-1}(C)))$, and as $\pi^{-1}C$ is preserved by the holomorphic $\mathbb{Z}_2^2$-action, the linearised action on $\pi^{-1}C$ restricts to a projectivised action on $\mathbb{P}(N(W/\pi^{-1}(C)))$. Now, since 
\begin{equation*}
    \begin{tikzcd}
        T\pi^{-1}(C)\arrow[r,hook]&TW|_{\pi^{-1}(C)}\arrow[r,two heads]& N (W/\pi^{-1}(C))\\
        N (\pi^{-1}(S)/\pi^{-1}(C))\arrow[r,hook]&N (W/\pi^{-1}(C))\arrow[r,two heads]& N (W/\pi^{-1}(S))|_{\pi^{-1}(C)}
    \end{tikzcd}
\end{equation*}
we see that\footnote{The linearised action of $\tau_{Z_+}$ on $\mathbb{P}(N(W/\pi^{-1}(C)))$ acts via $[n_0:n_1]\mapsto [n_0:-n_1]$ which can be deduced from the two exact sequences.}
\begin{align*}
    Fix(\sigma_{Z_+})=&Z_+(\mathbb{R})=W(\mathbb{R})\\
    Fix(\tau_{Z_+})=&\pi^{-1}(S)\backslash\pi^{-1}(C)\sqcup (\widehat{\pi^{-1}(C)}\xrightarrow[]{2:1}\pi^{-1}(C)).
\end{align*}
The proper transform of $\pi^{-1}(Q)$ 
\begin{align*}
    \Sigma \coloneqq \beta^{-1}(\pi^{-1}(Q)) \in \cat{HK}_1
\end{align*}
is a smooth, anticanonical $K3$ surface with a $\mathbb{Z}_2^2$-action satisfying 
\begin{align}
\label{sigmatauSigma}
    \sigma_{\Sigma}^*(\omega_I,\omega_J,\omega_K)=&(-\omega_I,\omega_J,-\omega_K)\\\nonumber
    \tau_{\Sigma}^*(\omega_I,\omega_J,\omega_K)=&(\omega_I,-\omega_J,-\omega_K).
\end{align}
Consider $\Sigma'\coloneqq (\Sigma, \omega_J, -\omega_I, -\omega_K)$ with the induced $\mathbb{Z}_2^2$-action. By \eqref{sigmatauSigma} $\sigma_{\Sigma'}$ is holomorphic. We define the orbifold
\begin{align*}
    Z_0 \coloneqq \left((\Sigma, \omega_J, \omega_I, -\omega_K) \times \mathbb{CP}^1\right)/(\sigma_{\Sigma} \times \sigma_{\mathbb{CP}^1}),
\end{align*}
where $\sigma_{\mathbb{CP}^1}\colon  [w_0:w_1] \mapsto [w_1:w_0]$. We can lift $\tau\colon  Z_0 \rightarrow Z_0$ to an anti-holomorphic involution acting on $\mathbb{CP}^1$ by $[\overline{w_0} \colon\overline{w_1}]$. By results of Kovalev and Lee \cite{kovalev2003twisted} and Nordström \cite{nordstrom2023extra} respectively, we obtain a building block
\begin{align*}
    (p_-\colon Z_- \rightarrow \mathbb{CP}^1) \overset{\beta_-}{\dashrightarrow} (p_0\colon Z_0 \rightarrow \mathbb{CP}^1)
\end{align*}
by either resolving or smoothing $Z_0$. This can be done in a $\tau$-equivariant way and hence, $(Z_-,\omega_-,\theta_-)$ admits an antiholomorphic involution $\tau_-$ such that the twisting map
\begin{equation*}
\begin{tikzcd}
	{\mathbb{R} \times \mathbb{T}^2 \times \Sigma} & {\mathbb{R} \times \mathbb{T}^2 \times \Sigma} \\
	{(s, \theta_1, \theta_2, m)} & {(-s, -\theta_2, \theta_1, m)}
	\arrow[from=1-1, to=1-2, "\mathfrak{r}"]
	\arrow[maps to, from=2-1, to=2-2]
\end{tikzcd}
\end{equation*}
intertwines $\tau_\pm$. The resulting equivariant twisted connected sum
\begin{align*}
    (Y_R, \varphi_R)
\end{align*}
is a family of compact $G_2$-manifolds, with a $G_2$-antiinvolution
\begin{align*}
    \tau_R\colon (Y_R, \varphi_R) \rightarrow (Y_R, -\varphi_R)
\end{align*}
fixing the family of coassociatives. For large $R \gg 0$, this family should admit a nonvanishing self-dual harmonic two-form, coming from matching, nonvanishing self-dual harmonic two forms on the two ends. These are induced by restricting the Kähler form onto $Fix(\tau_+)\subset Z_+$ and the pairing of the harmonic one form on $Fix(\tau_-)\subset Z_-$ with $\theta_2$.
\end{ex}

\subsection{Resolutions of Weighted Projective Spaces}
\label{Resolutions of Weighted Projective Spaces}

In \cite{Joyce1999}, Joyce constructed $\m{Spin}(7)$-orbifold resolutions of weighted projective spaces by gluing asymptotically conical $\m{Spin}(7)$-manifolds into the isolated conical singularities of $\mathbb{CP}^{[a_0,\dots,a_4]}$. \\

Using the adapted function spaces developed in this paper, we are able to refine Joyce’s analysis and improve the estimates for the distance between the torsion-free $\m{Spin}(7)$-structure and the preglued structure.

\begin{thm}
Let $X_{\zeta;t}\dashrightarrow \mathbb{CP}^{[a_0,...,a_4]}$ be one of the Joyce manifolds constructed in \cite{Joyce1999} and $\Phi_{\zeta;t}$ its torsion-free $\m{Spin}(7)$-structure. Let $\Phi^{pre}_{\zeta;t}$ be the Cayley form on $X_{\zeta;t}$ constructed from the pregluing process. Then 
\begin{align*}
    \left|\left|\Phi_{\zeta;t}-\Phi^{pre}_{\zeta;t}\right|\right|_{\mathfrak{D}^{1,\alpha}_{\beta;t}}\lesssim t^{\approx 4}.
\end{align*}
\end{thm}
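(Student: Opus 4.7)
The plan is to specialise the main existence theorem (Theorem \ref{mainthm}) to the case at hand, in the style of the Joyce-manifold improvement (Section \ref{Improved Estimates for Joyce-Manifolds}), and then optimise the a priori exponent $\vartheta$ from \eqref{vartheta} using the special structure of the singularities. The singular strata of $\mathbb{CP}^{[a_0,\dots,a_4]}$ are isolated conical points, i.e.\ codimension-eight strata of type discussed in Section \ref{Adiabatic Resolutions of Codimension Eight Singularities}. Over such a point the base $S$ is trivial, the parameter bundle $\mathfrak{P}$ is finite-dimensional, and adiabatic torsion-freeness reduces to $\Phi_\zeta\in\mathfrak{Ker}_{\m{ACF};0}(\widehat{D}^0_\zeta)$ being a genuine torsion-free ALE-$\m{Spin}(7)$-structure, which is precisely the family of ACF resolutions Joyce uses.

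First I would set up the pregluing. Because $S$ is a point, there are no curvature terms coming from the Ehresmann connection on the normal cone bundle, so the only contribution to $\m{d}\Phi^{pre}_{\zeta;t}$ is the error from interpolating between the orbifold Cayley form $\Phi$ and the ALE form $\Phi_{t^4\cdot\zeta}$ across the annulus $A_{(\epsilon,5\epsilon)}$. By Lemma \ref{expansionOmegaProperties}, in the Fermi expansion $\m{exp}_g^*\Phi=\Phi_0+\Phi_{hot}$ one has $\m{d}\Phi_{hot}=-\m{d}^{2,-1}\Phi_0=0$ in this isolated-point setting, since there is no horizontal direction, and $\pi_{\tau;\Phi_0}\Phi_1=\Phi_1$. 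Combined with the fact that $\Phi_{t^4\cdot\zeta}-\Phi_0=\mathcal{O}(r^{-8})$ (ALE rate for $\m{Spin}(7)$-ALE eight-manifolds), this yields a much sharper a priori bound than the generic inequality \eqref{vartheta}.

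Next, I would rerun the computation of Proposition \ref{apprioriboundonlemma} with $m=8$, $S=\m{pt}$, and the sharpened decay of $\Phi_{t^4\cdot\zeta}-\Phi_0$. The four terms in \eqref{vartheta} then collapse to a single binding constraint, essentially $\min\{\lambda(2-\beta),-\lambda\beta+(1-\lambda)\cdot 8\}$, and choosing $\lambda,\beta,\alpha,\kappa$ as in the remark following the proof of Theorem \ref{mainthm}, but adapted to $m=8$ and $\Phi_{hot}$ of higher order, one obtains
\begin{align*}
    \left\|\m{d}\Phi^{pre}_{\zeta;t}\right\|_{\mathfrak{C}^{0,\alpha}_{\beta-1;t}}\lesssim t^{\vartheta} \quad\text{with}\quad \vartheta\approx 4.
\end{align*}
The isentropicity condition (Assumption \ref{ass8Spin7}) is automatic here: the Leray--Serre spectral sequence degenerates trivially because the base is a point, and the resolution $N_\zeta$ carries no cohomology in odd degree (the ALE spaces are crepant resolutions of $\m{Spin}(7)$-quotient singularities and inherit the even-degree-only Chen--Ruan-type cohomology from the $\m{SU}$-subfactors, cf.\ Conjecture \ref{itoreidconjecture}), so the obstruction map $\m{ob}_{\beta;t}=0$ by the remark at the end of Section \ref{Linear Gluing}. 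Invoking Theorem \ref{mainthm} then gives the torsion-free $\m{Spin}(7)$-structure $\Phi_{\zeta;t}$ with the asserted bound.

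The main obstacle is the exponent bookkeeping: the improvement from the generic $\vartheta$ to $\vartheta\approx 4$ requires carefully tracking which of the four terms in \eqref{vartheta} is binding once one inserts $m=8$, the ALE decay rate $-8$, and the fact that $\Phi_{hot}$ starts at quadratic (not linear) order in $r$ at an isolated conical singularity. A secondary, but purely technical, obstacle is checking that the isentropicity indeed holds for each of Joyce's weighted projective examples in \cite{Joyce1999}, which reduces to verifying the vanishing of the relevant differentials on the second page of the Leray--Serre spectral sequence fibrewise over the isolated singular points, and follows from the cohomological structure of $\m{Spin}(7)$-ALE resolutions.
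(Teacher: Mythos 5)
Your proposal follows the same route as the paper (which states this result without a dedicated proof, treating it as an application of Theorem \ref{mainthm} with refined exponent bookkeeping). You correctly identify the key structural simplifications at an isolated conical singularity: $S$ is a point, so $\m{d}^{2,-1}=0$ on the normal cone, the parameter bundle $\mathfrak{P}$ is finite-dimensional, and the ACF resolutions are genuine torsion-free $\m{Spin}(7)$-ALE manifolds rather than merely adiabatic torsion-free, which kills the $\m{d}\Phi^t_\zeta-\m{d}\Phi^t_0$ term in $(I)$ of Proposition~\ref{apprioriboundonlemma}. You also correctly note that isentropicity is automatic when $S=\m{pt}$, since the Leray--Serre spectral sequence of $\nu_\zeta:N_\zeta\to\m{pt}$ collapses trivially.

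Two places where the argument should be tightened. First, you quote the ALE rate $-8$ as if it were generic for $\m{Spin}(7)$-ALE eight-manifolds, but Bando--Kasue--Nakajima gives $1-m=-7$ in general; the sharper rate $-m=-8$ holds here because Joyce's ALE $\m{Spin}(7)$-spaces in \cite{Joyce1999} are $\mathbb{Z}_2$-quotients of Calabi--Yau ALE four-folds, and the Kähler decay rate is inherited by the free anti-holomorphic quotient. This should be stated rather than asserted, since it feeds directly into the exponent. Second, your appeal to the theorem that gives $\m{ob}_{\beta;t}=0$ for $\Gamma\subset\m{SU}(m/2)$, $m>2$ is slightly off target: Joyce's isotropy groups here contain anti-holomorphic generators and are explicitly not subgroups of $\m{SU}(4)$. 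The correct justification is the one you also give (the trivial collapse of the spectral sequence over a point combined with the identification $\m{H}^\bullet(N_\zeta)\cong\mathbb{R}[0]\oplus\mathcal{H}^\bullet_{L^2}(N_\zeta)$ from the ALE Hodge theory of \cite{lockhart1987fredholm}), so you should drop the reference to the $\m{SU}(m/2)$ statement and the Ito--Reid even-degree argument, neither of which is needed or strictly applicable. Finally, the claim $\vartheta\approx 4$ is stated rather than derived: with $m=8$ and all $t^1$-adiabatic factors removed, the binding constraint in \eqref{vartheta} changes, and an explicit choice of $(\lambda,\beta,\alpha,\kappa)$ realising $\vartheta\approx 4$ should be exhibited, in parallel with the remark following the proof of Theorem~\ref{mainthm}. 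None of these affect the strategy, but they are the places a referee would push back.
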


\appendix

\section{Orbifolds and Fibration Conventions}
\label{Orbifolds and Fibration Conventions}

In the following section we will briefly discuss Riemannian orbifolds by following the work of Moerdijk in \cite{moerdijk2002orbifolds}. In this approach, the category of orbifolds is associated with the category of proper foliation groupoids up to Morita equivalence. By representing an orbifold by a Lie groupoid, the notions of tangent bundle and normal bundle can be understood in a more natural way than by the usual chart based approach.

\subsection{Orbifolds}
\label{Orbifolds}

In this section, we recall the groupoid-theoretic definition of orbifolds and their morphisms, following the modern approach via proper étale Lie groupoids and Morita equivalence.

\begin{defi}
Let $X^\bullet$ be a Lie groupoid, i.e. a groupoid object \footnote{A category in which every morphism is an isomorphism.} in $\cat{C}^\infty\cat{Mfd}$. We will denote by
\begin{align*}
    X^0\coloneqq \m{Obj}(X^\bullet)\und{1.0cm}X^1\coloneqq \m{Mor}(X^\bullet).
\end{align*}

\begin{itemize}
    \item $X^\bullet$ is called a \textbf{proper Lie groupoid} if $(s,t)\colon X^1\rightarrow X^0\times X^0$ is a proper map. In particular, every isotropy group is a compact Lie group.
    \item $X^\bullet$ is called a \textbf{foliation groupoid} if every isotropy group is discrete.
    \item $X^\bullet$ is called \textbf{étale} if $s$ and $t$ are local diffeomorphisms.
\end{itemize}
Let $X^\bullet$ be an étale Lie groupoid. Then every arrow $\phi\colon x\to y$ in $X^\bullet$ induces a germ of a diffeomorphism $\tilde{\phi}\colon (U_x,x)\rightarrow (V_y,y)$ as $\tilde{\phi}=t\circ \hat{\phi}$, where $\hat{\phi}\colon U_X\rightarrow X^1$ is an $s\colon X^1\rightarrow X^0$ section, defined on a sufficiently small neighbourhood $U_x$ of $x$. 
\begin{itemize}
    \item $X^\bullet$ is called \textbf{faithfull}, if for each point $x\in X^0$ the homomorphism $G_x\rightarrow \m{Diff}_x(X^0)$ is injective.
\end{itemize}
\end{defi}

\begin{defi}
Let $X^\bullet$ be a Lie groupoid. A \textbf{left (right) $X^\bullet$-space} is a smooth manifold $E$ equipped with an action of $X^\bullet$. Such an action is given by the smooth morphisms  
\begin{align*}
    \pi\colon E\rightarrow X^0\hspace{1.0cm}\mu_L\colon X^1\times_{s,\pi}E\rightarrow E\hspace{1.0cm}(\mu_R\colon E\times_{\pi;t}X^0\rightarrow E)
\end{align*}
satisfying $\pi(\phi.e)=s(\phi)$, $1_x.e=e$ and $\eta.(\phi.e)=(\eta\phi).e$. For each $X^\bullet$ space $E$, we define the translation groupoid $( X\ltimes E)^\bullet$ whose objects are points in $E$ and whose morphisms $\phi\colon e\rightarrow e'$ are morphisms $\phi\colon \pi(e)\rightarrow\pi(e')$ in $X^1$ with $\phi.e=e'$. There exists a homomorphism $\pi_E\colon (X\ltimes E)^\bullet\rightarrow X^\bullet$ of orbifolds. 
\end{defi}

\begin{rem}
By replacing the left by right actions in the above definition we can define right $X^\bullet$-spaces.
\end{rem}

\begin{rem}
Notice that on the level of groupoids, the fibre over $x\in X^0$ is $\pi^{-1}(x)$; at the level of orbit spaces $|X\ltimes E|\rightarrow |X|$ the fibres are $\pi^{-1}(x)/\m{Isot}_x$.
\end{rem}

\begin{defi}
A \textbf{right/left-$X^\bullet$-$Y^\bullet$-bibundle} $Z$ is given by smooth maps $\alpha_X\colon Z\rightarrow X^0$ and $\alpha_Y\colon Z\rightarrow Y^0$ called the moment maps such that both $\alpha_X$ is a right/left $X^\bullet$-space and $\alpha_Y$ is a right/left $Y^\bullet$-space, i.e.
\begin{equation*}
\begin{tikzcd}
	{X^1} && Z && {Y^1} \\
	\\
	{X^0} &&&& {Y^0}
	\arrow["t"{description}, shift left=2, from=1-1, to=3-1]
	\arrow["s"{description}, shift right=2, from=1-1, to=3-1]
	\arrow["{\alpha_X}"{description}, from=1-3, to=3-1]
	\arrow["{\alpha_Y}"{description}, from=1-3, to=3-5]
	\arrow["t"{description}, shift left=2, from=1-5, to=3-5]
	\arrow["s"{description}, shift right=2, from=1-5, to=3-5]
\end{tikzcd}
\end{equation*}
\end{defi}

\begin{defi}
A \textbf{generalised morphism} 
\begin{align*}
    [Z]\colon X^\bullet\rightsquigarrow Y^\bullet
\end{align*}
is given by an isomorphism class of right $X^\bullet$-$Y^\bullet$-bibundles.
\end{defi}

\begin{defi}
A Lie groupoid $X^\bullet$ is \textbf{Morita equivalent} to Lie groupoid $Y^\bullet$ if there exists a generalised morphism 
\begin{align*}
    [Z]\colon X^\bullet\rightsquigarrow Y^\bullet
\end{align*}
which is also a left $X^\bullet$-$Y^\bullet$-bibundle.
\end{defi}

\begin{defi}
A \textbf{smooth orbifold} $X$ is the Morita equivalence class of a smooth, proper, foliation Lie groupoid $X^\bullet$. Given such an equivalence class, we will write  
\begin{align*}
    [X^0/X^1]\coloneqq[X^\bullet].
\end{align*}
A \textbf{morphism of orbifolds} is given by an equivalence class of generalised morphism 
\begin{align*}
    [[Z]]\colon [X^\bullet]\rightsquigarrow[Y^\bullet].
\end{align*}
\end{defi}

\begin{rem}
In every equivalence class of an orbifold $[X^\bullet]\subset\cat{LieGrpd}$ there exists a proper, étale groupoid representing the orbifold. 
\end{rem}

\begin{rem}
Notice that the orbit space/coarse moduli space $X^0/X^1$ just carries the data of a topological space. The stacky quotient $[X^0/X^1]$ \say{remembers} the isotropy.
\end{rem}

\begin{defi}
The \textbf{tangent bundle} of an orbifold $[X^\bullet]$ is defined by Morita equivalence class of the $X^\bullet$-vector bundle 
\begin{align*}
    \pi_X\colon TX^0 \rightarrow X^0,\quad \mu_{TX}\colon X^1 \times_{s, \pi_{TX^0}} TX^0 \rightarrow TX^0,
\end{align*}
where $\mu(g, V_x) = T g(V_x)$ for $g\colon  x \to y$ in $X^1$. 
\end{defi}

If $S^\bullet\subset X^\bullet$ is subgroupoid such that all isotropy groups are conjugate to a finite group $\Gamma$, then the normal bundle $NS^\bullet \rightarrow S^\bullet$ is naturally an $S^\bullet$-vector bundle, whereas the normal bundle $N|S| \to |S|$ is a (quasi-)conical bundle on the coarse moduli space.

\begin{defi}
The \textbf{frame bundle} of an $n$-dimensional orbifold $[X^\bullet]$ is defined to be the Morita equivalence class of the $X^\bullet$-principal $\mathrm{GL}(n)$-bundle given by 
\begin{align*}
    f_X\colon  \m{Fr}(X^0) \rightarrow X^0,\quad 
    \mu_{\m{Fr}(X)}\colon  X^1 \times_{s, \pi_{X^0}} \m{Fr}(X^0) \rightarrow \m{Fr}(X^0),
\end{align*}
where $\mu_{\m{Fr}(X)}(\psi, f_x) = T\psi \circ f_x$ for $\psi\colon  x \to y$ in $X^1$.
\end{defi}

The following corollary can be seen as the traditional definition of orbifolds via orbifold charts, i.e. $X$ being a topological space covered by charts of the form $(\m{Isot}_x\ltimes U_x)^\bullet$ subject to gluing conditions.

\begin{cor}\cite[Sec. 3.4]{moerdijk2002orbifolds}
Let $X$ be a smooth orbifold presented by a proper étale Lie groupoid $X^\bullet$. For every open subset $U\subset X^0$ we write $(X|_U)^\bullet$ for the full subgroupid whose objects are $U$. Since $X^1$ is assumed to be proper and étale, for $x\in X^0$ there exists an arbitrarily small neighbourhood $U_x$ such that $(X|_{U_x})^\bullet$ is isomorphic to the action groupoid $(\m{Isot}_x\ltimes U_x)^\bullet$. 
\end{cor}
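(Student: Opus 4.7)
The plan is to exhibit, for every $x \in X^0$, an open neighbourhood $U_x$ of $x$ together with a faithful action of the finite group $\mathrm{Isot}_x$ on $U_x$ by local diffeomorphisms, and to show that the tautological functor
\begin{align*}
    (\mathrm{Isot}_x \ltimes U_x)^\bullet \longrightarrow (X|_{U_x})^\bullet
\end{align*}
is an isomorphism of Lie groupoids.

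First I would verify that $\mathrm{Isot}_x = s^{-1}(x) \cap t^{-1}(x)$ is finite. Properness of $(s,t)\colon X^1 \to X^0 \times X^0$ forces $\mathrm{Isot}_x$ to be compact, and since $X^\bullet$ is étale, $s$ is a local diffeomorphism, so $s^{-1}(x)$ is discrete and $\mathrm{Isot}_x$ is a finite set $\{g_1=1_x,g_2,\dots,g_N\}$. For each $g_i$ the étale condition provides a germ of a local diffeomorphism $\tilde{g}_i \colon (W_i,x) \to (W_i',x)$, obtained as $t \circ \sigma_i$ for a local section $\sigma_i$ of $s$ with $\sigma_i(x) = g_i$. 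By finiteness I can choose a common connected open neighbourhood $V$ of $x$ on which all sections $\sigma_i$ are defined and all maps $\tilde{g}_i \colon V \to \tilde{g}_i(V)$ are diffeomorphisms.

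Next I would produce an $\mathrm{Isot}_x$-invariant neighbourhood. Taking $U_x \subset V \cap \bigcap_i \tilde{g}_i^{-1}(V)$ small enough and further shrinking, the composition $\sigma_i(\tilde{g}_j(y)) \cdot \sigma_j(y)$ equals $\sigma_k(y)$ for the unique $k$ with $g_k = g_i g_j$, because both sides are local sections of $s$ agreeing at $x$, hence agreeing on the connected neighbourhood by the étale property. This gives a genuine (not merely germinal) action $\mathrm{Isot}_x \curvearrowright U_x$, faithful because $X^\bullet$ is a foliation groupoid and the sections $\sigma_i$ are distinct at $x$.

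The main obstacle, and the step where properness is used essentially, is to show that after shrinking $U_x$ further, \emph{every} arrow $\phi \colon y \to y'$ with $y,y' \in U_x$ is of the form $\sigma_i(y)$ for some $i$. Suppose this failed on every neighbourhood. Then there would exist a sequence $\phi_n \in X^1$ with $s(\phi_n), t(\phi_n) \to x$ and $\phi_n \notin \bigcup_i \sigma_i(U_x)$. By properness, $(s,t)^{-1}$ of a compact neighbourhood of $(x,x)$ is compact, so a subsequence converges to some $\phi_\infty \in \mathrm{Isot}_x$, say $\phi_\infty = g_i$. But then for large $n$, $\phi_n$ lies in any preassigned neighbourhood of $g_i$ in $X^1$, and since $s$ is a local diffeomorphism near $g_i$ with local inverse $\sigma_i$, we have $\phi_n = \sigma_i(s(\phi_n))$, contradicting the assumption. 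Hence for $U_x$ sufficiently small the map $\mathrm{Isot}_x \times U_x \to (X|_{U_x})^1$, $(g_i,y) \mapsto \sigma_i(y)$, is surjective; by the foliation property it is also injective, and étaleness upgrades it to a diffeomorphism. This identifies $(X|_{U_x})^\bullet$ with the action groupoid $(\mathrm{Isot}_x \ltimes U_x)^\bullet$, and the family of such $U_x$ can clearly be chosen arbitrarily small.
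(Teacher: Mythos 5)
Your proof is correct and is the standard argument for this local normal form of proper \'etale Lie groupoids; the paper itself simply cites Moerdijk and does not reprove the statement, so there is no in-text proof to compare against. Two small inaccuracies are worth flagging, though neither affects the conclusion. First, the isomorphism $(\mathrm{Isot}_x\ltimes U_x)^\bullet\cong (X|_{U_x})^\bullet$ does not require the local action to be faithful, and your justification of faithfulness ``because $X^\bullet$ is a foliation groupoid'' is not right: faithfulness of the induced action $\mathrm{Isot}_x\to \mathrm{Diff}_x(X^0)$ is the \emph{effectiveness} condition (which the paper later imposes as a standing assumption but which is not part of the hypotheses here), whereas the foliation-groupoid property only says the isotropy groups are discrete. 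Second, the injectivity of $(g_i,y)\mapsto\sigma_i(y)$ does not really come from ``the foliation property''; it comes from shrinking $U_x$ so that the (finitely many) local sections $\sigma_i$ of $s$, which take distinct values $g_i$ at $x$, have pairwise disjoint images — a Hausdorff-separation argument in $X^1$. With those two corrections your argument, and in particular the key properness/sequential-compactness step showing that every arrow between nearby points is one of the $\sigma_i(y)$, is exactly the right one.
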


\begin{defi}
    The \textbf{dimension of a smooth orbifold} is defined as $ \m{dim}(X)\coloneqq \m{dim}(U_x)$ which is independent of the point $x$.
\end{defi}

Orbifolds are examples of stratified spaces. The stratification of $X$ consists of the suborbifolds $S\subset X$ for which $\m{Isot}_s\cong\m{Isot}_{s'}$ for all $s,s'\in S$ and
\begin{align*}
    X^1/X^0=\bigcup_{[ \Gamma_\alpha]}X_\alpha\in\cat{Top}
\end{align*}
where the union runs over all conjugacy classes $[ \Gamma_\alpha]$ of finite subgroups of $\m{GL}(n)$. Notice, that every $X_\alpha$ is a smooth (open) manifold.\\ 

We define a partial order on the set $\{X_\alpha\}$ by 
\begin{align*}
    X_\alpha\leq  X_\beta
\end{align*}
if $ \Gamma_\beta\subset  \Gamma_\alpha$. Notice that since  $\{X_\alpha\}$ defines a stratification, $X_\beta\leq X_\alpha$ implies $X_\beta\subset \overline{X_\alpha}$. Hence, we can define a prestratification of $X$ by $X_{\tilde{\alpha}}=\{\bigcup_{\beta\leq\alpha}X_\beta\}_\alpha$. In particular, we say that two connected singular strata $X_\alpha$ and $X_{\alpha'}$ intersect in a singular stratum $X_\beta$, if $X_\beta\leq X_\alpha$ and $X_\beta\leq X_{\alpha'}$ or equivalently 
\begin{align*}
    X_{\tilde{\alpha}}\cap X_{\tilde{\alpha}'}=X_{\beta}.
\end{align*}
Notice, that $\overline{X_\alpha}=X_\alpha\sqcup \bigsqcup_{\beta\leq\alpha}X_\beta$. 

\begin{rem}
    The top stratum of an $n$-dimensional orbifold $X$, i.e. the open dense subset consisting of points with trivial isotropy will be denoted by $X^{reg}$. If the isotropy group $\Gamma_{reg} = 1$, we call the orbifold effective. For the remainder of this paper we assume all orbifolds to be effective in order to avoid additional complications.
\end{rem}

The following definitions quantify how severe or complex a given singularity can be. The depth measures the position of a stratum within the nested hierarchy of singularities, while the singularity type describes the local model and isotropy representation near the stratum. Together, these notions capture both the local complexity and the global stratified structure of the singular set.

\begin{defi}
    We define the \textbf{depth} of $X_{\alpha}$, as the length of the longest chain of inclusions of singular strata, i.e. 
    \begin{align*}
        \m{depth}(X_\alpha)=\max\{i\in \mathbb{N}|X_{\alpha}= X_{\beta_0}\leq...\leq X_{\beta_i}= X^{reg}|X_{\beta_k}\neq X_{\beta_{k-1}}\}.
    \end{align*}
\end{defi}

\begin{defi}
Let $(X,g)$ be a Riemannian orbifold and let $X^{sing}$ denote the union of its singular strata. Let $S$ be a stratum whose isotropy group is conjugate to a finite group $\Gamma\subset\m{O}(n)$. We say
\begin{itemize}
    \item $X$ is of \textbf{singularity type A} at $S$, if $X$ at $S\subset X$ is locally modelled on
    \begin{align*}
    \mathbb{R}^{n-m}\times\mathbb{R}^m/\Gamma
    \end{align*}
    such that $\Gamma$ acts freely on $\mathbb{R}^m\backslash\{0\}$.
    \item $X$ is of \textbf{singularity type B} at $S$, if $X$ at $S\subset X$ is locally modelled on
    \begin{align*}
    \mathbb{R}^{n-\sum_im_i}\times\prod_i\mathbb{R}^{m_i}/ \Gamma_i
    \end{align*}
    such that $ \Gamma_i$ acts freely on $\mathbb{R}^{m_i}\backslash\{0\}$.
    \item $X$ is of \textbf{singularity type C} at $S$, if it is neither type A nor type B
\end{itemize}
\end{defi}

\begin{rem}
Let $S_1$ and $S_2$ be two singular strata of isotropy $ \Gamma_1$ and $ \Gamma_2$. Then the intersection $S_3=\overline{S_1}\cap\overline{S_2}$ is a singular strata of isotropy $ \Gamma_1 \Gamma_2\subset \Gamma_3$ and $X$ is singular of type B or C.
\end{rem}

\begin{nota}
    Throughout this paper we will denote by 
    \begin{align*}
        \m{N}_G(H)\und{1.0cm} \m{C}_G(H)
    \end{align*}
    the normaliser and the commutator subgroup of $H$ in $G$.
\end{nota}

\begin{prop}
Let $S\subset X$ be a singular strata of isotropy type $\Gamma$. Then there exists a $\m{N}_{\m{GL}(n)}(\Gamma)\hookrightarrow \m{GL}(n)$ reduction of the restriction of the frame bundle $Fr_{X^\bullet}$ to $S$.
\end{prop}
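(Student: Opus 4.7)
The plan is to identify $Fr(X/S,\Gamma)$ as the set of frames at points of $S$ that conjugate the isotropy action to a fixed linear representation of $\Gamma$, and then check that this is naturally a principal $\mathrm{N}_{\mathrm{GL}(n)}(\Gamma)$-bundle. First I would fix once and for all an embedding $\varrho\colon \Gamma\hookrightarrow \mathrm{GL}(n)$ realising the conjugacy class of isotropy at $S$. For each $s\in S$, use a local orbifold chart $(\m{Isot}_s\ltimes U_s)^\bullet$ presenting $X$ near $s$: the differential at $s$ gives a faithful linear action $\m{Isot}_s\curvearrowright T_sX$, which, by the type assumption, is conjugate via some linear isomorphism $T_sX\cong\mathbb{R}^n$ to $\varrho(\Gamma)\subseteq\mathrm{GL}(n)$.

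Next I would set
\begin{align*}
Fr(X/S,\Phi)_s\coloneqq \left\{f\in Fr(X)_s\;\middle|\; f^{-1}\circ (\m{Isot}_s\text{-action on }T_sX)\circ f=\varrho(\Gamma)\subseteq\mathrm{GL}(n)\right\}
\end{align*}
and $Fr(X/S,\Phi)\coloneqq \bigsqcup_{s\in S}Fr(X/S,\Phi)_s\subseteq Fr(X)|_S$. The fibrewise structure is immediate: if $f,f'\in Fr(X/S,\Phi)_s$, then $A\coloneqq f^{-1}f'\in\mathrm{GL}(n)$ satisfies $A^{-1}\varrho(\Gamma)A=\varrho(\Gamma)$, i.e.\ $A\in \mathrm{N}_{\mathrm{GL}(n)}(\Gamma)$; conversely, right-multiplication by any $A\in\mathrm{N}_{\mathrm{GL}(n)}(\Gamma)$ preserves the defining condition. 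Thus each fibre is a $\mathrm{N}_{\mathrm{GL}(n)}(\Gamma)$-torsor, and the defining condition is exactly the $\mathrm{N}_{\mathrm{GL}(n)}(\Gamma)/\mathrm{GL}(n)$-reduction condition on the frames.

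The only nontrivial point is smoothness and local triviality over $S$. Here I would use the linearisation of the isotropy action: by Bochner's theorem (applied in the orbifold chart after averaging a $\Gamma$-invariant metric), one can choose the chart $(\Gamma\ltimes U_s)^\bullet$ so that $\Gamma$ acts linearly on $U_s\subseteq T_sX\cong\mathbb{R}^n$ via $\varrho$, and so that $S\cap U_s=U_s^\Gamma$. In such a linear chart, the constant standard framing defines a smooth local section of $Fr(X/S,\Phi)$ over $S\cap U_s$, whence the bundle is locally trivial with fibre $\mathrm{N}_{\mathrm{GL}(n)}(\Gamma)$.

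The main obstacle is precisely this smoothness verification: one must be careful that the identification $\m{Isot}_s\cong\Gamma$ (via which the defining condition is phrased) is only canonical up to inner automorphisms of $\Gamma$, but this ambiguity is absorbed exactly by $\mathrm{N}_{\mathrm{GL}(n)}(\Gamma)$, so the subbundle is well-defined independently of the choice. Morita invariance of the construction, together with the standard fact that any two orbifold charts at $s$ are related by a $\Gamma$-equivariant diffeomorphism, ensures that $Fr(X/S,\Phi)$ patches into a global smooth principal $\mathrm{N}_{\mathrm{GL}(n)}(\Gamma)$-bundle over $S$, giving the claimed reduction.
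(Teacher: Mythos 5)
Your proof is correct and, while arriving at the same bundle, takes a more explicit route than the paper. The paper works at the groupoid level: it restricts the frame bundle $Fr_{X^0}$ to the subgroupoid $S^\bullet$, invokes the exact sequence $TS^\bullet\hookrightarrow TX^\bullet|_S\twoheadrightarrow NS^\bullet$ of $S^\bullet$-vector bundles, and takes frames adapted to that splitting, leaving the identification of the structure group as $\m{N}_{\m{GL}(n)}(\Gamma)$ to ``a moment's thought.'' You instead characterise the reduction directly as the set of frames conjugating the linearised isotropy representation to a fixed $\varrho(\Gamma)\subset\m{GL}(n)$, check by hand that the fibrewise transition group is exactly $\m{N}_{\m{GL}(n)}(\Gamma)$, and then supply the local triviality argument that the paper elides, via Bochner linearisation in an orbifold chart. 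Your version makes visible precisely where the normalizer comes from (conjugation ambiguity in identifying the isotropy with the model $\Gamma$), which the paper leaves implicit; the paper's version, by contrast, keeps the tangent–normal decomposition in the foreground, which fits the later use of $\m{GL}(W\to H)$-reductions in Section \ref{Gstructures on Fibrations}. One small precision worth noting in your write-up: in a Bochner chart $(\Gamma\ltimes U_s)^\bullet$ the fixed set $U_s^\Gamma$ may a priori contain points of deeper strata with larger isotropy, so the equality $S\cap U_s=U_s^\Gamma$ should be read as holding on the open locus of points with isotropy exactly $\Gamma$; this does not affect the construction of the local section. Also, the well-definedness concern you raise about the identification $\m{Isot}_s\cong\Gamma$ is in fact automatic with your formulation: the defining condition compares the image subgroup $f^{-1}(\m{Isot}_s)f$ with the subset $\varrho(\Gamma)$ and so never references a choice of isomorphism $\m{Isot}_s\cong\Gamma$.
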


\begin{proof}
Let $X^\bullet$ be a Lie groupoid representing the orbifold $[X^0/X^1]$. Let $S^\bullet$ denote the sub Lie groupoid corresponding to $S\subset [X^0/X^1]$ in the quotient space. Notice that the $X^\bullet$-frame bundle restricts to a $\m{GL}(n)$-principal bundle of $S^\bullet$, i.e. 
\begin{align*}
    f_X|_S\colon Fr_{X^0}|_{S^0}\rightarrow S^0\hspace{1.0cm}\mu_{Fr_X|_S}\colon S^1\times_{s,\phi_{X^0}|_{S^0}}Fr_{X^0}|_{S^0}\rightarrow Fr_{X^0}|_{S^0}.
\end{align*}
There exists a short exact sequence of $S^\bullet$-vector bundles 
\begin{equation*}
    \begin{tikzcd}
        TS^\bullet\arrow[r,"T\iota",hook]&TX^\bullet|_S\arrow[r,"p",two heads]&NS^\bullet
    \end{tikzcd}
\end{equation*}
In particular, there exists a natural reduction to frames 
\begin{align*}
    f_x\colon H\oplus V\rightarrow TX^0
\end{align*}
such that $p\circ f_x|_V\in \m{Fr}(NS^\bullet)$. A moment's thought reveals that this reduction can be represented by smooth principal-$\m{N}_{\m{GL}(n)}({\Gamma})$-bundle 
\begin{align*}
    \m{Fr}(X/S)\rightarrow S
\end{align*}
\end{proof}

\subsection{$G$-Structures}
\label{G Structures}

We begin by recalling the general framework of $G\subset\m{GL}(n)$-structures orbifolds. These are constructed via reductions of the frame bundle of the orbifold. 
 
\begin{defi}
Let $X$ be an orbifold and $\m{Fr}(X)$ its $\m{GL}(n)$-frame bundle. The \textbf{bundle of $G$-structures} on $X$ is the homogeneous bundle
    \begin{align*}
        \gamma\colon \m{Str}_G(X)\coloneqq\m{Fr}(X)/G\rightarrow X.
    \end{align*}
    A $G$-structure on an orbifold is given by a section $\Phi\in \Gamma(X,\m{Str}_G(X))$. Such a section induces a $G$-reduction of the frame bundle by 
    \begin{align*}
        \m{Fr}(X,\Phi)\coloneqq \Phi^*\m{Fr}(X)\hookrightarrow \m{Fr}(X)
    \end{align*}
    pulling back the $G$-bundle $\phi_{/G}\colon \m{Fr}(X)\rightarrow \m{Str}_G(X)$. 
\end{defi}

\begin{lem}
    Given a nested subgroup $G\subset H\subset \m{GL}(W)$, there exists a natural projection map 
    \begin{align*}
        \m{Str}_G(X)\twoheadrightarrow \m{Str}_H (X),
    \end{align*}
    whose fibres are isomorphic to the homogeneous space $H/G$.
\end{lem}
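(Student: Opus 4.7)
The proof plan is to construct the projection map directly from the defining quotient-bundle description and then analyse its fibres pointwise.

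First, I would recall that by construction $Str_G(X) = Fr(X)/G$ and $Str_H(X) = Fr(X)/H$, both viewed as associated bundles of the $\m{GL}(W)$-frame bundle. Since $G \subset H$, every $G$-orbit in $Fr(X)$ is contained in a unique $H$-orbit, so the assignment $[f]_G \mapsto [f]_H$ is well-defined. This gives a map $\pi_{G,H}\colon Str_G(X) \to Str_H(X)$ covering the identity on $X$, and it is smooth because it is induced by the $G$-equivariant quotient map $Fr(X) \to Fr(X)/H$ (which factors through $Fr(X)/G$). Surjectivity is clear: any $H$-orbit is the image of any of its $G$-sub-orbits.

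Next, I would analyse the fibres. Fix a point $x \in X$ and a class $[f]_H \in Str_H(X)_x$ represented by a frame $f \in Fr(X)_x$. Any preimage in $Str_G(X)_x$ is of the form $[f \cdot h]_G$ for some $h \in H$, and two such classes $[f \cdot h_1]_G$ and $[f \cdot h_2]_G$ coincide precisely when there exists $g \in G$ with $f \cdot h_1 = f \cdot h_2 \cdot g$, i.e.\ when $h_1 G = h_2 G$ in $H/G$. Thus the map $H/G \to \pi_{G,H}^{-1}([f]_H)$, $hG \mapsto [f \cdot h]_G$, is a bijection. Any other choice of representative $f' = f \cdot h_0$ just reparametrises this identification by left multiplication with $h_0$, so the identification is canonical up to the $H$-action and the fibre is intrinsically isomorphic to $H/G$.

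Finally, to upgrade this pointwise identification to a fibre-bundle statement, I would observe that $\pi_{G,H}$ is, up to the associated bundle construction, nothing but the map
\begin{align*}
Fr(X) \times_G \{*\} \;\longrightarrow\; Fr(X) \times_H \{*\}
\end{align*}
induced by $G \hookrightarrow H$, which is the standard fibre bundle $Fr(X)\times_H (H/G) \to Fr(X)/H$ with typical fibre $H/G$. In the orbifold setting, this construction is carried out on a presenting proper étale Lie groupoid $X^\bullet$ and all maps are equivariant under the $X^1$-action, so the quotient passes to the orbifold level without modification.

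The only place requiring care — and the main (mild) obstacle — is ensuring that the local trivialisations behave well across the stratification of $X$: over a stratum with isotropy $\Gamma$, the frame bundle carries a compatible $\m{N}_{\m{GL}(n)}(\Gamma)$-action, and one must check that the identification of the fibre with $H/G$ is equivariant so that $\pi_{G,H}$ is indeed a morphism of orbifold bundles. This is automatic since the construction is functorial in $Fr(X)$, but it is the step that separates the manifold case from the orbifold case.
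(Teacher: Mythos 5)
Your proof is correct, and it fills in what the paper leaves unproved: the paper states this lemma without proof (it is a standard quotient-bundle fact), and your argument — factoring $Fr(X) \to Fr(X)/H$ through $Fr(X)/G$, identifying the fibre pointwise with $H/G$, and recognising the resulting map as the associated bundle $Fr(X)\times_H(H/G)\to Fr(X)/H$ — is exactly the intended reasoning. Your closing remark about verifying equivariance with respect to the groupoid action $X^1$ correctly addresses the only point that distinguishes the orbifold case from the manifold case.
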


The existence of a $G$-structure on an orbifold can be obstructed by the nature of its isotropy groups. The following proposition clarifies the necessary condition.

\begin{prop}
    Let $X$ be an orbifold modelled on finite quotients of a finite dimensional vector space $W$ and let $\phi\colon \m{Fr}(X)\rightarrow X$ denote its frame bundle. If $\m{Fr}(X)$ admits a reduction to a $G$-structure, then all isotropy groups are conjugate to subgroups of $G$.
\end{prop}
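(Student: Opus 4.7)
The plan is to exploit the $X^\bullet$-equivariance of the $G$-reduction of the frame bundle and evaluate it fibrewise over an orbifold singular point. Concretely, I would first represent the orbifold $X$ by a proper étale Lie groupoid $X^\bullet$, so that the frame bundle $\phi \colon Fr(X^0)\rightarrow X^0$ is a genuine principal $\mathrm{GL}(n)$-bundle equipped with a smooth $X^\bullet$-action lifting the $X^\bullet$-action on $X^0$. Under this identification, a $G$-structure on $X$ corresponds to an $X^\bullet$-invariant reduction $Fr(X,\Phi)\subset Fr(X^0)$, which is precisely a smooth principal $G$-subbundle preserved by the action of every arrow in $X^1$.

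The key step is then the fibrewise analysis at a point $x\in X^0$ with isotropy $\Gamma_x=s^{-1}(x)\cap t^{-1}(x)$. Each $g\in\Gamma_x$ acts on the fibre $Fr(X^0)_x\cong\mathrm{GL}(W)$ via the differential $T_xg\in\mathrm{GL}(T_xX^0)$, and by $X^\bullet$-invariance of the reduction the subset $Fr(X,\Phi)_x\subset Fr(X^0)_x$ is $\Gamma_x$-stable. Since $Fr(X,\Phi)_x$ is a $G$-torsor, choosing any frame $f\in Fr(X,\Phi)_x$ (i.e. an isomorphism $f\colon W\xrightarrow{\cong} T_xX^0$) and writing $T_xg\cdot f=f\cdot\rho_f(g)$ produces a unique element $\rho_f(g)\in G$ for each $g\in\Gamma_x$. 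A direct check from the cocycle property of the action shows that $\rho_f\colon\Gamma_x\to G$ is a group homomorphism.

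To upgrade $\rho_f$ to a conjugation into $G$, I would use the assumption that the orbifold is effective (standing assumption of Appendix \ref{Geometric Structures on Orbifolds}), which means that $\Gamma_x\hookrightarrow\mathrm{Diff}_x(X^0)$ is injective; combined with the observation that the linearisation $\Gamma_x\to\mathrm{GL}(T_xX^0)$ is itself injective for a proper étale groupoid (by Bochner linearisation around the fixed point $x$), this forces $\rho_f$ to be injective. By construction $f^{-1}\circ T_xg\circ f=\rho_f(g)$, so under the identification $\mathrm{GL}(T_xX^0)\cong\mathrm{GL}(W)$ induced by $f$ the subgroup $T_x\Gamma_x$ is literally conjugated into $G$, proving the claim at $x$. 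Running this argument at every point gives the global conclusion.

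I expect the only subtle point to be the linearisation statement, namely that the tangential representation $\Gamma_x\to\mathrm{GL}(T_xX^0)$ is faithful. This follows from a standard Bochner-type argument for compact group actions fixing a point, but it does rely on both properness of $X^\bullet$ and effectiveness of $X$; if either assumption were dropped one would only recover a conjugation of the quotient $\Gamma_x/\ker(T_x)$ into $G$, which is strictly weaker than the statement.
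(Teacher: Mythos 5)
Your proposal is correct and captures the same underlying idea as the paper's proof: a $G$-structure on the orbifold must be invariant under the isotropy action at each point, and evaluating this invariance on a single frame $f$ of the $G$-reduction over a singular point produces the desired conjugation $f^{-1}(T_x\Gamma_x)f\subset G$. The paper routes this through the intermediate observation (the earlier appendix proposition) that $Fr(X)|_S$ is naturally a principal $\m{N}_{\m{GL}(T_xX)}(\m{Isot}(x))$-bundle, and then asserts that this normalizer reduction cannot further reduce to $G$ if $\Gamma\not\subset G$ up to conjugation; you instead argue directly from the $X^\bullet$-equivariance of the $G$-subbundle, which makes the torsor computation $T_xg\cdot f=f\cdot\rho_f(g)$ explicit and avoids relying on the normalizer reduction as a separate intermediate step. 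Your additional remark about the faithfulness of the tangential representation $\Gamma_x\to\m{GL}(T_xX^0)$ — needed to conclude that $\Gamma_x$ itself (and not merely its image under linearisation) embeds in $G$ — is a point the paper glosses over by simply identifying $\m{Isot}(x)$ with a subgroup of $\m{GL}(W)$ from the outset; your invocation of Bochner linearisation together with effectiveness makes this identification honest, and the caveat you flag about non-effective orbifolds is accurate.
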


\begin{proof}
    The isotropy group of a point $x\in X$ can be identified with a conjugacy class of a finite subgroup $\m{Isot}(x)\subset \m{GL}(W)$. The frame bundle of $X$ at $x$ naturally reduces to frames  
    \begin{align*}
        f\colon W\rightarrow T_xX
    \end{align*}
    that normalizes the linearised action of the isotropy group, i.e. its representation in $\m{GL}(T_xX)$. These frames form a $\m{N}_{\m{GL}(T_xX)}(\m{Isot}(x))$ reduction of the frame bundle along the stratum containing $x$. If $X$ contains strata whose isotropy is not conjugate to a subgroup of $G$, the reduction $\m{N}_{\m{GL}(T_xX)}(\m{Isot}(x))$ can not be a $G$ reduction. Hence, $\m{Fr}(X)$ can not be reduced to a sub $G$-bundle.
\end{proof}

\begin{ex}
    Let $(W,\mathfrak{o})$ be an oriented vector space. An orientation on an orbifold, locally modelled by finite quotients of $W$, is oriented if it admits a section of the orientation bundle 
    \begin{align*}
        \mathfrak{o}(X)\coloneqq \m{Fr}(X)/\m{GL}_+(W)\cong (\wedge^{top}T^*X)/\mathbb{R}_{>0}.
    \end{align*}
\end{ex}

\begin{ex}
Let $(W,g_0)$ be an Euclidean vector space. The most important $G$-structure on an orbifold $X$ locally modelled on $W$ in Riemannian geometry is an $\m{O}(W,g_0)$-structure, i.e. a Riemannian orbifold structure. A Riemannian metric on $X$ corresponds to a section of the homogeneous bundle 
\begin{align*}
    \m{Met}(X)\coloneqq \m{Fr}(X)/\m{O}(W,g_0)\rightarrow X
\end{align*}
with fibres being 
\begin{align*}
    \m{Met}(W)\coloneqq \m{GL}(W)/\m{O}(W,g_0).
\end{align*}
Given a section $g\in \Gamma(X,\m{Met}(X))$ we obtain a reduction of the framebundle 
\begin{align*}
    \m{Fr}(X,g)\coloneqq g^*\m{Fr}(X)\hookrightarrow \m{Fr}(X)
\end{align*}
by pulling back the $\m{O}(W,g_0)$-bundle $\phi_{/\m{O}(W,g_0)}\colon \m{Fr}(X)\rightarrow \m{Met}(X)$.
\end{ex}

We will now define the Clifford bundle of $X$. This is a finite dimensional orbifold bundle, such that given a section $g\colon X\rightarrow \m{Met}(X)$, the pullback of the Clifford bundle corresponds to the Clifford bundle of $X$ with respect to $g$. This notion will be needed in the discussion of families of Dirac bundles later in this paper.

\begin{defi}
    We define the \textbf{Clifford bundle} of $X$ to be the associated bundle 
    \begin{align*}
        \m{Cl}(X)\coloneqq \m{Fr}(X)\times_{\m{O}(W,g_0)}\m{Cl}(W^*,g_0)\rightarrow \m{Met}(X).
    \end{align*}
    Given a metric on $X$, i.e. a section $g\colon X\rightarrow \m{Met}(X)$, we have
    \begin{align*}
        g^*\m{Cl}(X)=\m{Cl}(T^*X,g).
    \end{align*}
\end{defi}

\subsection{Fibrations}
\label{Fibrations}
Throughout this paper we will work with geometric structures on fibrations. The following section is devoted to the discussion of such structures from the general perspective of $G$-structures as introduced in the previous sections.\\

Let $M$ be an orbifold locally modelled on a vector space $W=H\oplus V$ and $\pi\colon M\rightarrow B$ a fibration. We define the group 
\begin{align*}
    \m{GL}(W\to H)\coloneqq \{A\in\m{GL}(H\oplus V)|A(V)=V,\m{pr}_H\circ A(H)=H\}.
\end{align*}
There exists a natural $\m{GL}(W\to H)$-reduction of the frame bundle 
\begin{align*}
    \m{Fr}(\pi\colon M\rightarrow B)\hookrightarrow \m{Fr}(X)
\end{align*}
of frames
\begin{align*}
    \m{Fr}(\pi\colon M\rightarrow B)\coloneqq\{f_1\colon H\oplus V \rightarrow T_{m}M|f_1|_{V}\colon V\cong V_m\pi,\, T\pi\circ f_1|_H\in \m{Fr}(B)_{\pi(m)}\}. 
\end{align*}

We will now be interested in reductions of such fibered structures and their torsions. In particular, this will identify the torsion of such structures with geometric quantities. In general, we will set 
\begin{align*}
    G(W\to H)\coloneqq G\cap \m{GL}(W\to H).
\end{align*}
Notice that this group depends on the embedding of $G\subset \m{GL}(W)$. We will now investigate fibred Riemannian structures. 

\begin{defi}
    A \textbf{fibred Riemannian structure} is given by a section of 
    \begin{align*}
        \m{Fr}(\pi\colon M\rightarrow B)/O(W\to H)\rightarrow M.
    \end{align*}
    Here $\m{O}(W\to H)=O(W)\cap \m{GL}(W\to H)\cong \m{O}(H)\times \m{O}(V)$. 
\end{defi}

The unique torsion-free Levi-Civita connection $\varphi_g$ on $\m{Fr}(M,g)$ pulls back to $\m{Fr}(\pi\colon M\to B,g)$ and decomposes into  
    \begin{align*}
        \varphi_g=\varphi_\oplus+T_{\oplus}.
    \end{align*}
We will refer to $T_{\oplus}$ as the torsion of the fibred Riemannian structure. This tensor is valued in
    \begin{align*}
        T_\oplus\in \Omega^0(\m{Fr}(\pi\colon M\rightarrow B),\wedge^2 H^*\otimes V\oplus \m{Sym}^2 V^*\otimes H)^{\m{O}(W\to H)}.
    \end{align*}

We will now identify the tensor $T_{\oplus}$ with geometric quantities of the Riemannian fibration. These results will be used throughout this paper to understand the behaviour of Dirac operators on collapsing fibrations and to construct resolutions of tubular neighbourhoods of singular strata of $\m{Spin}(7)$-orbifolds. The broader discussion is based on the work of Gromoll and Walshap in \cite{gromoll2009metric} and Berline, Getzler and Vergne in \cite{berline1992heat}.\\

The connection $\varphi_{\oplus}$ induces a covariant derivative that can be identified with
\begin{align*}
    \nabla^{\oplus}=\pi^*\nabla^{g_S}\oplus\nabla^{g_{V}},
\end{align*}
i.e. the direct sum connection of the vertical connection and the lift of the Levi-Civita connection on the base space through the Ehresmann connection $H=VM^\perp$. Let in the following 
\begin{align*}
    X,Y,Z\in\mathfrak{X}^{1,0}(M)\und{1.0cm}U,V,W\in\mathfrak{X}^{0,1}(M)
\end{align*}
and denote by $\nabla$ the Levi-Civita connection of $g$.

\begin{defi}
We define the two tensor $A\in\Omega^{1,0}(M,\m{Hom}(H,VM))$ by
\begin{align*}
    A(X)Y=\nabla^{0,1}_XY=\frac{1}{2}[X,Y]^{0,1}
\end{align*}
and
    \begin{align*}
    S\in\Omega^{1,0}(M,\m{End}(VM)) \quad \text{by} \quad S(X)U=-(\nabla_UX)^{0,1}.
\end{align*}

\end{defi}

The second fundamental form of the fibre can thus be identified with the tensor
\begin{align*}
    \m{II}(U,V)=\left<S U,V\right>^\flat.
\end{align*}
By taking the vertical trace we obtain the fibrewise mean curvature, i.e. the one form 
\begin{align*}
    k=\m{tr}_{g;V}\m{II}.
\end{align*}

\begin{rem}
For a Riemannian fibration, the fibrewise second fundamental form $\m{II}$ or equivalently $S$ vanishes if and only if the fibres are totally geodesic. Furthermore, the tensor $A$ vanishes if and only if the Ehresmann connection $H$ is flat.
\end{rem}

\begin{prop}\cite[Prop. 10.6]{berline1992heat}
Given $X,Y,Z\in\mathfrak{X}(M)$ the torsion tensor $T_{\oplus}\in\Omega^1(M,\wedge^2T^* M)$ can be expressed by 
\begin{align*}
    T_{\oplus}(X)(Y,Z)=&g(\m{II}(X,Z),Y)-g(\m{II}(X,Y),Z)\\
    &+\tfrac{1}{2}\left(g(F_{H}(X,Z),Y)-g(F_{H}(X,Y),Z)\right.\\
    &+\left.g(F_{H}(Y,Z),X)\right).
\end{align*}
\end{prop}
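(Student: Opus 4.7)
The plan is to compute the difference tensor $T_\oplus = \nabla^g - \nabla^\oplus$ directly via Koszul's formula and a case analysis based on whether each of $X, Y, Z$ is horizontal or vertical. Since both $\nabla^g$ and $\nabla^\oplus$ are metric with respect to $g$, the expression $T_\oplus(X)(Y,Z)$ is automatically antisymmetric in $(Y,Z)$, which matches the claimed formula.

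First I would recall Koszul's identity
\begin{align*}
2g(\nabla^g_X Y, Z) = Xg(Y,Z) + Yg(X,Z) - Zg(X,Y) + g([X,Y],Z) - g([Y,Z],X) + g([Z,X],Y),
\end{align*}
and establish an analogous formula for $\nabla^\oplus$ in which only the ``diagonal'' brackets, $[X,Y]^{1,0}$ when $X,Y\in H$ and $[U,V]^{0,1}$ when $U,V\in VM$, contribute. Subtracting the two expressions, the terms $Xg(Y,Z)$ etc. cancel, and only commutator contributions across the splitting survive. The remaining ingredients are the two identifications
\begin{align*}
[X,Y]^{0,1} = -F_H(X,Y) \qquad \text{for } X,Y \in H,
\end{align*}
which follows from the standard formula for the curvature of an Ehresmann connection, together with
\begin{align*}
g((\mathcal{L}_X g)|_{VM}(U,V)) = -2\,g(S(X)U, V) = -2\,g(\mathrm{II}(U,V), X) \qquad X\in H,\ U,V\in VM,
\end{align*}
which is just the definition of the $S$-tensor combined with $\nabla^g g = 0$.

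Next I would run through the six independent cases for $(X,Y,Z)$: (HHH), (HHV), (HVH), (VHH), (HVV), (VHV), (VVH), (VVV), collapsing them using antisymmetry in $(Y,Z)$. In the (HHH) case the vertical bracket $[Y,Z]^{0,1}$ produces the term $\tfrac12 g(F_H(Y,Z),X)$; the other two curvature terms vanish since $g(\cdot,X)$ vanishes on vertical vectors — wait, this needs care. In fact in the purely horizontal case both connections agree up to the horizontal bracket, so only the half-sum of mixed brackets contributes, yielding precisely $\tfrac12 g(F_H(Y,Z),X)$ and the $\mathrm{II}$-terms are absent because neither $Y$ nor $Z$ is vertical. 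The (VVV) case gives $0$ on both sides since $VM$ is integrable and the vertical connection is Levi-Civita on fibres. The mixed (HVV) case produces the two $\mathrm{II}$-terms via the identity above, and any would-be $F_H$ contribution vanishes by verticality. The (VHV) and (HHV) cases give the remaining $F_H$-terms through the antisymmetric combination of horizontal brackets projected vertically.

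The bookkeeping is the main obstacle: there are many signs to juggle, and one must be careful to distinguish the two types of vertical brackets (those coming from $[H,H]^{0,1}$ and those encoding $S$). Assembling the cases and noting that both sides are antisymmetric in $(Y,Z)$ and $C^\infty(M)$-linear in $X$, one reads off the claimed formula
\begin{align*}
T_\oplus(X)(Y,Z) = g(\mathrm{II}(X,Z),Y) - g(\mathrm{II}(X,Y),Z) + \tfrac12\bigl(g(F_H(X,Z),Y) - g(F_H(X,Y),Z) + g(F_H(Y,Z),X)\bigr).
\end{align*}
As a sanity check, specializing to the case of totally geodesic fibres ($\mathrm{II}=0$) recovers the familiar O'Neill-type formula relating $T_\oplus$ to the curvature of the Ehresmann connection, and specializing to a flat connection ($F_H=0$) gives $T_\oplus = \pi_{VM}\mathrm{II} - \mathrm{II}\,\pi_{VM}$ as expected for a warped-product-like geometry.
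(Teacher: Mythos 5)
The paper itself does not prove this proposition: it is cited verbatim from Berline--Getzler--Vergne \cite[Prop.~10.6]{berline1992heat}, so there is no internal proof to compare against. Your strategy --- compute the difference tensor $T_\oplus = \nabla^g - \nabla^\oplus$ by subtracting Koszul-type identities and then running a horizontal/vertical case analysis --- is a reasonable way to reprove the formula and is essentially how such formulae are usually derived. However, the sketch as written has two genuine gaps.

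First, you invoke ``Koszul's formula for $\nabla^\oplus$,'' but $\nabla^\oplus$ is not torsion-free, so the classical Koszul identity does not apply to it. What you actually need is the generalized Koszul formula for a metric connection with torsion $\Theta$,
\begin{align*}
2g(\nabla_X Y,Z) &= Xg(Y,Z) + Yg(X,Z) - Zg(X,Y) + g([X,Y],Z) - g([X,Z],Y) - g([Y,Z],X) \\
&\quad + g(\Theta(X,Y),Z) - g(\Theta(X,Z),Y) - g(\Theta(Y,Z),X),
\end{align*}
applied to $\nabla^\oplus$; subtracting from the torsion-free case then leaves only the three $\Theta$-terms, and the rest of the proof is to identify $\Theta_{\nabla^\oplus}$ on horizontal and vertical pairs. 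Your sketch conflates this with ``keeping only diagonal brackets,'' which is imprecise and hides exactly the place where $\m{II}$ and $F_H$ enter.

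Second, your case bookkeeping is off: you claim the $\m{II}$-terms arise in the (HVV) case, but in fact (HVV) contributes zero --- for $X$ horizontal and $Y,Z$ vertical, $\nabla^\oplus_X Y = (\nabla^g_X Y)^{0,1}$ so the difference is purely horizontal and annihilated by $g(\cdot, Z)$. The $\m{II}$-terms come from the (VHV) and (VVH) cases (where $X$ is vertical and exactly one of $Y,Z$ is horizontal), e.g.\ for $X,Z$ vertical, $Y$ basic horizontal one finds $g(\nabla^g_X Y - \nabla^\oplus_X Y, Z) = -g(Y, \m{II}(X,Z))$ using $\nabla^\oplus_X Y = 0$ and the metric property. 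Once you actually carry the signs through the (VHV) and (VVH) cases with the paper's stated conventions ($S(X)U = -(\nabla_U X)^{0,1}$, $\m{II}(U,V)$ determined by $g(\m{II}(U,V),X)=g(S(X)U,V)$, and $T_\oplus = \nabla^g - \nabla^\oplus$), you will find the sign of the $\m{II}$-terms disagrees with the stated formula --- so the computation must be reconciled either with BGV's sign conventions for $S$, $F_H$, and the direction of the difference tensor, or with a corrected sign in the displayed formula. Your sketch never checks the signs and therefore cannot detect or resolve this, which is the crux of the proposition; stating ``one reads off the claimed formula'' is not justified by the argument given.
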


\begin{rem}\cite[Prop. 10.1]{berline1992heat}
The de Rham differential on $M$ decomposes with respect to $H$ and the Riemannian structure $g=p^*g_S\oplus g_{V}$ into 
\begin{align*}
    \m{d}=&\m{d}^{1,0}+\m{d}^{0,1}+\m{d}^{2,-1}\\
    =&\m{d}_{\nabla^{g_{V}}}-\hat{\iota}_{\m{II}}+\m{d}^{0,1}+\hat{\iota}_{F_H}.
\end{align*}
In particular, let $\m{vol}_{V}$ denote the vertical volume form with respect to $g_{V}$. Then 
\begin{align*}
    \m{d}k(X,Y)=-2\m{div}(A(X)Y)\und{1.0cm}\m{d}\m{vol}_{V}=k\wedge\m{vol}_{V}+\iota_{F_{H}}\m{vol}_{V}.
\end{align*}
\end{rem}

\newpage
\bibliographystyle{alpha}
\bibliography{literature}

\end{document}